\title{Symmetric Homology of Algebras}
\author{Shaun V. Ault}
\address{Department of Mathematics, Fordham University, Bronx, New York, 10461, 
  USA.}
\email{ault@fordham.edu}
\keywords{symmetric homology, bar construction, spectral sequence,
  chessboard complex, GAP, cyclic homology}
\subjclass[2000]{55N35, 13D03, 18G10}
\newtheorem{theorem}{Theorem}
\theoremstyle{plain}
\newtheorem{lemma}[theorem]{Lemma}
\newtheorem{prop}[theorem]{Proposition}
\newtheorem{cor}[theorem]{Corollary}
\newtheorem{conj}[theorem]{Conjecture}
\theoremstyle{definition}
\newtheorem{definition}[theorem]{Definition}
\theoremstyle{remark}
\newtheorem{rmk}[theorem]{Remark}
\newcommand{\Z}{\mathbb{Z}}
\newcommand{\ltimescirc}{\textrm{\textcircled{$\ltimes$}}}
\newcommand{\ds}{\displaystyle}
\newcommand{\co}{\;:\;}
\DeclareMathOperator*{\colim}{colim}
\DeclareMathOperator*{\hocolim}{hocolim}
\begin{document}

\begin{abstract}    % type your abstract below

  The symmetric homology of a unital algebra $A$ over a commutative
  ground ring $k$ is defined using derived functors and the symmetric
  bar construction of Fiedorowicz.  For a group ring $A = k[\Gamma]$,
  the symmetric homology is related to stable homotopy theory via
  $HS_*(k[\Gamma]) \cong H_*(\Omega\Omega^{\infty}
  S^{\infty}(B\Gamma); k)$.  Two chain complexes that compute
  $HS_*(A)$ are constructed, both making use of a symmetric monoidal
  category $\Delta S_+$ containing $\Delta S$.  Two spectral sequences
  are found that aid in computing symmetric homology.  The second
  spectral sequence is defined in terms of a family of complexes,
  $Sym^{(p)}_*$.  $Sym^{(p)}$ is isomorphic to the suspension of the
  cycle-free chessboard complex $\Omega_{p+1}$ of Vre\'{c}ica and
  \v{Z}ivaljevi\'{c}, and so recent results on the connectivity of
  $\Omega_n$ imply finite-dimensionality of the symmetric homology
  groups of finite-dimensional algebras.  Some results about the
  $k\Sigma_{p+1}$--module structure of $Sym^{(p)}$ are devloped.  A
  partial resolution is found that allows computation of $HS_1(A)$ for
  finite-dimensional $A$ and some concrete computations are included.

\end{abstract}

\maketitle

%%%%%%%%%%%%%%%%%%%%   Start of main body of article

%%%%%%%%%%%%%%%%%%%%%%%%%%%%%%%%%%%%%%%%%%%%%%%%%%%%%%%%%%%%%%%%%%%%%%%%%%%%%%%%
\section{Introduction and Definitions}
%%%%%%%%%%%%%%%%%%%%%%%%%%%%%%%%%%%%%%%%%%%%%%%%%%%%%%%%%%%%%%%%%%%%%%%%%%%%%%%%

The theory of symmetric homology, in which the symmetric groups
$\Sigma_k^\mathrm{op}$, for $k \geq 0$, play the role that the cyclic
groups do in cyclic homology, begins with the definition of the
category $\Delta S$, containing the simplicial category $\Delta$ as
subcategory.  Indeed, $\Delta S$ is an example of {\it crossed
  simplicial group}~\cite{FL}.  One develops a notion of a bar
resolution over crossed simplicial groups by analogy with the cyclic
bar resolution.  However, the na\"ive contravariant symmetric bar
resolution produces trivial results.  Fiedorowicz found that a {\it
  covariant} symmetric bar construction, $B^{sym}_*$, produces a
non-trivial and important homology theory~\cite{F}.  One defines the
symmetric homology of an algebra $A$ by
\begin{definition} $\ds{HS_*(A) = \mathrm{Tor}^{\Delta S}_*( \underline{k}, 
B^{sym}_*A )}$.
\end{definition}
Almost 20 years ago, Fiedorowicz found but did not publish the following 
remarkable result~\cite{F}.
\begin{theorem}[Fiedorowicz's Theorem]\label{thm.HS_group}
  If $\Gamma$ is a group, then
  \[
    HS_*(k[\Gamma]) \cong H_*\left(\Omega\Omega^{\infty}
    S^{\infty}(B\Gamma); k\right).
  \]
\end{theorem}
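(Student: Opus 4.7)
The plan is to interpret $HS_*(k[\Gamma])$ as the singular $k$-homology of a space whose homotopy type can be identified with that of $\Omega\Omega^{\infty}S^{\infty}(B\Gamma)$ via a Barratt-Priddy-Quillen type group-completion argument.

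First I would reinterpret $\mathrm{Tor}^{\Delta S}_*(\underline{k}, B^{sym}_* A)$ as the $k$-homology of a geometric realization. By the general correspondence between Tor over a small category and homotopy coends, this Tor is computed by a two-sided bar construction $B(\underline{k}, \Delta S, B^{sym}_* A)$, so that
\[
HS_*(A) \cong H_*\!\left( \hocolim_{\Delta S^{\mathrm{op}}} B^{sym}_* A;\, k \right).
\]
This reduces the theorem to a homotopical identification of the relevant homotopy colimit.

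Second, I would identify the resulting homotopy type for $A = k[\Gamma]$. The category $\Delta S$, together with the symmetric bar construction, encodes an $E_{\infty}$-algebra structure on $|B^{sym}_* A|$, since the morphisms of $\Delta S$ include all order-preserving and order-reversing injections as well as arbitrary permutations of the finite sets $[n]$. For a group ring $A = k[\Gamma]$, the invertibility and multiplication of $\Gamma$ allow one to replace the algebraic construction on $A^{\otimes(n+1)}$ by a topological analogue built on $(B\Gamma)^{n+1}$; the resulting diagram over $\Delta S^{\mathrm{op}}$ should model the free $E_{\infty}$-space on $B\Gamma$, namely a thickening of $\coprod_{n \geq 0} E\Sigma_n \times_{\Sigma_n} (B\Gamma)^n$.

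Third, by May's recognition principle (or the classical Barratt-Priddy-Quillen theorem), the group completion of the free $E_{\infty}$-space on a pointed connected space $X$ is $QX = \Omega^{\infty}\Sigma^{\infty} X$. Group completion is effected by taking $\Omega B$, so the homotopy colimit identified above is equivalent, after passage through this group-completion loop, to $\Omega\Omega^{\infty}S^{\infty}(B\Gamma)$. Applying $H_*(-;k)$ then yields the claimed isomorphism.

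The main obstacle will be the second step: rigorously matching the $E_{\infty}$-algebra structure on $|B^{sym}_* k[\Gamma]|$ induced by $\Delta S$ with the topological free $E_{\infty}$-structure on $B\Gamma$. This requires interpreting $\Delta S$ as governing an $E_{\infty}$-operad (presumably along the lines of Fiedorowicz's original analysis motivating the definition of $B^{sym}_*$), handling the basepoint $1 \in k[\Gamma]$ with care since the statement involves $S^{\infty}(B\Gamma)$ rather than $S^{\infty}(B\Gamma_+)$, and verifying that the $\mathrm{Tor}$-construction corresponds precisely to group completion (delooped once) and not to some other derived invariant one would get from a naive coinvariants construction.
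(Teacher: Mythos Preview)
Your outline has the right large-scale shape (pass to a homotopy colimit, identify it with an $E_\infty$-construction, invoke infinite loop space machinery), but Step~2 contains a genuine error, and Step~3 inherits a bookkeeping problem from it.

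The homotopy colimit $\hocolim_{\Delta S} B^{sym}_* k[\Gamma]$ is \emph{not} the free $E_\infty$-space on $B\Gamma$.  A quick sanity check: for $\Gamma$ trivial, $HS_*(k)\cong k$ concentrated in degree~$0$ (Cor.~\ref{cor.HS_of_k}), whereas $C_\infty(B\{e\}) = C_\infty(*) \simeq \coprod_{n\ge 0} B\Sigma_n$ has highly nontrivial homology.  The point is that the $\Delta S$-diagram on $\Gamma^{n+1}$ already uses the multiplication of $\Gamma$ in an essential way, so the hocolim is not a free object on anything.  Your heuristic ``replace $A^{\otimes(n+1)}$ by $(B\Gamma)^{n+1}$'' does not define a $\Delta S$-diagram, since $B\Gamma$ is not a monoid.

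The paper's route is to first resolve the monoid $\Gamma$ by \emph{free} monoids via the two-sided bar construction $B_*(J,J,\Gamma)$.  Only for a free monoid $JX_+$ does the hocolim over $\Delta S_+$ decompose as $\coprod_n E\Sigma_n \times_{\Sigma_n} X^n \simeq C_\infty X$ (Lemmas~\ref{lem.HS_tensoralg} and~\ref{lem.E-infty-algebra}).  Feeding the resolution through, one obtains $\hocolim_{\Delta S_+} B^{sym_+}_*\Gamma \simeq B(C_\infty, C_1, \Gamma)$, a derived ``change of operad'' from $A_\infty$ to $E_\infty$, not a free $E_\infty$-space.  Because $\Gamma$ is a group this space is already grouplike, so no separate group-completion step is needed; one replaces $C_\infty$ by $\Omega^\infty S^\infty$ directly and then uses $B(S,C_1,\Gamma)\simeq B\Gamma$ to arrive at $\Omega\Omega^\infty S^\infty(B\Gamma)$.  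This also explains the extra loop you were unable to account for in Step~3: it comes from pulling $\Omega$ out of $B(\Omega^\infty S^\infty, C_1, \Gamma)$, not from a group-completion $\Omega B$.

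Two smaller points: (i) since $B^{sym}_*A$ is covariant on $\Delta S$, the hocolim is over $\Delta S$ (or $\Delta S_+$), not $\Delta S^{\mathrm{op}}$; (ii) the passage to $\Delta S_+$ is what makes the $E_\infty$-identification clean, and handling the unit/basepoint there is exactly what addresses your worry about $S^\infty(B\Gamma)$ versus $S^\infty(B\Gamma_+)$.
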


This formula shows in particular that $HS_*$ is an important and
non-trivial theory.  While it is true that
$H_*(\Omega^{\infty}S^{\infty}X) = H_*(QX)$ is well understood, the
same cannot be said of the homology of $\Omega\Omega^{\infty}
S^{\infty}X$.  Indeed, $H_*(QX)$ has been studied extensively by
Cohen, Lada and May~\cite{CLM}.  $H_*(QX)$ may be regarded as
the free allowable $AR$-Hopf algebra with conjugation generated by
$H_*(X)$.  On the other hand, results about the homology of $\Omega
\Omega^{\infty}S^{\infty}X$ are fewer and farther between.  Cohen and
Peterson~\cite{CP} computed $H_*(\Omega\Omega^{\infty}S^{\infty})$ ({\it
  i.e.}, the case when the space $X$ is the zero-sphere, $S^0$), but
there is little hope of extending this result to arbitrary $X$ using
the same methods.
 
We find that adjoining a ``unit'' to $\Delta S$ results in a
permutative category $\Delta S_+$.  This step is necessary in order
to prove Thm.~\ref{thm.HS_group} and related theorems.  

By reducing the standard
resolution that computes $HS_*(A)$ to one that involves only
epimorphisms of $\Delta S_+$, we develop two spectral sequences
abutting to $HS_*(A)$.  The first spectral sequence is based on the
work of S\l{}omi\'nska~\cite{Sl} on $E$-$I$-categories, and relates
symmetric homology to the homology of the symmetric groups.

The second spectral sequence makes use of a family of complexes,
$Sym_*^{(p)}$ which distill the relevent combinatorial data of the
nerve of $\mathrm{Epi}_{\Delta S}$.

\begin{theorem}\label{thm.SpecSeq2}
If $A$ has an augmentation ideal $I$ which is free as $k$--module, with
countable basis $X$, then there is a spectral sequence $E^1_{p,q} 
\Rightarrow \widetilde{H}S_{p+q}(A)$, with
\[
  E^1_{p,q} \cong \bigoplus_{u \in X^{p+1}/\Sigma_{p+1}}
  H_{p+q}\left(E_*G_u \ltimescirc_{G_u} Sym_*^{(p)}; k\right),
\]
where $G_{u}$ is the isotropy subgroup for the chosen representative
of $u \in X^{p+1}/ \Sigma_{p+1}$ and the symbol $\ltimescirc_{G_u}$
stands for the chain complex analog of equivariant half-smash
product for spaces, $\ltimes_{G_u}$.
\end{theorem}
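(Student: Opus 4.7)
The plan is to build the spectral sequence from a bicomplex obtained by filtering the ``epi-only'' resolution of $HS_*(A)$ (alluded to in the introduction) by the length of the epimorphism chain, and then to identify the associated graded in terms of orbits of $X^{p+1}$ under $\Sigma_{p+1}$.

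First I would set up the reduced theory. Replacing $A$ with the augmentation ideal $I$ in the symmetric bar construction yields a complex computing $\widetilde{H}S_*(A)$. Because $I$ is free as a $k$-module on the countable basis $X$, the module of $p$-chains in the epi-reduced resolution decomposes, as a $k$-module, into a direct sum indexed by $(p+1)$-tuples $x = (x_0,\dots,x_p) \in X^{p+1}$, each summand carrying a natural action of $\Sigma_{p+1}^{\mathrm{op}}$ which permutes tensor factors. Because the simplicial structure of $\Delta S_+$ is compatible with this permutation action, the entire complex is $\Sigma_{p+1}$-equivariant at each level.

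Next, I would introduce a filtration $F_p$ defined by the number of distinct basis elements appearing in a chain (equivalently, by the simplicial degree $p$ in the epi-reduced model). The $E^0_{p,*}$ column of the associated spectral sequence is then a direct sum, indexed by $X^{p+1}$, of chain complexes whose combinatorial content is exactly the nerve of $\mathrm{Epi}_{\Delta S}$ restricted to rank $p+1$. Collecting terms by $\Sigma_{p+1}$-orbits, for each orbit representative $u$ one obtains a single copy tensored over the isotropy subgroup $G_u \subseteq \Sigma_{p+1}$:
\[
  E^0_{p,*} \;\cong\; \bigoplus_{u \in X^{p+1}/\Sigma_{p+1}}
  C_*\bigl(Sym_*^{(p)}\bigr) \otimes_{k G_u} k,
\]
by the definition of $Sym_*^{(p)}$ as the complex built from $N(\mathrm{Epi}_{\Delta S})$ at the relevant rank. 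To get the correct $G_u$-derived term (and to keep the $d^1$-differential manageable), I would replace the above by its homotopy orbit construction, which chain-level is precisely the equivariant half-smash $E_*G_u \ltimescirc_{G_u} Sym_*^{(p)}$; justifying this replacement uses that the $k$-module freeness of $I$ makes all the relevant functors exact on the ``orbit'' direction.

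Taking $E^1 = H_*(E^0)$ then yields exactly the formula stated, and convergence to $\widetilde{H}S_{p+q}(A)$ follows from the filtration being bounded below and exhaustive on each bidegree (countability of $X$ handles any lim$^1$ issues). The main obstacle I anticipate is the identification step: verifying that the sub-quotient complex associated to a single orbit really is $E_*G_u \ltimescirc_{G_u} Sym_*^{(p)}$ rather than the strict orbit complex. This requires a careful comparison between the bar-type resolution inherited from $\Delta S_+$ and a free $G_u$-resolution, and checking that the $G_u$-action on $Sym_*^{(p)}$ coming from permuting entries of $u$ matches the $\Sigma_{p+1}$-action inherent to the chessboard-style combinatorics. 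Once this identification is in place, the rest of the argument is a standard filtration spectral sequence computation.
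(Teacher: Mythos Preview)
Your overall strategy matches the paper's: filter $C_*(\mathrm{Epi}\Delta S,\,B_*^{sym}I)$ by the tensor length $p+1$ of the element of $B_*^{sym}I$, decompose the associated graded over $X^{p+1}$, and collect into $\Sigma_{p+1}$-orbits. (Minor correction: the filtration is by the total number of tensor factors, not by the number of \emph{distinct} basis elements; repeated $x_i$'s are allowed, which is exactly why nontrivial isotropy groups $G_u$ arise.)

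The genuine gap is in the identification step, and your proposed fix does not work. You write
\[
  E^0_{p,*}\;\cong\;\bigoplus_{u} Sym_*^{(p)}\otimes_{kG_u} k
\]
and then want to ``replace'' this strict orbit complex by the homotopy orbit complex $E_*G_u\ltimescirc_{G_u}Sym_*^{(p)}$, justifying the replacement via the $k$-freeness of $I$. But $Sym_*^{(p)}$ is \emph{not} a free $G_u$-complex in general, so its strict and homotopy orbits need not agree; and the freeness of $I$ is irrelevant here---it gives you the splitting over $X^{p+1}$, but says nothing about the $G_u$-action on the combinatorial complex.

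The paper handles this by interposing a larger model. It introduces categories $\widetilde{\mathcal{S}}_p$ (with objects all orderings of tensor words in $z_0,\ldots,z_p$) and the subcategory $\widetilde{\mathcal{S}}'_p$, and shows that the $E^0$ summand $\mathscr{M}_u$ is chain-isomorphic to the \emph{strict} orbit complex $(N\widetilde{\mathcal{S}}_p/N\widetilde{\mathcal{S}}'_p)/G_u$. The point is that the $G_u$-action on $N\widetilde{\mathcal{S}}_p/N\widetilde{\mathcal{S}}'_p$ is free (the nondegenerate chains all begin at a permutation $\sigma_*(z_0\otimes\cdots\otimes z_p)$, and $G_u$ permutes these freely). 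Freeness gives $H_*(\mathscr{M}_u)\cong H_*^{G_u}(N\widetilde{\mathcal{S}}_p/N\widetilde{\mathcal{S}}'_p)$, i.e.\ homotopy orbits. Only \emph{after} passing to homotopy orbits does one replace $N\widetilde{\mathcal{S}}_p/N\widetilde{\mathcal{S}}'_p$ by the skeletal $N\mathcal{S}_p/N\mathcal{S}'_p$ and then by $Sym_*^{(p)}$, via $G_u$-equivariant homotopy equivalences (the latter being a cubical-to-simplicial subdivision). These equivalences are what carry the answer to $E_*G_u\ltimescirc_{G_u}Sym_*^{(p)}$.

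So you correctly anticipated that the identification is the crux, but the missing idea is not an exactness argument from the freeness of $I$; it is the construction of a $G_u$-\emph{free} intermediate complex equivariantly homotopy equivalent to $Sym_*^{(p)}$.
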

The multiplicative structure of $A$ becomes encapsulated in the
differential $d^1_{p,q}$ on $E^1$, and so results about $HS_*(A)$ for
general algebras $A$ will follow from the structure of $Sym_*^{(p)}$.
There is an isomorphism of complexes, $k\left[ S\Omega^+_{p+1}\right]
\stackrel{\cong}{\longrightarrow} Sym_*^{(p)}$, where $\Omega_n^+$ is
the augmented cycle-free $(n \times n)$-chessboard complex of
Vre\'{c}ica and \v{Z}ivaljevi\'{c}~\cite{VZ}, which produces the
immediate important corollary:
\begin{cor}\label{cor.fin-gen}
  If $A$ is finitely-generated over a Noetherian ground ring $k$, then
  $HS_*(A)$ is finitely-generated over $k$ in each degree.
\end{cor}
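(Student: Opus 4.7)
The plan is to feed the spectral sequence of Theorem~\ref{thm.SpecSeq2} into a Noetherian finiteness argument, using two independent inputs about the complexes $Sym^{(p)}_*$: their \emph{finiteness} in each degree, and their \emph{high connectivity}, which comes from the stated isomorphism $k[S\Omega^+_{p+1}] \cong Sym^{(p)}_*$ together with the Vre\'{c}ica--\v{Z}ivaljevi\'{c} (and subsequent) connectivity estimates for $\Omega_n$.

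First I would arrange so that the hypothesis of Theorem~\ref{thm.SpecSeq2} is satisfied with a \emph{finite} set $X$. Since $A$ is finitely generated over a Noetherian $k$, one can cover $A$ by a surjection from a finitely generated augmented $k$--algebra $A'$ whose augmentation ideal $I'$ is $k$--free on a finite basis $X$ (for instance, take $A' = T_k V$ for a free $k$--module $V$ on a finite generating set of $A$, or work with $A$ directly when $I$ is already $k$--free). A comparison between the spectral sequences for $A'$ and $A$, together with the fact that the quotient is a finitely generated $k$--module, reduces the problem to showing $\widetilde{HS}_n(A')$ is finitely generated for each $n$. So without loss of generality, I assume $X$ is finite.

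Next, I would bound the range of $(p,q)$ that can contribute to a fixed total degree $n$. The connectivity result for $\Omega_n$ (which is roughly linear in $n$) transfers, via the suspension isomorphism and the chain-level identification $k[S\Omega^+_{p+1}] \cong Sym^{(p)}_*$, into a vanishing of $H_i(Sym^{(p)}_*)$ for $i$ below some threshold $c(p)$ with $c(p) \to \infty$. Because $G_u \subseteq \Sigma_{p+1}$ is finite, the hyperhomology spectral sequence for $E_*G_u \ltimescirc_{G_u} Sym^{(p)}_*$ (whose $E^2$ is $H_*(G_u;H_*(Sym^{(p)}_*))$) shows that $H_{p+q}\bigl(E_*G_u \ltimescirc_{G_u} Sym^{(p)}_*;k\bigr)$ likewise vanishes for $p+q$ below a growing function of $p$. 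Consequently, for each fixed $n$ only finitely many $p$ give a nonzero $E^1_{p,q}$ with $p+q=n$.

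For each of those finitely many $p$, the indexing set $X^{p+1}/\Sigma_{p+1}$ is finite since $X$ is finite; and for each orbit representative $u$, the chain complex $E_*G_u \ltimescirc_{G_u} Sym^{(p)}_*$ has finitely generated $k$--modules in every degree ($Sym^{(p)}_*$ has finite $k$--rank in each degree, $G_u$ is finite, and we may take $E_*G_u$ with finitely generated $k[G_u]$--module chains). Since $k$ is Noetherian, homology of a chain complex of finitely generated $k$--modules is finitely generated in each degree, so each summand is finitely generated and hence so is $E^1_{p,q}$. Finite generation passes to $E^\infty_{p,q}$ as a subquotient, and the filtration on $\widetilde{HS}_n(A)$ has only finitely many nonzero associated graded pieces by the preceding connectivity argument, so $\widetilde{HS}_n(A)$ is finitely generated; adding $HS_0$ finishes the result. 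The main obstacle I expect is a careful bookkeeping of the connectivity bound on $\Omega_n$ against the total degree in the spectral sequence, to be sure that the growth of $c(p)$ really outpaces $p$ and confines contributions at total degree $n$ to a finite strip.
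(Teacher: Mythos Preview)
Your overall strategy—use the connectivity of $Sym^{(p)}_*$ to confine the contributing $p$'s to a finite range, then check each piece is finitely generated and invoke Noetherianity—is correct and is exactly what the paper does. The connectivity argument you sketch via the hyperhomology spectral sequence with $E^2 = H_*(G_u; H_*(Sym^{(p)}_*))$ is right, and your bookkeeping worry is unfounded: $c(p) = \lfloor \tfrac{2}{3}(p-1)\rfloor$ grows linearly, so for fixed $n$ only $p \lesssim \tfrac{3}{2}n$ can contribute.

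The gap is in your reduction step. Taking $A' = T_k V$ with $V$ free of finite rank does \emph{not} give an augmentation ideal free on a finite basis: $I' = \bigoplus_{m\geq 1} V^{\otimes m}$ is free on the countably infinite set of nonempty words, so your ``without loss of generality $X$ is finite'' is unjustified. Moreover, even granting such an $A'$, there is no comparison mechanism in the paper that transports finite generation of $HS_*(A')$ to $HS_*(A)$ along an algebra surjection; ``the quotient is a finitely generated $k$--module'' does not produce a long exact sequence or anything of the sort for $HS_*$.

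The paper sidesteps this entirely: it does not use the basis-dependent $E^1$ description at all for this corollary, but rather Cor.~\ref{cor.trunc-isomorphism}, which gives $\widetilde{HS}_i(A) \cong H_i\bigl(\mathscr{F}_{N_i} C_*(\mathrm{Epi}\Delta S, B^{sym}_* I)\bigr)$ for some finite $N_i$. Each $\mathscr{F}_{N_i} C_q$ is a \emph{finite} direct sum of modules $k[\text{finite set}] \otimes I^{\otimes(m_0+1)}$ with $m_0 \leq N_i$, hence finitely generated whenever $I$ is (no freeness needed), and Noetherianity of $k$ finishes. The underlying reason the connectivity step works without a basis is that $E^0_{p,*} \cong k[N\widetilde{\mathcal{S}}_p/N\widetilde{\mathcal{S}}'_p] \otimes_{k\Sigma_{p+1}} I^{\otimes(p+1)}$, with the left factor a bounded complex of free $k\Sigma_{p+1}$--modules whose homology vanishes below $c(p)$; your hyperhomology argument then applies verbatim with $I^{\otimes(p+1)}$ in place of the orbit decomposition.
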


In the final section, we develop a partial resolution of the trivial
$\Delta S^{\mathrm{op}}$--module $\underline{k}$ by projective modules,
leading to the following:
\begin{theorem}\label{thm.partial_resolution}
  $HS_i(A)$ for $i=0,1$ may be computed as the degree $0$ and degree $1$ 
  homology groups of the following (partial) chain complex:
  \[
    0\longleftarrow A \stackrel{\partial_1}{\longleftarrow} A\otimes
    A\otimes A \stackrel{\partial_2}{\longleftarrow}(A\otimes A\otimes
    A\otimes A)\oplus A,
  \]
  where 
  \[
    \partial_1 \co a\otimes b\otimes c \mapsto abc - cba,
  \]
  \[
    \partial_2 \co \left\{\begin{array}{lll} a\otimes b\otimes c\otimes
    d &\mapsto& ab\otimes c\otimes d + d\otimes ca\otimes b  \\
    &&\quad + bca\otimes 1\otimes d + d\otimes bc\otimes a,\\ a &\mapsto&
    1\otimes a\otimes 1.
       \end{array}\right.
  \]
\end{theorem}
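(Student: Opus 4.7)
The plan is to build a partial projective resolution of the trivial $\Delta S^{\mathrm{op}}$-module $\underline{k}$ out of representables, and then identify its coend with $B^{sym}_* A$ with the chain complex in the statement. By the definition of $\mathrm{Tor}^{\Delta S}$, a projective resolution $P_\bullet \to \underline{k}$ gives $HS_*(A) = H_*(P_\bullet \otimes_{\Delta S} B^{sym}_* A)$, and a \emph{partial} resolution through degree $n$ suffices to compute $HS_i$ for $i \leq n-1$. So I only need $P_0$, $P_1$, $P_2$ together with suitable exactness at $P_{-1} = \underline{k}$, $P_0$, and $P_1$.

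The representable functor $k[\mathrm{Hom}_{\Delta S}(-, [n])]$ is projective, and by Yoneda/coend one has $k[\mathrm{Hom}_{\Delta S}(-, [n])] \otimes_{\Delta S} B^{sym}_* A \cong B^{sym}_n A = A^{\otimes(n+1)}$. Reading off the tensor degrees in the target complex, the candidate modules are $P_0 = k[\mathrm{Hom}(-, [0])]$, $P_1 = k[\mathrm{Hom}(-, [2])]$, and $P_2 = k[\mathrm{Hom}(-, [3])] \oplus k[\mathrm{Hom}(-, [0])]$, with augmentation $d_0 \co P_0 \to \underline{k}$ sending each basis morphism to $1$.

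Next I would write each differential as an element of the appropriate morphism-$k$-module: by Yoneda a natural transformation $k[\mathrm{Hom}(-,[m])] \to k[\mathrm{Hom}(-,[n])]$ is a formal sum in $k[\mathrm{Hom}_{\Delta S}([m],[n])]$, and after applying $-\otimes_{\Delta S} B^{sym}_* A$ it acts on $A^{\otimes(m+1)}$ as the corresponding combination of multiplications, insertions of $1$, and permutations. Matching the formulas in the statement, I would take $d_1 = \mu - \mu\tau$, where $\mu \co [2] \to [0]$ is the unique $\Delta$-morphism and $\tau = (0\;2) \in \Sigma_3$; similarly I would read off four summands in $k[\mathrm{Hom}_{\Delta S}([3],[2])]$ for $d_2$ restricted to the $A^{\otimes 4}$ summand and one degeneracy in $k[\mathrm{Hom}_{\Delta S}([0],[2])]$ for the $A$ summand. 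Each term in the statement corresponds to a specific pair (order-preserving surjection, permutation) in $\Delta S$.

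The verification breaks into: (i) $d_0 d_1 = 0$ and $d_1 d_2 = 0$, which reduce to bookkeeping on finitely many compositions of multiplications and permutations and on cancellation under the reversal $\tau$; (ii) exactness of $P_0 \to \underline{k} \to 0$, which is immediate because $\mathrm{Hom}([m],[0])$ is always nonempty; and (iii) exactness at $P_0$ and $P_1$ as a diagram of functors on $\Delta S^{\mathrm{op}}$, checked pointwise at each $[m]$. Here the factorization $\mathrm{Hom}_{\Delta S}([m],[n]) \cong \mathrm{Hom}_\Delta([m],[n]) \times \Sigma_{m+1}$ lets me view each $P_i([m])$ as a free $k[\Sigma_{m+1}]$-module indexed by order-preserving maps into the target. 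I expect (iii), and in particular exactness at $P_1$, to be the main obstacle: one must prove that the kernel of $d_1$ in arity $m$ is generated by the specific four $A^{\otimes 4}$-relations plus the one degeneracy relation. I would tackle this by choosing canonical representatives for $\Sigma_{m+1}$-orbits of $\mathrm{Hom}([m],[2])$, rewriting an arbitrary kernel element on these representatives, and performing an inductive reduction on $m$ that expresses invariance of $a_{\sigma(0)}\cdots a_{\sigma(m)}$ under the full reversal as an iterated application of the adjacent-transposition relations supplied by $d_2$.
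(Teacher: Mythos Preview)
Your overall architecture is exactly the paper's: build a partial projective resolution of $\underline{k}$ by representables $P_m = k[\mathrm{Mor}_{\Delta S}(-,[m])]$ with $P_0 \to \underline{k}$, $P_2 \to P_0$, and $P_3 \oplus P_0 \to P_2$, then apply Yoneda to get the displayed complex. You have also correctly located the crux in step~(iii).

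There is, however, a conflation in your final sentence. ``Expressing invariance of $a_{\sigma(0)}\cdots a_{\sigma(m)}$ under the full reversal as an iterated application of the relations supplied by $d_2$'' is what one does for exactness at $P_0$ (i.e.\ $\ker d_0 = \mathrm{im}\,d_1$): you must show every difference $\phi - \phi_0$ in $k[\mathrm{Mor}([m],[0])]$ is a sum of three-block reversals $XYZ \leadsto ZYX$. The paper handles this by a short explicit rewriting (its Lemma~38). Exactness at $P_1$ is a different and much harder statement: one must show the four-term relation $XY\otimes Z\otimes W + W\otimes ZX\otimes Y + YZX\otimes 1\otimes W + W\otimes YZ\otimes X \approx 0$ together with $1\otimes X\otimes 1 \approx 0$ suffice to collapse \emph{all} of $k[\mathrm{Mor}([m],[2])]$ onto a submodule on which $d_1$ is injective. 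The paper does this by singling out an explicit free basis $\mathscr{B}_m$ of rank $(m+1)!-1$ on which $d_1$ is visibly an isomorphism onto $\ker d_0$, and then running an eleven-step combinatorial reduction (degeneracy relations, a sign relation $X\otimes Y\otimes Z \approx -Z\otimes Y\otimes X$, a ``Hochschild'' relation, a cyclic relation, and modified versions of the last two) to rewrite an arbitrary element of $k[\mathrm{Mor}([m],[2])]$ into $k[\mathscr{B}_m]$. This is not an induction on $m$; it is a rewriting procedure at fixed $m$, with internal inductions on word-lengths and on the position of the trailing block $x_{n-j+1}\cdots x_n$. Your sketch does not yet contain the idea of such a normal form, and without it you will not be able to close the argument at $P_1$.
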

In particular, we see that $HS_0(A) = A/([A,A])$, where $([A,A])$ is
the ideal generated by the commutator submodule $[A,A]$.  In other
words, $HS_0(A)$ is the symmetrization of $A$ as an algebra.  Compare
with the zeroth Hochschild or Cyclic homology of $A$, which is
$A/[A,A]$.

%%%%%%%%%%%%%%%%%%%%%%%%%%%%%%%%%%%%%%%%%%%%%%%%%%%%%%%%%%%%%%%%%%%%%%%%%%%%%%
\subsection{The category $\Delta S$}

Let $\Delta S$ be the category that has as objects, the ordered sets
$[n] = \{0, 1, \ldots, n\}$ for $n \geq 0$, and as morphisms, pairs
$(\phi, g)$, where $\phi \co [n] \to [m]$ is a non-decreasing map of
sets (\textit{i.e.}, a morphism in $\Delta$), and $g \in
\Sigma_{n+1}^{\mathrm{op}}$ (the opposite group of the symmetric group
acting on $[n]$).  The element $g$ represents an automorphism of
$[n]$, and as a set map, takes $i \in [n]$ to $g^{-1}(i)$.
Equivalently, a morphism in $\Delta S$ is a morphism in $\Delta$
together with a total ordering of the domain $[n]$.  Composition of
morphisms is achieved as in~\cite{FL}, namely, $(\phi, g) \circ (\psi,
h) = (\phi \cdot g^*(\psi), \psi^*(g) \cdot h)$.  Observe that the
properties of $g^*(\phi)$ and $\phi^*(g)$ stated in Prop.~1.6
of~\cite{FL} are formally rather similar to the properties of
exponents (except for properties 2.h and 2.v).  Indeed, the notation
$g^\phi \stackrel{def}{=} \phi^*(g), \phi^g \stackrel{def}{=}
g^*(\phi)$ will generally be used in lieu of the original notation in
what follows.  For reference, we restate Prop.~1.6 of~\cite{FL} using
the ``exponent'' notation:
\begin{prop}
  If $G_*$ is a crossed simplicial group, for $g, h \in G_n$, $\phi \in 
  \mathrm{Mor}_\Delta([m], [n])$, $\psi \in \mathrm{Mor}_\Delta([p], [m])$,
  \begin{eqnarray*}
      (1.h)' &\quad& g^{\phi\psi} = (g^\phi)^\psi \\
      (1.v)' &\quad& \phi^{gh} = (\phi^g)^h \\
      (2.h)' &\quad& (\phi\psi)^g = \phi^g\psi^{(g^\phi)} \\
      (2.v)' &\quad& (gh)^{\phi} = g^\phi h^{(\phi^g)} \\
      (3.h)' &\quad& g^{\mathrm{id}_n} = g \quad \textrm{and} \quad 1^\phi = 1 \\
      (3.v)' &\quad& \phi^{1} = \phi \quad \textrm{and} \quad \mathrm{id}_n^g = 
        \mathrm{id} \\
  \end{eqnarray*}
\end{prop}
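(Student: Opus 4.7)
The proposition is explicitly labeled as a restatement of Proposition~1.6 of~\cite{FL} in the new exponent notation, so the plan is not to reprove the structural identities of a crossed simplicial group but to verify that the notational substitution yields exactly the six stated identities. First I would fix the dictionary: $g^\phi := \phi^*(g)$, which lies in $G_m$ when $g \in G_n$ and $\phi \co [m] \to [n]$, and $\phi^g := g^*(\phi)$, which is a morphism in $\mathrm{Mor}_\Delta([m],[n])$ depending on $g \in G_n$. Observe that $g^{(\cdot)}$ is contravariant in its ``horizontal'' argument $\phi$, while $(\cdot)^g$ is covariant, matching the variance conventions in~\cite{FL}.

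Next I would translate the identities one at a time. The ``horizontal'' identity (1.h) of~\cite{FL}, which reads $(\phi\psi)^*(g) = \psi^*(\phi^*(g))$, becomes (1.h)' after substitution; the ``vertical'' identity (1.v), $(gh)^*(\phi) = h^*(g^*(\phi))$, becomes (1.v)'. The two ``product rules'' $g^*(\phi\psi) = g^*(\phi)\cdot(\phi^*(g))^*(\psi)$ and $\phi^*(gh) = \phi^*(g)\cdot(g^*(\phi))^*(h)$ transcribe to (2.h)' and (2.v)' respectively. Finally, (3.h)' and (3.v)' are the unit/normalization axioms: the identity morphism acts trivially on group elements, the identity group element acts trivially on morphisms, and the constants $1$ and $\mathrm{id}_n$ are preserved. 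All six translations are purely notational.

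The main — and really only — obstacle is bookkeeping: on each side of each identity one must check that the element of $G_*$ or morphism of $\Delta$ produced has matching source and target. For example, in (2.h)' one has $\phi \in \mathrm{Mor}_\Delta([m],[n])$, $\psi \in \mathrm{Mor}_\Delta([p],[m])$, and $g \in G_n$, so $\phi^g \in \mathrm{Mor}_\Delta([m],[n])$ and $g^\phi \in G_m$, hence $\psi^{(g^\phi)} \in \mathrm{Mor}_\Delta([p],[m])$; the composite $\phi^g\cdot\psi^{(g^\phi)}$ then lies in $\mathrm{Mor}_\Delta([p],[n])$, which is precisely where $(\phi\psi)^g$ lives. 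Carrying out the analogous type-check for each of (1.h)', (1.v)', (2.v)', (3.h)', (3.v)' — a routine but necessary verification — completes the proof.
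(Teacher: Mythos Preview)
Your proposal is correct and matches the paper's approach: the paper gives no proof at all for this proposition, simply introducing it as a restatement of Prop.~1.6 of~\cite{FL} in the exponent notation. Your explicit verification of the dictionary and the type-checking is more than the paper itself provides, but it is exactly the right content for a proof of a purely notational restatement.
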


It is often helpful to represent morphisms of $\Delta S$ as diagrams of points
and lines, indicating images of set maps.  Using these diagrams, we may see 
clearly how $\psi^g$ and $g^\psi$ are related to $\psi$ and $g$ (see 
Figure~\ref{diag.morphism}).

\begin{figure}[ht]
  \psset{unit=.75in}
  \begin{pspicture}(4,4)
  
  \psdots[linecolor=black, dotsize=4pt]
  (0.3, 2.4)(0.6, 2.7)(0.9, 3.0)(1.2, 3.3)(1.5, 3.6)
  (2.7, 2.4)(3.0, 2.7)(3.3, 3.0)(3.6, 3.3)
  (2.7, 0.3)(3.0, 0.6)(3.3, 0.9)(3.6, 1.2)
  (0.3, 0.3)(0.6, 0.6)(0.9, 0.9)(1.2, 1.2)(1.5, 1.5)
  
  \psline[linewidth=1pt, linecolor=black](0.3, 2.4)(0.3, 0.3)
  \psline[linewidth=1pt, linecolor=black](0.6, 2.7)(0.6, 0.6)
  \psline[linewidth=1pt, linecolor=black](0.9, 3.0)(1.5, 1.5)
  \psline[linewidth=1pt, linecolor=black](1.2, 3.3)(0.9, 0.9)
  \psline[linewidth=1pt, linecolor=black](1.5, 3.6)(1.2, 1.2)

  \psline[linewidth=1pt, linecolor=black](0.3, 0.3)(2.7, 0.3)
  \psline[linewidth=1pt, linecolor=black](0.6, 0.6)(2.7, 0.3)
  \psline[linewidth=1pt, linecolor=black](0.9, 0.9)(3.0, 0.6)
  \psline[linewidth=1pt, linecolor=black](1.2, 1.2)(3.0, 0.6)
  \psline[linewidth=1pt, linecolor=black](1.5, 1.5)(3.6, 1.2)

  \psline[linewidth=1pt, linecolor=black](0.3, 2.4)(2.7, 2.4)
  \psline[linewidth=1pt, linecolor=black](0.6, 2.7)(2.7, 2.4)
  \psline[linewidth=1pt, linecolor=black](0.9, 3.0)(3.0, 2.7)
  \psline[linewidth=1pt, linecolor=black](1.2, 3.3)(3.6, 3.3)
  \psline[linewidth=1pt, linecolor=black](1.5, 3.6)(3.6, 3.3)

  \psline[linewidth=1pt, linecolor=black](2.7, 2.4)(2.7, 0.3)
  \psline[linewidth=1pt, linecolor=black](3.0, 2.7)(3.6, 1.2)
  \psline[linewidth=1pt, linecolor=black](3.3, 3.0)(3.3, 0.9)
  \psline[linewidth=1pt, linecolor=black](3.6, 3.3)(3.0, 0.6)

  \rput(0.2, 2.4){$0$}
  \rput(0.5, 2.7){$1$}
  \rput(0.8, 3.0){$2$}
  \rput(1.1, 3.3){$3$}
  \rput(1.4, 3.6){$4$}

  \rput(0.2, 0.3){$0$}
  \rput(0.5, 0.6){$1$}
  \rput(0.8, 0.9){$2$}
  \rput(1.1, 1.2){$3$}
  \rput(1.4, 1.5){$4$}
  
  \rput(2.6, 2.5){$0$}
  \rput(2.9, 2.8){$1$}
  \rput(3.2, 3.1){$2$}
  \rput(3.5, 3.4){$3$}
  
  \rput(2.6, 0.4){$0$}
  \rput(2.9, 0.7){$1$}
  \rput(3.2, 1.0){$2$}
  \rput(3.5, 1.3){$3$}
    
  \rput(0.1, 1.2){$g^\psi$}
  \rput(2.4, 3.7){$\psi$}
  \rput(3.6, 2.3){$g$}
  \rput(1.8, 0.1){$\psi^g$}

  \end{pspicture}

  \caption[Morphisms of $\Delta S$]{Morphisms of $\Delta S$: $g \cdot \psi = 
  \big(g^*(\psi), \psi^*(g)\big)  = \big(\psi^g, g^{\psi}\big)$}
  \label{diag.morphism}
\end{figure}
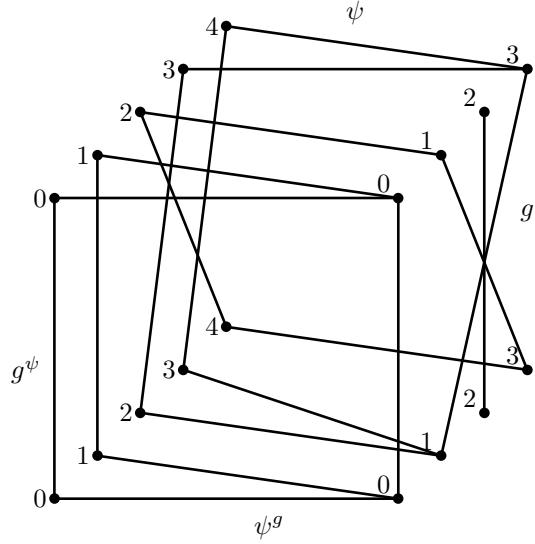

An equivalent characterization of $\Delta S$ comes from Pirashvili, as
the category $\mathcal{F}(\mathrm{as})$ of {\it non-commutative
  sets}~\cite{P}.  The objects are sets $\underline{n}
\stackrel{def}{=} \{1, 2, \ldots, n\}$ for $n \geq 0$.  By convention,
$\underline{0}$ is the empty set.  A morphism in
$\mathrm{Mor}_{\mathcal{F}(\mathrm{as})} (\underline{n},
\underline{m})$ consists of a set map $f \co \underline{n} \to
\underline{m}$ together with a total ordering on each preimage set
$f^{-1}(j)$.  There is an obvious inclusion of categories, $\Delta S
\hookrightarrow \mathcal{F}(\mathrm{as})$, taking $[n]$ to
$\underline{n+1}$, but there is no object of $\Delta S$ that maps to
$\underline{0}$.  It will be useful to define $\Delta S_+ \supset
\Delta S$ which is isomorphic to $\mathcal{F}(\mathrm{as})$:
\begin{definition}
  $\Delta S_+$ is the category consisting of all objects and morphisms of
  $\Delta S$, with the additional object $[-1]$, representing the empty set,
  and a unique morphism $\iota_n \co [-1] \to [n]$ for each $n \geq -1$.
\end{definition}
\begin{rmk}
  Pirashvili's construction is a special case of a more general
  construction due to May and Thomason \cite{MT}. This construction
  associates to any topological operad $\{\mathcal{C}(n)\}_{n \geq 0}$
  a topological category $\widehat{\mathcal{C}}$ together with a
  functor $\widehat{\mathcal{C}} \to \mathcal{F}$, where $\mathcal{F}$
  is the category of finite sets, such that the inverse image of any
  function $f \co \underline{m} \to \underline{n}$ is the space
  $\prod_{i=1}^n \mathcal{C}(\# f^{-1}(i))$.  Composition in
  $\widehat{\mathcal{C}}$ is defined using the composition of the
  operad. May and Thomason refer to $\widehat{\mathcal{C}}$ as the
  {\it category of operators} associated to $\mathcal{C}$. They were
  interested in the case of an $E_\infty$ operad, but their
  construction evidently works for any operad.  The category of
  operators associated to the discrete $A_\infty$ operad
  $\mathcal{A}ss$, which parametrizes monoid structures, is precisely
  Pirashvili's construction of $\mathcal{F}(as)$, {\it i.e.} $\Delta
  S_+$.
\end{rmk}
One very useful advantage in enlarging our category to $\Delta S$ to 
$\Delta S_+$ is the added structure inherent in $\Delta S_+$.
\begin{prop}\label{prop.deltaSpermutative}
  $\Delta S_+$ is a permutative category.
\end{prop}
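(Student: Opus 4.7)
The plan is to exploit the identification $\Delta S_+ \cong \mathcal{F}(\mathrm{as})$ and define the monoidal structure by disjoint union of non-commutative sets, which automatically inherits the needed strictness from the fact that disjoint union of finite sets is strictly associative and strictly unital on $\underline{0}$.

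\textbf{Step 1: Definition of $\oplus$.} On objects, set $[m]\oplus[n] := [m+n+1]$, which corresponds to $\underline{m+1}\sqcup\underline{n+1} = \underline{m+n+2}$ under Pirashvili's identification, with the convention that elements of the first summand precede those of the second. The unit is $[-1]$, since $\underline{0}$ is the identity for disjoint union. On morphisms, given $(\phi,g)\co[m]\to[m']$ and $(\psi,h)\co[n]\to[n']$, define $(\phi,g)\oplus(\psi,h)$ to be the morphism whose underlying order-preserving set map is the obvious ``block'' map $[m+n+1]\to[m'+n'+1]$ sending the first $m{+}1$ elements via $\phi$ into the first $m'{+}1$ slots and the last $n{+}1$ elements via $\psi$ into the last $n'{+}1$ slots, paired with the permutation $g\times h\in\Sigma_{m+n+2}^{\mathrm{op}}$ acting as $g$ on the first block and as $h$ on the second. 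The morphisms $\iota_n\co[-1]\to[n]$ are required to act as identity for $\oplus$, which is forced since they are the unique maps out of the initial object and disjoint union with the empty set is trivial.

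\textbf{Step 2: Strict associativity and unitality.} Both are immediate on the level of non-commutative sets: $(\underline{\ell}\sqcup\underline{m})\sqcup\underline{n} = \underline{\ell}\sqcup(\underline{m}\sqcup\underline{n})$ as totally ordered sets, and $\underline{0}\sqcup\underline{n} = \underline{n} = \underline{n}\sqcup\underline{0}$. At the level of morphisms one verifies functoriality, $((\phi,g)\oplus(\psi,h))\circ((\phi',g')\oplus(\psi',h')) = ((\phi,g)\circ(\phi',g'))\oplus((\psi,h)\circ(\psi',h'))$, by checking the two components: the order-preserving components compose blockwise, and the permutation components $(g\times h)\cdot(g'^{\phi'}\times h'^{\psi'})$ agree with $(g\cdot g'^{\phi'})\times(h\cdot h'^{\psi'})$ because the ``exponent'' action of a block map on a block permutation preserves the block decomposition.

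\textbf{Step 3: Symmetry.} Define $\tau_{m,n}\co[m]\oplus[n]\to[n]\oplus[m]$ to be the pair $(\mathrm{id},\sigma_{m,n})$, where $\sigma_{m,n}\in\Sigma_{m+n+2}^{\mathrm{op}}$ is the block-interchange permutation sending the first $m{+}1$ elements past the last $n{+}1$. Naturality in $(\phi,g)$ and $(\psi,h)$ reduces to the identity $\sigma_{m',n'}\circ(g\times h) = (h\times g)\circ\sigma_{m,n}$ on the permutation side and the corresponding statement about block maps on the $\Delta$ side; both hold by inspection since $\sigma$ simply relabels blocks. Involutivity $\tau_{n,m}\tau_{m,n}=\mathrm{id}$ and the unit condition $\tau_{-1,n}=\tau_{n,-1}=\mathrm{id}$ are immediate, and the hexagon axiom reduces to the fact that the block-interchange permutations satisfy the braid-type identity $\sigma_{\ell,m+n+1} = (\mathrm{id}_{[m]}\oplus\sigma_{\ell,n})\circ(\sigma_{\ell,m}\oplus\mathrm{id}_{[n]})$ in $\Sigma_{\ell+m+n+3}^{\mathrm{op}}$, which holds in any permutative category of finite sets.

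\textbf{Main obstacle.} The one place real care is needed is Step 2, verifying that the block construction on morphisms is a bifunctor in the presence of the twisted composition law of $\Delta S_+$, because one must check that the ``exponent'' actions $g^\phi$ and $\phi^g$ of Prop.~1 interact correctly with the block decomposition. The key observation that makes this routine rather than delicate is that $\phi\oplus\psi$ is an order-preserving map respecting the block partition, so its action on a block permutation $g\times h$ is simply $g^\phi\times h^\psi$, and symmetrically for the action of $g\times h$ on $\phi\oplus\psi$; once this compatibility is recorded, all the remaining verifications collapse to the corresponding statements for disjoint union of ordinary finite sets.
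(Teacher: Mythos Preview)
Your proof is correct and follows essentially the same approach as the paper: both define the monoidal product by block disjoint union $[m]\oplus[n]=[m+n+1]$ with unit $[-1]$, and the symmetry by the block-interchange automorphism. You supply more detail than the paper does (in particular the functoriality check in Step~2 and the hexagon verification in Step~3), but the underlying construction is identical.
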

\begin{proof}
  Define the monoid product on objects by $[n] \odot [m]
  \stackrel{def}{=} [n+m+1]$, (disjoint union of sets), and on
  morphisms $(\phi, g) \co [n] \to [n']$, $(\psi, h) \co [m] \to [m']$, by
  $(\phi,g) \odot (\psi,h) = (\eta, k) \co [n+m+1] \to [n'+m'+1]$, where
  $(\eta, k)$ is just the morphism $(\phi, g)$ acting on the first
  $n+1$ points of $[n+m+1]$, and $(\psi, h)$ acting on the remaining
  points.
  
  The unit object will be $[-1] = \emptyset$. $\odot$ is clearly associative, 
  and $[-1]$ acts as two-sided identity.    Finally, define the transposition 
  transformation $\gamma_{n,m} \co [n] \odot [m] \to [m] \odot [n]$ to be the 
  identity on objects, and on morphisms to be precomposition with the block 
  transposition that switches the first block of size $n+1$ with the second 
  block of size $m+1$.
\end{proof}

\begin{rmk}
  The fact that $\Delta S_+$ is permutative shall be exploited to prove that 
  $HS_*(A)$ admits homology operations in a forthcoming paper.
\end{rmk}

\begin{rmk}
  It will become convenient to include the object $[-1] = \emptyset$ in
  $\Delta$ as well.  Denote the enlarged category by $\Delta_+$.  
\end{rmk}

For the purposes of computation, a morphism $\alpha \co [n] \to [m]$ of
$\Delta S$ may be conveniently represented as a tensor product of
monomials in the formal non-commuting variables $\{x_0, x_1, \ldots,$
$x_n\}$.  Let $\alpha = (\phi, g)$, with $\phi \in
\mathrm{Mor}_\Delta([n],[m])$ and $g \in \Sigma_{n+1}^\mathrm{op}$.
The tensor representation of $\alpha$ will have $m + 1$ tensor
factors.  Each $x_i$ will occur exactly once, in the order $x_{g(0)},
x_{g(1)}, \ldots, x_{g(n)}$.  The $i^{th}$ tensor factor consists of
the product of $\#\phi^{-1}(i-1)$ variables, with the convention that
the empty product will be denoted $1$.  Thus, the $i^{th}$ tensor
factor records the total ordering of $\phi^{-1}(i)$.  As an example,
the tensor representation of the morphism depicted in
Fig.~\ref{diag.morphism-tensor} is $x_1x_0 \otimes x_3x_4 \otimes 1
\otimes x_2$.  With this notation, the composition of two morphisms
$\alpha = X_0 \otimes X_1 \otimes \ldots \otimes X_m \co [n] \to [m]$
and $\beta = Y_1 \otimes Y_2 \otimes \ldots Y_n \co [p] \to [n]$ is
given by, $\alpha \beta = Z_0 \otimes Z_1 \otimes \ldots \otimes Z_m$,
where $Z_i$ is determined by replacing each variable in the monomials
$X_i = x_{j_1} \ldots x_{j_s}$ by the corresponding monomials
$Y_{j_k}$ in $\beta$.  So, $Z_i = Y_{j_1} \ldots Y_{j_s}$.

\begin{figure}[ht]
  \psset{unit=.5in}
  \begin{pspicture}(4,4)
  
  \psdots[linecolor=black, dotsize=4pt]
  (0.3, 2.4)(0.6, 2.7)(0.9, 3.0)(1.2, 3.3)(1.5, 3.6)
  (2.7, 0.3)(3.0, 0.6)(3.3, 0.9)(3.6, 1.2)
  (0.3, 0.3)(0.6, 0.6)(0.9, 0.9)(1.2, 1.2)(1.5, 1.5)
  
  \psline[linewidth=1pt, linecolor=black](0.3, 2.4)(0.6, 0.6)
  \psline[linewidth=1pt, linecolor=black](0.6, 2.7)(0.3, 0.3)
  \psline[linewidth=1pt, linecolor=black](0.9, 3.0)(1.5, 1.5)
  \psline[linewidth=1pt, linecolor=black](1.2, 3.3)(0.9, 0.9)
  \psline[linewidth=1pt, linecolor=black](1.5, 3.6)(1.2, 1.2)

  \psline[linewidth=1pt, linecolor=black](0.3, 0.3)(2.7, 0.3)
  \psline[linewidth=1pt, linecolor=black](0.6, 0.6)(2.7, 0.3)
  \psline[linewidth=1pt, linecolor=black](0.9, 0.9)(3.0, 0.6)
  \psline[linewidth=1pt, linecolor=black](1.2, 1.2)(3.0, 0.6)
  \psline[linewidth=1pt, linecolor=black](1.5, 1.5)(3.6, 1.2)

  \rput(0.15, 2.4){$0$}
  \rput(0.45, 2.7){$1$}
  \rput(0.75, 3.0){$2$}
  \rput(1.05, 3.3){$3$}
  \rput(1.35, 3.6){$4$}

  \rput(0.15, 0.3){$0$}
  \rput(0.45, 0.6){$1$}
  \rput(0.75, 0.9){$2$}
  \rput(1.05, 1.2){$3$}
  \rput(1.35, 1.5){$4$}
  
  \rput(2.55, 0.4){$0$}
  \rput(2.85, 0.7){$1$}
  \rput(3.15, 1.0){$2$}
  \rput(3.45, 1.3){$3$}
    
  \rput(2.7, 2.4){$x_1x_0 \otimes x_3x_4 \otimes 1 \otimes x_2$}

  \end{pspicture}

  \caption[Morphisms of $\Delta S$ as Tensors]{Morphisms of $\Delta S$ in tensor
  notation}
  \label{diag.morphism-tensor}
\end{figure}
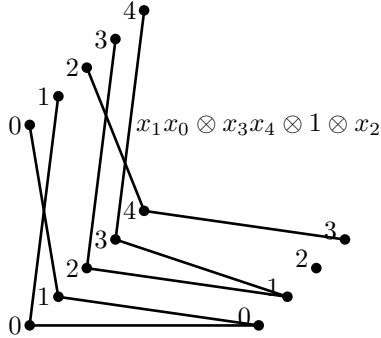

%%%%%%%%%%%%%%%%%%%%%%%%%%%%%%%%%%%%%%%%%%%%%%%%%%%%%%%%%%%%%%%%%%%%%%%%%%%%%%
\subsection{Homological Algebra of Functors}\label{sec.hom_alg_functors}

Recall, for a category $\mathscr{C}$, a \mbox{$\mathscr{C}$--module} is
covariant functor $F \co \mathscr{C} \to
\mbox{\textrm{$k$--\textbf{Mod}}}$.  Similarly, a
$\mathscr{C}^\mathrm{op}$--module is a contravariant functor $G \co
\mathscr{C} \to \mbox{\textrm{$k$--\textbf{Mod}}}$.  Let $M$ be a
$\mathscr{C}^\mathrm{op}$--module and $N$ be a
\mbox{$\mathscr{C}$--module}.  Following MacLane~\cite{ML}, define the
tensor product of functors as a coend, $M \otimes_\mathscr{C} N =
\int^X (MX) \otimes (NX)$.  That is,
\[
  M \otimes_{\mathscr{C}} N = \bigoplus_{X \in \mathrm{Obj}\mathscr{C}}
  M(X) \otimes_k N(X) / \approx,
\]
where the equivalence $\approx$ is generated by $y \otimes f_*(x)
\approx f^*(y) \otimes x$ for $f \in \mathrm{Mor}_{\mathscr{C}}(X,
Y)$, $x \in N(X)$ and $y \in M(Y)$.

\begin{rmk}
  Note, the existing literature based on the work of Connes, Loday and
  Quillen consistently defines the categorical tensor product in the
  reverse sense: $N \otimes_{\mathscr{C}} M$ is the direct sum of
  copies of $NX \otimes_k MX$ modded out by the equivalence $x \otimes
  f^*(y) \approx f_*(x) \otimes y$ for all $\mathscr{C}$-morphisms $f
  \co X \to Y$.  In this context, $N$ is covariant, while $M$ is
  contravariant.  I chose to follow the convention of Pirashvili and
  Richter \cite{PR} in writing tensor products as $M
  \otimes_{\mathscr{C}} N$ so that the equivalence $\xi \co
  $\mbox{$\mathscr{C}$--\textbf{Mod}} $\to$
  \mbox{$k[\mathrm{Mor}\mathscr{C}]$--\textbf{Mod}} passes to tensor
  products in a straightforward way: $\xi( M \otimes_{\mathscr{C}} N )
  = \xi(M) \otimes_{k[\mathrm{Mor}\mathscr{C}]} \xi(N)$.
\end{rmk}

The {\it trivial} $\mathscr{C}$--module,
resp.~$\mathscr{C}^\mathrm{op}$--module, denoted by $\underline{k}$
(for either variance), is the functor taking each object to $k$ and
each morphism to the identity.  As noted in~\cite{FL}, the category of
$\mathscr{C}$--modules is abelian and has enough projectives.  For any
$\mathscr{C}^{\mathrm{op}}$--module $M$, the functor $N \mapsto M
\otimes_{\mathscr{C}} N$ is right-exact and so admits derived functors
$\mathrm{Tor}_n^{\mathscr{C}}(M, -)$ such that
$\mathrm{Tor}_0^{\mathscr{C}} (M, N) = M \otimes_{\mathscr{C}} N$.

%%%%%%%%%%%%%%%%%%%%%%%%%%%%%%%%%%%%%%%%%%%%%%%%%%%%%%%%%%%%%%%%%%%%%%%%%%%%%%
\subsection{The Symmetric Bar Construction}\label{sub.symbar}

Now that we have defined the category $\Delta S$, the next step should
be to define an appropriate bar construction.  Recall that the {\it
  cyclic bar construction} is a functor $B^{cyc}_*A \co \Delta
C^{\mathrm{op}} \to k$--\textbf{Mod}.  One then takes the groups
$\mathrm{Tor}^{\Delta C}_n(B^{cyc}_*A, \underline{k})$ as the
definition of $HC_n(A)$ for $n \geq 0$.  However the results
of~\cite{FL} show that the cyclic bar construction does not extend to
a functor $\Delta S^{\mathrm{op}} \to k$--\textbf{Mod}.  Furthermore,
for {\it any} functor $F \co \Delta S^{\mathrm{op}} \to
k$--\textbf{Mod}, the groups $\mathrm{Tor}^{\Delta S}_n(F,
\underline{k})$ simply compute the homology of the underlying
simplicial module of $F$ (given by restricting $F$ to
$\Delta^{\mathrm{op}}$).  However, Fiedorowicz discovered that there
is a natural extension of the cyclic bar construction not to a {\it
  contravariant} functor on $\Delta S$, but to a {\it covariant}
functor~\cite{F}.
\begin{definition}\label{def.symbar}
  Let $A$ be an associative, unital algebra over a commutative ground ring $k$.
  Define a $\Delta S$--module ({\it i.e.}, a functor $\Delta S \to 
  k$--\textbf{Mod}),
  $B_*^{sym}A$ by:
  \[
    B_n^{sym}A = B_*^{sym}A[n] \stackrel{def}{=} A^{\otimes (n+1)}
  \]
  \[  
    B_*^{sym}A(\alpha) \co (a_0 \otimes a_1 \otimes \ldots \otimes a_n) \mapsto
       \alpha(a_0, \ldots, a_n),
  \]
  where $\alpha \co [n] \to [m]$ is represented in tensor notation, and
  evaluation at $(a_0, \ldots, a_n)$ simply amounts to substituting
  each $a_i$ for $x_i$ and multiplying the resulting monomials in $A$.
  If the pre-image $\alpha^{-1}(i)$ is empty, then the unit of $A$ is
  inserted.  Observe that $B_*^{sym}A$ is natural in $A$.
\end{definition}

Note that there are natural inclusions $\Delta \hookrightarrow \Delta
C \hookrightarrow \Delta S$.  The second inclusion is induced from
inclusions of groups $C_{n+1} \hookrightarrow \Sigma_{n+1}$.  To be
precise, for each $n$, let $\tau_n$ be the $(n+1)$-cycle $(0, n, n-1,
\ldots, 1) \in \Sigma_{n+1}$.  $\tau_n$ generates a subgroup
isomorphic to $C_{n+1}$.  $B_*^{sym}A$ may be regarded as a simplicial
\mbox{$k$--module} via the chain of functors, $\Delta^{\mathrm{op}}
\hookrightarrow \Delta C^{\mathrm{op}} \stackrel{\cong}{\to} \Delta C
\hookrightarrow \Delta S$.  Here, the isomorphism $D \co \Delta
C^{\mathrm{op}} \to \Delta C$ is the standard duality (see~\cite{L}).
Note that the {\it cyclic bar construction} can be recovered from the
covariant symmetric bar construction by $B_*^{cyc}A = B_*^{sym}A \circ
D$.

\begin{definition}
  The \textbf{symmetric homology} of an associative, unital $k$-algebra $A$
  is denoted $HS_*(A)$, and is defined as:
  \[
    HS_*(A) \stackrel{def}{=} \mathrm{Tor}_*^{\Delta S}\left(\underline{k},
    B_*^{sym}A\right)
  \]
\end{definition}

\begin{rmk}
  Since $\underline{k}\otimes_{\Delta S} M \cong \colim_{\Delta S}M$,
  for any $\Delta S$--module $M$, we can alternatively describe
  symmetric homology as derived functors of the colimit:
  \[
    HS_n(A) = {\colim_{\Delta S}}^{(n)} B_*^{sym}A.
  \]
\end{rmk}
In the language of Gabriel-Zisman (\cite{GZ}, Appendix II.3), $HS_*(A) = 
H_*(\Delta S, B_*^{sym}A)$.  Moreover, a simplicial module whose homology 
computes $H_*(\Delta S, B_*^{sym}A)$ is readily available (see~\cite{GZ},
p. 153).  In order to describe this simplicial module, we recall that the
{\it nerve} of a category $\mathscr{C}$ is the simplicial set 
$N\mathscr{C}$ such that $n$-chains are sequences of $n$ composable
morphisms of $\mathscr{C}$, $0$-chains are simply objects of $\mathscr{C}$,
and face maps are given by deletion of objects along with the 
composition of the corresponding morphisms.  If $F \co \mathscr{C}
\to \mathscr{D}$ is a morphism and
$\chi \in N\mathscr{C}$ is an $n$-chain,
\[
  \chi_n \stackrel{f_n}{\gets} \chi_{n-1} \stackrel{f_{n-1}}{\gets}
  \cdots \stackrel{f_2}{\gets} \chi_1 \stackrel{f_1}{\gets}\chi_0,
\]
then let $F\chi = F\chi_0$.  The following simplicial module computes 
$H_*(\Delta S, B_*^{sym}A)$:
\[
  \{C_n(\Delta S, B_*^{sym}A)\}_{n \geq 0} = \left\{\bigoplus_{\chi \in 
  N_n{\Delta S}} B_*^{sym}A\chi \right\}_{n \geq 0},
\]
with face maps and degeneracies defined by:

\begin{eqnarray*}
  d_i(\chi, x) &=& \left\{ \begin{array}{cc}
                   \left(d_0\chi, \,(f_1)_*(x)\right), & \quad i=0,\\
                   \left(d_i\chi, \,x\right), & \quad 1 \leq i \leq n
                   \end{array}\right. \\
  s_i(\chi, x) &=& (s_i\chi, x)
\end{eqnarray*}
In this formula, we have $\chi \in N_n\Delta S$ and $x \in B^{sym}_*A\chi$.

$C_n(\Delta S, B_*^{sym}A)$ may be interpreted in terms of the
categorical tensor product construction mentioned in
Section~\ref{sec.hom_alg_functors}, using the {\it under-category}
functor, $( - \setminus \Delta S) \co \Delta S^{\mathrm{op}} \to
\mathbf{cat}$, which associates to each object $[p]$ of $\Delta S$,
the category $[p]\setminus \Delta S$ of objects under $[p]$
(See~\cite{Q},~\cite{ML2}, etc.).  The $n$--chains of $N([p] \setminus
\Delta S)$ may be identified with $(n+1)$--chains of $N\Delta S$ which
have domain $[p]$.  Let $k[ - ]$ be the functor that associates to any
simplicial set the corresponding simplicial $k$--module.  We may
identify,
\[
  C_*(\Delta S, B^{sym}_*A) = k\left[N(- \setminus \Delta S)\right] 
    \otimes_{\Delta S} B_*^{sym}A
\]

Indeed, observe that every element of $k\left[N(-\setminus \Delta S)\right]
\otimes_{\Delta S} B^{sym}_*A$ of the form
\[
  \left(
  \begin{diagram}
    \node[4]{[p]}
    \arrow{sw}
    \arrow{swww}
    \arrow{s,r}{\alpha}\\
    \node{[q_n]}
    \node{\cdots}
    \arrow{w,t}{\beta_n}
    \node{[q_1]}
    \arrow{w,t}{\beta_2}
    \node{[q_0]}
    \arrow{w,t}{\beta_1}
  \end{diagram}
  \right) \otimes x
\]
is equivalent to one in which the incoming morphism $\alpha$ becomes an 
identity:
\[
  \left(
  \begin{diagram}
    \node[4]{[q_0]}
    \arrow{sw}
    \arrow{swww}
    \arrow{s,r}{\mathrm{id}}\\
    \node{[q_n]}
    \node{\cdots}
    \arrow{w,t}{\beta_n}
    \node{[q_1]}
    \arrow{w,t}{\beta_2}
    \node{[q_0]}
    \arrow{w,t}{\beta_1}
  \end{diagram}
  \right) \otimes \alpha_*(x),
\]
the latter element being identified with 
\[
  \left( [q_n] \stackrel{\beta_n}{\gets} \cdots
  \stackrel{\beta_2}{\gets} [q_1] \stackrel{\beta_1}{\gets} [q_0], \,
  \alpha_*(x)\right) \in C_n(\Delta S, B_*^{sym}A).
\]

\begin{prop}\label{prop.SymHomComplex}
  For an associative, unital $k$-algebra $A$,
  \[  
    HS_*(A) = H_*\left( k[N(- \setminus \Delta S)] \otimes_{\Delta S} 
    B_*^{sym}A;\,k  \right).
  \]
\end{prop}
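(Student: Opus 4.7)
The strategy is to exhibit $k[N(-\setminus \Delta S)]$, viewed as a simplicial $\Delta S^{\mathrm{op}}$--module, as a projective resolution of the trivial $\Delta S^{\mathrm{op}}$--module $\underline{k}$. Once this is established, the formula for $HS_*(A)$ follows immediately: tensoring this resolution over $\Delta S$ with the $\Delta S$--module $B^{sym}_*A$ and passing to the associated chain complex computes $\mathrm{Tor}^{\Delta S}_*(\underline{k}, B^{sym}_*A)$ by definition, and the identification of this tensor product with the simplicial $k$--module $C_*(\Delta S, B^{sym}_*A)$ has already been carried out in the preceding discussion.

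First, I would verify projectivity in each simplicial degree. An $n$--chain in $N_n([p]\setminus \Delta S)$ is specified by a pair consisting of an $n$--chain $\tau = \bigl([q_n] \leftarrow \cdots \leftarrow [q_0]\bigr)$ of $N_n \Delta S$ together with a ``leading'' morphism $[p] \to [q_0]$. Therefore, as a $\Delta S^{\mathrm{op}}$--module,
\[
  k[N_n(-\setminus \Delta S)] \;\cong\; \bigoplus_{\tau \in N_n\Delta S} k\bigl[\mathrm{Mor}_{\Delta S}(-,\,[q_0(\tau)])\bigr].
\]
Each summand is a representable $\Delta S^{\mathrm{op}}$--module, hence projective by the Yoneda lemma, so the entire simplicial object is degreewise projective.

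Second, I would verify acyclicity. For each fixed $[p]$, the under-category $[p] \setminus \Delta S$ has an initial object, namely $\mathrm{id}_{[p]}$, so its nerve $N([p] \setminus \Delta S)$ is contractible. Applying $k[-]$ and augmenting to $\underline{k}$ by sending every chain to $1$ therefore gives, for each $[p]$, a quasi-isomorphism. Thus $k[N(-\setminus \Delta S)] \twoheadrightarrow \underline{k}$ is a projective resolution in the category of $\Delta S^{\mathrm{op}}$--modules, and
\[
  HS_*(A) \;=\; \mathrm{Tor}^{\Delta S}_*(\underline{k},\, B^{sym}_*A) \;=\; H_*\bigl(k[N(-\setminus \Delta S)] \otimes_{\Delta S} B^{sym}_*A;\,k\bigr).
\]

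The one subtlety to monitor is bookkeeping for $d_0$. Because $B^{sym}_*A$ is a $\Delta S$--module rather than a chain complex, the result of tensoring is a simplicial $k$--module whose boundary is the alternating sum of face maps; in particular, the face $d_0$ must absorb the new leading morphism via the covariant action on $B^{sym}_*A$. The explicit diagram chase in the excerpt (sliding an incoming $\alpha$ across the categorical tensor to turn it into $\alpha_*(x)$) is precisely what matches the prescribed $d_0(\chi,x) = (d_0\chi,\,(f_1)_*(x))$ on $C_*(\Delta S, B^{sym}_*A)$ with the boundary inherited from the resolution. Once this is checked, the proposition is immediate.
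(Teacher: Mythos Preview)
Your proof is correct and is essentially the same approach the paper takes, with one presentational difference: the paper simply cites Gabriel--Zisman (Appendix~II.3 and p.~153 of~\cite{GZ}) for the fact that $C_*(\Delta S, B_*^{sym}A)$ computes the derived functors of $\colim_{\Delta S}$, whereas you unpack that citation by verifying directly that $k[N(-\setminus\Delta S)]$ is degreewise a sum of representables and is objectwise contractible. The identification of the resulting tensor product with $C_*(\Delta S, B_*^{sym}A)$ via the ``identity trick'' and the check on $d_0$ are exactly what the paper does in the paragraphs preceding the proposition.
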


\begin{rmk}\label{rmk.HC}
  By duality of $\Delta C$, it is clear that the related complex
  $k[N(- \setminus \Delta C)] \otimes_{\Delta C} B_*^{sym}A$ computes
  $HC_*(A)$, where we understand that the functor $B_*^{sym}A$ is
  restricted to $\Delta C \hookrightarrow \Delta S$.
\end{rmk}

%%%%%%%%%%%%%%%%%%%%%%%%%%%%%%%%%%%%%%%%%%%%%%%%%%%%%%%%%%%%%%%%%%%%%%%%%%%%%%
\subsection{Symmetric Homology of the Ground Ring}

We now have enough tools to compute $HS_*(k)$.  First, we need to show:
\begin{lemma}\label{lem.DeltaScontractible}
  $N(\Delta S)$ is contractible.
\end{lemma}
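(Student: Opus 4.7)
The plan is to exhibit the identity functor $\mathrm{id}_{\Delta S}$ as homotopic, through a zigzag of natural transformations, to a constant functor. Since $\Delta S$ has neither initial nor terminal object (the candidate $[0]$ fails both ways: there are $(n+1)!$ morphisms $[n] \to [0]$ and $n+1$ morphisms $[0] \to [n]$), no direct natural transformation between $\mathrm{id}_{\Delta S}$ and a constant can exist. The key is to use the permutative structure of $\Delta S_+$ from Prop.~\ref{prop.deltaSpermutative} to build an intermediate endofunctor.

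Introduce $L \co \Delta S \to \Delta S$ by $L[n] = [0] \odot [n] = [n+1]$ on objects and $L(\alpha) = \mathrm{id}_{[0]} \odot \alpha$ on morphisms. Functoriality is immediate from the bifunctoriality of $\odot$, and $L[n] = [n+1]$ never equals $[-1]$, so $L$ is indeed an endofunctor of $\Delta S$ (rather than of $\Delta S_+$). Construct two natural transformations into $L$. The first is $\eta \co \mathrm{id}_{\Delta S} \Rightarrow L$ with component $\eta_n = \iota_0 \odot \mathrm{id}_{[n]} \co [n] \to [n+1]$, the order-preserving injection $i \mapsto i+1$. The second is $\mu \co \mathrm{const}_{[0]} \Rightarrow L$ with component $\mu_n = \mathrm{id}_{[0]} \odot \iota_n \co [0] \to [n+1]$, the map picking out the vertex $0$. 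Naturality of $\eta$ reduces via the interchange law to the identity $(\mathrm{id}_{[0]} \odot \alpha)(\iota_0 \odot \mathrm{id}_{[n]}) = \iota_0 \odot \alpha = (\iota_0 \odot \mathrm{id}_{[m]})(\mathrm{id}_{[-1]} \odot \alpha)$, and naturality of $\mu$ reduces to $(\mathrm{id}_{[0]} \odot \alpha)(\mathrm{id}_{[0]} \odot \iota_n) = \mathrm{id}_{[0]} \odot (\alpha \circ \iota_n) = \mathrm{id}_{[0]} \odot \iota_m$, using that $[-1]$ is initial in $\Delta S_+$.

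Finally, invoke the standard fact that a natural transformation of functors induces a simplicial homotopy of their nerves. The zigzag $\mathrm{id}_{\Delta S} \Rightarrow L \Leftarrow \mathrm{const}_{[0]}$ then gives a chain of homotopies $\mathrm{id}_{N(\Delta S)} \simeq N(L) \simeq \mathrm{const}_{[0]}$ on $N(\Delta S)$, establishing contractibility. The only real obstacle is conceptual: recognizing that although $\Delta S$ itself lacks a terminal or initial object, the monoidal room afforded by embedding into $\Delta S_+$ suffices to manufacture the required intermediate functor $L$. Once $L$ is identified, everything else is formal from bifunctoriality of $\odot$.
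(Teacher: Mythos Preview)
Your proof is correct and is essentially the same as the paper's: your functor $L$ is the paper's $\mathscr{F}$, and your components $\eta_n = \iota_0 \odot \mathrm{id}_{[n]}$ and $\mu_n = \mathrm{id}_{[0]} \odot \iota_n$ are precisely the paper's $\delta_0$ and $0_0$ respectively. The only difference is cosmetic --- you phrase the naturality verification via the interchange law for $\odot$, whereas the paper simply draws the commuting squares --- but the argument is identical.
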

\begin{proof}
  Define a functor $\mathscr{F} \co \Delta S \to \Delta S$ on objects by 
  $\mathscr{F}[n] = [0] \odot [n]$, and on morphisms by $\mathscr{F}f = 
  \mathrm{id}_{[0]} \odot f$,  using 
  the monoid multiplication $\odot$ defined in 
  Prop.~\ref{prop.deltaSpermutative}.
  There is a natural transformation $\mathrm{id}_{\Delta S} \to \mathscr{F}$
  given by the following commutative diagram for each $f \co [m] \to [n]$:
  \[
    \begin{diagram}%1
      \node{ [m] }
      \arrow{e,t}{f}
      \arrow{s,l}{\delta_0}
      \node{ [n] }
      \arrow{s,r}{\delta_0}\\
      \node{ [m+1] }
      \arrow{e,t}{\mathrm{id} \odot f}
      \node{ [n+1] }
    \end{diagram}
  \]
  Here, $\delta^{(k)}_j \co [k-1] \to [k]$ is the $\Delta$ morphism that misses 
  the point $j \in [k]$.
  
  Consider the constant functor $\Delta S \stackrel{[0]}{\to} \Delta S$ that
  sends all objects to $[0]$ and all morphisms to $\mathrm{id}_{[0]}$.  There is
  a natural transformation $[0] \to \mathscr{F}$
  given by the following commutative diagram for each $f \co [m] \to [n]$.
  \[
    \begin{diagram}%2
      \node{ [0] }
      \arrow{e,t}{\mathrm{id}}
      \arrow{s,l}{0_0}
      \node{ [0] }
      \arrow{s,r}{0_0}\\
      \node{ [m+1] }
      \arrow{e,t}{\mathrm{id} \odot f}
      \node{ [n+1] }
    \end{diagram}
  \]
  Here, $0^{(k)}_j \co [0] \to [k]$ is the morphism that sends the point $0$ to
  $j \in [k]$.
  
  Natural transformations induce homotopy equivalences (see~\cite{Se} or
  Prop.~1.2 of~\cite{Q}), so in particular, the identity map on $N(\Delta S)$ is
  homotopic to the map that sends $N(\Delta S)$ to the nerve of a trivial
  category.  Thus, $N(\Delta S)$ is contractible.
\end{proof}

\begin{cor}\label{cor.HS_of_k}
  The symmetric homology of the ground ring $k$ is isomorphic to $k$, 
  concentrated in degree $0$.
\end{cor}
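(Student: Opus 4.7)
The plan is to reduce the computation to the homology of the nerve $N(\Delta S)$ and then invoke Lemma~\ref{lem.DeltaScontractible}. The first observation is that the symmetric bar construction applied to the ground ring $k$ is particularly simple: $B_n^{sym}k = k^{\otimes(n+1)} \cong k$, and every morphism $\alpha \co [n] \to [m]$ of $\Delta S$ acts by substituting units into monomials over $k$, which yields the identity map $k \to k$. Thus $B_*^{sym}k$ is canonically isomorphic to the trivial $\Delta S$--module $\underline{k}$.

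By Proposition~\ref{prop.SymHomComplex}, we therefore have
\[
  HS_*(k) = H_*\bigl(k[N(-\setminus \Delta S)] \otimes_{\Delta S} \underline{k};\, k\bigr).
\]
The next step is to identify this tensor product with $k[N\Delta S]$. Using the description of $C_n(\Delta S, B_*^{sym}A)$ as $\bigoplus_{\chi \in N_n \Delta S} B_*^{sym}A(\chi)$, the specialization $A = k$ gives $B_*^{sym}k(\chi) = k$ for every chain $\chi$, so the summand over $\chi$ contributes a single copy of $k$. The face maps simplify correspondingly: $d_0$ on the module factor is the identity on $k$, so only the action on chains remains. Hence $C_*(\Delta S, \underline{k}) \cong k[N\Delta S]$ as simplicial $k$--modules.

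Finally, Lemma~\ref{lem.DeltaScontractible} tells us that $N\Delta S$ is contractible, so its singular (equivalently, simplicial) homology with coefficients in $k$ is $k$ concentrated in degree $0$. Combining the identifications,
\[
  HS_n(k) \cong H_n(N\Delta S;\, k) \cong \begin{cases} k & n = 0, \\ 0 & n > 0. \end{cases}
\]
The only step requiring any care is the identification of the categorical tensor product with $k[N\Delta S]$; once that is in place, the result follows immediately from the preceding lemma. I do not anticipate a serious obstacle, as the argument is essentially a bookkeeping check that collapsing the module values in the standard resolution recovers the nerve.
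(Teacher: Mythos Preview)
Your proof is correct and follows essentially the same approach as the paper: both identify $C_*(\Delta S, B_*^{sym}k)$ with $k[N\Delta S]$ (the paper does this by noting every chain is $(\beta_n,\ldots,\beta_1)\otimes(1\otimes\cdots\otimes 1)$, you by observing $B_*^{sym}k \cong \underline{k}$) and then invoke Lemma~\ref{lem.DeltaScontractible}.
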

\begin{proof}
  $HS_*(k)$ is the homology of the chain complex generated (freely) over $k$ by
  the chains
  \[
    \left\{ [q_n]\stackrel{\beta_{n}}{\gets} \cdots 
    \stackrel{\beta_2}{\gets} [q_1] \stackrel{\beta_1}{\gets}[q_0] \,\otimes\,
    (1 \otimes \ldots \otimes 1) \right\},
  \]
  where $\beta_i \in \mathrm{Mor}_{\Delta S}\left( [q_{i-1}], [q_i] \right)$.  
  Each such chain may be identified with the chain $[q_n] \stackrel{\beta_n}
  {\gets} \cdots \stackrel{\beta_2}{\gets} [q_1] \stackrel{\beta_1}{\gets}
  [q_0]$ of $N(\Delta S)$, and this defines a chain isomorphism to 
  $N(\Delta S)$. The result now follows from Lemma~\ref{lem.DeltaScontractible}.
\end{proof}

%%%%%%%%%%%%%%%%%%%%%%%%%%%%%%%%%%%%%%%%%%%%%%%%%%%%%%%%%%%%%%%%%%%%%%%%%%%%%%%%
\section{Symmetric Homology with Coefficients and the UCT}\label{sec.coeff}
%%%%%%%%%%%%%%%%%%%%%%%%%%%%%%%%%%%%%%%%%%%%%%%%%%%%%%%%%%%%%%%%%%%%%%%%%%%%%%%%

Following the conventions for Hochschild and cyclic homology in Loday~\cite{L},
when we need to indicate explicitly the ground ring $k$ over which we compute
symmetric homology of $A$, we shall use the notation:  $HS_*(A\;|\;k)$.  On
the other hand, for a $k$--module $M$, $HS_*(A ; M)$ will denote the homology
of the complex $C_*(\Delta S, B_*^{sym}A) \otimes_k M$.

Two easy propositions are collected here for reference.
\begin{prop}

\hfill

  \begin{enumerate}
    \item If $M$ is flat over $k$, then $HS_*(A ; M) \cong HS_*(A) \otimes_k M$.
    \item If $B$ is a commutative $k$-algebra, then
      $HS_*(A \otimes_k B \;|\; B) \cong HS_*(A ; B)$.
  \end{enumerate}
\end{prop}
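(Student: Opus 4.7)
Part (1) follows from the standard universal coefficient theorem for chain complexes. By Proposition~\ref{prop.SymHomComplex}, $HS_*(A) = H_*(C_*)$ where $C_* := C_*(\Delta S, B_*^{sym}A)$, and $HS_*(A;M) = H_*(C_* \otimes_k M)$ by the definition given at the start of the section. Since $M$ is flat over $k$, the functor $-\otimes_k M$ is exact on $k$-modules and hence commutes with passage to homology of a complex of $k$-modules, giving $HS_*(A;M) \cong HS_*(A) \otimes_k M$ immediately.

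The plan for (2) is to identify the chain complexes that compute the two homologies.  The key step is to exhibit a natural isomorphism of $\Delta S$-modules
\[
  \beta \co B_*^{sym}(A \otimes_k B) \xrightarrow{\cong} B_*^{sym}A \otimes_k B,
\]
where the left-hand side is the symmetric bar construction formed over the ground ring $B$, and the right-hand side is the bar construction over $k$ tensored with $B$. At each level $\beta$ is obtained by iterating the $B$-module isomorphism
\[
  (A \otimes_k B) \otimes_B (A \otimes_k B) \xrightarrow{\cong} A \otimes_k A \otimes_k B, \qquad (a_1 \otimes b_1) \otimes (a_2 \otimes b_2) \mapsto a_1 \otimes a_2 \otimes b_1 b_2.
\]
Naturality in $\Delta S$ reduces to two easy verifications in the tensor notation of Figure~\ref{diag.morphism-tensor}: multiplication in $A \otimes_k B$ is componentwise, so the $\Delta$-part of any $\Delta S$-morphism acts diagonally on the two tensor factors, and the $\Sigma_{n+1}^{\mathrm{op}}$-part permutes only the $A$-factors, leaving the trailing $B$ fixed.

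Next I would descend $\beta$ to the chain-complex level. Writing $B[X] \cong k[X] \otimes_k B$ for any set $X$ and using the absorption identity $(U \otimes_k B) \otimes_B (V \otimes_k B) \cong U \otimes_k V \otimes_k B$ for $k$-modules $U, V$ at each object $X$ of $\Delta S$, each summand of the coend computing $HS_*(A \otimes_k B \mid B)$ becomes $k[N(X \setminus \Delta S)] \otimes_k B_*^{sym}A(X) \otimes_k B$. The coend relations along $\Delta S$-morphisms are preserved because the trailing $B$-factor is passive under the $\Delta S$-action transported through $\beta$, yielding
\[
  k[N(- \setminus \Delta S)] \otimes_{\Delta S} B_*^{sym}(A \otimes_k B) \;\cong\; C_*(\Delta S, B_*^{sym}A) \otimes_k B,
\]
whose homology is $HS_*(A;B)$ by definition.

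The main obstacle is the bookkeeping in (2): one has to verify that every $B$-factor genuinely commutes past the $\Delta S$-action in both the bar construction and the coend. The reason this is automatic is that $A$ is only a $k$-algebra, so no element of $A$ is ever multiplied against an element of $B$, and the $\Sigma$-permutations act on the $A$-factors independently of the absorbed $B$. Once these compatibilities are checked, both parts reduce to standard homological algebra.
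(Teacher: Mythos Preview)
Your proof is correct and is exactly the argument the paper has in mind: the paper's own ``proof'' reads in its entirety ``Left to the reader,'' so there is nothing to compare against beyond noting that your approach is the standard one. Part~(1) is immediate from exactness of $-\otimes_k M$, and your identification $(A\otimes_k B)^{\otimes_B(n+1)}\cong A^{\otimes_k(n+1)}\otimes_k B$ together with the check that $\Delta S$-morphisms act trivially on the collapsed $B$-factor (using commutativity of $B$) is precisely what makes part~(2) go through.
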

\begin{proof}
  Left to the reader.
\end{proof}

\begin{theorem}[Universal Coefficient Theorem]\label{thm.univ.coeff.}
  If $A$ is a flat $k$-algebra, and $B$ is a commutative $k$-algebra, 
  then there is a spectral sequence with
  \[
    E_2^{p,q} = \mathrm{Tor}^k_p\left( HS_q( A \;|\; k) , B \right) 
    \Rightarrow HS_*( A \;|\; B).
  \]
\end{theorem}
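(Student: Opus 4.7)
The plan is to recognize this as an instance of the standard universal coefficient spectral sequence coming from a flat chain complex tensored with a module, applied to the explicit complex of Prop.~\ref{prop.SymHomComplex}.

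First I would set $C_* := k[N(- \setminus \Delta S)] \otimes_{\Delta S} B^{sym}_* A$, the complex that computes $HS_*(A \mid k)$. The key preliminary observation is that $C_*$ consists of flat $k$-modules: $C_n$ is a direct sum, indexed by $n$-chains $\chi \in N_n \Delta S$ with domain some $[p_\chi]$, of copies of $B^{sym}_{p_\chi} A = A^{\otimes(p_\chi + 1)}$, and since $A$ is flat over $k$, all iterated tensor powers and arbitrary direct sums of them remain flat. Then, combining the definition of $HS_*(A; M)$ as the homology of $C_* \otimes_k M$ with the identification $HS_*(A \otimes_k B \mid B) \cong HS_*(A; B)$ from the preceding proposition, the abutment $HS_*(A \mid B)$ equals $H_*(C_* \otimes_k B)$.

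Next I would choose a free (hence flat) resolution $P_\bullet \to B$ of $B$ as a $k$-module and form the first-quadrant double complex
\[
  D_{p,q} := C_q \otimes_k P_p,
\]
with the two natural differentials. Running the spectral sequence of $D$ filtered by $q$-columns, the rows $C_q \otimes_k P_\bullet$ are acyclic away from degree $0$ (by flatness of each $C_q$) and compute $C_q \otimes_k B$ there; hence this spectral sequence degenerates quickly and converges to $H_{p+q}(\mathrm{Tot}(D)) = H_{p+q}(C_* \otimes_k B) = HS_{p+q}(A \mid B)$.

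Running the same double complex filtered by $p$-rows instead yields $E^1_{p,q} = H_q(C_*) \otimes_k P_p = HS_q(A \mid k) \otimes_k P_p$ with horizontal differential induced by $P_\bullet \to B$; taking homology in the $p$-direction gives
\[
  E^2_{p,q} = \mathrm{Tor}^k_p\bigl(HS_q(A \mid k), B\bigr),
\]
which is exactly the claimed $E_2$ term. Since both filtrations of $\mathrm{Tot}(D)$ converge to the same total homology, this is the desired spectral sequence. The only substantive step is the flatness verification, and this follows immediately from the flatness hypothesis on $A$; no genuine obstacle is expected, as everything past that point is the classical derivation of the universal coefficient spectral sequence.
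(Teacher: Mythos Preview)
Your proof is correct. The paper takes a different route: rather than building the double complex by hand, it invokes Dold's abstract Universal Coefficient Theorem (2.12 of~\cite{D}; see also McCleary~\cite{Mc}, Thm.~12.11) applied to the family of functors $T_q = HS_q(A; -)$. The hypotheses needed there are that $\{T_q\}$ forms a long exact sequence of additive covariant functors (this is where flatness of $A$, hence of each $C_n$, enters), that $T_q$ vanishes for $q < 0$, and that each $T_q$ commutes with arbitrary direct sums. Your argument is more elementary and self-contained --- effectively an unpacking of Dold's machine in this particular case --- while the paper's version is terser but relies on an external black box. Both ultimately rest on the same flatness observation you isolated.
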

\begin{proof}
  Let $T_q \co k\textrm{-$\mathbf{Mod}$} \to k\textrm{-$\mathbf{Mod}$}$
  be the functor $HS_q( A ; - )$.  Observe, since $A$ is flat,
  $\{T_q\}$ is a long exact sequence of additive covariant functors
  (See~\cite{D}, Definition~1.1 and also~\cite{Mc}, section~12.1 for
  details).  $T_q = 0$ for sufficiently small $q$ (indeed, for $q <
  0$) and $T_q$ commutes with arbitrary direct sums.  Hence, by the
  Universal Coefficient Theorem of Dold (2.12 of~\cite{D}.  See also
  McCleary~\cite{Mc}, Thm.~12.11), there is a spectral sequence with
  $E_2^{p,q} = \mathrm{Tor}^k_p\left( T_q(k) , B \right) \Rightarrow
  T_*(B)$.
\end{proof}

As an immediate consequence, we have the following result.
\begin{cor}\label{cor.iso_in_HS_with_coeff.}
  If $f \co A \to A'$ is a $k$-algebra map between flat algebras which
  induces an isomorphism in symmetric homology, $HS_*(A)
  \stackrel{\cong}{\to} HS_*(A')$, then for a commutative $k$-algebra
  $B$, the chain map $f \otimes \mathrm{id}_B$ induces an
  isomorphism $HS_*(A;B) \stackrel{\cong}{\to} HS_*(A' ; B)$.
\end{cor}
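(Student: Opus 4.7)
The plan is to derive the corollary as an immediate consequence of Theorem~\ref{thm.univ.coeff.} applied to both algebras, combined with naturality of the resulting spectral sequence. First I would apply the Universal Coefficient Theorem to $A$ and to $A'$, obtaining two first-quadrant spectral sequences
\[
  E_2^{p,q}(A) = \mathrm{Tor}^k_p\bigl(HS_q(A\,|\,k),\,B\bigr) \Rightarrow HS_{p+q}(A;B)
\]
and analogously $E_2^{p,q}(A') \Rightarrow HS_{p+q}(A';B)$, both of which are legitimate by the flatness hypothesis.

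Next I would check that these spectral sequences are natural in the algebra variable. By Definition~\ref{def.symbar}, $B_*^{sym}$ is a functor in $A$, so $f \co A \to A'$ induces a morphism of $\Delta S$-modules $B_*^{sym}A \to B_*^{sym}A'$, hence a chain map $C_*(\Delta S, B_*^{sym}A) \to C_*(\Delta S, B_*^{sym}A')$ (cf.\ Prop.~\ref{prop.SymHomComplex}). Tensoring over $k$ with $B$ yields precisely the map $f \otimes \mathrm{id}_B$ of the corollary statement, and this in turn gives a morphism of long exact sequences of additive functors $HS_q(A;-) \to HS_q(A';-)$. Dold's construction in~\cite{D}, which builds the UCT spectral sequence from a projective (or flat) resolution of the coefficient module $B$ independently of the input functor, is manifestly functorial in morphisms of such long exact sequences, so one obtains a morphism of spectral sequences $E_r^{*,*}(A) \to E_r^{*,*}(A')$ converging to the map induced by $f \otimes \mathrm{id}_B$ on abutments.

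On the $E_2$ page this morphism equals $\mathrm{Tor}^k_p(f_*,\,\mathrm{id}_B)$, where $f_* \co HS_q(A\,|\,k) \to HS_q(A'\,|\,k)$ is an isomorphism by hypothesis. Functoriality of $\mathrm{Tor}$ in its first argument then gives isomorphisms on every $E_2^{p,q}$ term, and the comparison theorem for first-quadrant spectral sequences finishes the proof by promoting this to an isomorphism $HS_*(A;B) \stackrel{\cong}{\to} HS_*(A';B)$ on abutments.

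The only nontrivial point is naturality of Dold's UCT with respect to morphisms of the underlying functors $\{T_q\}$; this is the main obstacle in the sense that it is not stated explicitly in the theorem citation, but it is visible from the construction, since the spectral sequence is built by applying $\{T_q\}$ to a fixed resolution of $B$ and any natural transformation $T_q \to T'_q$ descends to a map of the resulting bicomplexes. Beyond this, the argument is entirely formal.
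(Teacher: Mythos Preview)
Your proposal is correct and is precisely the argument the paper has in mind: the paper offers no explicit proof beyond calling the corollary an ``immediate consequence'' of Theorem~\ref{thm.univ.coeff.}, and what you have written spells out exactly that consequence via naturality of the UCT spectral sequence together with the spectral sequence comparison theorem.
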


Under stronger hypotheses, the universal coefficient spectral sequence
reduces to short exact sequences.

\begin{cor}\label{cor.univ.coeff.ses}
  If $k$ has weak global dimension $\leq 1$, then the spectral sequence of 
  Thm.~\ref{thm.univ.coeff.} reduces to short exact sequences,
  \[
    0 \longrightarrow HS_n(A\;|\;k) \otimes_k B \longrightarrow
    HS_n(A ; B) \longrightarrow \mathrm{Tor}^k_1( HS_{n-1}(A \;|\;k), B)
    \longrightarrow 0.
  \]
  Moreover, if $k$ is hereditary and and $A$ is projective over $k$,
  then these sequences split (unnaturally). (See~\cite{I} for precise
  definitions of weak global dimension and hereditary).
\end{cor}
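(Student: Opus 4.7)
The plan is to observe first that the hypothesis on the weak global dimension of $k$ forces the spectral sequence of Thm.~\ref{thm.univ.coeff.} to collapse at the $E_2$ page. Since $\mathrm{wgd}(k) \leq 1$ implies $\mathrm{Tor}^k_p(-,-) = 0$ for all $p \geq 2$, the $E_2$ page is supported on the two columns $p = 0$ and $p = 1$. Every differential $d_r$ with $r \geq 2$ has horizontal component of absolute value $r \geq 2$, so it must map into or out of the vanishing region and therefore be zero. Hence $E_2 = E_\infty$, and the associated filtration of $HS_n(A\;|\;B)$ has exactly two nontrivial successive quotients: $E_\infty^{0,n} = HS_n(A\;|\;k)\otimes_k B$ and $E_\infty^{1,n-1} = \mathrm{Tor}^k_1(HS_{n-1}(A\;|\;k), B)$. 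Reading the extension off gives precisely the stated short exact sequence.

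For the splitting statement, the plan is to fall back on the explicit chain complex $C_* = k[N(-\setminus \Delta S)]\otimes_{\Delta S} B^{sym}_*A$ provided by Prop.~\ref{prop.SymHomComplex}, which computes $HS_*(A\;|\;k)$, and whose tensor product $C_*\otimes_k B$ computes $HS_*(A;B)$ (this is the definition of $HS_*(A;B)$ used at the start of Section~\ref{sec.coeff}). When $A$ is projective over $k$, each term of $C_*$ is a direct sum of tensor powers $A^{\otimes(q_0+1)}$, indexed by the nerve of $\Delta S$; since tensor products and direct sums of projectives are projective, each $C_n$ is a projective $k$-module. Hence the sequence in question arises from applying $-\otimes_k B$ to a chain complex of projective $k$-modules, reducing the problem to the classical universal coefficient theorem for chain complexes.

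To complete the splitting, I would use the fact that over a hereditary ring every submodule of a projective is projective. Consequently the cycles $Z_n \subseteq C_n$ and boundaries $B_{n-1}\subseteq C_{n-1}$ are projective, so each short exact sequence $0 \to Z_n \to C_n \to B_{n-1} \to 0$ splits as $k$-modules. The standard diagram-chase producing a (non-canonical) splitting of the classical UCT sequence (see, e.g., Thm.~12.11 of~\cite{Mc}, or~\cite{I}) then supplies the desired splitting of the symmetric-homology UCT sequence. The only substantive step is the first paragraph's collapse argument; everything else is bookkeeping and a citation to the classical theorem, so I expect the main obstacle to be merely verifying that the indexing conventions of Dold's spectral sequence in Thm.~\ref{thm.univ.coeff.} agree with the subobject/quotient ordering displayed in the corollary, which follows from the collapse observation itself.
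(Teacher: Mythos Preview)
Your proposal is correct and follows essentially the same approach as the paper. For the first part, the paper likewise observes that $\mathrm{Tor}_p^k(HS_q(A\;|\;k),B)=0$ for $p>1$ and invokes Dold's Corollary~2.13 to obtain the short exact sequences; for the splitting, the paper simply cites Theorem~8.22 of Rotman~\cite{R3}, which is precisely the classical UCT splitting for complexes of projectives over a hereditary ring that you have unpacked explicitly via the chain complex $C_*$ of Prop.~\ref{prop.SymHomComplex}.
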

\begin{proof}
  Assume first that $k$ has weak global dimension $\leq 1$.  So
  $\mathrm{Tor}_p^k(HS_q(A \; |\; k), B) = 0$ for all $p > 1$.
  Following Dold's argument (Corollary~2.13 of~\cite{D}), we obtain
  the required exact sequences.  Assume further that $k$ is hereditary
  and $A$ is projective.  Then Theorem~8.22 of Rotman~\cite{R3} then
  gives us the desired splitting.
\end{proof}

\begin{rmk}
  The proof given above also proves UCT for cyclic homology.  A
  partial result along these lines exists in Loday (\cite{L}, 2.1.16).
  There, he shows $HC_*(A\;|\; k) \otimes_k K \cong HC_*(A \;|\; K)$
  and \mbox{$HH_*(A\;|\; k) \otimes_k K \cong$} \mbox{$HH_*(A \;|\;
    K)$} in the case that $K$ is a localization of $k$, and $A$ is a
  $K$--module, flat over $k$.  I am not aware of a statement of UCT for
  cyclic or Hochschild homology in its full generality in the
  literature.
\end{rmk}

%%%%%%%%%%%%%%%%%%%%%%%%%%%%%%%%%%%%%%%%%%%%%%%%%%%%%%%%%%%%%%%%%%%%%%%%%%%%%%%%
\section{Integral Symmetric Homology and a Bockstein Spectral Sequence}
%%%%%%%%%%%%%%%%%%%%%%%%%%%%%%%%%%%%%%%%%%%%%%%%%%%%%%%%%%%%%%%%%%%%%%%%%%%%%%%%

We shall obtain a converse to Cor.~\ref{cor.iso_in_HS_with_coeff.} in
the case $k = \Z$.
\begin{theorem}\label{thm.conv.HS_iso}
  Let $f \co A \to A'$ be an algebra map between torsion-free
  $\Z$-algebras.  Suppose for $B = \mathbb{Q}$ and $B = \Z/p\Z$ for
  any prime $p$, the map $f \otimes \mathrm{id}_B \co C_*(\Delta S,
  B_*^{sym}A) \otimes_k B \longrightarrow C_*(\Delta S, B_*^{sym}A')
  \otimes_k B$ induces an isomorphism $HS_*(A ; B) \to HS_*(A' ; B)$.
  Then $f$ also induces an isomorphism $HS_*(A) \stackrel{\cong}{\to}
  HS_*(A')$.
\end{theorem}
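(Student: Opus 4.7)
My plan is to translate the hypothesis into a vanishing statement for the mapping cone of the induced map of chain complexes, and then combine the rational and mod--$p$ information via the Universal Coefficient Theorem to conclude integral vanishing.

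First I would set $X_* = C_*(\Delta S, B_*^{sym}A)$ and $X'_* = C_*(\Delta S, B_*^{sym}A')$. Since $A$ and $A'$ are torsion-free abelian groups, so is each tensor power $A^{\otimes(n+1)} = B_n^{sym}A$, and hence so are the direct sums $X_n$ and $X'_n$. Over the PID $\Z$, torsion-free means flat, so both complexes consist of flat $\Z$--modules. Let $Y_* = \mathrm{Cone}(f_*)$ be the mapping cone of $f_* \co X_* \to X'_*$; it is again a bounded-below complex of torsion-free (hence flat) $\Z$--modules. The long exact sequence of the cone,
\[
 \cdots \to HS_n(A) \xrightarrow{f_*} HS_n(A') \to H_n(Y_*) \to HS_{n-1}(A) \to \cdots,
\]
reduces the problem to showing $H_*(Y_*) = 0$.

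Next I would run the same reduction with coefficients. Forming the mapping cone commutes with tensoring by an abelian group, so $Y_* \otimes_\Z B = \mathrm{Cone}(f_* \otimes \mathrm{id}_B)$, and the LES with $B$--coefficients together with the hypothesis gives $H_*(Y_* \otimes_\Z B) = 0$ for $B = \mathbb{Q}$ and $B = \Z/p\Z$ for every prime $p$. The standard UCT applied to the flat complex $Y_*$ then says: for $B = \mathbb{Q}$ (which is flat), $H_n(Y_*) \otimes_\Z \mathbb{Q} = H_n(Y_* \otimes \mathbb{Q}) = 0$, so every $H_n(Y_*)$ is a torsion abelian group. For $B = \Z/p\Z$, the collapsing short exact sequence
\[
 0 \to H_n(Y_*) \otimes_\Z \Z/p\Z \to H_n(Y_* \otimes \Z/p\Z) \to \mathrm{Tor}^\Z_1(H_{n-1}(Y_*), \Z/p\Z) \to 0
\]
forces both $H_n(Y_*)/pH_n(Y_*) = 0$ and $H_{n-1}(Y_*)[p] = 0$ for every $n$ and every prime $p$.

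Finally, an elementary abelian group argument finishes the job: a nonzero element of the torsion group $H_n(Y_*)$ has finite order divisible by some prime $p$, and a suitable power yields a nonzero element of $p$--torsion, contradicting $H_n(Y_*)[p] = 0$. Hence $H_n(Y_*) = 0$ for all $n$, and the long exact sequence of the first paragraph shows that $f$ induces an isomorphism $HS_*(A) \stackrel{\cong}{\to} HS_*(A')$. The main technical point is the invocation of UCT on the cone, which requires that $X_*, X'_*$ (and hence $Y_*$) are complexes of flat $\Z$--modules; this reduces to the observation that tensor powers of torsion-free $\Z$--modules are torsion-free. The same argument can be repackaged as a Bockstein spectral sequence one prime at a time, in keeping with the section title: the rational hypothesis forces convergence to zero, and the mod--$p$ hypotheses force $E_1 = 0$ at each prime, whence $H_*(Y_*) = 0$.
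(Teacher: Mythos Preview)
Your proof is correct, and it takes a genuinely different route from the paper's. The paper does not pass to the mapping cone; instead it bootstraps the coefficient hypothesis: first an induction on $m$ using the Bockstein long exact sequence for $0 \to \Z/p^{m-1}\Z \to \Z/p^m\Z \to \Z/p\Z \to 0$ gives isomorphisms with $\Z/p^m\Z$--coefficients for all $m$; a direct-limit argument then yields isomorphisms with $\Z/p^{\infty}\Z$--coefficients; finally the short exact sequence $0 \to \Z \to \mathbb{Q} \to \bigoplus_p \Z/p^{\infty}\Z \to 0$ and the two-out-of-three principle for Bockstein sequences give the integral result. Your argument is tighter and more self-contained: by moving to the cone you turn the problem into a vanishing statement for a single complex, and then the UCT with $\mathbb{Q}$ and with $\Z/p\Z$ immediately forces $H_*(Y_*)$ to be torsion with trivial $p$--torsion for every $p$, hence zero. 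Both approaches ultimately rest on the same technical input---that the chain complexes consist of flat $\Z$--modules because tensor powers of torsion-free abelian groups are torsion-free---but your method extracts the conclusion in fewer steps and without needing the Pr\"ufer groups or direct limits. The paper's approach, on the other hand, makes the role of the Bockstein machinery more explicit, which is in line with the surrounding section.
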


First, note that if $A$ is flat over $k$, one can construct the
Bockstein homomorphisms $\beta_n \co HS_n(A ; Z) \to HS_{n-1}(A ; X)$
associated to a short exact sequence of $k$--modules, $0 \to X \to Y
\to Z \to 0$.  These Bockstein maps are natural in $A$, since $HS_*(A;
-)$ is a long exact sequence of functors (see
Section~\ref{sec.coeff}).  Moreover if the induced map $f_*\co HS_*(A;W)
\to HS_*(A';W)$ is an isomorphism for any two of $W=X$, $W=Y$, $W=Z$,
then it is an isomorphism for the third -- an easy exercise for the
reader.  We shall now proceed with the proof of
Thm.~\ref{thm.conv.HS_iso}.  All tensor products will be over $\Z$ in
what follows.

\begin{proof}
  Let $A$ and $A'$ be torsion-free $\Z$--modules.  Let $f \co A \to A'$
  be an algebra map inducing isomorphism in symmetric homology with
  coefficients in $\mathbb{Q}$ and also in $\Z/p\Z$ for any prime
  $p$. For $m \geq 2$, there is a short exact sequence,
  \[
    0 \longrightarrow \Z/p^{m-1}\Z \stackrel{p}{\longrightarrow} \Z/p^m\Z
    \longrightarrow \Z/p\Z \longrightarrow 0.
  \]
  A straightforward induction argument shows that the maps induced
  by $f$ are isomorphisms,
  \begin{equation}\label{eq.HS_A_Zpm}
    f_* \co HS_*(A; \Z/p^{m}\Z) \stackrel{\cong}{\longrightarrow} HS_*(A'; 
    \Z/p^{m}\Z)
  \end{equation}
  
  Denote $\ds{\Z / p^\infty \Z = \lim_{\longrightarrow} \Z/ p^m\Z}$.
  Note, this is a {\it direct limit} in the sense that it is a colimit
  over a directed system.  The direct limit functor is exact
  (Prop.~5.3 of~\cite{S2}), so the maps $HS_n(A ; \Z / p^\infty \Z)
  \to HS_n(A' ; \Z / p^\infty \Z)$ induced by $f$ are isomorphisms,
  given by the chain of isomorphisms below:
  \[
    HS_n(A; \Z / p^\infty \Z)
    \cong
    \lim_{\longrightarrow}H_*\left(
      k\left[C_*(\Delta S, B_*^{sym}A)\right] \otimes \Z/p^m\Z \right)
    \stackrel{f_*}{\longrightarrow}
  \]
  \[
    \qquad\qquad\qquad
    \lim_{\longrightarrow}H_*\left(
      k\left[C_*(\Delta S, B_*^{sym}A')\right] \otimes \Z/p^m\Z\right)
    \cong
    HS_*(A'; \Z / p^\infty \Z)
  \]
  (Note, $f_*$ here stands for $\ds{\lim_{\longrightarrow}H_n(
  k\left[C_*(\Delta S, B_*^{sym}f)\right] \otimes \mathrm{id})}$.)
    
  Finally, consider the short exact sequence of abelian groups,
  \[
    0 \longrightarrow \Z \longrightarrow \mathbb{Q} \longrightarrow
    \bigoplus_{\textrm{$p$ prime}} \Z / p^\infty \Z \longrightarrow 0
  \]
    
  The isomorphism $f_* \co HS_*(A; \Z / p^\infty \Z) \to HS_*(A'; \Z / p^\infty 
  \Z)$ passes to direct sums, giving isomorphisms for each $n$,
  \[
    f_* \co HS_{n}\left(A;\, \bigoplus_p \Z/p^\infty \Z\right) \stackrel{\cong}
    {\longrightarrow} HS_{n}\left(A';\, \bigoplus_p \Z/p^\infty \Z\right).
  \]
  Since $HS_*(A; \mathbb{Q}) \to HS_*(A'; \mathbb{Q})$ is an
  isomorphism, we have the required isomorphism in symmetric homology,
  $f_* \co HS_n(A \,|\, \Z) \stackrel{\cong}{\longrightarrow}
  HS_n(A'\,|\, \Z)$.
\end{proof}

\begin{rmk}
  Theorem~\ref{thm.conv.HS_iso} may be useful for determining integral
  symmetric homology, since rational computations are generally
  simpler, and computations mod $p$ may be made easier due to the
  presence of additional structure, such as homology operations (see
  the author's forthcoming paper).
\end{rmk}

Finally, we state a result along the lines of McCleary~\cite{Mc}, Thm.~10.3.
Denote the torsion submodule of a graded module $H_*$ by 
$\tau\left(H_*\right)$.

\begin{theorem}[Bockstein spectral sequence]\label{thm.bockstein_spec_seq}
  Suppose $A$ is free of finite rank over $\Z$.  Then there is a singly-graded
  spectral sequence with
  \[
    E_*^1 = HS_*( A ; \Z/p\Z) \Rightarrow HS_*(A)/\tau\left(HS_*(A) \right)
    \otimes \Z/p\Z,
  \]
  with differential map $d^1 = \beta$, the standard Bockstein map associated to
  $0 \to \Z / p\Z \to \Z / p^2\Z \to \Z / p\Z \to 0$.  Moreover, the convergence
  is strong.
\end{theorem}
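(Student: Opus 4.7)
The plan is to apply the classical Bockstein construction directly to the chain complex $C_* = C_*(\Delta S, B_*^{sym}A)$ from Prop.~\ref{prop.SymHomComplex}. The starting observation is that because $A$ is free of finite rank over $\Z$, every summand $B_n^{sym}A\chi \cong A^{\otimes(q_0+1)}$ is itself a free $\Z$-module, so each $C_n$ is free (and hence flat) over $\Z$. Tensoring the short exact sequence $0 \to \Z \xrightarrow{p} \Z \to \Z/p\Z \to 0$ with $C_*$ therefore yields a short exact sequence of chain complexes
\[
0 \longrightarrow C_* \xrightarrow{p} C_* \longrightarrow C_* \otimes \Z/p\Z \longrightarrow 0.
\]
The associated long exact sequence in homology reads
\[
\cdots \to HS_n(A) \xrightarrow{p} HS_n(A) \xrightarrow{\rho} HS_n(A;\Z/p\Z) \xrightarrow{\beta} HS_{n-1}(A) \to \cdots,
\]
and I would package this into an exact couple with $D^1 = HS_*(A)$ and $E^1 = HS_*(A;\Z/p\Z)$. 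The standard derived-couple machinery (cf.~\cite{Mc}, Ch.~10) then produces the required singly-graded spectral sequence, whose first differential $d^1 = \rho \circ \beta$ coincides with the mod-$p$ Bockstein $\beta$ associated to $0 \to \Z/p\Z \to \Z/p^2\Z \to \Z/p\Z \to 0$ (the two sequences are connected by a map of short exact sequences, producing the same connecting-and-reduction composite in homology).

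For strong convergence I would invoke Cor.~\ref{cor.fin-gen}: since $\Z$ is Noetherian and $A$ is finitely generated, each $HS_n(A)$ is a finitely generated abelian group. By the structure theorem, $HS_n(A) \cong F_n \oplus \bigoplus_i \Z/p^{e_i}\Z \oplus T'_n$ with $F_n$ free of finite rank and $T'_n$ the prime-to-$p$ torsion. A standard inductive analysis of the iterated derived couples, as in McCleary's Thm.~10.3, shows that $T'_n$ drops out already at $E^1$ (multiplication by $p$ is invertible on it), that a generator of each $\Z/p^{e_i}\Z$ survives to $E^{e_i}$ and is killed by $d^{e_i}$, and that only $F_n \otimes \Z/p\Z$ persists to $E^\infty$. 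Finite generation in each fixed degree guarantees that $\bigcap_r p^r\,HS_n(A) = 0$ and that the pages stabilize after finitely many steps, yielding the strong convergence $E^r \Rightarrow (HS_*(A)/\tau(HS_*(A))) \otimes \Z/p\Z$.

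The main obstacle is the strong-convergence claim, since the construction of the spectral sequence from the exact couple is purely formal. Concretely, one must verify both $D^\infty = 0$ and the precise identification of $E^\infty_n$ with the free quotient tensored with $\Z/p\Z$; both statements rest on the finite generation of $HS_n(A)$ from Cor.~\ref{cor.fin-gen}, together with careful bookkeeping of the $p$-primary filtration on successive derived couples. Once finite generation in each degree is in hand, the remaining arguments are the well-known ones and I would simply reference McCleary's treatment rather than reproduce them.
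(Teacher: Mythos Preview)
Your proposal is correct and matches the paper's approach: the paper simply observes that McCleary's argument (Thm.~10.3 in~\cite{Mc}) carries over verbatim once one knows that each $HS_n(A)$ is a finitely generated abelian group, and this is supplied by Cor.~\ref{cor.fin-gen} (restated later as Cor.~\ref{cor.fin-gen-restated}). You have spelled out the exact-couple construction and the convergence bookkeeping that the paper leaves implicit in the citation, but the substance is identical.
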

\begin{proof}
  The proof McCleary gives on p.~459 carries over to our case intact.
  All that is required for this proof is that each $HS_n(A)$ be a
  finitely-generated abelian group.  The hypothesis that $A$ is
  finitely-generated, coupled with the result of
  Cor.~\ref{cor.fin-gen-restated} of
  Section~\ref{sub.connectivity_of_HS}, guarantees this.
\end{proof}

%%%%%%%%%%%%%%%%%%%%%%%%%%%%%%%%%%%%%%%%%%%%%%%%%%%%%%%%%%%%%%%%%%%%%%%%%%%%%%%%
\section{Symmetric Homology Using $\Delta S_+$}\label{sec.deltas_plus} 
%%%%%%%%%%%%%%%%%%%%%%%%%%%%%%%%%%%%%%%%%%%%%%%%%%%%%%%%%%%%%%%%%%%%%%%%%%%%%%%%

In this section, we shall show that replacing $\Delta S$ by $\Delta S_+$ in an
appropriate way does not affect the computation of $HS_*$.  We shall define a 
functor $B_*^{sym_+}A \co \Delta S_+ \to k$-$\mathbf{Mod}$ such that
$H_*(\Delta S_+,\, B_*^{sym_+}A) \cong H_*(\Delta S,\, B_*^{sym}A)$.

%%%%%%%%%%%%%%%%%%%%%%%%%%%%%%%%%%%%%%%%%%%%%%%%%%%%%%%%%%%%%%%%%%%%%%%%%%%%%%%%
\subsection{Extension of the Bar Construction to $\Delta S_+$}

Recall for a functor $\mathscr{F} \co \mathscr{D} \to \mathscr{C}$ of
small categories, the induced functor from the category of
$\mathscr{C}$--modules to $\mathscr{D}$--modules given by $M \mapsto
M\circ \mathscr{F}$ admits a left adjoint functor, $\mathscr{F}^*$
(see~\cite{K} or~\cite{GZ}, App.  II.3).  In particular, the inclusion
$i \co \Delta S \hookrightarrow \Delta S_+$ induces a functor $i^* \co
\Delta S$-$\mathbf{Mod} \longrightarrow \Delta S_+$-$\mathbf{Mod}$.
It is instructive to analyze the functor $i^*(B_*^{sym}A) \co \Delta
S_+ \to \textrm{$k$-$\mathbf{Mod}$}$.  Note, $i^*(B_*^{sym}A)$ is just
the left Kan extension of $B_*^{sym}A$ along $i$.  By definition,
$i^*(B_*^{sym}A)$ is the functor, $[n] \mapsto \colim_{(i/[n])}
B_*^{sym}A \circ \pi_1$, where $\pi_1 \co (i/[n]) \to \Delta S$ is the
projection $(i[m] \stackrel{\phi}{\to} [n]) \mapsto [m]$.  Note, for
$n = -1$, $\colim_{(i/[-1])} B_*^{sym}A \circ \pi_1 \cong 0$, the
initial object of $k$-$\mathbf{Mod}$, since the category $(i/[-1])$ is
empty.  For $n \geq 0$,
\begin{equation}\label{eqn.colim_F}
  \colim_{(i/[n])} B_*^{sym} \circ \pi_1 = \left(\bigoplus_{([n] 
  \stackrel{\phi}{\gets} [m]) \in 
  \mathrm{Mor}\Delta S} B_m^{sym}A \right) \bigg/ (\phi\psi, x) \approx (\phi, 
  \psi_*(x)) \quad \cong \quad B_n^{sym}A
\end{equation}
The last isomorphism results from the ``identity trick'' (see, for
example, Section~\ref{sub.symbar}), namely $(\phi, x) \approx
(\mathrm{id}, \phi_*(x))$.  For any morphism $\beta \in
\mathrm{Mor}\Delta S$, the morphism induced by $i^*(B_*^{sym}A)$ is
simply $\beta$ itself.  On the other hand, the morhpism $\iota_n \co
[-1] \to [n]$ of $\Delta S_+$ becomes the zero map $0 \to A^{\otimes
  (n+1)}$ under $i^*(B_*^{sym}A)$.

\begin{definition}\label{def.symbar_plus_0}
  For an associative, unital algebra, $A$, over a commutative ground ring $k$,  
  let $B_*^{sym^0_+}A$ be the functor $i^*(B_*^{sym}A) \co \Delta S_+ \to 
  k$-$\mathbf{Mod}$ defined above.
\end{definition}

\begin{lemma}\label{lem.HS_to_HS_+^0}
$\ds{ H_*(\Delta S_+,\, B_*^{sym_0^+}A) \cong H_*(\Delta S,\, B_*^{sym}A)}$.
\end{lemma}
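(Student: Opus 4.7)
The plan is to unwind the simplicial model from Prop.~\ref{prop.SymHomComplex} applied to $\Delta S_+$ and compare it term-by-term with the corresponding model for $\Delta S$. The key observation is that $[-1]$ is an \emph{initial} object of $\Delta S_+$: the morphisms $\iota_n\co[-1]\to[n]$ are the only new morphisms introduced beyond those of $\Delta S$, so the only morphism in $\Delta S_+$ with codomain $[-1]$ is $\mathrm{id}_{[-1]}$. Consequently, in a generic $n$-chain
\[
  \chi_n\stackrel{f_n}{\gets}\chi_{n-1}\stackrel{f_{n-1}}{\gets}\cdots\stackrel{f_1}{\gets}\chi_0
\]
of $N\Delta S_+$, if $\chi_j=[-1]$ then the morphism $f_j\co\chi_{j-1}\to[-1]$ forces $\chi_{j-1}=[-1]$, and by descending induction the set of indices with $\chi_i=[-1]$ forms an initial segment $\{0,1,\ldots,k\}$. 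In particular, $\chi_0\ne[-1]$ implies that the entire chain lies in $N_n\Delta S$.

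Next I would use the vanishing $B_*^{sym^0_+}A([-1])=0$ noted in Def.~\ref{def.symbar_plus_0} to conclude that in the direct-sum description
\[
  C_n(\Delta S_+,\,B_*^{sym^0_+}A)=\bigoplus_{\chi\in N_n\Delta S_+} B_*^{sym^0_+}A(\chi_0),
\]
every summand indexed by a chain with $\chi_0=[-1]$ vanishes. The surviving summands are therefore indexed precisely by chains in $N_n\Delta S$, and for these Eq.~\eqref{eqn.colim_F} gives $B_*^{sym^0_+}A(\chi_0)=B_*^{sym}A(\chi_0)$. This identifies $C_n(\Delta S_+,B_*^{sym^0_+}A)$ with $C_n(\Delta S,B_*^{sym}A)$ as $k$-modules in each simplicial degree.

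To finish, I would check compatibility with the simplicial structure. Face maps $d_i$ for $i>0$ and degeneracies $s_i$ only drop, compose, or duplicate objects and morphisms, so they cannot introduce $[-1]$ into a chain that does not already contain one. The map $d_0$ acts on coefficients via $(f_1)_*$, where $f_1$ is a $\Delta S$-morphism whenever $\chi\in N_n\Delta S$. Hence the identification above is an isomorphism of simplicial $k$-modules, and Prop.~\ref{prop.SymHomComplex} gives the claimed isomorphism on $H_*$. The only real subtlety is the bookkeeping to confirm that the Kan-extension formula in Eq.~\eqref{eqn.colim_F} yields $B_*^{sym^0_+}A|_{\Delta S}\cong B_*^{sym}A$ as functors (on morphisms as well as objects); once this is in hand the proof is essentially combinatorial, driven entirely by the initiality of $[-1]$ together with the vanishing of $B_*^{sym^0_+}A$ there.
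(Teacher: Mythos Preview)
Your argument is correct and takes a genuinely different route from the paper. The paper invokes the Gabriel--Zisman spectral sequence
\[
  E^2_{p,q}={\colim_{\Delta S_+}}^{(p)}(L_qi^*)(B_*^{sym}A)\ \Rightarrow\ {\colim_{\Delta S}}^{(p+q)}B_*^{sym}A,
\]
and then shows that the Kan extension $i^*$ is actually an \emph{exact} functor (not merely right-exact), forcing $L_qi^*=0$ for $q\ge1$ and collapsing the spectral sequence. Your approach bypasses this machinery entirely: you work directly at the level of the simplicial $k$-modules $C_*(\Delta S_+,B_*^{sym^0_+}A)$ and $C_*(\Delta S,B_*^{sym}A)$, using initiality of $[-1]$ together with $B_*^{sym^0_+}A([-1])=0$ to see that the two complexes are literally isomorphic degree by degree.

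Your method is more elementary and more transparent for this particular lemma; it makes clear that the passage to $\Delta S_+$ adds no new information when the functor vanishes at $[-1]$. The paper's approach, on the other hand, isolates a structural fact---exactness of $i^*$---that applies uniformly to any $\Delta S$-module, not just $B_*^{sym}A$, and fits into the general framework of change-of-category spectral sequences used elsewhere in the paper. Both arguments ultimately hinge on the same underlying observation (the behaviour of $i^*$ at $[-1]$), but package it differently.
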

\begin{proof}
  By~\cite{GZ}, Thm.~3.6, there is a spectral sequence with
  \begin{equation}\label{eqn.specseq_GZ}
    E_{p, q}^2 = {\colim_{\Delta S_+}}^{(p)} (L_qi^*)(B_*^{sym}A) \Rightarrow
    {\colim_{\Delta S}}^{(p+q)} B_*^{sym}A,
  \end{equation}
  where $i$ is the inclusion $\Delta S \hookrightarrow \Delta S_+$ as above and
  $L_qi^*$ is the $q^{th}$ left-derived functor of the right-exact functor 
  $i^*$.  The target, ${\colim_{\Delta S}}^{(p+q)} B_*^{sym}A$, is the same as 
  $HS_{p+q}(A)$.  I claim that $i^*$ is in fact left-exact.
  
  Let $0 \to M \to N \to P \to 0$ be an exact sequence of $\Delta S$--modules.
  By the same argument displayed in Eqn.~(\ref{eqn.colim_F}), we have
  \[
    \colim_{(i/[n])} M \circ \pi_1 \cong \left\{\begin{array}{cc}
      0, & \quad \textrm{if $n = -1$} \\
      M_n, & \quad \textrm{if $n \geq 0$}
    \end{array}\right.
  \]
  Similar results follow for $N$ and $P$.  Thus, the sequence $0 \to
  i^*M \to i^*N \to i^*P \to 0$ is exact, since each sequence of
  $k$--modules, $0 \to M_n \to N_n \to P_n \to 0$ is exact, and
  certainly the sequence of $0$ modules corresponding to $n = -1$ is
  exact.  Now, since $i^*$ is shown to be an exact functor, its
  left-derived functors $L_qi^*$ must be trivial for $q \geq 1$.  The
  spectral sequence~(\ref{eqn.specseq_GZ}) is trivial in columns $q
  \geq 1$, and thus strongly converges
  \[
    {\colim_{\Delta S_+}}^{(p)} i^*B_*^{sym}A 
    = H_p(\Delta S_+,\, B_*^{sym_+^0}A) \Rightarrow HS_p(A)
  \]
\end{proof}

%%%%%%%%%%%%%%%%%%%%%%%%%%%%%%%%%%%%%%%%%%%%%%%%%%%%%%%%%%%%%%%%%%%%%%%%%%%%%%%%
\subsection{A More Useful Extension}

Unfortunately, the $\Delta S_+$--module $B_*^{sym^0_+}A$ does not seem
to reflect the useful properities of $\Delta S_+$.  In particular,
the module corresponding to the object $[-1]$ is trivial in $B_*^{sym^0_+}A$.
Define a related functor which is non-trivial at the object $[-1]$:
\begin{definition}\label{def.symbar_plus}
  For an associative, unital algebra, $A$, over a commutative ground ring $k$,  
  define a functor $B_*^{sym_+}A \co \Delta S_+ \to k$--\textbf{Mod} by:
  \[
    \left\{
    \begin{array}{lll}
      B_n^{sym_+}A &=& A^{\otimes (n+1)}, \quad \textrm{for $n \geq 0$} \\
      B_{-1}^{sym_+}A &=& k,
    \end{array}
    \right.
  \]
  Morphisms are mapped in the same way as for $B_*^{sym_+^0}A$ with
  the exception that $\iota_n$ gets mapped to the inclusion $k
  \hookrightarrow A^{\otimes (n+1)}$ as algebras.
\end{definition}

One very useful consequence of defining symmetric in terms of the
homology groups $H_*(\mathscr{C},\, F)$ for an appropriate category
$\mathscr{C}$ and functor $F$, is that the functor $H_*(\mathscr{C},\,
-)$, takes short exact sequences of $\mathscr{C}$--modules to long
exact sequences in homology~\cite{GZ}.  We use this fact to prove:

\begin{theorem}\label{thm.SymHom_plusComplex}
  For an associative, unital $k$-algebra $A$, $HS_*(A) = H_*\left( 
  \Delta S_+,\, B_*^{sym_+}A\right)$.
\end{theorem}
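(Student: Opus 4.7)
The plan is to compare $B_*^{sym_+}A$ with $B_*^{sym^0_+}A$ of Definition \ref{def.symbar_plus_0}, whose $H_*(\Delta S_+, -)$ already equals $HS_*(A)$ by Lemma \ref{lem.HS_to_HS_+^0}. First I construct the natural transformation $B_*^{sym^0_+}A \to B_*^{sym_+}A$ that is the identity at each $[n]$ with $n \geq 0$ and the zero map $0 \to k$ at $[-1]$; naturality against the arrows $\iota_n$ is automatic, since both resulting compositions vanish. Its cokernel $K$ is the $\Delta S_+$--module concentrated at the initial object $[-1]$, with value $k$ there, value $0$ at each $[n]$ with $n \geq 0$, and $K(\iota_n) = 0$. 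The associated long exact sequence in $H_*(\Delta S_+, -)$ reduces the theorem, via Lemma \ref{lem.HS_to_HS_+^0}, to proving $H_*(\Delta S_+, K) = 0$.

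To compute $H_*(\Delta S_+, K)$, I use the chain complex $C_n(\Delta S_+, K) = \bigoplus_{\chi \in N_n \Delta S_+} K(\chi_0)$, which is the free $k$-module on $n$-chains $\chi_n \gets \cdots \gets \chi_0$ with $\chi_0 = [-1]$. Because $[-1]$ is initial in $\Delta S_+$ (no morphism $[m] \to [-1]$ exists for $m \geq 0$), every such chain factors uniquely as some number $p \geq 1$ of initial identity arrows on $[-1]$, possibly followed by an $\iota$-arrow into $\Delta S$ and an $(n-p)$-chain lying entirely in $\Delta S$. I filter $C_*$ by letting $F_p C_*$ be the subcomplex spanned by chains with at most $p$ initial $[-1]$'s; inspection of face maps (using that $(f_1)_*$ on $K(\chi_1) = 0$ kills $d_0$ when $\chi_1 \neq [-1]$) confirms $F_p$ is a subcomplex, and the filtration is bounded, with $F_0 = 0$ and $F_{n+1} C_n = C_n$.

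The associated graded piece $\mathrm{gr}_p C_*$ collects chains with exactly $p$ initial $[-1]$'s. Its induced differential retains only the face maps $d_i$ with $i \geq p$ (acting on the $\Delta S$-tail); the remaining $d_i$'s either strictly lower the filtration degree or vanish. A direct identification shows $\mathrm{gr}_p C_*$ is, up to a degree shift by $p-1$, the augmented chain complex of the nerve $N\Delta S$---the augmentation $k[N_0 \Delta S] \to k$ arises from the face $d_p$ on length-zero $\Delta S$-tails, which collapses the chain to the all-$[-1]$ type. By Lemma \ref{lem.DeltaScontractible}, $N\Delta S$ is contractible, so its augmented complex is acyclic, and $H_*(\mathrm{gr}_p C_*) = 0$ for every $p \geq 1$. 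The spectral sequence of the bounded filtration then collapses with $E^1 = 0$, giving $H_*(C_*(\Delta S_+, K)) = 0$, which completes the proof. The main delicate point is the identification of $\mathrm{gr}_p C_*$ with the (shifted) augmented nerve; this requires carefully tracking how the face $d_p$ at length-zero tails produces the augmentation and matching the signs from $\sum (-1)^i d_i$ with those of the nerve differential on $N\Delta S$.
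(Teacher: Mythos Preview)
Your proof is correct. Both your argument and the paper's set up the same short exact sequence $0 \to B_*^{sym^0_+}A \to B_*^{sym_+}A \to K \to 0$ and reduce the theorem, via Lemma~\ref{lem.HS_to_HS_+^0}, to showing $H_*(\Delta S_+, K) = 0$; they differ only in how this vanishing is established. The paper specializes to $A = k$: since $B_*^{sym_+}k$ is the constant functor $\underline{k}$, one has $H_*(\Delta S_+, B_*^{sym_+}k) \cong H_*(N\Delta S_+)$, which is $k$ concentrated in degree~$0$ because $[-1]$ is initial; comparing with $H_*(\Delta S_+, B_*^{sym^0_+}k) = HS_*(k) \cong k_0$ (Cor.~\ref{cor.HS_of_k}) in the long exact sequence then forces $H_*(\Delta S_+, K) = 0$. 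Your route is a direct computation---filtering $C_*(\Delta S_+, K)$ by the length of the $[-1]$-prefix and identifying each $\mathrm{gr}_p$ with a shift of the augmented chain complex of $N\Delta S$. The paper's trick is shorter and avoids any bookkeeping of face maps; your argument is more hands-on but has the virtue of making completely transparent where the contractibility of $N\Delta S$ (Lemma~\ref{lem.DeltaScontractible}) enters, and it does not need to separately verify that the degree-$0$ comparison map in the $A=k$ long exact sequence is an isomorphism.
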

\begin{proof}
  Consider the following sequence of $\Delta S_+$--modules:
  \begin{equation}\label{eqn.ses_functor_homology}
    0 \to B_*^{sym_+^0}A \to B_*^{sym_+}A \to Q \to 0
  \end{equation}
  Here, $Q$ is the quotient $B_*^{sym_+}A/B_*^{sym_+^0}A$, or in other words,
  $Q$ is the functor $\Delta S_+ \to k$--\textbf{Mod} taking $[-1] \mapsto
  k$ and $[n] \mapsto 0$ for all $n \geq 0$, which has no dependence
  on the choice of algebra $A$.  Indeed, with $A = k$, we have the
  exact sequence:
  \[
    0 \to B_*^{sym_+^0}k \to B_*^{sym_+}k \to Q \to 0
  \]
  The corresponding long exact sequence in homology is rather easy to
  analyze.  Using Lemma~\ref{lem.HS_to_HS_+^0},
  \[
    H_p\left( \Delta S_+, \, B_*^{sym_+^0}k\right) = HS_p(k)
    = \left\{ \begin{array}{ll}
      k, \quad p = 0\\
      0, \quad p > 0
    \end{array}\right.
  \]
  \[
    \cong H_*(N(\Delta S_+)) \cong H_*\left( \Delta S_+, \,
    B_*^{sym_+}k \right)
  \]
  The last isomorphism is obtained in the same way as in
  Cor.~\ref{cor.HS_of_k}.  Thus $H_*(\Delta S_+,\, Q) = 0$.  The
  $5$-lemma applied to the long exact sequence corresponding to
  diagram~(\ref{eqn.ses_functor_homology}) then proves $H_n(\Delta
  S_+,\,B_*^{sym_+^0}A) \cong H_n(\Delta S_+,\, B_*^{sym_+}A)$ in
  every degree. Lemma~\ref{lem.HS_to_HS_+^0} provides the link to
  symmetric homology.
\end{proof}

%%%%%%%%%%%%%%%%%%%%%%%%%%%%%%%%%%%%%%%%%%%%%%%%%%%%%%%%%%%%%%%%%%%%%%%%%%%%%%%%
\section{Tensor Algebra Decomposition and Fiedorowicz's Theorems}
\label{sec.tensoralgebras}
%%%%%%%%%%%%%%%%%%%%%%%%%%%%%%%%%%%%%%%%%%%%%%%%%%%%%%%%%%%%%%%%%%%%%%%%%%%%%%%%

For a general $k$-algebra $A$, finding $HS_*(A)$ using the standard
resolution is often too difficult.  The multiplicative structure of
$A$ is hard to control.  On the other hand, tensor algebras may be
easier to deal with since their multiplicative structure is so clean.
Indeed, tensor algebra arguments are also key in the proof of
Fiedorowicz's Theorem about the symmetric homology of group algebras.

%%%%%%%%%%%%%%%%%%%%%%%%%%%%%%%%%%%%%%%%%%%%%%%%%%%%%%%%%%%%%%%%%%%%%%%%%%%%%%%%
\subsection{Resolution of Algebras Using the 2-Sided Bar Construction}
\label{sub.resolution_barconstruction}

Let $T \co k$--\textbf{Mod} $\to k$--\textbf{Alg} be the functor sending a
$k$--module to the tensor algebra generated by that $k$--module.  In
other words, $TM = \bigoplus_{n \geq 0} M^{\otimes n}$.  There is an
algebra homomorphism $\theta_A \co TA \to A$, defined by multiplying
tensor factors, $\theta_A( a_0 \otimes a_1 \otimes \ldots \otimes a_k
) = a_0a_1 \cdots a_k$.  By abuse of notation, let $T$ also stand for
the functor $k$--\textbf{Mod} $\to k$--\textbf{Mod} obtained by
composing with the forgetful functor $k$--\textbf{Alg} $\to
k$--\textbf{Mod}.  $T$ becomes a monad with multiplication $\theta_{TA}
\co T^2 \to T$ and unit transformation $h \co \mathrm{Id} \to T$, defined
by sending the module $M$ identically onto the summand corresponding
to $n = 1$.  The homomorphism $\theta_A$ then expresses $A$ as a
$T$-algebra.

Observe that May's 2-sided bar construction $B_*(T, T, A)$ (See
chapter 9 of~\cite{M}) gives a resolution of $A$ by tensor algebras:
\begin{equation}\label{eq.res_tensor_alg}
  0 \gets A \stackrel{\theta_A}{\gets} TA \stackrel{\theta_1}{\gets} T^2A 
  \stackrel{\theta_2}{\gets} \ldots
\end{equation}
The boundary maps $\theta_n$ are defined in the standard way:
$\theta_n \stackrel{def}{=} \sum_{i = 0}^{n} (-1)^i
T^{n-i}\theta_{T^iA}$.  If we denote by $A_*$ the simplicial algebra
consisting of $A$ all degrees and whose faces and degeneracies are all
$\mathrm{id}_A$, then there is a strong deformation retract $B_*(T, T,
A) \to A_*$\cite{M}.

%%%%%%%%%%%%%%%%%%%%%%%%%%%%%%%%%%%%%%%%%%%%%%%%%%%%%%%%%%%%%%%%%%%%%%%%%%%%%%%%
\subsection{Monoid Algebras}

In the case that $A = k[M]$ for a monoid, $M$, we find a remarkable
interpretation of the complex $C_*( \Delta S_+, B_*^{sym_+}TA )$ as
an $E_{\infty}$-algebra.  First, define a set-valued variant of the
symmetric bar construction:
\begin{definition}
  Let $M$ be a monoid.  Define a functor $B_*^{sym}M \co \Delta S \to 
  \textbf{Sets}$ by:
  \[
    B_n^{sym}M = B_*^{sym}M[n] \stackrel{def}{=}  M^{n+1}, \;
    \textrm{(set product)}
  \]
  \[  
    B_*^{sym}M(\alpha) \co (m_0, \ldots, m_n) \mapsto \alpha(m_0, \ldots, m_n), 
    \qquad \textrm{for $\alpha \in \mathrm{Mor}\Delta S$}.
  \]
  where $\alpha \co [n] \to [k]$ is represented in tensor notation, and
  evaluation at $(m_0, \ldots, m_n)$ is as in
  definition~\ref{def.symbar}.

  The functor $B_*^{sym_+}M \co \Delta S_+ \to \textbf{Sets}$ will be
  defined by the above as well as by setting $B_{-1}^{sym_+}M
  \stackrel{def}{=} \emptyset$, and $B_*^{sym_+}M\iota_n$ is the
  inclusion $\emptyset \hookrightarrow M^{n+1}$.
\end{definition}

Regarding $N(- \setminus \Delta S_+)$ as a contravariant functor to
\textbf{sets}, define $N(- \setminus \Delta S_+) \times_{\Delta S_+}
B^{sym_+}_*M$ as a coend construction similar to that in
section~\ref{sec.hom_alg_functors}.  It is immediate that this
construction is a simplicial set whose homology computes $HS_*(k[M])$.

Now, in the context of monoids, the James construction is the
appropriate analog of the the tensor algebra construction of
section~\ref{sub.resolution_barconstruction}.  If $M$ is a free monoid
on a generating set $X$, that is, $M = JX_+$, then $k[M] = k[JX_+] =
T(k[X])$ is the free tensor algebra over $k$ on the set $X$ with
disjoint basepoint.  In this case, we have the following:
\begin{lemma}\label{lem.HS_tensoralg}
  \begin{equation}\label{eq.TX-decomp}
    HS_*\left(k[JX_+])\right) \cong H_*\left( \coprod_{n\geq -1}
    \widetilde{X}_n; k\right),
  \end{equation}
  where
  \[
    \widetilde{X}_n = \left\{\begin{array}{ll}
                               N(\Delta S_+), &n = -1\\
                               N\left([n] \setminus \Delta S_+\right) 
                               \times_{\Sigma_{n+1}^\mathrm{op}} X^{n+1},
                               &n \geq 0
                             \end{array}\right.
  \]
\end{lemma}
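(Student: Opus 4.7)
The plan is to apply Theorem~\ref{thm.SymHom_plusComplex} to realize $HS_*(k[JX_+])$ as the homology of the coend complex $k[N(-\setminus \Delta S_+)] \otimes_{\Delta S_+} B_*^{sym_+}(k[JX_+])$, and then to split the functor $B_*^{sym_+}(k[JX_+])$ along the grading by total word length inherited from $JX_+ = \bigsqcup_{k \geq 0} X^k$.

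Since $k[JX_+] = T(k[X]) = \bigoplus_{k \geq 0} k[X]^{\otimes k}$ as a graded $k$-module, for each $n \geq -1$ the $(n+1)$-fold tensor power splits as $B_n^{sym_+}(k[JX_+]) \cong \bigoplus_{k \geq 0} F_k[n]$, where $F_k[n]$ is the sub-$k$-module spanned by pure tensors of total word length $k$ (with $F_0[-1] = k$). Using the tensor notation for morphisms in $\Delta S_+$---namely, each $\alpha \co [k-1] \to [n]$ is a tensor of $n+1$ monomials in $x_0,\dots,x_{k-1}$ with each variable appearing exactly once---I would identify
\[
  F_k \;\cong\; k[\mathrm{Mor}_{\Delta S_+}([k-1], -)] \otimes_{k\Sigma_k} k[X^k]
\]
as $\Delta S_+$-modules, where $\Sigma_k$ acts diagonally by pre-composition on the representable factor and by permutation on $X^k$; for $k = 0$ this identifies $F_0$ with the trivial module $\underline{k}$. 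Verifying that the splitting is truly $\Delta S_+$-equivariant is the main bookkeeping step, but it is really a repackaging of the recipe ``substitute the $x_i$ into the tensor notation and multiply out in $k[JX_+]$''.

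Since both the coend and $H_*$ commute with direct sums, it suffices to compute the contribution of each $F_k$ separately. For $k \geq 1$, tensoring with the representable $k[\mathrm{Mor}_{\Delta S_+}([k-1], -)]$ collapses the coend by the co-Yoneda lemma to yield
\[
  k[N(-\setminus \Delta S_+)] \otimes_{\Delta S_+} F_k \;\cong\; k[N([k-1]\setminus \Delta S_+)] \otimes_{k\Sigma_k} k[X^k] \;=\; k[\widetilde{X}_{k-1}],
\]
whose homology is $H_*(\widetilde{X}_{k-1}; k)$. For $k = 0$, the identity-trick reduction already exploited in Cor.~\ref{cor.HS_of_k} gives $k[N(-\setminus \Delta S_+)] \otimes_{\Delta S_+} \underline{k} \cong k[N(\Delta S_+)] = k[\widetilde{X}_{-1}]$. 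Reindexing $n = k-1$ and summing yields $HS_*(k[JX_+]) \cong \bigoplus_{n \geq -1} H_*(\widetilde{X}_n; k) = H_*\bigl(\coprod_{n \geq -1} \widetilde{X}_n;\, k\bigr)$.

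The principal technical obstacle is verifying the $\Delta S_+$-equivariant decomposition of $F_k$; once this is in place, the remainder is formal coend/representable-functor calculus. A minor care point is the apparent mismatch between the set-valued bar construction (with $B_{-1}^{sym_+}M = \emptyset$) and the linear version (with $B_{-1}^{sym_+}(k[M]) = k$); this difference is precisely absorbed into the $\widetilde{X}_{-1} = N(\Delta S_+)$ summand, which records the ``empty word'' contribution from the unit of $JX_+$.
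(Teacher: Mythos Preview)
Your proof is correct and follows essentially the same strategy as the paper: split $B_*^{sym_+}(k[JX_+])$ by total word length, identify the length-$k$ piece with a representable $\Delta S_+$-module $k[\mathrm{Mor}_{\Delta S_+}([k-1],-)]$ balanced over $\Sigma_k$ against $k[X^k]$, and then collapse the coend. The only difference is packaging---where you invoke co-Yoneda, the paper writes down the bijection by hand, constructing for each basic tensor $u$ an explicit $\Delta$-morphism $\zeta_u$ and reduced tensor $u'$ with $(\zeta_u)_*(u')=u$; this is precisely the inverse of your substitution map, so the two arguments are the same proof at different levels of abstraction.
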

\begin{proof}
  Using the simplicial set $N(- \setminus \Delta S_+) \times_{\Delta S_+}
  B^{sym_+}_*JX_+$, a typical generator has the form,
  \begin{equation}\label{eqn.typ_gen}
    \left(
    \begin{diagram}
      \node[4]{[p]}
      \arrow{sw}
      \arrow{swww}
      \arrow{s,r}{\alpha}\\
      \node{[q_n]}
      \node{\cdots}
      \arrow{w,t}{\beta_n}
      \node{[q_1]}
      \arrow{w,t}{\beta_2}
      \node{[q_0]}
      \arrow{w,t}{\beta_1}
    \end{diagram}
    \right) \otimes u
  \end{equation}
  where
  \[
    u = \left(\bigotimes_{x\in X_0}x\right) \otimes
              \left(\bigotimes_{x\in X_1}x\right) \otimes \ldots \otimes
              \left(\bigotimes_{x\in X_p}x\right),
  \]
  and $X_0, X_1, \ldots, X_p$ are finite ordered lists of elements of
  $X$.  The idea is to ``expand'' the element $u$ into a tensor of
  individual elements of $X$, and at the same time to get rid of the
  units that may be present.  Indeed, each $X_j$ may be thought of as
  an element of the set product $X^{m_j}$ for some $m_j$.  If $X_j =
  \emptyset$, then set $m_j = 0$.  We use the convention that an empty
  tensor product is equal to $1_k$ (think of this as the disjoint
  basepoint appended to $X$), and say that the corresponding
  tensor factor is {\it trivial}.  Now, let $m = \left(\sum m_j
  \right) - 1$.  Let $\pi \co X^{m_0} \times X^{m_1} \times \ldots
  \times X^{m_p} \longrightarrow X^{m+1}$ be the evident isomorphism.
  Let $X_m = \pi( X_0, X_1, \ldots, X_p )$.
  
  {\bf Case 1.} If $u$ is non-trivial (\textit{i.e.}, $X_m \neq \emptyset$), 
  then construct the element
  \[
    u' = \bigotimes_{x \in X_m}x
  \]
  Next, construct a $\Delta$-morphism $\zeta_u \co [m] \to [p]$ as follows:  
  For each $j$, $\zeta_u$ maps the points
  \[
    \sum_{i=0}^{j-1} m_i, \left(\sum_{i=0}^{j-1} m_i\right)+1, \ldots,
    \left(\sum_{i=0}^{j} m_i\right) - 1 \;\mapsto\; j
  \]
  Observe, $(\zeta_u)_*(u') = u$. Under $\Delta S_+$-equivalence, 
  expression~\ref{eqn.typ_gen} is equivalent to
  \begin{equation}
    \left(
    \begin{diagram}
      \node[4]{[m]}
      \arrow{sw}
      \arrow{swww}
      \arrow{s,r}{\alpha\zeta_u}\\
      \node{[q_n]}
      \node{\cdots}
      \arrow{w,t}{\beta_n}
      \node{[q_1]}
      \arrow{w,t}{\beta_2}
      \node{[q_0]}
      \arrow{w,t}{\beta_1}
    \end{diagram}
    \right) \otimes u'
  \end{equation}
  The choice of $u'$ and $\zeta_u$ is well-defined with respect to the
  $\Delta S_+$-equivalence up to isomorphism of $[m]$ (an element of
  $\Sigma_{m+1}^{\mathrm{op}}$).  This shows that any such non-trivial
  chain may be written uniquely as an element of $\ds{ N([m] \setminus
    \Delta S_+) \times_{k\Sigma_{m+1}^\mathrm{op}} X^{m+1}}$.  An
  example should clarify this process.  If $u = (x_{i_0} \otimes
  x_{i_1}) \otimes 1 \otimes (x_{i_2} \otimes x_{i_3}) \in
  \left(T(k[X])\right)^{\otimes 3}$, then $u' = x_{i_0} \otimes
  x_{i_1} \otimes x_{i_2} \otimes x_{i_3}$ and $\zeta_u = x_0 x_1
  \otimes 1 \otimes x_2$ (written in tensor notation -- these $x_i$'s
  are formal variables, not elements of $X$).  Clearly, $\zeta_u(u') =
  u$.
  
  {\bf Case 2.} If $u$ is trivial, then $u = 1_k^{\otimes(p + 1)}$,
  and we have 
  \[
    ([q_n] \gets \ldots \gets [q_0] \stackrel{\alpha}{\gets} [p])
    \otimes u \approx ([q_n] \gets \ldots \gets [q_0]
    \stackrel{\iota_{q_0}}{\gets} [-1]) \otimes 1_k.
  \]
  This element can be identified uniquely with the chain 
  $[q_n] \gets \ldots \gets [q_0] \in N(\Delta S_+)$.
  
  Thus, the isomorphism~(\ref{eq.TX-decomp}) is proven.  Note that the
  total number of non-trivial tensor factors is preserved under
  $\Delta S$ morphisms.  This shows that the differential respects the
  direct sum decomposition.
\end{proof}

\begin{rmk}
  This proof works the same using $\Delta S$ rather than $\Delta S_+$,
  by replacing the argument of Case 2 with the ``identity trick'',
  \[
    ([q_n] \gets \ldots \gets [q_0] \stackrel{\alpha}{\gets} [p])
    \otimes u \approx ([q_n] \gets \ldots \gets [q_0]
    \stackrel{\mathrm{id}}{\gets} [q_0]) \otimes 1_k^{\otimes(q_0+1)}.
  \]
  However the extra structure of $\Delta S_+$ will be necessary to
  prove the $E_{\infty}$-structure needed for the results in the next
  section.
\end{rmk}

Breaking normal convention, let $X^0$ stand for the one-point set,
$\{ \ast \}$, (or $\{ 1_k \}$, which signifies the purpose of this
set).  Then the content of Lemma~\ref{lem.HS_tensoralg} can
be more concisely,

\begin{equation}\label{eq.JX-decomp}
  HS_*(k[JX_+]) \cong \bigoplus_{n \geq -1} H_*\left( N\left([n]
  \setminus \Delta S_+\right) \times_{\Sigma_{n+1}^\mathrm{op}}
  X^{n+1}; k\right)
\end{equation}

\begin{rmk}
  It will be useful to work with a resolution of the monoid $M$ by
  free monoids.  We obtain an equivalence, $B_*(J, J, M) \to M_*$,
  where $M_*$ is the simplicial monoid consisting of $M$ in all
  degrees and whose faces and degeneracies are all $\mathrm{id}_M$.
\end{rmk}

The next lemma will provide an essential link in the computation
of $HS_*(k[M])$.

\begin{lemma}\label{lem.E-infty-algebra}
  Let $\mathscr{N} \stackrel{def}{=} N(- \setminus \Delta S_+)
  \times_{\Delta S_+} B^{sym_+}_*J$.  There is an equivalence of
  functors $\Theta \co \mathscr{N} \stackrel{\simeq}{\to} C_{\infty}$,
  where $C_{\infty}$ is the monad associated to the
  $E_{\infty}$-operad, $\mathcal{C}_{\infty}$.  Moreover, $\Theta$
  induces an equivalence
  \begin{equation}\label{eqn.equiv_of_bars}
    \Theta_* \co B_*(\mathscr{N}, J, M) \to B_*(C_{\infty}, J, M)
  \end{equation}
\end{lemma}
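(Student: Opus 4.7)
The plan is to exhibit $\mathscr{N}$ as the monad of a simplicial operad $\mathcal{E} = \{\mathcal{E}_n\}_{n\ge 0}$ with $\mathcal{E}_n := N([n]\setminus \Delta S_+)$, verify that $\mathcal{E}$ is an $E_\infty$-operad, and conclude that its associated monad is equivalent to $C_\infty$. First I would apply the case analysis from the proof of Lemma~\ref{lem.HS_tensoralg} at the level of simplicial sets (with $M = JX_+$, natural in the pointed set $X_+$); the identification of each $u$ with a pair $(u',\zeta_u)$ carries through verbatim and yields a natural isomorphism
\[
\mathscr{N}(X) \;\cong\; N(\Delta S_+) \,\sqcup\, \coprod_{n \geq 0} \mathcal{E}_n \times_{\Sigma_{n+1}^{\mathrm{op}}} X^{n+1},
\]
which is exactly the form of the monad associated to an operad with spaces $\mathcal{E}_n$ (the $n=-1$ summand being the unit piece contributed by the empty word).

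Next I would construct the operad structure on $\{\mathcal{E}_n\}$ and verify the $E_\infty$ property. The permutative product $\odot$ on $\Delta S_+$ (Prop.~\ref{prop.deltaSpermutative}) together with the block transpositions $\gamma_{n,m}$ supplies composition maps $\mathcal{E}_n \times \mathcal{E}_{k_0} \times \cdots \times \mathcal{E}_{k_n} \to \mathcal{E}_{k_0 + \cdots + k_n + n}$, with the unit and equivariance axioms following formally from the exponent identities $(1.h)'$--$(3.v)'$. Contractibility of each $\mathcal{E}_n$ is immediate: the under-category $[n]\setminus \Delta S_+$ has $\mathrm{id}_{[n]}$ as an initial object (any morphism out of $\mathrm{id}_{[n]}$ in the under-category is uniquely determined by the target arrow $\alpha:[n]\to[m]$), so its nerve deformation-retracts to a point. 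Freeness of the $\Sigma_{n+1}^{\mathrm{op}}$-action on $\mathcal{E}_n$ follows from the fact that a morphism in $\Delta S$ is uniquely determined by a set map together with the orderings of its fibres, so $\alpha\sigma = \alpha$ forces $\sigma = \mathrm{id}$; freeness on objects propagates to all simplices of the nerve since each simplex is pinned by its vertex $c_0$. Consequently $\{\mathcal{E}_n\}$ is an $E_\infty$-operad whose associated monad is $\mathscr{N}$.

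To finish, I would transfer the equivalence to bar constructions. The product operad $\mathcal{E}\times \mathcal{C}_\infty$ is again $E_\infty$ and receives levelwise equivalences to both factors; passing to associated monads and applying the two-sided bar construction $B_*(-,J,M)$ yields a zigzag of equivalences
\[
B_*(\mathscr{N}, J, M) \xleftarrow{\;\simeq\;} B_*\bigl((\mathcal{E}\times \mathcal{C}_\infty)\text{-monad},\, J,\, M\bigr) \xrightarrow{\;\simeq\;} B_*(C_\infty, J, M),
\]
using the standard fact that $B_*(-,J,M)$ preserves degreewise weak equivalences of monads acting compatibly on a fixed pair (May~\cite{M}, Ch.~11--12). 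The chief obstacle is the bookkeeping: one must verify that the operadic composition derived from $\odot$ satisfies associativity, unit, and equivariance axioms \emph{and} agrees with the monadic multiplication on $\mathscr{N}$ coming from the coend definition. The exponent identities restated in Prop.~1.6 make this verification essentially formal once conventions on left/right actions and on the direction of distinguished arrows are fixed, but considerable care is required to keep those conventions aligned throughout.
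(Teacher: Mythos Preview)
Your decomposition of $\mathscr{N}X$, the contractibility of each $N([n]\setminus\Delta S_+)$ via the initial object $\mathrm{id}_{[n]}$, and the freeness of the $\Sigma_{n+1}^{\mathrm{op}}$-action are all correct and are exactly the ingredients the paper uses. The gap is in the next step: you assume that the spaces $\mathcal{E}_n = N([n]\setminus\Delta S_+)$ assemble into an honest operad whose associated monad is $\mathscr{N}$, and then invoke the product-operad trick. The paper explicitly warns (in the remark immediately following the lemma) that this fails: $\mathscr{N}$ ``turns out not to be a full-fledged operad, since it fails the right-unit condition.'' Concretely, the candidate composition you sketch from $\odot$ has to be compatible, on higher simplices, with \emph{both} the nerve face maps and the coend identifications defining $\mathscr{N}$; when you try to substitute the (degenerate) unit $\mathrm{id}_{[0]}$ into each slot of a $q$-simplex $(\alpha;\beta_1,\ldots,\beta_q)$, the targets of the $\beta_i$ do not line up with the $\odot$-product of the unit chains, and you do not recover the original simplex. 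So the ``essentially formal'' verification you anticipate does not go through, and with it the product-operad zigzag collapses.

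The paper sidesteps the issue entirely. It never claims $\mathscr{N}$ is an operad or a monad; in $B_*(\mathscr{N},J,M)$ the only structure used is the left $J$-action $\mathscr{N}J\to\mathscr{N}$ coming from $J^2\to J$. The paper produces a \emph{direct} natural transformation $\Theta:\mathscr{N}\to\mathscr{D}$, where $\mathscr{D}$ is the monad of the operad $\{E\Sigma_n\}$, by sending a $0$-simplex $(\phi,g)\in N([n-1]\setminus\Delta S_+)$ to $g\in\Sigma_n^{\mathrm{op}}$ and then $g\mapsto g^{-1}$ to land in $E\Sigma_n$. Contractibility and freeness (your observations) make this a levelwise equivalence. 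For the bar construction, only the $0$-face $\mathscr{N}J\to\mathscr{N}$ depends on $\mathscr{N}$ (the other faces and degeneracies involve only $J$ and $M$), and that face commutes with $\Theta$ simply by naturality of $\Theta$ in $X$. No operad axioms for $\mathscr{N}$ are needed, and no zigzag through a product operad is required.
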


\begin{rmk}
  $\mathscr{N}$ itself turns out not to be a full-fledged operad,
  since it fails the right-unit condition.  However, in order to prove
  Fiedorowicz's Theorems below, all that we require is an equivalence
  of functors which preserves the simplicial maps of the bar
  construction.
\end{rmk}

\begin{proof}
  By definition,
  \[
    \mathscr{N}X = \coprod_{n\geq 0} N\left([n-1] \setminus \Delta
    S_+\right) \times_{\Sigma_{n}^\mathrm{op}} X^{n},
  \]
  which has been re-indexed to begin at $n = 0$.  Let $\mathscr{D}$
  denote the operad with $\mathscr{D}(n) = E\Sigma_n$.  Since
  $\mathscr{D}$ is an $E_{\infty}$-operad, and the monads associated
  to different $E_{\infty}$-operads are equivalent, it suffices to
  show the equivalence, $\mathscr{N}X \simeq \mathscr{D}X$.  

  Note, the $0$-chains (objects) of $N\left([n-1] \setminus \Delta
  S_+\right)$ are pairs $(\phi, g) \in \mathrm{Mor}_{\Delta_+}([n-1],
  [m]) \times \Sigma_{n}^{\mathrm{op}}$, so the $\Sigma_n$-action is
  free.  Since $N\left([n-1] \setminus \Delta S_+\right)$ is a
  contractible simplicial set, there is an equivalence induced on
  $0$-chains by the map sending $(\phi, g) \mapsto g$:
  \[
    B\left( [n-1] \setminus \Delta S_+\right) \stackrel{\simeq}{\to}
    E\Sigma_n^{\mathrm{op}},
  \]
  where $B\mathscr{C}$ is the classifying space of a category
  $\mathscr{C}$.  The map $g \to g^{-1}$ provides an equivalence
  $E\Sigma_n^{\mathrm{op}} \simeq E\Sigma_n$.

  These equivalences induce
  \begin{equation}\label{eqn.equiv_to_show}
    \coprod_{n\geq 0} N\left([n-1] \setminus \Delta
    S_+\right) \times_{\Sigma_{n}^{\mathrm{op}}} X^{n}
    \simeq \coprod_{n \geq 0} E_*\Sigma_n^{\mathrm{op}}
    \times_{\Sigma_n^{\mathrm{op}}} X^n
    \simeq
    \coprod_{n \geq 0} E_*\Sigma_n
    \times_{\Sigma_n} X^n
  \end{equation}
  Eqn.~(\ref{eqn.equiv_to_show}) gives the equivalence of functors,
  $\mathscr{N}\to C_{\infty}$.  For the second part of the lemma,
  we need only consider the $0$-faces of each bar construction
  in Eqn.~(\ref{eqn.equiv_of_bars}), since
  the other face maps and all of the degeneracies only depend on the
  monad $J$ and the monoid $M$.  The $0$-faces correspond to
  the transformations $\mathscr{N}J \to \mathscr{N}$ on the one hand,
  and $\mathscr{D}J \to \mathscr{D}$ on the other.  Since the map
  $\mathscr{N}X \to \mathscr{D}X$ is natural in $X$, the diagram
  below commutes:  
  \[
    \begin{diagram}
    \node{\mathscr{N}JX}
    \arrow{e,t}{\simeq}
    \arrow{s}
    \node{\mathscr{D}JX}
    \arrow{s}
    \\
    \node{\mathscr{N}X}
    \arrow{e,t}{\simeq}
    \node{\mathscr{D}X}
    \end{diagram}
  \]
  Thus, the equivalence passes to the bar construction with no problem.
\end{proof}

%%%%%%%%%%%%%%%%%%%%%%%%%%%%%%%%%%%%%%%%%%%%%%%%%%%%%%%%%%%%%%%%%%%%%%%%%%%%%%%%
\subsection{Fiedorowicz's Theorems}\label{sec.symhommonoid}  

The results of this section have been known for some time now, but not
previously published.  
Fiedorowicz studied
the symmetric homology of monoid algebras and group algebras~\cite{F}.  
All proofs of this section are either based on or inspired by the
corresponding proofs given in that preprint. The
following theorem is the first major result, which serves as a
stepping stone to the group-algebra case.
\begin{theorem}\label{thm.HS_monoidalgebra}
  $HS_*(k[M]) \cong H_*\left(B(C_{\infty}, C_1, M); k\right)$, where
  $C_1$ is the little $1$-cubes monad and $C_{\infty}$ is the little
  $\infty$-cubes monad.
\end{theorem}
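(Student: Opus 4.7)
My plan is to replace $k[M]$ by a resolution by tensor algebras, compute the symmetric homology of each tensor algebra via Lemma~\ref{lem.HS_tensoralg}, and then recognize the total complex as a two-sided monadic bar construction using Lemma~\ref{lem.E-infty-algebra}.

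First, I would use the resolution $B_*(J,J,M) \to M_*$ of $M$ by free monoids noted in the remark above. Linearizing gives a simplicial $k$-algebra $k[B_*(J,J,M)]$ weakly equivalent to the constant simplicial algebra $k[M]$, and each simplicial degree is the tensor algebra $T^{n+1}(k[M]) \cong k[J^{n+1}M]$ on a free $k$-module, so Lemma~\ref{lem.HS_tensoralg} applies degreewise. Applying the symmetric homology complex of Theorem~\ref{thm.SymHom_plusComplex} then produces a bisimplicial $k$-module whose diagonal, by a standard Eilenberg--Zilber/bisimplicial argument, computes $HS_*(k[M])$.

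Second, by Lemma~\ref{lem.HS_tensoralg} the column at simplicial degree $n$ is chain-equivalent to $k[\mathscr{N}(J^n M)]$. I would then match the horizontal face maps, which descend from the James-monad multiplications $\theta_{J^i M}$ and the augmentation $\theta \co J M \to M$, against the simplicial structure maps of the two-sided bar construction $B_*(\mathscr{N},J,M)$. Once this comparison is in hand, the diagonal chain complex is chain-equivalent to $B_*(\mathscr{N},J,M)$, giving $HS_*(k[M]) \cong H_*(B_*(\mathscr{N},J,M);k)$.

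Third, Lemma~\ref{lem.E-infty-algebra} provides the equivalence $\Theta_* \co B_*(\mathscr{N},J,M) \stackrel{\simeq}{\to} B_*(C_\infty,J,M)$. Finally, since the James monad $J$ and the little $1$-cubes monad $C_1$ are weakly equivalent $A_\infty$-monads on based spaces (both modeling $\Omega\Sigma$), the standard principle that a two-sided monadic bar construction is homotopy-invariant under an equivalence of the middle monad yields $B_*(C_\infty,J,M) \simeq B_*(C_\infty,C_1,M)$, and passing to $k$-coefficient homology completes the argument.

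The main obstacle I expect is Step~2: verifying that the fiberwise identifications of Lemma~\ref{lem.HS_tensoralg} assemble into a coherent bisimplicial object matching $B_*(\mathscr{N},J,M)$. This reduces to checking that the ``expansion'' construction in the proof of Lemma~\ref{lem.HS_tensoralg} is natural in monoid maps in a way that intertwines the James multiplications $\theta_{J^iM}$ with the face maps of $\mathscr{N}$ and $B_*^{sym_+}$. A secondary subtlety is the replacement $J \leadsto C_1$ in the final step, which requires care about basepoints and the interaction of the middle-monad equivalence with the $A_\infty$-module structure on $M$.
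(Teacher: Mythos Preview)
Your proposal is correct and follows essentially the same route as the paper: resolve $M$ by $B_*(J,J,M)$, identify the result with $B_*(\mathscr{N},J,M)$, apply Lemma~\ref{lem.E-infty-algebra}, and finish with $J\simeq C_1$. The one difference is packaging: where you work bisimplicially and worry about assembling the degreewise identifications of Lemma~\ref{lem.HS_tensoralg} into a simplicial map (your Step~2 obstacle), the paper phrases everything via homotopy colimits and invokes May's Lemma~9.7 (from \emph{The Geometry of Iterated Loop Spaces}) to obtain directly
\[
  \hocolim_{\Delta S_+} B_*^{sym_+}B_*(J,J,M)\;\cong\; B_*\bigl(\hocolim_{\Delta S_+} B_*^{sym_+}J,\,J,\,M\bigr)\;=\;B_*(\mathscr{N},J,M).
\]
This is exactly the naturality check you flagged, handled by a general lemma rather than by hand; so your concern is real but already has a clean off-the-shelf resolution.
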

\begin{proof}
  The arguments become clearer if we use homotopy colimits throughout.
  The link is provided by the following fact of homological algebra:
  \[
    {\colim_{\mathscr{C}}}^{(*)}F = H_*\left( \hocolim_{\mathscr{C}}F \right)
  \]
  In our case, we are interested in $\mathscr{C}= \Delta S_+$ and $F =
  B_*^{sym_+}M$.  By Section~\ref{sub.resolution_barconstruction}, we
  can replace $M$ by its resolution $B_*(J, J, M)$.  
  So there is an equivalence,
  \[
    \hocolim_{\Delta S_+} B_*^{sym_+}M \simeq \hocolim_{\Delta S_+} 
    B_*^{sym_+}B_*(J, J, M)
  \]
  Now using Lemma 9.7 of~\cite{M},
  \[
    \hocolim_{\Delta S_+} B_*^{sym_+}B_*(J, J, M) \cong
    B_*\left( \hocolim_{\Delta S_+} B_*^{sym_+}J, J, M\right)
    = B_*\left( \mathscr{N}, J, M \right)
  \]
  By Lemma~\ref{lem.E-infty-algebra} the latter is equivalent to $B_*(
  C_{\infty}, J, M)$.  The proof is completed by using the equivalence
  $J \simeq C_1$.
\end{proof}

As we turn our attention to group algebras, the following lemma
will be necessary.

\begin{lemma}\label{lem.top_monoid}
  For any topological monoid, $M$, there is a natural equivalence,
  \begin{equation}\label{eqn.equivalence}
    B(\Omega^{\infty}S^{\infty}, C_1, M) \simeq \Omega\Omega^{\infty}
    S^{\infty}(BM)
  \end{equation}
\end{lemma}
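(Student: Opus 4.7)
The plan is to identify both sides with the infinite loop space $Q(\Omega BM)$, where $Q = \Omega^{\infty}S^{\infty}$, via three successive equivalences.

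First, I would show $B(\Omega^{\infty}S^{\infty}, C_1, M) \simeq Q(M^+)$, where $M^+$ denotes the group completion of $M$. This is an instance of May's infinite loop space machine \cite{M}: the composite of monad maps $C_1 \simeq J \to C_{\infty} \to \Omega^{\infty}\Sigma^{\infty}$ equips $Q$ with a $C_1$-functor structure, and the resulting two-sided bar construction $B(Q, C_1, M)$ is by the recognition principle an infinite loop space whose underlying space is weakly equivalent to $Q$ applied to the group completion of $M$. Equivalently, the natural augmentation $B(Q, C_1, M) \to Q(M)$ becomes an equivalence after group-completing $M$, while the left-hand side is already group-complete by construction.

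Second, I would apply the classical group completion theorem for topological monoids: the canonical map $M \to \Omega BM$ is a group completion, so $M^+ \simeq \Omega BM$. Combining with the first step gives $B(\Omega^{\infty}S^{\infty}, C_1, M) \simeq Q(\Omega BM)$. Third, I would commute $\Omega$ past $Q$: for any connected pointed space $Y$, the canonical map $Q(\Omega Y) \to \Omega Q(Y)$ is a weak equivalence, as one sees by comparing the defining sequences $\colim_n \Omega^n\Sigma^n\Omega Y$ and $\colim_n \Omega^{n+1}\Sigma^n Y$ term by term via the evident adjunction units. Taking $Y = BM$ (connected since it is a classifying space) yields $Q(\Omega BM) \simeq \Omega Q(BM) = \Omega\Omega^{\infty}S^{\infty}(BM)$. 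Chaining the three equivalences, all of which are natural in $M$, proves the lemma.

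The main obstacle is the first step, which is not a formal manipulation but requires the full strength of May's recognition principle together with the group-completion property of the functor $B(\Omega^{\infty}S^{\infty}, C_1, -)$. One has to confirm that the monad structure on $Q$ inherited from $C_1 \to C_{\infty} \to Q$ is the same standard one that makes $QX$ an infinite loop space, and that the bar construction lands in group-complete objects. The naturality of each equivalence in $M$ must also be tracked carefully, since this lemma will be combined with a monoid-algebra resolution (as in Section~\ref{sec.symhommonoid}) to deduce Fiedorowicz's theorem for group algebras; steps two and three are classical but require the connectedness of $BM$, which is automatic.
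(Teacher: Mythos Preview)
Your step 3 is incorrect: the canonical map $Q(\Omega Y) \to \Omega Q(Y)$ is \emph{not} a weak equivalence, even for simply connected $Y$. The term-by-term comparison you propose uses the counit $\Sigma\Omega Y \to Y$, which is almost never an equivalence, and suspending it does not help. Take $Y = S^2$: by the James splitting $\Sigma^\infty\Omega S^2 \simeq \bigvee_{n\ge 1}\Sigma^\infty S^n$, so $\pi_2\,Q(\Omega S^2) = \bigoplus_{n\ge 1}\pi_{2}^s(S^n) = \pi_1^s \oplus \pi_0^s \cong \Z/2\Z \oplus \Z$, whereas $\pi_2\,\Omega Q(S^2) = \pi_3^s(S^2) = \pi_1^s \cong \Z/2\Z$. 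The underlying point is that $\Sigma^\infty$ is a left adjoint and has no reason to commute with the limit $\Omega$. Your step 1 is likewise suspect: for a discrete group $\Gamma$ one has $\Gamma^+ = \Gamma$, so your chain would give $B(Q,C_1,\Gamma)\simeq Q(\Gamma)$; but the lemma asserts $B(Q,C_1,\Gamma)\simeq \Omega Q(B\Gamma)$, and already $\pi_0$ disagrees for $\Gamma = \Z$ (one side is a countably infinite direct sum of copies of $\Z$, the other is $\pi_1^s(S^1)\cong\Z$).

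The paper's argument bypasses group completion entirely. It inserts a suspension and uses May's Lemma~9.7 to pull $\Omega^\infty S^\infty$ outside the bar construction: $B_*(\Omega^\infty S^\infty S, C_1, M) \cong \Omega^\infty S^\infty B_*(S, C_1, M)$. Looping and realizing gives $B(\Omega\Omega^\infty S^\infty S, C_1, M) \simeq \Omega\Omega^\infty S^\infty B(S,C_1,M)$; the left side is $B(\Omega^\infty S^\infty, C_1, M)$ since $\Omega\Omega^\infty S^\infty S \simeq \Omega^\infty S^\infty$, and the right side is identified via the Thomason--Fiedorowicz equivalence $B(S, C_1, M) \simeq BM$. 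No interchange of $Q$ with $\Omega$ is required.
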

\begin{proof}
  Consider the bar construction $B_*(S, C_1, M)$, where $S$ is 
  suspension.  May's Lemma 9.7~\cite{M} gives the isomorphism
  \[
    B_*(\Omega^{\infty}S^{\infty}S, C_1, M) \cong
    \Omega^{\infty}S^{\infty}B_*(S, C_1, M).
  \]
  Looping on both sides and taking geometric realization, we get
  \[
    B(\Omega\Omega^{\infty}S^{\infty}S, C_1, M)
    \cong
    \Omega\Omega^{\infty}S^{\infty}B(S, C_1, M).
  \]
  Now $B(\Omega\Omega^{\infty}S^{\infty}S, C_1, M) \simeq
  B(\Omega^{\infty}S^{\infty}, C_1, M)$.  Finally, by
  Thomason~\cite{T2} and Fiedorowicz~\cite{F2}, $B(S, C_1, M)$ is
  naturally equivalent to the bar construction $BM$, which yields the
  equivalence~(\ref{eqn.equivalence}).
\end{proof}

When the monoid is a group, we obtain the following important
consequence.
\begin{cor}\label{thm.HS_group-restated}
  If $\Gamma$ is a group, then
  $\ds{HS_*(k[\Gamma]) \cong H_*\left(\Omega\Omega^{\infty}S^{\infty}(B\Gamma); 
  k\right)}$.
\end{cor}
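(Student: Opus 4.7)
The plan is to combine Theorem \ref{thm.HS_monoidalgebra} with Lemma \ref{lem.top_monoid}. Applying the former to $M=\Gamma$ gives
\[
HS_*(k[\Gamma]) \cong H_*\bigl( B(C_\infty, C_1, \Gamma);\, k\bigr),
\]
while the latter provides the homotopy identification
\[
B(\Omega^\infty S^\infty, C_1, \Gamma) \simeq \Omega\Omega^\infty S^\infty(B\Gamma).
\]
It therefore suffices to produce a weak equivalence $B(C_\infty, C_1, \Gamma) \simeq B(\Omega^\infty S^\infty, C_1, \Gamma)$, using crucially the hypothesis that $\Gamma$ is a group.

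I would obtain this equivalence along the same lines as the proof of Lemma \ref{lem.top_monoid}. First, apply May's Lemma~9.7 to commute $C_\infty S$ past the bar and invoke the Thomason--Fiedorowicz equivalence $B(S, C_1, \Gamma) \simeq B\Gamma$ to get
\[
B(C_\infty S, C_1, \Gamma) \;\cong\; C_\infty B(S, C_1, \Gamma) \;\simeq\; C_\infty(B\Gamma).
\]
Because $\Gamma$ is a group the classifying space $B\Gamma$ is connected, so May's approximation theorem gives $C_\infty(B\Gamma)\simeq \Omega^\infty S^\infty(B\Gamma)$. Looping both sides,
\[
\Omega B(C_\infty S, C_1, \Gamma) \;\simeq\; \Omega\Omega^\infty S^\infty(B\Gamma).
\]
Finally, commuting the external loop past the bar construction via $\Omega B(C_\infty S, C_1, -) \simeq B(\Omega C_\infty S, C_1, -) \simeq B(C_\infty, C_1, -)$ identifies the left-hand side with $B(C_\infty, C_1, \Gamma)$, completing the chain of equivalences.

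The step requiring the most care is the final commutation. The identification $\Omega B(C_\infty S, C_1, -) \simeq B(\Omega C_\infty S, C_1, -) \simeq B(C_\infty, C_1, -)$ relies on each simplicial level being sufficiently connected to apply delooping, and the levels $C_1^n\Gamma$ need not themselves be connected simply because $\Gamma$ is a group. One way to handle this rigorously is to work directly with the natural transformation of monads $C_\infty \to \Omega^\infty S^\infty$, which on any space is May's group-completion map and is a weak equivalence on connected spaces. Because $\Gamma$ is a group, the $C_\infty$-algebra $B(C_\infty, C_1, \Gamma)$ is already grouplike (its $\pi_0$ is a group, not merely a commutative monoid), and for grouplike $E_\infty$-algebras the group-completion map is a weak equivalence. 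This yields the required equivalence $B(C_\infty, C_1, \Gamma) \simeq B(\Omega^\infty S^\infty, C_1, \Gamma)$, and combining with Lemma \ref{lem.top_monoid} finishes the proof.
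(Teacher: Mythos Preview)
Your proposal is correct, and your final paragraph is precisely the paper's argument: because $\Gamma$ is a group, $B(C_\infty, C_1, \Gamma)$ is grouplike, hence the group-completion map $C_\infty \to \Omega^\infty S^\infty$ gives $B(C_\infty, C_1, \Gamma) \simeq B(\Omega^\infty S^\infty, C_1, \Gamma)$, and Lemma~\ref{lem.top_monoid} finishes. The middle paragraph (commuting loops past the bar and invoking the approximation theorem on $B\Gamma$) is an unnecessary detour that, as you yourself note, runs into connectivity issues at the simplicial levels; the paper skips it entirely and goes straight to the grouplike argument you give at the end.
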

\begin{proof}
  From Thm.~\ref{thm.HS_monoidalgebra}, $HS_*(k[\Gamma]) \cong
  H_*\left(B(C_{\infty}, C_1, \Gamma); k\right)$.  Since $\Gamma$ is a
  group, $B(C_{\infty}, C_1, \Gamma)$ is grouplike, so $B(C_{\infty},
  C_1, \Gamma) \stackrel{\simeq}{\to} B(\Omega^{\infty}S^{\infty},
  C_1, \Gamma)$.  Lemma~\ref{lem.top_monoid} then applies.
\end{proof}

The analysis of $HS_*(k[\Gamma])$ then becomes interesting in its
own right.
We shall have more specific to say about $HS_1(k[\Gamma])$ 
in section~\ref{sub.2-torsion}.

%%%%%%%%%%%%%%%%%%%%%%%%%%%%%%%%%%%%%%%%%%%%%%%%%%%%%%%%%%%%%%%%%%%%%%%%%%%%%%%%
\section{The Category $\mathrm{Epi}\Delta S$ and a Smaller Resolution}
%%%%%%%%%%%%%%%%%%%%%%%%%%%%%%%%%%%%%%%%%%%%%%%%%%%%%%%%%%%%%%%%%%%%%%%%%%%%%%%%
\label{sec.epideltas}
The complexes for computing $H_*(\Delta S,\,B_*^{sym}A)$ or
$H_*(\Delta S_+,\,B_*^{sym_+}A)$ are extremely large and unwieldy for
computation.  Fortunately, when the algebra $A$ is equipped with an
augmentation, $\epsilon \co A \to k$, we may use a more manageable
subcomplex of $C_*(\Delta S_+,\,B_*^{sym_+}A)$, related to the
subcategory of $\Delta S_+$ consisting only of epimorphisms.

%%%%%%%%%%%%%%%%%%%%%%%%%%%%%%%%%%%%%%%%%%%%%%%%%%%%%%%%%%%%%%%%%%%%%%%%%%%%%%
\subsection{Basic and Reduced Tensors}

Recall, if $A$ has an augmentation $\epsilon$, then there is an
augmentation ideal $I$ and the exact sequence $0 \to I \to A
\stackrel{\epsilon}{\to} k \to 0$ splits as $k$--modules.  So $A \cong
I \oplus k$, and every $x \in A$ can be decomposed uniquely as $x = a
+ \lambda$ for some $a \in I$, $\lambda \in k$.
\begin{definition}\label{def.B_JA}
  Define $B_{-1,\emptyset}A = k$.  For $n \geq 0$, if $J \subseteq [n]$, define
  \[
    B_{n,J}A \stackrel{def}{=} B^J_0 \otimes B^J_1 \otimes \ldots
    \otimes B^J_n, \quad \textrm{where}\;\; B^J_j =
    \left\{\begin{array}{ll} I & \textrm{if $j \in J$}\\ k &
    \textrm{if $j \notin J$}
                 \end{array}\right.
  \]  
\end{definition}

\begin{rmk} For each $n \geq -1$, there is a direct sum decomposition 
  of $k$--modules, $\ds{B_n^{sym_+}A \cong \bigoplus_{J \subseteq [n]}
    B_{n,J}A}$.
\end{rmk}

\begin{definition}\label{def.basictensors}
  A {\it basic tensor} is any tensor $w_0 \otimes w_1 \otimes \ldots
  \otimes w_n$, where each $w_j$ is in $I$ or is equal to the unit of
  $A$.  Call a tensor factor $w_j$ \textit{trivial} if it is the unit
  of $A$.  If all factors of a basic tensor are trivial, then the
  tensor is called {\it trivial}, and if no factors are trivial, the
  tensor is called {\it reduced}.
\end{definition}

For a basic
tensor $Y \in B_n^{sym_+}A$, we shall define a map $\delta_Y \in
\mathrm{Mor}\Delta_+$ as follows: If $Y$ is trivial, let $\delta_Y =
\iota_n$.  Otherwise, $Y$ has $\overline{n} + 1$ non-trivial factors
for some $\overline{n} \geq 0$.  Define $\delta_Y \co [\overline{n}] \to
[n]$ to be the unique injective map that sends each point $0, 1,
\ldots, \overline{n}$ to a point $p \in [n]$ such that $Y$ is
non-trivial at factor~$p$.  Let $\overline{Y}$ be the tensor obtained
from $Y$ by omitting all trivial factors if such exist, or
$\overline{Y} = 1_k$ if $Y$ is trivial.  Note, $\overline{Y}$ is the
unique basic tensor such that $(\delta_Y)_*(\overline{Y}) = Y$.

\begin{prop}\label{prop.BsymI}
  Any chain $([q_n] \gets [q_{n-1}] \gets \cdots \gets [q_0] \gets
  [q]) \otimes Y \;\in\; k[N(-\setminus \Delta S_+)] \otimes_{\Delta
    S_+} B_*^{sym_+}A$, where $Y$ is a basic tensor, is equivalent to
  a chain $([q_n] \gets [q_{n-1}] \gets \cdots \gets [q_0] \gets
  [\overline{q}]) \otimes \overline{Y}$, where either $\overline{Y}$
  is reduced or $\overline{Y} = 1_k$ and $\overline{q} = -1$.
\end{prop}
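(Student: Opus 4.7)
The plan is to apply the coend equivalence defining $\otimes_{\Delta S_+}$ a single time, using the morphism $\delta_Y \in \mathrm{Mor}\Delta_+$ already constructed just before the statement to absorb all trivial factors of $Y$ into the chain. First I would verify the identity $(\delta_Y)_*(\overline{Y}) = Y$ inside $B_q^{sym_+}A$. In the non-trivial case, $\delta_Y \co [\overline{q}] \to [q]$ is the order-preserving injection in $\Delta_+ \subset \Delta S_+$; in the tensor notation of Definition~\ref{def.symbar}, this morphism has a single variable $x_i$ in each tensor factor corresponding to a non-trivial position of $Y$, and a $1$ (empty product) in each tensor factor corresponding to a trivial position. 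Applying the functor $B_*^{sym_+}A$ therefore produces the map that takes $\overline{Y}$ and inserts the unit $1_A$ of $A$ at precisely the trivial positions of $Y$, which is $Y$ by the construction of $\overline{Y}$ and $\delta_Y$. In the trivial case, $\delta_Y = \iota_q \co [-1] \to [q]$, and by Definition~\ref{def.symbar_plus} the map $(\iota_q)_* \co k \to A^{\otimes(q+1)}$ is the unital inclusion, so $(\iota_q)_*(1_k) = 1_A^{\otimes(q+1)} = Y$.

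With this identity in hand, the coend equivalence $\chi \otimes f_*(x) \approx f^*\chi \otimes x$ gives
\[
  \bigl([q_n]\gets\cdots\gets[q_0]\gets[q]\bigr) \otimes Y
  \;\approx\; \delta_Y^*\bigl([q_n]\gets\cdots\gets[q_0]\gets[q]\bigr) \otimes \overline{Y}.
\]
The contravariant action $\delta_Y^*$ on $N(-\setminus\Delta S_+)$ is precomposition at the object slot of the under-category, so it merely replaces the incoming morphism $\alpha \co [q] \to [q_0]$ by $\alpha \circ \delta_Y \co [\overline{q}] \to [q_0]$ while leaving the backbone $[q_n]\stackrel{\beta_n}{\gets}\cdots\stackrel{\beta_1}{\gets}[q_0]$ untouched. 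This produces the desired element $([q_n]\gets\cdots\gets[q_0]\gets[\overline{q}]) \otimes \overline{Y}$.

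Finally I would check that $\overline{Y}$ has the required form. In the non-trivial case $\overline{Y}$ is the tensor of the non-trivial factors of $Y$; since each of these lies in $I$, the result is reduced. In the trivial case $\overline{Y} = 1_k \in B_{-1}^{sym_+}A = k$ and $\overline{q} = -1$; the new incoming morphism $\alpha \circ \iota_q$ is automatically $\iota_{q_0}$ by uniqueness of morphisms out of $[-1]$ in $\Delta S_+$, so the resulting chain lies in the expected summand. There is no genuine obstacle here; the only step that requires any care is the tensor-notation verification of $(\delta_Y)_*(\overline{Y}) = Y$, and that is essentially a restatement of how the functor $B_*^{sym_+}A$ inserts units on morphisms of $\Delta_+$.
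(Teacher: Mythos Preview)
Your proposal is correct and follows exactly the same approach as the paper: apply the coend equivalence once using the already-defined morphism $\delta_Y$ and the identity $(\delta_Y)_*(\overline{Y}) = Y$, which replaces the incoming morphism $\phi$ by $\phi\delta_Y$ and the tensor $Y$ by $\overline{Y}$. Your write-up is simply more detailed than the paper's two-line proof, spelling out the verification of $(\delta_Y)_*(\overline{Y}) = Y$ in both the non-trivial and trivial cases and noting that in the trivial case $\phi\circ\iota_q = \iota_{q_0}$ by uniqueness.
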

\begin{proof}
  Let $\delta_Y$ and $\overline{Y}$ be defined as above, and let
  $[\overline{q}]$ be the domain of $\delta_Y$.  Then $Y =
  (\delta_Y)_*(\overline{Y})$, and
  \begin{eqnarray*}
    \lefteqn{([q_n] \gets [q_{n-1}] \gets \cdots \gets [q_0] 
      \stackrel{\phi}{\gets} [q])\otimes Y}\\
    &\approx&
    ([q_n] \gets [q_{n-1}] \gets \cdots \gets [q_0] \stackrel{\phi\delta_Y}
    {\longleftarrow} [\overline{q}]) \otimes \overline{Y}
  \end{eqnarray*}
\end{proof}
  
%%%%%%%%%%%%%%%%%%%%%%%%%%%%%%%%%%%%%%%%%%%%%%%%%%%%%%%%%%%%%%%%%%%%%%%%%%%%%%
\subsection{Reducing to Epimorphisms}
We now turn our attention to the morphisms in the chains.  Our goal is
to reduce to those chains that involve only epimorphisms.

\begin{definition}
  Let $\mathscr{C}$ be a category.  The category $\mathrm{Epi}\mathscr{C}$ 
  (resp.~$\mathrm{Mono}\mathscr{C}$) is the subcategory of $\mathscr{C}$ 
  consisting of the same objects as $\mathscr{C}$ and only those morphisms 
  $f \in \mathrm{Mor}\mathscr{C}$ that are epic (resp.~monic).
  The set of morphisms of $\mathrm{Epi}\mathscr{C}$ from $X$ to $Y$ may be 
  denoted $\mathrm{Epi}_{\mathscr{C}}(X, Y)$.  Similarly, the set of morphisms 
  of $\mathrm{Mono}\mathscr{C}$ from $X$ to $Y$ may be denoted 
  $\mathrm{Mono}_{\mathscr{C}}(X, Y)$.  
\end{definition}
\begin{rmk}
  A morphism $\alpha = (\phi, g) \in \mathrm{Mor}\Delta S_+$ is epic
  (resp.~monic) if and only if $\phi$ is epic (resp.~monic) as
  morphism in $\Delta_+$.
\end{rmk}

\begin{prop}\label{prop.decomp}
  Any morphism $\alpha \in \mathrm{Mor}\Delta S_+$ decomposes uniquely as 
  $(\eta, \mathrm{id}) \circ \gamma$, where $\gamma \in \mathrm{Mor}(
  \mathrm{Epi}\Delta S_+)$ and $\eta \in \mathrm{Mor}(\mathrm{Mono}\Delta_+)$.
\end{prop}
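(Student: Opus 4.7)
The plan is to reduce the claim to the classical epi-mono factorization in $\Delta_+$ and then transport the permutation data onto the epic factor. Given $\alpha = (\phi, g)$ with $\phi \co [n] \to [m]$ in $\Delta_+$ and $g \in \Sigma_{n+1}^{\mathrm{op}}$, I would first produce the unique image factorization $\phi = \eta\pi$ in $\Delta_+$: letting $J = \phi([n]) \subseteq [m]$ with order-preserving enumeration $j_0 < j_1 < \cdots < j_r$, take $\pi \co [n] \to [r]$ to send $i$ to the unique $k$ with $\phi(i) = j_k$, and take $\eta \co [r] \to [m]$ to send $k$ to $j_k$. Then $\pi$ is epic, $\eta$ is monic, and $\eta\pi = \phi$. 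The edge case $n = -1$ is immediate: the only epimorphism out of $[-1]$ is $\mathrm{id}_{[-1]}$, and $\iota_m$ is already monic, so the decomposition $\iota_m = \iota_m \circ \mathrm{id}_{[-1]}$ is forced.

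For existence in $\Delta S_+$, I would set $\gamma = (\pi, g)$ (which is epic in $\Delta S_+$ since $\pi$ is) and pair $\eta$ with the identity permutation to obtain $(\eta, \mathrm{id}) \in \mathrm{Mor}(\mathrm{Mono}\Delta_+)$. Using the composition formula $(\phi_1, g_1) \circ (\phi_2, g_2) = (\phi_1 \phi_2^{g_1}, g_1^{\phi_2} g_2)$ together with properties $(3.v)'$ and $(3.h)'$, which give $\pi^{\mathrm{id}} = \pi$ and $\mathrm{id}^{\pi} = \mathrm{id}$, the composite computes as
\[
  (\eta, \mathrm{id}) \circ (\pi, g) = (\eta \cdot \pi^{\mathrm{id}},\, \mathrm{id}^{\pi} \cdot g) = (\eta\pi, g) = (\phi, g) = \alpha.
\]

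For uniqueness, suppose $\alpha = (\eta_1, \mathrm{id}) \circ (\pi_1, g_1) = (\eta_2, \mathrm{id}) \circ (\pi_2, g_2)$ with each $\pi_i$ epic and $\eta_i$ monic. Applying the composition formula as above, both sides equal $(\eta_i \pi_i, g_i)$, so we immediately get $g_1 = g_2$ and $\eta_1 \pi_1 = \eta_2 \pi_2$ in $\Delta_+$. Uniqueness of the image factorization in $\Delta_+$ then forces $\eta_1 = \eta_2$ and $\pi_1 = \pi_2$, and hence $\gamma_1 = \gamma_2$. The main thing to watch is the permutation bookkeeping in the composition law, but since the monic factor is constrained to lie in $\mathrm{Mono}\Delta_+$ (i.e.\ with trivial permutation component), the exponent terms collapse via $(3.h)'$ and $(3.v)'$, so no real obstacle arises beyond invoking the classical image factorization in $\Delta_+$.
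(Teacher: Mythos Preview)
Your proof is correct and follows essentially the same approach as the paper: both reduce to the classical epi-mono image factorization in $\Delta_+$, place the permutation datum entirely on the epic factor, and deduce uniqueness from the uniqueness of that factorization in $\Delta_+$ after observing that the trivial permutation on the monic factor makes the exponent terms collapse. Your version is in fact slightly more explicit in citing $(3.h)'$ and $(3.v)'$ for the composition computation, but there is no substantive difference.
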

\begin{proof}
  Suppose $\alpha$ has source $[-1]$ and target $[n]$.  Then $\alpha =
  \iota_n$ is the only possibility, and this decomposes as $\iota_n
  \circ \mathrm{id}_{[-1]}$.  Now suppose the source of $\alpha$ is
  $[p]$ for some $p \geq 0$.  Write $\alpha = (\phi, g)$, with $\phi
  \in \mathrm{Mor}_\Delta([p], [r])$ and $g \in
  \Sigma_{p+1}^{\mathrm{op}}$.  Suppose $\phi$ hits $q+1$ distinct
  points in $[r]$.  Then there is a morphism $\pi \co [p] \to [q]$
  induced by $\phi$ by maintaining the order of the points hit.  Let
  $\eta$ be the obvious order-preserving monomorphism $[q] \to [r]$ so
  that $\eta \pi = \phi$ as morphisms in $\Delta$.  To get the
  required decomposition in $\Delta S$, use: $\alpha = (\eta,
  \mathrm{id}) \circ (\pi, g)$.
  
  Now, if $(\xi, \mathrm{id})\circ (\psi, h)$ is also a decomposition
  of $\alpha$, with $\xi$ monic and $\psi$ epic, then $(\xi,
  \mathrm{id}) \circ (\psi, h) = (\eta, \mathrm{id}) \circ (\pi, g)$
  implies $(\xi \psi, g^{-1}h) = (\eta\pi, \mathrm{id})$, proving $g =
  h$.  Uniqueness will then follow from uniqueness of such
  decompositions entirely within the category $\Delta$.  The latter
  follows from Theorem~B.2 of~\cite{L}.
\end{proof}
This decomposition will be written:  $[p] \to [r] \;=\; [p] \twoheadrightarrow
\mathrm{im}([p] \to [r]) \hookrightarrow [r]$, and we call the
{\it epimorphism construction}, the rule $\mathscr{E}_p( [p] \to [r] )
= [p] \twoheadrightarrow \mathrm{im}([p] \to [r])$.
\begin{prop}\label{prop.epiconstruction-functorial}
  The epimorphism construction is a functor $\mathscr{E}_p \co [p] \setminus 
  \Delta S_+ \to [p] \setminus \mathrm{Epi}\Delta S_+$.
\end{prop}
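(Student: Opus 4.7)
The plan is to leverage the unique epi–mono factorization from Proposition~\ref{prop.decomp} twice: once to define the functor on objects (already done in the statement preceding the proposition), and once more to define it on morphisms. An object of $[p]\setminus\Delta S_+$ is a morphism $\alpha\co [p]\to[r]$, and $\mathscr{E}_p$ sends it to the epi part $\pi\co [p]\twoheadrightarrow[q]$ of the decomposition $\alpha=\eta\circ\pi$. A morphism in $[p]\setminus\Delta S_+$ from $\alpha=\eta\pi$ to $\alpha'=\eta'\pi'$ is a $\Delta S_+$-morphism $\beta\co [r]\to[r']$ with $\beta\alpha=\alpha'$; the target morphism of $\mathscr{E}_p$ should be an epimorphism $\gamma\co [q]\to[q']$ with $\gamma\pi=\pi'$.

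To produce $\gamma$, I would apply Proposition~\ref{prop.decomp} to the composite $\beta\eta\co [q]\to[r']$, obtaining a unique factorization $\beta\eta=\eta''\circ\pi''$ with $\pi''\co [q]\twoheadrightarrow[q'']$ epic and $\eta''\co [q'']\hookrightarrow[r']$ monic. Then
\[
\eta'\pi' \;=\; \alpha' \;=\; \beta\alpha \;=\; \beta\eta\pi \;=\; \eta''\pi''\pi.
\]
Since epimorphisms in $\Delta S_+$ are closed under composition, $\pi''\pi$ is epic, and $\eta''$ is monic, so the uniqueness clause of Proposition~\ref{prop.decomp} applied to $\alpha'$ forces $\eta''=\eta'$ (hence $[q'']=[q']$) and $\pi''\pi=\pi'$. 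I would then set $\mathscr{E}_p(\beta)\co =\pi''$; by construction $\pi''$ is an epimorphism and satisfies the required triangle condition over $[p]$.

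Functoriality then follows again from uniqueness. For the identity $\beta=\mathrm{id}_{[r]}$, the factorization of $\mathrm{id}_{[r]}\circ\eta=\eta$ is $\eta\circ\mathrm{id}_{[q]}$, so $\mathscr{E}_p(\mathrm{id})=\mathrm{id}_{[q]}$. For composition, given $\beta_1\co \alpha\to\alpha'$ and $\beta_2\co \alpha'\to\alpha''$ with images $\pi''_1$ and $\pi''_2$ produced as above, compute
\[
(\beta_2\beta_1)\eta \;=\; \beta_2(\eta'\pi''_1) \;=\; (\beta_2\eta')\pi''_1 \;=\; \eta''(\pi''_2\pi''_1).
\]
Since $\pi''_2\pi''_1$ is epic and $\eta''$ is monic, the unique epi–mono decomposition of $(\beta_2\beta_1)\eta$ identifies $\mathscr{E}_p(\beta_2\beta_1)=\pi''_2\pi''_1=\mathscr{E}_p(\beta_2)\circ\mathscr{E}_p(\beta_1)$, as required.

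The only step needing a little care is the edge case where $[p]=[-1]$ or where the target object of $\alpha$ is $[-1]$; here the only available morphisms are the $\iota_n$, and the decomposition $\iota_n=\iota_n\circ\mathrm{id}_{[-1]}$ trivially behaves well, so the argument goes through without modification. The genuine content is packaged entirely in the uniqueness of epi–mono factorization, which is why I expect no real obstacle beyond bookkeeping; the symmetric-group components ride along without interference because Proposition~\ref{prop.decomp} already handles them inside $\Delta S_+$.
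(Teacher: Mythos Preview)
Your proposal is correct and follows essentially the same approach as the paper: both define the induced morphism as the epi part of the factorization of $\beta\eta$ (the paper writes this as ``the epimorphism induced from the map $\alpha\eta_1$''), and both rely on the uniqueness in Proposition~\ref{prop.decomp} to verify the required identities. Your write-up is in fact more explicit than the paper's, which simply asserts the composition formula $\overline{\alpha_2\alpha_1}=\overline{\alpha_2}\circ\overline{\alpha_1}$ without spelling out the uniqueness argument.
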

\begin{proof}
  If $[p] \stackrel{\beta}{\to} [r_1] \stackrel{\alpha}{\to} [r_2]$,
  then there is an induced map $\mathrm{im}([p] \to [r_1])
  \stackrel{\overline{\alpha}}{\to} \mathrm{im}([p] \to [r_2])$ making
  the diagram commute:
  \begin{equation}\label{eq.epidiagram}
    \begin{diagram}%20
    \node{ [r_1] }
    \arrow[2]{e,t}{ \alpha }
    \node[2]{ [r_2] }
    \\
    \node[2]{ [p] }
    \arrow{nw,b}{ \beta }
    \arrow{ne,b}{ \alpha\beta }
    \arrow{sw,t,A}{ \pi_1 }
    \arrow{se,t,A}{ \pi_2 }
    \\
    \node{ \mathrm{im}([p] \to [r_1]) }
    \arrow[2]{n,l,J}{ \eta_1 }
    \arrow[2]{e,b,A}{ \overline{\alpha} }  
    \node[2]{ \mathrm{im}([p] \to [r_2]) }
    \arrow[2]{n,r,J}{ \eta_2 }
    \end{diagram}
  \end{equation}
  $\overline{\alpha}$ is the epimorphism induced from the map $\alpha \eta_1$.  
  Furthermore, for morphisms $[p] \to [r_1] \stackrel{\alpha_1}{\to} [r_2] 
  \stackrel{\alpha_2}{\to} [r_3]$, we have:  $\overline{\alpha_2 \alpha_1} 
  = \overline{\alpha_2} \circ \overline{\alpha_1}.$
\end{proof}

\begin{rmk}
  If $\alpha \co [p] \to [r]$ is an epimorphism of $\Delta S_+$, then 
  $\mathscr{E}_p(\alpha) = \alpha$.
\end{rmk}

Define a variant of the symmetric bar construction using $\mathrm{Epi}
\Delta S$:
\begin{definition}
  $B_*^{sym_+}I \co \mathrm{Epi}\Delta S_+  \to k$--\textbf{Mod} is the functor 
  defined by:
  \[
    \left\{
    \begin{array}{lll}
      B_n^{sym_+}I &=& I^{\otimes (n+1)}, \quad n \geq 0, \\
      B_{-1}^{sym_+}I &=& k,
    \end{array}
    \right.
  \]
  \[  
    B_*^{sym_+}I(\alpha) \co (a_0 \otimes a_1 \otimes \ldots \otimes a_n) \mapsto
      \alpha(a_0, \ldots, a_n), \;\textrm{for $\alpha \in 
      \mathrm{Mor}(\mathrm{Epi}\Delta S_+$)}
  \]
\end{definition}
This definition makes sense since $I$ is an ideal, and $\alpha$ is required to
be epimorphic.  Note, the simple tensors $w_0 \otimes \ldots \otimes w_n$ in
$B_n^{sym_+}I$ are by definition reduced.  Consider the simplicial $k$--module:
\begin{equation}\label{epiDeltaS_complex}
  C_*(\mathrm{Epi}\Delta S_+,\,B_*^{sym_+}I) 
  = k[ N(- \setminus \mathrm{Epi}\Delta S_+) ]
  \otimes_{\mathrm{Epi}\Delta S_+} B_*^{sym_+}I
\end{equation}
There is an obvious inclusion, $f \co C_*(\mathrm{Epi}\Delta
S_+,\,B_*^{sym_+}I) \longrightarrow C_*(\Delta S_+,\,B_*^{sym_+}A)$.
Define a chain map $g$ in the opposite direction as follows.  First,
by Prop.~\ref{prop.BsymI} and observations above, we only need to
define $g$ on the chains $([q_n] \gets \cdots \gets [q_0] \gets [q])
\otimes Y$ where $Y$ is reduced (or $Y = 1_k$).  In this case, define
component maps $g(q) = N(\mathscr{E}_q) \otimes \mathrm{id}$.  A
priori, this definition is well-defined only when the tensor product
is over $k$.  We would like to assemble the maps $g(q)$ into a chain
map $g$.  In order to do this, we must show that the maps are
compatible under $\Delta S_+$-equivalence.

Suppose $v = ([p_n] \gets \ldots \gets [p_0] \stackrel{\phi\psi}{\gets} [p])
\otimes Z$ and $v' = ([p_n] \gets \ldots \gets [p_0] \stackrel{\phi}{\gets}
[q]) \otimes \psi_*(Z)$, 
where $\psi \in \mathrm{Mor}_{\Delta S_+}\left([p], 
[q]\right)$.  If $Z$ is a basic tensor, then so is $\psi_*(Z)$.  In order to 
apply $g$ to $v$ or $v'$, each must first be put into a reduced form.

{\bf Case 1} Suppose $Z$ is trivial.  Then $v$ and $v'$ both reduce to
$([p_n] \gets \ldots \gets [p_0] \gets [-1]) \otimes 1$,
hence $g(v) = g(v')$.

{\bf Case 2} Suppose $Z$ is non-trivial.  For the sake of clean notation, let
$W = \psi_*(Z)$.  Construct $\delta_Z$, $\overline{Z}$, $\delta_W$ and
$\overline{W}$ such that $Z = (\delta_Z)_*(\overline{Z})$ and $W = 
(\delta_W)_*(\overline{W})$ as in Prop.~\ref{prop.BsymI}, and reduce both 
chains:
\begin{equation}\label{eq.epi-reduction}
  \begin{diagram}%21
    \node{ ([p_n] \gets \ldots \gets [p_0] \stackrel{\phi\psi}{\longleftarrow}
      [p]) \otimes Z}
    \arrow{e,b,!}{\approx}
    \arrow{s,lr}{reduce}{\approx}
    \node{ ([p_n] \gets \ldots \gets [p_0] \stackrel{\phi}{\longleftarrow}
      [q]) \otimes W}
    \arrow{s,lr}{reduce}{\approx}
    \\
    \node{([p_n] \gets \ldots \gets [p_0] \stackrel{\phi\psi\delta_Z}
      {\longleftarrow} [\overline{p}]) \otimes \overline{Z}}
    \node{([p_n] \gets \ldots \gets [p_0] \stackrel{\phi\delta_W}
      {\longleftarrow} [\overline{q}]) \otimes \overline{W}}
  \end{diagram}
\end{equation}
Observe that number of distinct points hit by $\psi\delta_Z$ is exactly 
$\overline{q} + 1$; indeed, $W=\psi_*(Z)$ has $\overline{q} + 1$ non-trivial 
factors.  Thus, $[\overline{q}] = \mathrm{im}([\overline{p}] \to [q])$.
Now, Prop.~\ref{prop.decomp} implies that there is precisely one 
$\Delta S$-epimorphism $\gamma \co [\overline{p}] \to [\overline{q}]$ making 
Diagram~(\ref{eq.epi-decomp}) commute.
\begin{equation}\label{eq.epi-decomp}
  \begin{diagram}%22
    \node{ [p]  }
    \arrow{e,t}{\psi}
    \node{ [q] }
    \\    
    \node{ [\overline{p}] }
    \arrow{n,l}{\delta_Z}
    \arrow{e,t,.}{}
    \arrow{see,b,A}{\gamma}
    \arrow{ne,t}{\psi\delta_Z}
    \node{ [\overline{q}] }
    \arrow{n,l}{\delta_W}
    \arrow{se,t,=}{}
    \\
    \node[3]{ \mathrm{im}([\overline{p}] \to [q]) }
    \arrow{nnw,t,L}{}    
  \end{diagram}
\end{equation}
That is to say, there exists an epimorphism $\gamma$ such that
$\gamma_*(\overline{Z}) = \overline{W}$ and $\psi\delta_Z =
\delta_W\gamma$.  So we may replace the first morphism of the chain in
the lower left of Diagram~(\ref{eq.epi-reduction}) with
$\phi\delta_W\gamma$.  Then when we apply $g$ to the chain, the first
morphism becomes $\mathscr{E}_{\overline{p}} ( \phi\delta_W\gamma ) =
\mathscr{E}_{\overline{p}}( \phi\delta_W) \circ \gamma$, since
$\gamma$ is epic.  Let $\pi \stackrel{def}{=} \mathscr{E}_{\overline{p}} (
\phi\delta_W)$.  Then the result of applying $g$ to each side
Diagram~(\ref{eq.epi-reduction}) is shown below:
\[
  \begin{diagram}%23
    \node{ ([p_n] \gets \ldots \gets [p_0] \stackrel{\phi\psi\delta_Z}
      {\longleftarrow} [\overline{p}]) \otimes \overline{Z} }
    \arrow{s,l}{ g(\overline{p}) }
    \\
    \node{ (\mathrm{im}([\overline{p}] \to [p_n]) \gets \ldots \gets
      \mathrm{im}([\overline{p}] \to [p_0]) \stackrel{\pi\gamma}{\gets}
      [\overline{p}]) \otimes \overline{Z} }
  \end{diagram}
\]
\[
  \begin{diagram}
    \node{ ([p_n] \gets \ldots \gets [p_0] \stackrel{\phi\delta_W}
      {\longleftarrow}  [\overline{q}])\otimes \overline{W} }
    \arrow{s,r}{ g(\overline{q}) }
    \\
    \node{ (\mathrm{im}([\overline{q}] \to [p_n])  \gets \ldots \gets
      \mathrm{im}([\overline{q}] \to [p_0])  \stackrel{\pi}{\gets} 
      [\overline{q}]) \otimes \overline{W} } 
  \end{diagram}
\]
Observe that there is equality of objects and morphisms up to the
morphisms $\pi\gamma$ on the left and $\pi$ on the right.  Since
$\gamma$ is epic, the $\mathrm{Epi}\Delta S_+$-equivalence allows us
to transport the morphism $\gamma$ to the right of the tensor, showing
that $g(v) \approx g(v')$, hence $g$ is well-defined.

\begin{prop}\label{prop.epi}
  If $A$ has augmentation ideal $I$, then
  \[
    HS_*(A) = H_*(\mathrm{Epi}\Delta S_+,\,B_*^{sym_+}I) 
    = H_*\left(k[ N(- \setminus \mathrm{Epi}\Delta S_+) ] 
    \otimes_{\mathrm{Epi}\Delta S_+} B_*^{sym_+}I;\,k \right).
  \]
\end{prop}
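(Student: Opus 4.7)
The approach is to prove that the chain maps $f$ and $g$ constructed in the paragraphs preceding the statement induce mutually inverse isomorphisms on homology; combined with Thm.~\ref{thm.SymHom_plusComplex}, this identifies the homology of the complex in~(\ref{epiDeltaS_complex}) with $HS_*(A)$.

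First I would verify that $g$ is actually a chain map. Compatibility with the face operators $d_i$ for $i \geq 1$ follows from the functoriality of the epimorphism construction $\mathscr{E}_p$ (Prop.~\ref{prop.epiconstruction-functorial}), and compatibility with $d_0$ is essentially the commutative diagram~(\ref{eq.epi-decomp}) already used to show $g$ is well-defined. The composition $gf$ is then the identity on the nose: for chains in $C_*(\mathrm{Epi}\Delta S_+,\,B_*^{sym_+}I)$ all morphisms are already epic, so $\mathscr{E}_p$ acts as the identity (by the Remark after Prop.~\ref{prop.epiconstruction-functorial}), and reduced tensors in $I^{\otimes(n+1)}$ are already in the form $g$ produces.

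The core of the proof is producing a chain homotopy $fg \simeq \mathrm{id}$ on $C_*(\Delta S_+,\,B_*^{sym_+}A)$. For each $p \geq -1$ I would introduce the natural transformation
\[
  \tau \co \iota \circ \mathscr{E}_p \;\Longrightarrow\; \mathrm{id}_{[p] \setminus \Delta S_+}
\]
whose component at an object $(\alpha \co [p] \to [r])$ is the monomorphism $\eta \co \mathrm{im}([p]\to[r]) \hookrightarrow [r]$ from the epi-mono factorization of Prop.~\ref{prop.decomp}, viewed as a morphism in $[p]\setminus\Delta S_+$ from $\mathscr{E}_p(\alpha) = \pi$ to $\alpha$ (since $\eta\pi = \alpha$). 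Naturality of $\tau$ with respect to morphisms in $[p] \setminus \Delta S_+$ is exactly the commutativity of Diagram~(\ref{eq.epidiagram}), which follows from the uniqueness clause of Prop.~\ref{prop.decomp}. By the standard recipe (Prop.~1.2 of~\cite{Q} or~\cite{Se}), $\tau$ induces, for each $p$, a simplicial homotopy on $N([p]\setminus\Delta S_+)$ between $N(\iota \circ \mathscr{E}_p)$ and the identity; tensoring with the identity on $B_*^{sym_+}A$ yields a candidate prehomotopy on the coend.

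The main obstacle will be showing that these simplicial homotopies, which act only on the nerve component and leave the tensor factor $Y$ fixed, assemble into a chain homotopy that descends through the $\Delta S_+$-equivalence defining the coend $k[N(-\setminus\Delta S_+)] \otimes_{\Delta S_+} B_*^{sym_+}A$. The delicate point arises when the reduction step of Prop.~\ref{prop.BsymI} is invoked, replacing $(\alpha, Y)$ by $(\mathrm{id},\alpha_*(Y))$ and possibly introducing trivial tensor factors in $\alpha_*(Y)$; one must check that the inserted $\eta$'s interact correctly with this reduction. This compatibility follows from the uniqueness in Prop.~\ref{prop.decomp} together with the naturality of $\tau$, exactly as in the argument (already carried out in the excerpt) showing that $g$ passes to the quotient. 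Once the homotopy is shown to descend, $f$ and $g$ are mutually inverse quasi-isomorphisms, and Thm.~\ref{thm.SymHom_plusComplex} identifies the resulting homology with $HS_*(A)$.
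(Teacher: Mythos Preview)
Your proposal is correct and is essentially the same argument as the paper's. The paper writes down the explicit presimplicial homotopy
\[
  h_j^{(n)} \co \left([q_n] \gets \ldots \gets [q_0] \gets [q]\right)\otimes Y \;\mapsto\;
  \left([q_n] \gets \ldots \gets [q_j] \gets \mathrm{im}([q]\to[q_j]) \gets \ldots \gets \mathrm{im}([q]\to[q_0]) \gets [q]\right)\otimes Y,
\]
which is exactly the homotopy your natural transformation $\tau \co \iota\circ\mathscr{E}_p \Rightarrow \mathrm{id}$ produces on the nerve; your conceptual packaging via $\tau$ and the Segal--Quillen recipe is a cleaner way to say the same thing, and your discussion of why the homotopy descends to the coend fills in what the paper dismisses as ``routine, but tedious.''
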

\begin{proof}
  Clearly $gf = \mathrm{id}$, where $f$ is the inclusion
  mentioned above.  It remains to show
  that $fg \simeq \mathrm{id}$.  Assume $Y$ is a basic tensor in
  $B_q^{sym_+}I$.  Define maps $h_j^{(n)}$ as follows:
  \[
    h_j^{(n)} \co \left([q_n] \gets \ldots \gets [q_0] \gets [q]\right)
    \otimes Y \mapsto
  \]
  \[
    \left([q_n] \gets \dots \gets [q_j] \gets \mathrm{im}([q] \to [q_j])
    \gets \ldots \gets \mathrm{im}([q] \to [q_0]) \gets [q]\right)
    \otimes Y.
  \]
  $h_j^{(n)}$ is well-defined by the functorial properties of the
  epimorphism construction, and a routine, but tedious, verification
  shows that $h$ defines a presimplicial homotopy from $fg$ to
  $\mathrm{id}$.
\end{proof}

%%%%%%%%%%%%%%%%%%%%%%%%%%%%%%%%%%%%%%%%%%%%%%%%%%%%%%%%%%%%%%%%%%%%%%%%%%%%%%%%
\subsection{Reduced Symmetric Homology}
Denote by $B_*^{sym}I$, the restriction of $B_*^{sym_+}I$ to $\Delta
S$.  That is, $B_n^{sym}I \stackrel{def}{=} I^{\otimes(n+1)}$ for all $n \geq 0$.
Then there is a splitting of the chain complex,
\[
  C_*( \mathrm{Epi}\Delta S_+,\,B_n^{sym_+}I) =
  C_*( \mathrm{Epi}\Delta S,\,B_n^{sym}I) \oplus k[ N(\ast) ],
\]
where $\ast$ is the full subcategory of $\mathrm{Epi}{\Delta S_+}$
consisting only of the object $[-1]$.  Indeed, since there are no
epimorphisms $[-1] \to [n]$ or $[n] \to [-1]$ for $n \geq 0$, we may
think of $[-1]$ as a disconnected basepoint.  Hence, we have $HS_*(A)
\cong H_*(\mathrm{Epi}\Delta S,\,B_n^{sym}I) \oplus k_0$, where $k_0$
is the graded $k$--module consisting of $k$ concentrated in degree $0$.
\begin{definition}\label{def.reducedHS}
  The {\it reduced symmetric homology} of $A$ is defined, 
  \[
    \widetilde{H}S_*(A) \stackrel{def}{=} H_*(\mathrm{Epi}\Delta
    S,\,B_n^{sym}I).
  \]
\end{definition}

\begin{rmk}
  Reduction to epimorphisms seems to hinge on the property that $A$
  has an augmentation ideal.  This condition may be lifted (as Richter
  conjectures), if it can be shown that $N(\mathrm{Epi}\Delta S)$ is
  contractible.  As partial progress along these lines, it can be
  shown that $N(\mathrm{Epi}\Delta S)$ is simply-connected.
\end{rmk}

%%%%%%%%%%%%%%%%%%%%%%%%%%%%%%%%%%%%%%%%%%%%%%%%%%%%%%%%%%%%%%%%%%%%%%%%%%%%%%%%
\section{Interpretation of $HS_*(A)$ in terms of Group Homology}
\label{sec.specseq}
%%%%%%%%%%%%%%%%%%%%%%%%%%%%%%%%%%%%%%%%%%%%%%%%%%%%%%%%%%%%%%%%%%%%%%%%%%%%%%%%

$\mathrm{Epi}\Delta S$ is an E-I-A-category, that is, a category in
which all endomorphisms are isomorphisms and also each isomorphism
class of an object has only a single member (all isomorphism are
automorphisms).  In this section, we use results of
S\l{}omi\'nska~\cite{Sl} about E-I-A-categories to provide an
interpretation of symmetric homology as the homology of products of
symmetric groups with coefficients in certain modules.  As a
corollary, we find that when the ground ring is a field of
characteristic $0$, $HS_*(A)$ can be computed as the coinvariants of a
group action.

Fix a unital associative algebra $A$ over commutative ground ring $k$,
and assume $A$ has an augmentation, with augmentation ideal $I$.  By
Prop.~\ref{prop.epi} and definition~\ref{def.reducedHS},
\[
  \widetilde{H}S_*(A) =  H_*(\mathrm{Epi}\Delta S,\,B_*^{sym}I) 
  = {\colim_{\mathrm{Epi}\Delta S}}^{(*)} B_*^{sym}I,
\]
where ${\colim}^{(*)}$ represents the left derived functors of the
colimit.  

Let $\mathcal{S}_0$ be the category whose objects are
ordered tuples of non-negative integers, $(q_n < q_{n-1} < \cdots < q_0)$,
with a unique morphism $(q_n < \cdots < q_0) \to (q'_{n'} < \cdots < q'_0)$
if $\{ q'_{n'}, \ldots, q'_0\} \subseteq \{q_n, \ldots, q_0\}$.
Let $\mathcal{A}ut$ be the functor ($\mathcal{S}_0$-group) defined by
\[
  \mathcal{A}ut(q_n < \cdots < q_0) = \prod_{i=0}^{n}
  \Sigma^{\mathrm{op}}_{q_i + 1}
  = \prod_{i=0}^{n} \mathrm{Aut}_{\mathrm{Epi \Delta S}}([q_i])
\]
Morphisms of $\mathcal{S}_0$ map under $\mathcal{A}ut$ to the evident
projections.  Let $\mathcal{E}pi$ be the functor $\mathcal{S}_0 \to
\mathbf{Set}$ defined by
\[
  \mathcal{E}pi(q_n < \cdots < q_0) = \left\{\begin{array}{cc}
    \{ q_0 \}, &\quad \textrm{if $n = 0$}\\
    \prod_{i=1}^n \mathrm{Epi}_{\Delta S}\left( [q_{i-1}], [q_i]\right),
    &\quad \textrm{if $n \geq 1$}
  \end{array}\right.
\]
If $\{ q'_{n'}, \ldots, q'_0\} \subseteq \{q_n, \ldots, q_0\}$, then
the set map $\mathcal{E}pi(q_n < \cdots < q_0) \to \mathcal{E}pi
(q'_{n'} < \cdots < q'_0)$ is defined by the appropriate projections
and compositions of morphisms, if $n' \geq 1$, and by the unique set
map into $\{q'_0\}$ if $n = 0$.
 
There is an action of $\mathcal{A}ut(q_n < \cdots < q_0)$ on
$\mathcal{E}pi(q_n < \cdots < q_0)$ given by
\[
  \mu \co (g_n, \ldots, g_0)\otimes(\phi_n, \ldots, \phi_1) \mapsto
  (g_n \phi_n g_{n-1}^{-1}, \ldots, g_1\phi_1 g_0^{-1})
\]
The maps induced by $(q_n < \cdots < q_0) \to (q'_{n'} < \cdots <
q'_0)$ are equivariant with respect to this action in the sense that
the following diagram commutes:

\begin{equation}\label{eqn.comm_diag_A_action}
  \begin{diagram}
    \node{(g_n, \ldots, g_0) \otimes (\phi_n, \ldots, \phi_1)}
    \arrow{e,t}{\mu}
    \arrow{s}
    \node{(g_n \phi_n g_{n-1}^{-1}, \ldots, g_1\phi_1 g_0^{-1})}
    \arrow{s}
    \\
    \node{(g'_{n'}, \ldots, g'_0) \otimes (\phi'_{n'}, \ldots, \phi'_1)}
    \arrow{e,t}{\mu}
    \node{(g'_{n'} \phi'_{n'} (g'_{n'-1})^{-1}, \ldots, g'_1\phi'_1 (g'_0)^{-1})}
  \end{diagram}
\end{equation}

Now, given the augmentation ideal $I$ of $A$, define a related functor,
$\mathcal{E}pi_{I} \co \mathcal{S}_0 \to k$-$\mathbf{Mod}$ by
\[
  (q_n < \cdots < q_0) \mapsto k\left[ \mathcal{E}pi(q_n < \cdots <
  q_0)\right] \otimes_k B_{q_0}^{sym}I
\]
The effect of $\mathcal{E}pi_{I}$ on morphisms of $\mathcal{S}_0$
requires some explanation.  If $(q_n < \cdots < q_0) \to (q'_{n'} <
\cdots < q'_0)$ is a morphism of $\mathcal{S}_0$, then the map $
\mathcal{E}pi_I(q_n < \cdots < q_0) \to \mathcal{E}pi_I(q'_{n'} <
\cdots < q'_0)$ is defined on generators $(\phi_n, \ldots, \phi_1)
\otimes x$ by applying the map $\mathcal{E}pi((q_n < \cdots < q_0) \to
(q'_{n'} < \cdots < q'_0)) \otimes \phi_{r}\cdots \phi_{1}$, if $q'_0
= q_r$.  That is, morphisms ``on the left'' are thrown out; morphisms
``in the middle'' are composed; and morphisms ``on the right'' are
applied to the element of $B_{q_0}^{sym}I$ to produce an element of
$B_{q'_0}^{sym}I$.  Each $k\left[\mathcal{E}pi(q_n < \cdots <
  q_0)\right] \otimes B_{q_0}^{sym}I$ becomes an $\mathcal{A}ut(q_n <
\cdots < q_0)$--module in the obvious way.
Diagram~\ref{eqn.comm_diag_A_action} implies that there are
well-defined functors, $H_q(\mathcal{A}ut-, \mathcal{E}pi_I-) \co
\mathcal{S}_0 \to k$--\textbf{Mod}.

\begin{theorem}\label{thm.group_homology}
  With $\mathcal{A}ut$ and $\mathcal{E}pi_I$ defined as above, there is a
  spectral sequence
  \[
    {\colim_{\mathcal{S}_0}}^{(p)} H_q(\mathcal{A}ut-, \mathcal{E}pi_I-) 
    \Rightarrow \widetilde{H}S_{p+q}(A)
  \]
\end{theorem}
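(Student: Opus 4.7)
The plan is to apply Słominska's general framework for computing derived colimits of functors on E-I-categories to the specific case at hand. Since $\mathrm{Epi}\Delta S$ is an E-I-A-category in which the isomorphism class of $[q]$ is a singleton with automorphism group $\Sigma_{q+1}^{\mathrm{op}}$, the first move is to rewrite the standard complex $k[N(-\setminus \mathrm{Epi}\Delta S)]\otimes_{\mathrm{Epi}\Delta S} B_*^{sym}I$ computing $\widetilde{H}S_*(A)$ in a way that separates the data of a chain into a ``skeletal'' part (the sequence of underlying isomorphism classes) and a ``local'' part (the automorphisms and epimorphisms filling in that skeleton).

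Concretely, I would filter the chain complex by the length of the \emph{strict} sequence of isomorphism classes traversed by a chain of composable epimorphisms: the $p$-th filtration stratum consists of chains whose underlying isomorphism-class sequence, after collapsing consecutive equal objects, has length exactly $p+1$. In an E-I-A-category any morphism between two objects of the same isomorphism class must be an automorphism, so a chain with repeated classes is determined by a chain of strictly decreasing classes together with insertions of automorphisms. This yields a first-quadrant double complex whose $(p,q)$-entry at $E^1$ splits as a direct sum over tuples $(q_n < \cdots < q_0) \in \mathrm{Obj}\,\mathcal{S}_0$ of length $p+1$, with summand $k[\mathcal{E}pi(q_n<\cdots<q_0)]\otimes B_{q_0}^{sym}I = \mathcal{E}pi_I(q_n<\cdots<q_0)$, acted on by $\mathcal{A}ut(q_n<\cdots<q_0)$ via the formula $\mu$ in the statement. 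Taking group homology vertically then produces $H_q(\mathcal{A}ut-,\mathcal{E}pi_I-)$.

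With the $E^1$-page identified, the horizontal differential becomes the standard face-map alternating sum obtained by deleting an intermediate object from a tuple $(q_n<\cdots<q_0)$, which is exactly the boundary operator in the bar-type complex computing ${\colim_{\mathcal{S}_0}}^{(p)}$ of any $\mathcal{S}_0$-module. Functoriality and equivariance are exactly the content of diagram~(\ref{eqn.comm_diag_A_action}), guaranteeing that the $E^2$-page is
\[
  E^2_{p,q} = {\colim_{\mathcal{S}_0}}^{(p)} H_q(\mathcal{A}ut-, \mathcal{E}pi_I-),
\]
and convergence to $\widetilde{H}S_{p+q}(A)$ is the standard convergence of a first-quadrant filtration spectral sequence.

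The main obstacle I anticipate is the careful bookkeeping of how automorphisms are ``absorbed'' into the adjacent automorphism groups when passing from a chain in $N(\mathrm{Epi}\Delta S)$ to its skeletal shadow in $\mathcal{S}_0$. One must verify that the inner automorphism action implicit in $\mu$ (pre- and post-composition by $g_i$ and $g_{i-1}^{-1}$) matches precisely the ambiguity introduced when two consecutive objects of a chain lie in the same isomorphism class, and that this bookkeeping is compatible with the boundary operator after normalization. This is essentially the technical core imported from Słominska~\cite{Sl}; the rest of the argument is a direct identification of her general machinery with the $\mathcal{A}ut$, $\mathcal{E}pi_I$, and $\mathcal{S}_0$ introduced above.
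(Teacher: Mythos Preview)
Your proposal is correct and follows essentially the same route as the paper: both arguments are direct applications of S\l{}omi\'nska's spectral sequence for E-I-A-categories (Prop.~1.6 of~\cite{Sl}), specialized to $\mathrm{Epi}\Delta S$ with its poset of isomorphism classes $\mathcal{S}_0$ and automorphism groups $\mathcal{A}ut(Q)$. The paper's own proof is in fact terser than yours---it simply cites S\l{}omi\'nska and records the identification ${\colim_{\mathcal{S}_0}}^{(p)} {\colim_{\mathcal{G}Q}}^{(q)} \mathcal{E}pi_I(Q) \Rightarrow {\colim_{\mathrm{Epi}\Delta S}}^{(p+q)} B_*^{sym}I$---whereas you have unpacked the mechanism (filter by the length of the strict isomorphism-class chain, identify the associated graded with bar resolutions of the $\mathcal{A}ut(Q)$ acting on $\mathcal{E}pi_I(Q)$), which is exactly the content of S\l{}omi\'nska's construction.
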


\begin{proof}
  This is essentially Prop.~1.6 of~\cite{Sl}, reinterpreted in terms
  of the derived functors of $\colim$.  Compare also Prop.~1.8, where
  variance of the functor $N$ is opposite, and cohomology is computed.
  Let $Q$ stand for $(q_n < \cdots < q_0)$.  For each $Q$, let
  $\mathcal{G}Q$ be the category consisting of a single object whose
  endomorphism set is the group $\mathcal{A}ut(Q)$.  Then we obtain:
  \begin{eqnarray*}
    {\colim_{Q \in \mathcal{S}_0}}^{(p)}
    H_q(\mathcal{A}ut(Q),\mathcal{E}pi_I(Q)) &=&
    {\colim_{Q \in \mathcal{S}_0}}^{(p)} {\colim_{\mathcal{G}Q}}^{(q)}
    \left(\mathcal{E}pi_I(Q)\right)\\
    &\Rightarrow& {\colim_{\mathrm{Epi}
        \Delta{S}}}^{(p+q)} B_*^{sym}I \cong \widetilde{H}S_{p+q}A.
  \end{eqnarray*}
\end{proof}

%%%%%%%%%%%%%%%%%%%%%%%%%%%%%%%%%%%%%%%%%%%%%%%%%%%%%%%%%%%%%%%%%%%%%%%%%%%%%%
\subsection{Implications in Characteristic 0}
If $k$ is a field of characteristic 0, then for any finite group $G$
and $kG$--module $M$, $H_q( G, M ) = 0$ for all $q > 0$ (see~\cite{B},
for example).  Thus, the spectral sequence of
Thm.~\ref{thm.group_homology}, would collapse giving the following

\begin{cor}\label{cor.char_0}
  If $k$ is a field of characteristic $0$, and $A$ is a $k$-algebra,
  then
  \[
    \widetilde{H}S_p(A) = {\colim_{Q \in \mathcal{S}_0}}^{(p)} 
    \mathcal{E}pi_I(Q)_{\mathcal{A}ut(Q)},
  \]
  where $\mathcal{E}pi_I(Q)_{\mathcal{A}ut(Q)}$ is the $k$-space of
  coinvariants of $\mathcal{E}pi_I(Q)$ under the action of
  $\mathcal{A}ut(Q)$.
\end{cor}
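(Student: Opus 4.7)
The plan is to invoke Theorem~\ref{thm.group_homology} and observe that in characteristic $0$ the $E_2$-page degenerates to a single row. First I would note that for each object $Q = (q_n < \cdots < q_0)$ of $\mathcal{S}_0$, the group $\mathcal{A}ut(Q) = \prod_{i=0}^n \Sigma_{q_i+1}^{\mathrm{op}}$ is a finite group, since it is a finite product of symmetric groups. Standard group cohomology (as cited, e.g., \cite{B}) tells us that when $k$ is a field of characteristic $0$ and $G$ is a finite group, the group ring $kG$ is semisimple (Maschke's theorem), so every $kG$--module is projective and $H_q(G, M) = 0$ for all $q > 0$, while $H_0(G, M) = M_G$ is the module of coinvariants.

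Next I would apply this pointwise to the functor $H_q(\mathcal{A}ut-, \mathcal{E}pi_I-) \co \mathcal{S}_0 \to k\textrm{-}\mathbf{Mod}$ appearing on the $E_2$-page of the spectral sequence of Theorem~\ref{thm.group_homology}. The vanishing of higher group homology gives
\[
  E_2^{p,q} = {\colim_{\mathcal{S}_0}}^{(p)} H_q\bigl(\mathcal{A}ut-,\, \mathcal{E}pi_I-\bigr) = 0 \quad \textrm{for all } q > 0,
\]
while for $q = 0$ we have $H_0(\mathcal{A}ut(Q), \mathcal{E}pi_I(Q)) = \mathcal{E}pi_I(Q)_{\mathcal{A}ut(Q)}$, naturally in $Q$. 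Thus the spectral sequence is concentrated in the single row $q = 0$, so it collapses at $E_2$ with no extension problems, and $\widetilde{H}S_p(A) \cong E_2^{p,0} = {\colim_{Q \in \mathcal{S}_0}}^{(p)} \mathcal{E}pi_I(Q)_{\mathcal{A}ut(Q)}$, as claimed.

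The only thing requiring a small verification is that the assignment $Q \mapsto \mathcal{E}pi_I(Q)_{\mathcal{A}ut(Q)}$ is itself a well-defined functor on $\mathcal{S}_0$, so that the derived colimit makes sense. But this follows from the equivariance square in Diagram~\ref{eqn.comm_diag_A_action}, which guarantees that the morphisms $\mathcal{E}pi_I(Q) \to \mathcal{E}pi_I(Q')$ descend to coinvariants. There is no real obstacle here; the content of the corollary is essentially the observation that Maschke's theorem trivializes the vertical direction of the spectral sequence in Theorem~\ref{thm.group_homology}.
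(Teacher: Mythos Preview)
Your proposal is correct and follows exactly the paper's approach: the paper simply notes that for $k$ a field of characteristic $0$ and $G$ finite, $H_q(G,M)=0$ for $q>0$, so the spectral sequence of Theorem~\ref{thm.group_homology} collapses, and states the corollary without further argument. Your write-up just spells out the details (finiteness of $\mathcal{A}ut(Q)$, identification of $H_0$ with coinvariants, and functoriality of the coinvariants via Diagram~\ref{eqn.comm_diag_A_action}) that the paper leaves implicit.
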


%%%%%%%%%%%%%%%%%%%%%%%%%%%%%%%%%%%%%%%%%%%%%%%%%%%%%%%%%%%%%%%%%%%%%%%%%%%%%%%%
\section{A Second Spectral Sequence}\label{chap.spec_seq2}
%%%%%%%%%%%%%%%%%%%%%%%%%%%%%%%%%%%%%%%%%%%%%%%%%%%%%%%%%%%%%%%%%%%%%%%%%%%%%%%%

In Section~\ref{sec.specseq}, a spectral sequence was found that
essentially separates the symmetric group elements from the morphisms
comprising the chains of the resolution $C_*( \mathrm{Epi}\Delta S,\,
B_*^{sym}I)$.  What remains is still rather cumbersome.  The following
can be viewed as an attempt to separate the combinatorial structure
inherent in the category $\mathrm{Epi}\Delta S$ from the more
complicated structure generally found in $B_*^{sym}I$.  Indeed, the
$E^1$ term of our spectral sequence isolates the multiplicative
structure of $B_*^{sym}I$ into the action of the differential.

%%%%%%%%%%%%%%%%%%%%%%%%%%%%%%%%%%%%%%%%%%%%%%%%%%%%%%%%%%%%%%%%%%%%%%%%%%%%%%
\subsection{Filtering by Degree}

Consider a filtration $\mathscr{F}_*$ of $C_*(\mathrm{Epi}\Delta S,\,
B_*^{sym}I)$ given by,
\[
  \mathscr{F}_pC_q(\mathrm{Epi}\Delta S,\, B_*^{sym}I) = \bigoplus_{p
    \geq m_0 \geq \ldots \geq m_q} 
    k\left[ \prod_{i=1}^{q}\mathrm{Epi}_{\Delta S}([m_{i-1}], [m_i])\right] 
    \otimes B_{m_0}^{sym}I
\]
$\mathscr{F}_*$ filters the complex by the {\it length} of $x \in
B^{sym}_*I$.  Here, length refers to the number of tensor components
of $x$.  Tensors of $B^{sym}_{m_0}I$ are considered to have length
$m_0 + 1$.  The only face map that can potentially change the length
of $x$ is $d_0$, and since all morphisms are epic, $d_0$ can only
reduce the length.  Thus, $\mathscr{F}_*$ is compatible with the
differential.  The filtration quotients are easily described.  The
non-zero chains are those corresponding to $p = m_0$.  In other words,
the length of $x$ is exactly $p+1$.  $E^0$ splits into a direct sum
based on the product of $x_i$'s in $u = (x_0, \ldots, x_p) \in
X^{p+1}$.  For $u \in X^{p+1}$, let $P_u$ be the set of all distinct
permutations of $u$.  Then,
\[
  E^0_{p,q} = 
  \bigoplus_{u \in X^{p+1}/\Sigma_{p+1}} 
  \bigoplus_{w \in P_u}
  \left(
  \bigoplus_{p = m_0 \geq \ldots \geq m_q} 
  k\left[\prod_{i=1}^q \mathrm{Epi}_{\Delta S}([m_{i-1}],
    [m_i])\right] \otimes w\right).
\]

%%%%%%%%%%%%%%%%%%%%%%%%%%%%%%%%%%%%%%%%%%%%%%%%%%%%%%%%%%%%%%%%%%%%%%%%%%%%%%
\subsection{The Categories $\widetilde{\mathcal{S}}_p$, 
$\widetilde{\mathcal{S}}'_p$, $\mathcal{S}_p$ and $\mathcal{S}'_p$}
Before proceeding with the main theorem of this section, we must
define four related categories.  In the definitions that follow, let
$\{z_0, z_1, z_2, \ldots z_p\}$ be a set of formal non-commuting
indeterminates.
\begin{definition}
  $\widetilde{\mathcal{S}}_p$ is the category with objects formal
  tensor products $Z_0 \otimes \ldots \otimes Z_s$, where each $Z_i$
  is a non-empty product of $z_i$'s, and every one of $z_0, z_1,
  \ldots, z_p$ occurs exactly once in the tensor product.  There is a
  unique morphism $Z_0 \otimes \ldots \otimes Z_s \to Z'_0 \otimes
  \ldots \otimes Z'_t$, if and only if the tensor factors of the
  latter are products of the factors of the former in some order.  In
  such a case, there is a unique $\beta \in \mathrm{Epi}\Delta S$ so
  that $\beta_*(Z_0 \otimes \ldots \otimes Z_s) = Z'_0 \otimes \ldots
  \otimes Z'_t$.
\end{definition}
$\widetilde{\mathcal{S}}_p$ has initial objects $\sigma_*(z_0 \otimes
z_1 \otimes \ldots \otimes z_p)$, for $\sigma \in
\Sigma^{\mathrm{op}}_{p+1}$, so $N\widetilde{\mathcal{S}}_p$ is a
contractible complex.  Let $\widetilde{\mathcal{S}}'_p$ be the full
subcategory of $\widetilde{\mathcal{S}}_p$ with all objects
$\sigma_*(z_0 \otimes \ldots \otimes z_p)$ deleted.
  
Let $\mathcal{S}_p$ be a skeletal category equivalent to 
$\widetilde{\mathcal{S}}_p$.  In fact, we may make $\mathcal{S}_p$ the quotient
category, identifying each object $Z_0 \otimes \ldots \otimes Z_s$ with any 
permutation of its tensor factors, and identifying morphisms $\phi$ and $\psi$
if their source and target are equivalent.  This category has nerve 
$N\mathcal{S}_p$ homotopy-equivalent to $N\widetilde{\mathcal{S}}_p$.  Now, 
$\mathcal{S}_p$ is a poset with unique initial object, $z_0 \otimes \ldots 
\otimes z_p$.  Let $\mathcal{S}'_p$ be the full subcategory (subposet) of 
$\mathcal{S}_p$ obtained by deleting the object $z_0 \otimes \ldots \otimes 
z_p$.  Clearly, $\mathcal{S}'_p$ is a skeletal category equivalent to 
$\widetilde{\mathcal{S}}'_p$.

%%%%%%%%%%%%%%%%%%%%%%%%%%%%%%%%%%%%%%%%%%%%%%%%%%%%%%%%%%%%%%%%%%%%%%%%%%%%%%
\subsection{Main Theorem}
\begin{theorem}\label{thm.E1_NS}
There is spectral sequence converging (weakly) to $\widetilde{H}S_*(A)$ with
\[
  E^1_{p,q} \cong \bigoplus_{u\in X^{p+1}/\Sigma_{p+1}} H_{p+q}
  \left(EG_{u}\ltimes_{G_u} |N\mathcal{S}_p/N\mathcal{S}'_p|;k\right),
\]
where $G_{u}$ is the isotropy subgroup for the chosen representative of $u \in 
X^{p+1}/ \Sigma_{p+1}$.
\end{theorem}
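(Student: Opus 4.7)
My plan is to analyze the $E^0$ term given in the excerpt, decompose it by $\Sigma_{p+1}$-orbit, and identify each summand with a chain model for the stated equivariant half-smash. The first task is to describe the differential $d^0$ explicitly. Since the filtration is by length and only $d_0$ can alter length, $d^0$ consists of the inner face maps $d_1,\ldots,d_q$ (which act only on the chain of morphisms) together with the contribution of $d_0$ restricted to chains with $m_1 = p$; in the latter case $f_1$ lies in $\Sigma_{p+1}^{\mathrm{op}}$ and acts on the tensor factor $w$ by permutation. Consequently the decomposition $B^{sym}_p I \cong \bigoplus_{[u]} k[P_u]$ into $\Sigma_{p+1}^{\mathrm{op}}$-orbits is respected by $d^0$, yielding $E^0_{p,*} = \bigoplus_{[u] \in X^{p+1}/\Sigma_{p+1}} C^{(u)}_*$.

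Next, for a fixed orbit with representative $u$ and stabilizer $G_u$, I would rewrite $C^{(u)}_*$ using the categories $\widetilde{\mathcal{S}}_p$ and $\mathcal{S}_p$. Under the identification $\widetilde{\mathcal{S}}_p \cong [p]\setminus\mathrm{Epi}\Delta S$, a chain of epimorphisms from $[p]$ is the same as a chain in $\widetilde{\mathcal{S}}_p$ based at the initial object $Z_0 = z_0\otimes\cdots\otimes z_p$. Using the coend identification $(\alpha,x)\sim(\mathrm{id},\alpha_*x)$ for $\alpha\in\Sigma_{p+1}^{\mathrm{op}}$, the $\Sigma_{p+1}^{\mathrm{op}}$-action can be freely transferred between the base point of the chain and the tensor factor $w\in P_u$. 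This matches $C^{(u)}_*$ with the complex of chains in $\widetilde{\mathcal{S}}_p$ that pass through an initial object, with the $G_u$-action absorbed via a coinvariants construction. Because the skeletal equivalence $\widetilde{\mathcal{S}}_p \to \mathcal{S}_p$ is $\Sigma_{p+1}^{\mathrm{op}}$-equivariant and a weak equivalence on nerves, we may pass to $\mathcal{S}_p$; the reduced chain complex of chains through the (unique) initial object is then precisely a chain model of $|N\mathcal{S}_p/N\mathcal{S}'_p|$ with its induced $G_u$-action.

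Since the $G_u$-action on $|N\mathcal{S}_p/N\mathcal{S}'_p|$ need not be free, to recover the correct equivariant homology I would replace the trivial $k[G_u]$-module $k$ by a free $k[G_u]$-resolution $C_*(EG_u)$, yielding a quasi-isomorphism between $C^{(u)}_*$ and a chain model for $EG_u\ltimes_{G_u}|N\mathcal{S}_p/N\mathcal{S}'_p|$, with an overall degree shift of $p$ built into the identification to account for the filtration grading. Taking $H_q$ of the resulting row complex then gives $H_{p+q}(EG_u\ltimes_{G_u}|N\mathcal{S}_p/N\mathcal{S}'_p|;k)$, as claimed. The main obstacle I anticipate is this last quasi-isomorphism: carefully tracking the coend identifications, the skeletal equivalence, and the replacement by a free $G_u$-resolution, while verifying that the combined transformation is compatible with the differential $d^0$ and produces the precise $p$-fold degree shift, requires delicate bookkeeping.
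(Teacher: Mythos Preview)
Your outline follows the paper's route, but the order in which you handle the passage $\widetilde{\mathcal{S}}_p \to \mathcal{S}_p$ and the introduction of $EG_u$ is inverted, and this is where the argument would break. The identification of $C^{(u)}_*$ with the $G_u$-coinvariants of $k[N\widetilde{\mathcal{S}}_p/N\widetilde{\mathcal{S}}'_p]$ is exactly the content of Lemma~\ref{lem.G_u-identification}. The crucial observation you omit is that $G_u$ acts \emph{freely} on $N\widetilde{\mathcal{S}}_p/N\widetilde{\mathcal{S}}'_p$: every nondegenerate chain there begins at some $\sigma_*(z_0\otimes\cdots\otimes z_p)$, and distinct $\sigma$ give distinct initial vertices. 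Freeness is precisely what allows the strict orbit complex to compute Borel homology, i.e.\ $H_*(\mathscr{M}_u)\cong H_*\big(EG_u\ltimes_{G_u}|N\widetilde{\mathcal{S}}_p/N\widetilde{\mathcal{S}}'_p|\big)$. Only \emph{after} this step does the paper replace $\widetilde{\mathcal{S}}_p$ by the skeletal $\mathcal{S}_p$, and it does so at the level of Borel constructions (Prop.~\ref{prop.weak-eq}), where an equivariant weak equivalence of the underlying spaces is enough.

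Your plan instead passes to $\mathcal{S}_p$ first and then, noticing that the action there is no longer free, proposes to ``replace $k$ by a free resolution $C_*(EG_u)$.'' But $C^{(u)}_*$ is already a concrete complex of strict coinvariants; there is no way to retroactively insert a resolution into an object that has already been formed, and strict coinvariants do not preserve quasi-isomorphisms when the action is not free. The fix is to exploit freeness on the $\widetilde{\mathcal{S}}_p$ side before touching $\mathcal{S}_p$. Finally, there is no genuine $p$-fold degree shift: the chain isomorphism of Lemma~\ref{lem.G_u-identification} is degree-preserving when both sides are indexed by total simplicial degree, so the appearance of $H_{p+q}$ in the statement simply reflects the standard spectral-sequence convention $E^0_{p,q}=\mathscr{F}_pC_{p+q}/\mathscr{F}_{p-1}C_{p+q}$.
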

Recall, for a group $G$, right $G$-space X, and left $G$-space $Y$, $X
\ltimes_G Y$ denotes the \textit{equivariant half-smash product}.  If
$\ast$ is a chosen basepoint for $Y$ having trivial $G$-action, then
$X \ltimes_G Y \stackrel{def}{=} (X \times_G Y)/(X \times_G \ast) = X
\times Y/\approx$, with equivalence relation defined by $(x.g , y)
\approx (x, g.y)$ and $(x, \ast) \approx (x', \ast)$ for all $x, x'
\in X$, $y \in Y$ and $g \in G$ (cf.~\cite{M4}).  In our case, $X$ is
of the form $EG$, with canonical underlying complex $E_*G$, equipped
with a right $G$-action, $(g_0, g_1, \ldots, g_n).g = (g_0, g_1,
\ldots, g_ng)$.

Observe, both $N\widetilde{\mathcal{S}}_p$ and $N\widetilde{\mathcal{S}}'_p$ 
carry a left $\Sigma_{p+1}$-action (hence also a $G_u$-action).  The action is
defined on $0$-chains $Z_0 \otimes \ldots \otimes Z_s$ by permutation of
the individual indeterminates, $z_0, z_1, \ldots, z_p$.  This action extends
to $n$-chains in the straightforward manner.

Define for each $u \in X^{p+1}/\Sigma_{p+1}$, the following subcomplex of 
$E^0_{p,q}$:
\[
  \mathscr{M}_u \stackrel{def}{=}
  \bigoplus_{w \in P_u} \left(
  \bigoplus_{p = m_0 \geq \ldots \geq m_q} 
  k\left[ \prod_{i=1}^q \mathrm{Epi}_{\Delta S}
  \left([m_{i-1}], [m_i]\right)\right] \otimes w\right)
\]
\begin{lemma}\label{lem.G_u-identification}
  There is a chain-isomorphism, $\left(N\widetilde{\mathcal{S}}_p/
  N\widetilde{\mathcal{S}}'_p\right)/G_u \stackrel{\cong}{\to} \mathscr{M}_u$.
\end{lemma}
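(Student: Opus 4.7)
The plan is to construct an explicit basis bijection and then check it intertwines the differentials. First I would observe that a nondegenerate $q$-simplex $\chi = (Z^{(0)} \to \cdots \to Z^{(q)})$ of $N\widetilde{\mathcal{S}}_p$ survives in the quotient by $N\widetilde{\mathcal{S}}'_p$ if and only if some $Z^{(i)}$ is an initial object $\sigma_*(z_0 \otimes \cdots \otimes z_p)$; since morphisms of $\widetilde{\mathcal{S}}_p$ only combine or permute tensor factors, this forces $Z^{(0)}$ itself to be initial for a unique $\sigma \in \Sigma^{\mathrm{op}}_{p+1}$. The defining property of $\widetilde{\mathcal{S}}_p$ then identifies each morphism $Z^{(i-1)} \to Z^{(i)}$ with a unique $\beta_i \in \mathrm{Epi}_{\Delta S}([m_{i-1}], [m_i])$ satisfying $m_0 = p$ and $m_i \leq m_{i-1}$, so $\chi$ is uniquely encoded by the tuple $(\sigma; \beta_1, \ldots, \beta_q)$.

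Next, I would analyze the $G_u$-action on this parametrization. A permutation $g \in G_u$ of the indeterminates sends the initial object $\sigma_*(z_0 \otimes \cdots \otimes z_p)$ to $(g\sigma)_*(z_0 \otimes \cdots \otimes z_p)$; unpacking the tensor-notation composition rule recorded after Figure~\ref{diag.morphism-tensor} shows the intrinsic morphism data $(\beta_1, \ldots, \beta_q)$ is unaffected, since the combinatorial rule ``combine these tensor positions into one factor'' does not depend on which indeterminates sit at those positions. Therefore $G_u$-orbits of surviving chains are parameterized by $(G_u\sigma; \beta_1, \ldots, \beta_q)$, and the map $\sigma \mapsto \sigma.u$ furnishes a bijection $G_u \backslash \Sigma^{\mathrm{op}}_{p+1} \cong P_u$. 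I would then define $\Phi \co [(\sigma; \beta_1, \ldots, \beta_q)] \mapsto (\beta_1, \ldots, \beta_q) \otimes \sigma.u$, which is a bijection onto the basis of $\mathscr{M}_u$ described in the text.

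The remaining task is to verify $\Phi$ commutes with the face operators. The cases $d_i$ for $1 \leq i < q$ (composing adjacent morphisms) and $d_q$ (truncating) agree on both sides formally. The essential case is $d_0$: on $\mathscr{M}_u \subseteq E^0_{p,q}$, having passed to the filtration quotient $\mathscr{F}_p/\mathscr{F}_{p-1}$, the face $d_0$ vanishes unless $\beta_1$ is length-preserving, i.e., $\beta_1 \in \Sigma^{\mathrm{op}}_{p+1}$, in which case it sends $(\beta_1, \ldots, \beta_q) \otimes w$ to $(\beta_2, \ldots, \beta_q) \otimes \beta_1.w$. On the source side, deleting $Z^{(0)}$ leaves a chain in the quotient precisely when $Z^{(1)}$ is still initial, equivalently when $\beta_1$ is an isomorphism; in that case the new initial object records $\sigma' = \beta_1 \sigma$ and $\Phi$ produces $\beta_1.(\sigma.u) = (\beta_1\sigma).u$, matching exactly. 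The hard part is the invariance claim in the second step: verifying that $G_u$ does not alter the morphism labels $\beta_i$ requires careful use of the $\Delta S$-composition rules, since the source and target of each $\beta_i$ are relabeled simultaneously under $g$; once that is established, the rest is routine bookkeeping.
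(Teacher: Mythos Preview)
Your proposal is correct and follows essentially the same approach as the paper: both construct the bijection by recording the initial permutation $\sigma$ (equivalently, the paper's $C_0$) together with the string of $\mathrm{Epi}\Delta S$-morphisms, send this data to $(\beta_q,\ldots,\beta_1)\otimes \sigma(u)$, and verify compatibility with the face maps, the only substantive case being $d_0$. The invariance of the labels $\beta_i$ under the $G_u$-action, which you flag as the hard part, is in fact immediate once you note (as the paper does) that the action is $C_j \mapsto C_j g$ with the connecting morphisms $\phi_i$ unchanged, since $\beta$ acts on tensor \emph{positions} and so commutes with relabeling of indeterminates; also, under the paper's conventions the action on $\sigma$ is by right multiplication $\sigma\mapsto \sigma g$, so the relevant coset space is $\Sigma_{p+1}^{\mathrm{op}}/G_u$ rather than $G_u\backslash\Sigma_{p+1}^{\mathrm{op}}$, though this is only a notational adjustment.
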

\begin{proof}
  Let $C_j$ denote objects of $\widetilde{\mathcal{S}}_p$.  As above, we may
  view each $C_j$ as a morphism of $\Delta S$.  By abuse of notation, let
  $C_j$ also represent a $0$-cell of $N\widetilde{\mathcal{S}}_p/
  N\widetilde{\mathcal{S}}'_p$.  Denote the chosen representative of $u$ again
  by $u$ (We view $u =(x_{i_0}, x_{i_1}, \ldots, x_{i_p}) \in X^{p+1}/
  \Sigma_{p+1}$ as represented by a $(p+1)$-tuple whose indices are in 
  non-decreasing order).
  
  First define a map $N\widetilde{\mathcal{S}}_p \longrightarrow \mathscr{M}_u$
  on $n$-cells by: 
  \begin{equation}\label{eq.forward-map}
    \alpha_*\co \left(C_q \stackrel{\phi_q}{\gets} \ldots
    \stackrel{\phi_1}{\gets} C_0\right) \mapsto 
    (\phi_q, \ldots, \phi_1) \otimes C_0(u).
  \end{equation}
  The notation $C_0(u)$ is used in place of the more correct
  $(C_0)_*(u)$ in order to avoid clutter.  I claim $\alpha_*$ factors
  through $N\widetilde{\mathcal{S}}_p/N\widetilde{\mathcal{S}}'_p$.
  Indeed, if $C_q \stackrel{\phi_q}{\gets} \ldots
  \stackrel{\phi_1}{\gets} C_0 \in
  N\widetilde{\mathcal{S}}'_p$, then we cannot have $C_0 = \sigma(z_0
  \otimes \ldots \otimes z_p)$ for any symmetric group element
  $\sigma$.  That is, $C_0$, viewed as a morphism, must be strictly
  epic.  Then the length of of $C_0(u)$ is strictly less than the
  length of $u$, which would make $(\phi_q, \ldots, \phi_1) \otimes
  C_0(u)$ trivial in $E^0_{p,q}$.
    
  The map $\alpha_*$ then factors through $\left(N\widetilde{\mathcal{S}}_p/
  N\widetilde{\mathcal{S}}'_p \right)/G_u$, since if $\gamma \in G_u$, then 
  $\gamma$ corresponds to an automorphism 
  $g \in \Sigma_{p+1}^{\mathrm{op}}$, and by definition, we have:
  \begin{eqnarray*}
    \gamma.\left(C_q \stackrel{\phi_q}{\gets} \ldots \stackrel{\phi_1}{\gets}
      C_0\right)
    = \left(C_q g \stackrel{\phi_q}{\gets} \ldots \stackrel{\phi_1}{\gets}
      C_0 g\right)
    &\mapsto& (\phi_q, \ldots, \phi_1) \otimes C_0(g(u))\\
    &=& (\phi_q, \ldots, \phi_1) \otimes C_0(u)
  \end{eqnarray*}
  (Note, $g(u) = u$ follows from the fact that $\gamma \in 
  G_u$, the isotropy subgroup for $u$).

  For the map in the opposite direction, consider $(\phi_q, \ldots,
  \phi_1) \otimes w$ for $w \in P_u$.  Let $t \in
  \Sigma^{\mathrm{op}}_{p+1}$ so that $w = t(u)$.  Define a map
  sending,
  \begin{equation}\label{eq.backward-map}
    \beta_* \co (\phi_q, \ldots, \phi_1)\otimes w
    \mapsto
    \left( (\phi_q\cdots\phi_1t) \stackrel{\phi_q}{\gets} \cdots
    \stackrel{\phi_2}{\gets} \phi_1t \stackrel{\phi_1}{\gets} t \right)
  \end{equation}
  We must check that the definition of $\beta_*$ does not depend on choice of 
  $t$.  Indeed, if $w = s(u)$ also, then $u = s^{-1}t(u)$, hence $s^{-1}t \in 
  G_u^{\mathrm{op}}$.
  Thus, 
  \begin{eqnarray*}
    \left( (\phi_q\ldots\phi_1s) \stackrel{\phi_q}{\gets} \ldots 
      \stackrel{\phi_2}{\gets} \phi_1s \stackrel{\phi_1}{\gets} s \right)
    &\approx& 
    (s^{-1}t) . \left( (\phi_q \cdots\phi_1s)\stackrel{\phi_q}{\gets} \ldots
       \stackrel{\phi_2}{\gets} \phi_1s \stackrel{\phi_1}{\gets} s\right) \\
    &=& 
    \left( (\phi_q \cdots\phi_1t)\stackrel{\phi_q}{\gets} \ldots
       \stackrel{\phi_2}{\gets} \phi_1t \stackrel{\phi_1}{\gets} t\right)
  \end{eqnarray*}
  The maps $\alpha_*$ and $\beta_*$ are clearly inverse to one
  another.  All that remains is to verify that they are chain maps.
  We need only check compatibility with the zeroth face maps in either
  case, since the $i^{th}$ face maps (for $i>0$) simply compose the
  morphisms $\phi_{i+1}$ and $\phi_i$ in either chain complex.  The
  zeroth face maps of either complex will be denoted $d_0$.
  
  First consider the map $\alpha_*$.
  \[
    \begin{diagram}
      \node{ \left(C_q \stackrel{\phi_q}{\gets} \ldots \stackrel{\phi_1}{\gets}
        C_0\right)}
      \arrow[2]{e,t,T}{d_0}
      \arrow{s,l,T}{\alpha_*}
      \node[2]{ \left(C_q \stackrel{\phi_q}{\gets} \ldots \stackrel{\phi_2}
        {\gets}C_1\right)}
      \arrow{s,l,T}{\alpha_*}
      \\
      \node{ (\phi_q, \ldots, \phi_1) \otimes C_0(u) }
      \arrow{e,t,T}{d_0}
      \node{ (\phi_q, \ldots, \phi_2) \otimes \phi_1C_0(u) }
      \arrow{e,t,=}{}
      \node{ (\phi_q, \ldots, \phi_2) \otimes C_1(u)}
    \end{diagram}
  \]
  The equality in the lower right of the diagram is simply a restatement that
  $C_0 \stackrel{\phi_1}{\to} C_1$ is a morphism of $\widetilde{\mathcal{S}}_p$.
  
  For the reverse direction, assume $w = t(u)$ as above.  The following
  diagram commutes.
  \[
    \begin{diagram}
      \node{ (\phi_q, \ldots, \phi_1)\otimes w }
      \arrow{e,t,T}{d_0}
      \arrow{s,l,T}{\beta_*}
      \node{ (\phi_q, \ldots, \phi_2)\otimes \phi_1(w) }
      \arrow{s,l,T}{\beta_*}
      \\
      \node{ \left( (\phi_q \cdots\phi_1t)\stackrel{\phi_q}{\gets} \ldots
       \stackrel{\phi_2}{\gets} \phi_1t \stackrel{\phi_1}{\gets} t\right) }
      \arrow{e,t,T}{d_0}
      \node{ \left( (\phi_q \cdots\phi_1t)\stackrel{\phi_q}{\gets} \ldots
       \stackrel{\phi_2}{\gets} \phi_1t \right)}
    \end{diagram}
  \]
  The lower righthand corner deserves some explanation.  Since the
  target of $\beta_*$ should be $\left(N\widetilde{\mathcal{S}}_p/
  N\widetilde{\mathcal{S}}'_p \right)/G_u$, we see that the lower
  righthand term is trivial unless $\phi_1$ is an isomorphism.  In
  other words, we only need to prove the diagram commutes when $\phi_1
  \in \Sigma^{\mathrm{op}}_{p+1}$.  As $\beta_*$ is applied to the
  chain on the upper righthand corner, the clear choice for group
  element to begin the chain is $\phi_1t$, since $\phi_1(w) =
  \phi_1t(u)$.
\end{proof}
Using Lemma~\ref{lem.G_u-identification}, we identify $\mathscr{M}_u$
with the orbit complex
$\big(N\widetilde{\mathcal{S}}_p/N\widetilde{\mathcal{S}}'_p\big)/G_u$.
Now, the complex
$N\widetilde{\mathcal{S}}_p/N\widetilde{\mathcal{S}}'_p$ is a free
$G_u$-complex, so we have an isomorphism,
\[
  H_*\left(\left(
  N\widetilde{\mathcal{S}}_p/N\widetilde{\mathcal{S}}'_p\right)/G_u\right)\cong
  H^{G_u}_*\big(N\widetilde{\mathcal{S}}_p/N\widetilde{\mathcal{S}}'_p\big),
\]
where $H^{G_u}_*$ is $G_u$-equivariant homology.  See~\cite{B} for
details).  Then, by definition,
\[
  H^{G_u}_*\left(N\widetilde{\mathcal{S}}_p/
  N\widetilde{\mathcal{S}}'_p\right) = H_*\left(G_u,
  N\widetilde{\mathcal{S}}_p/ N\widetilde{\mathcal{S}}'_p\right),
\] 
which may be computed using the free resolution, $E_*G_u$ of $k$ as
right $G_u$--module.  The resulting complex $k[E_*G_u] \otimes_{kG_u}
k\left[N\widetilde{\mathcal{S}}_p\right]/k\left[
  N\widetilde{\mathcal{S}}'_p\right]$ is a double complex isomorphic
to the quotient,
\begin{eqnarray*}
  \lefteqn{\left(k[E_*G_u]
    \otimes_{kG_u}k\left[N\widetilde{\mathcal{S}}_p\right]\right)/
    \left(k[E_*G_u]
    \otimes_{kG_u}k\left[N\widetilde{\mathcal{S}}'_p\right]\right)}\\
    &\cong& k\left[ \left(E_*G_u \times_{G_u}
      N\widetilde{\mathcal{S}}_p\right)/ \left(E_*G_u \times_{G_u}
      N\widetilde{\mathcal{S}}'_p\right)\right].
\end{eqnarray*}
This last complex may be identified with the simplicial complex of the space,
\[
  \left(EG_u \times_{G_u} |N\widetilde{\mathcal{S}}_p|\right)/\left(EG_u 
  \times_{G_u} |N\widetilde{\mathcal{S}}'_p|\right) \cong EG_u \ltimes_{G_u} 
  |N\widetilde{\mathcal{S}}_p/N\widetilde{\mathcal{S}}'_p|.
\]

The last piece of the puzzle involves simplifying the spaces 
$|N\widetilde{\mathcal{S}}_p/N\widetilde{\mathcal{S}}'_p|$.  Since $\mathcal{S}$
is a skeletal subcategory of $\widetilde{\mathcal{S}}$, there is an equivalence
of categories $\widetilde{\mathcal{S}} \simeq \mathcal{S}$, inducing a homotopy
equivalence of complexes (hence also of spaces) $|N\widetilde{\mathcal{S}}| 
\simeq |N\mathcal{S}|$.  Note that $N\mathcal{S}$ inherits a $G_u$-action from 
$N\widetilde{\mathcal{S}}$, and the map $\widetilde{\mathcal{S}} \to 
\mathcal{S}$ is $G_u$-equivariant.  
\begin{prop}\label{prop.weak-eq}
  There are weak equivalences, $EG_u \times_{G_u} |N\widetilde{\mathcal{S}}_p| 
  \to EG_u \times_{G_u} |N\mathcal{S}_p|$ and $EG_u \times_{G_u} 
  |N\widetilde{\mathcal{S}}'_p| \to EG_u \times_{G_u} |N\mathcal{S}'_p|$, 
  inducing a weak equivalence $EG_u \ltimes_{G_u} |N\widetilde{\mathcal{S}}_p/
  N\widetilde{\mathcal{S}}'_p| \to EG_u \ltimes_{G_u}|N\mathcal{S}_p/
  N\mathcal{S}'_p|$.
\end{prop}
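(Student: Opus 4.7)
The plan is to promote the (non-equivariant) categorical equivalence $\widetilde{\mathcal{S}}_p \to \mathcal{S}_p$ to a $G_u$-equivariant weak equivalence of nerves, then apply the Borel construction termwise, and finally pass to the relative / quotient case by a standard cofibration argument.

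First I would work at the categorical level. Since $\mathcal{S}_p$ was constructed as the quotient category of $\widetilde{\mathcal{S}}_p$ identifying each object with any reordering of its tensor factors, there is a canonical quotient functor $\pi_p\co \widetilde{\mathcal{S}}_p \to \mathcal{S}_p$, which is an equivalence of categories. Its restriction $\pi'_p\co \widetilde{\mathcal{S}}'_p \to \mathcal{S}'_p$ is likewise an equivalence. The $G_u$-action on $\widetilde{\mathcal{S}}_p$ relabels the indeterminates $z_0,\dots,z_p$ uniformly inside every tensor factor, whereas the defining equivalence relation of the quotient permutes the tensor factors themselves; these two operations commute, so the action descends to $\mathcal{S}_p$ (and to $\mathcal{S}'_p$) and $\pi_p$, $\pi'_p$ become $G_u$-equivariant. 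Equivalences of categories induce homotopy equivalences of nerves (via natural transformations, cf.~\cite{Se},~\cite{Q}), yielding $G_u$-equivariant (non-equivariant) weak equivalences $|N\pi_p|\co |N\widetilde{\mathcal{S}}_p|\to |N\mathcal{S}_p|$ and $|N\pi'_p|\co |N\widetilde{\mathcal{S}}'_p|\to |N\mathcal{S}'_p|$.

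Next I would apply the Borel construction. Since $EG_u$ is a free $G_u$-CW space, the product $EG_u \times |N\pi_p|$ is a non-equivariant weak equivalence (product with a CW space preserves weak equivalences), and both sides carry free $G_u$-actions through the $EG_u$ factor, so the map of orbit spaces
\[
  EG_u\times_{G_u}|N\widetilde{\mathcal{S}}_p| \longrightarrow EG_u\times_{G_u}|N\mathcal{S}_p|
\]
is a weak equivalence; alternatively this follows from the five-lemma applied to the fibration sequence $|N(-)|\to EG_u\times_{G_u}|N(-)|\to BG_u$. The same argument handles the primed version.

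Finally, to get the half-smash statement I observe that the inclusions $|N\widetilde{\mathcal{S}}'_p|\hookrightarrow |N\widetilde{\mathcal{S}}_p|$ and $|N\mathcal{S}'_p|\hookrightarrow |N\mathcal{S}_p|$ are $G_u$-equivariant cellular inclusions, hence $G_u$-cofibrations, and remain cofibrations after Borel construction. In the resulting commutative square of cofibrations the two horizontal maps are weak equivalences just shown, so passing to vertical quotients gives a weak equivalence
\[
  \bigl(EG_u\times_{G_u}|N\widetilde{\mathcal{S}}_p|\bigr)\big/\bigl(EG_u\times_{G_u}|N\widetilde{\mathcal{S}}'_p|\bigr)
  \xrightarrow{\;\simeq\;}
  \bigl(EG_u\times_{G_u}|N\mathcal{S}_p|\bigr)\big/\bigl(EG_u\times_{G_u}|N\mathcal{S}'_p|\bigr),
\]
and each side is canonically identified with the corresponding equivariant half-smash product $EG_u \ltimes_{G_u}|N(-)/N(-)'|$. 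The main obstacle is the bookkeeping verification that the $G_u$-action (permuting letters) truly descends to the quotient $\mathcal{S}_p$ (which quotients by permutation of tensor factors), and that $\pi_p$ is equivariant for these two a priori distinct symmetries; once this compatibility is set up cleanly, the rest is a routine application of the homotopy invariance of the Borel construction and of cofiber sequences.
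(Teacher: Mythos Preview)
Your argument is correct and in fact cleaner than the paper's. The key point you exploit---that $EG_u \times X$ is a \emph{free} $G_u$-CW complex regardless of how $G_u$ acts on $X$, so that passing to orbits preserves weak equivalences---lets you handle all $p$ uniformly. The paper instead invokes the fibration sequence $X \to EG_u \times_{G_u} X \to BG_u$ and the five-lemma, which requires the fibre $X$ to be path-connected; it therefore spends most of the proof constructing explicit edge-paths in $|N\mathcal{S}'_p|$ for $p>2$ and then treats $p=0,1,2$ by separate ad~hoc arguments (for $p=2$ the two connected components of $|N\mathcal{S}'_2|$ are analysed by hand). Your ``alternatively'' remark about the fibration sequence is exactly the paper's route, and as stated it would inherit the same connectedness obligation---so it is your first argument, via freeness of the diagonal action, that actually carries the weight. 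The cofibration/gluing-lemma passage to the half-smash quotient and the verification that the letter-permutation action descends along the factor-permutation quotient are both fine.
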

\begin{proof}
  The case $p > 2$ will be handled first.  As long as the spaces are
  path connected, we can use the fibration sequences associated to $X
  \to EG_u \times_{G_u} X \to BG_u$ to show that a $G_u$-equivariant
  homotopy equivalence $X \simeq Y$ induces $EG_u \times_{G_u} X
  \simeq EG_u \times_{G_u} Y$.  
  $|N\widetilde{\mathcal{S}}_p|$ and $|N\mathcal{S}_p|$ are
  path-connected because they are contractible.  It suffices to
  show  $|N\mathcal{S}'_p|$ path connected, since 
  $|N\widetilde{\mathcal{S}}'_p| \simeq |N\mathcal{S}'_p|$.

  let $W_0 = z_0z_1 \otimes z_2 \otimes \ldots \otimes
  z_p$.  This represents a vertex of $N\mathcal{S}'_p$.  Suppose $W =
  Z_0 \otimes \ldots \otimes Z_i'z_0z_1Z_i'' \otimes \ldots \otimes
  Z_s$.  Then there is a morphism $W_0 \to W$, hence an edge between
  $W_0$ and $W$.
  
  Next, suppose $W = Z_0 \otimes \ldots \otimes Z_i'z_0Z_i''z_1Z_i''' \otimes 
  \ldots \otimes Z_s$.  There is a path:
  \[
    \begin{diagram}
      \node{Z_0 \otimes \ldots \otimes Z_i'z_0Z_i''z_1Z_i''' \otimes \ldots 
        \otimes Z_s}
      \arrow{s}\\
      \node{Z_0Z_1 \ldots Z_i'z_0Z_i''z_1Z_i''' \ldots Z_s}\\
      \node{Z_0Z_1 \ldots Z_i' \otimes z_0 \otimes Z_i''z_1Z_i''' \ldots Z_s}
      \arrow{n}
      \arrow{s}\\
      \node{z_0 \otimes Z_0Z_1 \ldots Z_i'Z_i''z_1Z_i''' \ldots Z_s}\\
      \node{z_0 \otimes Z_0Z_1 \ldots Z_i'Z_i'' \otimes z_1Z_i''' \ldots Z_s}
      \arrow{n}
      \arrow{s}\\
      \node{z_0z_1Z_i''' \ldots Z_s \otimes Z_0Z_1 \ldots Z_i'Z_i''}\\
      \node{W_0}
      \arrow{n}
    \end{diagram}
  \]
  
  Similarly, if $W = Z_0 \otimes \ldots \otimes Z_i'z_1Z_i''z_0Z_i''' \otimes 
  \ldots \otimes Z_s$, there is a path to $W_0$.  Finally, if
  $W = Z_0 \otimes \ldots \otimes Z_s$ with $z_0$ occurring in $Z_i$ and
  $z_1$ occurring in $Z_j$ for $i \neq j$, there is an edge to some 
  $W'$ in which $Z_iZ_j$ occurs, and thus a path to $W_0$.

  The cases $p = 0, 1$ and $2$ are handled individually:

  Observe that $|N\widetilde{\mathcal{S}}'_0|$ and $|N\mathcal{S}'_0|$ are empty
  spaces, since $\widetilde{\mathcal{S}}'_0$ has no objects.  Hence, $EG_u 
  \times_{G_u} |N\widetilde{\mathcal{S}}'_0| = EG_u \times_{G_u} 
  |N\mathcal{S}'_0| = \emptyset$.  Furthermore, any group $G_u$ must be trivial.
  Thus there is a chain of homotopy equivalences, $EG_u \ltimes_{G_u}
  |N\widetilde{\mathcal{S}}_0/N\widetilde{\mathcal{S}}'_0| \simeq 
  |N\widetilde{\mathcal{S}}_0| \simeq |N\mathcal{S}_0| \simeq EG_u \ltimes_{G_u}
  |N\mathcal{S}_p/N\mathcal{S}'_p|$.

  Next, since $|N\widetilde{\mathcal{S}}'_1|$ is homeomorphic to 
  $|N\mathcal{S}'_1|$, each space consisting of the two discrete points 
  $z_0z_1$ and $z_1z_0$ with the same group action, the proposition is true for
  $p = 1$ as well.  

  For $p=2$, observe that $|N\widetilde{\mathcal{S}}'_2|$ has two connected 
  components, $\widetilde{U}_1$ and $\widetilde{U}_2$ that are interchanged by 
  any odd permutation $\sigma \in \Sigma_3$.  Similarly, $|N\mathcal{S}'_2|$ 
  consists of two connected components, $U_1$ and $U_2$, interchanged by any odd
  permutation of $\Sigma_3$.  Now, resticted to the alternating group, $A_3$, we
  certainly have weak equivalences for any subgroup $H_u \subseteq A_3$,
  $EH_u \times_{H_u} \widetilde{U}_1 \stackrel{\simeq}{\longrightarrow} EH_u 
  \times_{H_u} U_1$ and $EH_u \times_{H_u} \widetilde{U}_2 \stackrel{\simeq}
  {\longrightarrow} EH_u \times_{H_u} U_2$.  The action of an odd permutation
  induces equivariant homeomorphisms $\widetilde{U}_1 \stackrel{\cong}
  {\longrightarrow} \widetilde{U}_2$ and $U_1 \stackrel{\cong}{\longrightarrow}
  U_2$, and so if we have a subgroup $G_u \subseteq \Sigma_3$ generated by $H_u
  \subseteq A_3$ and a transposition, then the two connected components are
  identified in an \mbox{$A\Sigma_3$-equivariant} manner.  Thus, if $G_u$ 
  contains a transposition, $EG_u \times_{G_u} |N\widetilde{\mathcal{S}}'_2| 
  \cong EH_u \times_{H_u} \widetilde{U}_1 \simeq EH_u \times_{H_u} U_1 \cong 
  EG_u \times_{G_u} |N\mathcal{S}'_2|$.  This completes the case $p=2$ and the 
  proof of Prop.~\ref{prop.weak-eq}.
\end{proof}
  
Prop.~\ref{prop.weak-eq} coupled with Lemma~\ref{lem.G_u-identification} 
produces the required isomorphism in homology, hence proving 
Thm.~\ref{thm.E1_NS}:
\[
  E^1_{p,q} \;=\; \bigoplus_{u \in X^{p+1}/\Sigma_{p+1}} H_{p+q}(\mathscr{M}_u) 
  \;\cong\; \bigoplus_{u \in X^{p+1}/\Sigma_{p+1}}H_{p+q}\left( EG_u 
  \ltimes_{G_u} |N\widetilde{\mathcal{S}}_p / N\widetilde{\mathcal{S}}'_p|; k
  \right)
\]
\[
  \cong\; \bigoplus_{u \in X^{p+1}/\Sigma_{p+1}} H_{p+q}\left(
  EG_u \ltimes_{G_u} |N\mathcal{S}_p/N\mathcal{S}'_p|; k\right).
\]
\begin{cor}\label{cor.square-zero}
If the augmentation ideal of $A$ satisfies $I^2 = 0$, then 
\[
  \widetilde{H}S_n(A) \cong \bigoplus_{p \geq 0} \bigoplus_{u \in
    X^{p+1}/\Sigma_{p+1}} H_{n}(EG_u \ltimes_{G_u}
  N\mathcal{S}_p/N\mathcal{S}'_p; k).
\]
\end{cor}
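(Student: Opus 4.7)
The plan is to show that under the hypothesis $I^2 = 0$, the filtration $\mathscr{F}_*$ used in Theorem~\ref{thm.E1_NS} splits as an internal direct sum, so the associated spectral sequence collapses at $E^1$ with no extension problem to resolve, and all higher differentials vanish for trivial reasons.

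First, I would analyze how the face maps of $C_*(\mathrm{Epi}\Delta S,\,B^{sym}_* I)$ interact with the length of the underlying tensor $x \in B^{sym}_{m_0} I$. The middle face maps $d_i$ for $i \geq 1$ merely compose adjacent morphisms in the chain $[m_q] \gets \cdots \gets [m_0]$ and leave $x$ untouched, so they preserve length. The only face map that can alter length is $d_0$, which applies the outgoing morphism $\beta_1 \co [m_0] \to [m_1]$ to $x$. If $\beta_1$ is a strict epimorphism ($m_1 < m_0$), then its tensor-notation representation has at least one tensor factor consisting of a product of two or more formal variables; evaluating on $x \in I^{\otimes(m_0+1)}$ therefore produces a monomial in which some tensor factor lies in $I^2 = 0$. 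Consequently $d_0$ annihilates every chain whose first outgoing morphism fails to be an isomorphism.

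It follows that the entire boundary operator on $C_*$ strictly preserves the length of the underlying tensor, and the filtration $\mathscr{F}_*$ decomposes as an internal direct sum
\[
C_*(\mathrm{Epi}\Delta S,\,B^{sym}_* I) \;\cong\; \bigoplus_{p \geq 0} C^{(p)}_*,
\]
where $C^{(p)}_*$ is the subcomplex spanned by chains whose tensor has length exactly $p+1$. Passing to homology gives $\widetilde{H}S_n(A) \cong \bigoplus_{p \geq 0} H_n(C^{(p)}_*)$ with no filtration quotients to extend.

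To finish, I would invoke the orbit decomposition and identifications already established in the proof of Theorem~\ref{thm.E1_NS}: splitting each $C^{(p)}_*$ along the $\Sigma_{p+1}$-orbits in $X^{p+1}/\Sigma_{p+1}$ yields $C^{(p)}_* \cong \bigoplus_{u \in X^{p+1}/\Sigma_{p+1}} \mathscr{M}_u$, and Lemma~\ref{lem.G_u-identification} together with Proposition~\ref{prop.weak-eq} identify the homology of $\mathscr{M}_u$ with $H_*(EG_u \ltimes_{G_u} N\mathcal{S}_p/N\mathcal{S}'_p;\,k)$. Assembling these gives the stated isomorphism. The main (and essentially only) obstacle is verifying the vanishing of $d_0$ on length-reducing faces, which reduces to the elementary observation that any $\Delta S$-epimorphism strictly lowering dimension must group at least two tensor variables together, forcing the resulting monomial to contain a factor in $I^2$.
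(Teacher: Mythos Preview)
Your proposal is correct and follows essentially the same approach as the paper. Both arguments hinge on the observation that when the first outgoing morphism is a strict epimorphism, applying it to a tensor in $I^{\otimes(m_0+1)}$ forces at least one factor into $I^2=0$; the paper phrases the consequence as ``$d^0$ agrees with $d$, so the spectral sequence collapses at $E^1$,'' while you phrase it as ``the filtration splits as an internal direct sum,'' which are equivalent formulations of the same fact.
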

\begin{proof}
  This follows from consideration of the original $E^0$ term of the
  spectral sequence.  $E^0$ is generated by chains $(\phi_n, \ldots,
  \phi_1)\otimes w$ with induced differential $d^0$, agreeing with the
  differential $d$ of $C_*(\mathrm{Epi}\Delta S,\, B_*^{sym}I)$ when
  $\phi_0$ is an isomorphism.  When $\phi_0$ is a strict epimorphism,
  however, the zeroth face map of $d$ maps the generator to:
  $(\phi_n, \ldots, \phi_1) \otimes (\phi_0)_*(Y) = 0$,
  since $(\phi_0)_*(Y)$ would have at least one tensor factor that is
  the product of two or more elements of $I$.  Thus, $d^0$ also agrees
  with $d$ in the case that $\phi_0$ is strictly epic.  Hence, the
  spectral sequence collapses at level 1.
\end{proof}

%%%%%%%%%%%%%%%%%%%%%%%%%%%%%%%%%%%%%%%%%%%%%%%%%%%%%%%%%%%%%%%%%%%%%%%%%%%%%%
\subsection{The complex $Sym_*^{(p)}$}
Note, for $p > 0$, there are homotopy equivalences $|N\mathcal{S}_p/
N\mathcal{S}'_p| \simeq |N\mathcal{S}_p| \vee S|N\mathcal{S}'_p|
\simeq S|N\mathcal{S}'_p|$, since $|N\mathcal{S}_p|$ is contractible.
$|N\mathcal{S}_p|$ is a disjoint union of $(p+1)!$ $p$-cubes,
identified along certain faces.  Geometric analysis of
$S|N\mathcal{S}'_p|$, however, seems quite difficult.  Fortunately,
there is an even smaller chain complex homotopic to
$N\mathcal{S}_p/N\mathcal{S}'_p$.
\begin{definition}\label{def.sym_complex}
  Let $p \geq 0$ and impose an equivalence relation on $k\left[
  \mathrm{Epi}_{\Delta S} ([p], [q])\right]$ generated by:
  \[
    Z_0 \otimes \ldots \otimes Z_i \otimes Z_{i+1} \otimes \ldots \otimes Z_q
    \approx (-1)^{ab} Z_0 \otimes \ldots \otimes Z_{i+1} \otimes Z_{i} \otimes
    \ldots \otimes Z_q,
  \]
  where $Z_0 \otimes \ldots \otimes Z_q$ is a morphism expressed in
  tensor notation, and $a = deg(Z_i)$ and $b = deg(Z_{i+1})$, where
  $deg(Z) \stackrel{def}{=} |Z| - 1$, one less than the number of
  tensor factors of $Z$.  The complex $Sym_*^{(p)}$ is then defined by
  \[
    Sym_i^{(p)} \stackrel{def}{=} k\left[ \mathrm{Epi}_{\Delta S}([p],
    [p-i])\right]/\approx.
  \]
  The face maps will be defined
  recursively.  On monomials,
  \[
    d_i(z_{j_0} \ldots z_{j_s}) = \left\{\begin{array}{ll}
       0, & i < 0,\\
       z_{j_0}   \ldots   z_{j_i} \otimes z_{j_{i+1}}   \ldots   z_{j_s},
       \quad,& 0 \leq i < s,\\
       0, & i \geq s.
       \end{array}\right.
  \]
  Then, extend $d_i$ to tensor products via:  
  \begin{equation}\label{eq.face_i-tensor}
  d_i(W \otimes V) = d_i(W) \otimes V + W \otimes d_{i-deg(W)}(V),
  \end{equation}
  where $W$ and $V$ are formal tensors in $k\left[\mathrm{Epi}_{\Delta
      S}([p], [q])\right]$, and $deg(W) = deg(W_0 \otimes \ldots
  \otimes W_t) \stackrel{def}{=} \sum_{k=0}^t deg(W_k)$.  The boundary
  map $Sym_n^{(p)} \to Sym_{n-1}^{(p)}$ is then $d = \sum_{i=0}^n
  (-1)^i d_i = \sum_{i=0}^{n-1} (-1)^i d_i$.
\end{definition}
\begin{rmk}\label{rmk.action}
  There is an action $\Sigma_{p+1} \times Sym_i^{(p)} \to Sym_i^{(p)}$, given by
  permuting the formal indeterminates $z_i$.  Furthermore, this action is 
  compatible with the differential.
\end{rmk}
\begin{lemma}\label{lem.NS-Sym-homotopy}
   $Sym_*^{(p)}$ is homotopy-equivalent to $k[N\mathcal{S}_p]/
   k[N\mathcal{S}'_p]$.
\end{lemma}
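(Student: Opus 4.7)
The plan is to construct a chain map $\Phi\colon k[N\mathcal{S}_p]/k[N\mathcal{S}'_p] \to Sym_*^{(p)}$ together with a section $\Psi$ and a contracting chain homotopy for $\Psi\Phi$. First I would identify the cells on both sides: after passing to normalised chains, a non-degenerate $n$-chain of $k[N\mathcal{S}_p]/k[N\mathcal{S}'_p]$ is a strict chain $C_0 \to C_1 \to \cdots \to C_n$ in $\mathcal{S}_p$ with $C_0 = z_0 \otimes \cdots \otimes z_p$ (the initial object must occupy position $0$, since any chain avoiding $C_0$ lies entirely in $N\mathcal{S}'_p$, and no non-identity morphism maps into the initial object). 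A generator of $Sym_n^{(p)}$ is an object $C$ of $\mathcal{S}_p$ with $p+1-n$ tensor factors, taken modulo the Koszul-sign permutation of its factors.

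For $\Phi$ I would send $(C_0 \to C_1 \to \cdots \to C_n)$ to the target $C_n$, regarded in $Sym_n^{(p)}$ with an appropriate sign, whenever every morphism $C_{i-1} \to C_i$ performs exactly one pairwise merge, and to $0$ otherwise. Verifying the chain-map condition amounts to matching each inner face $d_i$ ($0 < i < n$) of the nerve (which composes two consecutive single merges into a double-merge step and is therefore killed by $\Phi$) against the face maps of $Sym^{(p)}$ that split a block of $C_n$, while $d_0$ collapses to zero (the truncated chain avoids $C_0$ and thus lies in $N\mathcal{S}'_p$) and $d_n$ produces the chain with $C_n$ removed, matching the splitting of the last-merged block under $d$ in $Sym^{(p)}$.

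For the section $\Psi\colon Sym_*^{(p)} \to k[N\mathcal{S}_p]/k[N\mathcal{S}'_p]$, I would send a generator $Z_0 \otimes \cdots \otimes Z_{p-n}$ (with intra-block degrees $d_i = \deg(Z_i)$ and $\sum d_i = n$) to a signed sum over all chains of single merges that build up each block $Z_i$ in its prescribed order and interleave the blocks via a shuffle. Such chains are counted by $n!/\prod d_i!$ shuffles times $\prod d_i!$ internal adjacent-merge orderings within each block, for $n!$ chains per target; the internal orderings interact with the Koszul signs on $Sym_*^{(p)}$ so that after a suitable sign normalisation $\Phi \Psi = \mathrm{id}$.

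The main obstacle is producing the contracting chain homotopy $h$ witnessing $\Psi\Phi \simeq \mathrm{id}$. I would exploit the cubical decomposition of $|N\mathcal{S}_p|$ mentioned just before the definition of $Sym^{(p)}$: for each target $C$ with blocks $Z_0, \ldots, Z_{p-n}$, the poset interval $[C_0, C]$ in $\mathcal{S}_p$ is isomorphic to a product of Boolean lattices, one for each block and indexed by the cut positions of $Z_i$, whose order complex is a product of cubes. On this product-of-cubes structure I would set up an acyclic matching (Forman-style discrete Morse theory, or equivalently an Eilenberg--Zilber shuffle contraction) that pairs off every simplex outside the image of $\Psi$. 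The principal technical difficulty is showing that these local contractions assemble into a single global chain homotopy on $k[N\mathcal{S}_p]/k[N\mathcal{S}'_p]$ and that the simplicial alternating signs on the nerve side are compatible throughout with the Koszul signs used to define $Sym^{(p)}$.
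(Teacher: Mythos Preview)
Your map $\Phi$ is not a chain map, and this is where the proposal breaks. Take $p=2$ and the single-merge $2$-chain
\[
  \sigma = \bigl(z_0\otimes z_1\otimes z_2 \;\to\; \{z_0z_1,\,z_2\} \;\to\; z_0z_1z_2\bigr).
\]
On the nerve side $d_0\sigma$ lies in $N\mathcal{S}'_2$, and $d_1\sigma = (C_0 \to z_0z_1z_2)$ is a double merge, so both are annihilated by $\Phi$; only $d_2\sigma = (C_0 \to \{z_0z_1,z_2\})$ survives, giving $\Phi(d\sigma) = \pm\, z_0z_1\otimes z_2$, a single term. But on the $Sym$ side
\[
  d\,\Phi(\sigma) \;=\; \pm\, d(z_0z_1z_2) \;=\; \pm\bigl(z_0\otimes z_1z_2 \;-\; z_0z_1\otimes z_2\bigr),
\]
which has two independent terms, and no sign choice reconciles them. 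The sentence ``matching each inner face $d_i$ \ldots\ against the face maps of $Sym^{(p)}$ that split a block of $C_n$'' is precisely the mistake: a face that $\Phi$ kills contributes \emph{zero} to $\Phi(d\sigma)$; it cannot account for a nonzero summand of $d\Phi(\sigma)$.

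Your section $\Psi$, by contrast, is essentially the map the paper actually builds (there denoted $\theta_*$, going $Sym_*^{(p)} \to k[N\mathcal{S}_p]/k[N\mathcal{S}'_p]$): a cell $Z_0\otimes\cdots\otimes Z_{p-n}$ is sent to the signed sum of all length-$n$ chains of single merges from $C_0$ to that cell. The point is that this signed sum is exactly the standard triangulation of the $n$-cube indexed by the $n$ erasable tensor symbols, and when you apply the simplicial boundary the interior faces (your double-merge $1$-chains) cancel in pairs---that is why $\theta_*$ \emph{is} a chain map while your $\Phi$ is not. The paper then finishes without ever constructing an explicit inverse or homotopy: $\theta_*$ geometrically realises the simplicial subdivision of a cubical cell complex, and subdivision is a homotopy equivalence. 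If you want to salvage your route, you should drop $\Phi$ altogether and argue directly that $\Psi$ is a quasi-isomorphism via this cubical-subdivision picture; producing an explicit chain-level inverse would require an Eilenberg--Zilber-type projection, which is far more delicate than ``send a single-merge chain to its target''.
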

\begin{proof}
  Let $v_0$ represent the common initial vertex of the $p$-cubes
  making up $N\mathcal{S}_p$.  Then, as cell-complex, $N\mathcal{S}_p$
  consists of $v_0$ together with all corners of the various $p$-cubes
  and $i$-cells for each $i$-face of the cubes.  Thus,
  $N\mathcal{S}_p$ consists of $(p+1)!$ $p$-cells with attaching maps
  \mbox{$\partial I^p \to (N\mathcal{S}_p)^{p-1}$} defined according
  to the face maps for $N\mathcal{S}_p$ given above.  Note that a
  chain of $N\mathcal{S}_p$ is non-trivial in
  $N\mathcal{S}_p/N\mathcal{S}'_p$ if and only if the initial vertex
  $v_0$ is included.  Thus, any (cubical) $k$-cell is uniquely
  determined by the label of the vertex opposite $v_0$.
  
  Label each top-dimensional cell with the permutation induced on the
  set $\{0, 1, \ldots, p\}$ by the order of indeterminates in its
  final vertex, $z_{i_0} z_{i_1}\ldots z_{i_p}$.  On a given $p$-cell,
  for each vertex $Z_0 \otimes \ldots \otimes Z_s$, there is an
  ordering of the tensor factors so that $Z_0 \otimes \ldots \otimes
  Z_s \to z_{i_0}z_{i_1}\ldots z_{i_p}$ preserves the order of formal
  indeterminates $z_i$.  Rewrite each vertex of this $p$-cell in this
  order.  Now, any $p$-chain $(z_{i_0} \otimes z_{i_1} \otimes \ldots
  \otimes z_{i_p}) \to \ldots \to z_{i_0}z_{i_1}\ldots z_{i_p}$ is
  obtained by choosing the order in which to combine the factors.  In
  fact, the $p$-chains for this cube are in bijection with the
  elements of the symmetric group $S_{p}$, as in the standard
  decomposition of a $p$-cube into $p!$ simplices.  A given
  permutation $\{1,2, \ldots, p\}\mapsto \{ j_1, j_2, \ldots, j_p \}$
  will represent the chain obtained by first combining $z_{j_0}
  \otimes z_{j_1}$ into $z_{j_0}z_{j_1}$, then combining $z_{j_1}
  \otimes z_{j_2}$ into $z_{j_1}z_{j_2}$.  In effect, we ``erase'' the
  tensor product symbol between $z_{j_{r-1}}$ and $z_{j_r}$ for each
  $j_r$ in order given by the list above.

  We shall declare that the {\it natural} order of combining the
  factors will be the one that always combines the last two: $(z_{i_0}
  \otimes \ldots \otimes z_{i_{p-1}} \otimes z_{i_p}) \to (z_{i_0}
  \otimes \ldots \otimes z_{i_{p-1}} z_{i_p}) \to (z_{i_0} \otimes
  \ldots \otimes z_{i_{p-2}}z_{i_{p-1}}z_{i_p}) \to \ldots \to
  (z_{i_0} \ldots z_{i_p})$.  This corresponds to a permutation $\rho
  \stackrel{def}{=} \{1,\ldots, p\} \mapsto \{p, p-1, \ldots, 2, 1\}$,
  and this chain will be regarded as {\it positive}.  A chain
  $C_\sigma$, corresponding to another permutation, $\sigma$, will be
  regarded as positive or negative depending on the sign of the
  permutation $\sigma\rho^{-1}$.  Finally, the entire $p$-cell should
  be identified with the sum $\sum_{\sigma \in S_p}
  sgn(\sigma\rho^{-1}) C_\sigma$.  It is this sign convention that
  permits the inner faces of the cube to cancel appropriately in the
  boundary maps.  Thus we have a map on the top-dimensional chains:
  \begin{equation}\label{eq.theta_p-map}
    \theta_p \co Sym_p^{(p)} \to
    \big(k[N\mathcal{S}_p]/k[N\mathcal{S}'_p]\big)_p.
  \end{equation}
  
  Extend the defintion $\theta_*$ to arbitrary $k$-cells by sending
  the $k$-chain $Z_0 \otimes \ldots \otimes Z_{p-k}$ to the sum of
  $k$-length chains with source $z_0 \otimes \ldots \otimes z_p$ and
  target $Z_0 \otimes \ldots \otimes Z_{p-k}$ with signs determined by
  the natural order of erasing tensor product symbols of $z_0 \otimes
  \ldots \otimes z_p$, excluding those tensor product symbols that
  never get erased.  The following example should clarify the point.
  Let $W = z_3z_0 \otimes z_1 \otimes z_2z_4$. This is a $2$-cell of
  $Sym_*^{(4)}$.  $W$ is obtained from $z_0 \otimes z_1 \otimes z_2
  \otimes z_3 \otimes z_4=z_3 \otimes z_0 \otimes z_1 \otimes z_2
  \otimes z_4$ by combining factors in some order.  There are only $2$
  erasable tensor product symbols in this example.  The natural order
  (last to first) corresponds to the chain, $z_3 \otimes z_0 \otimes
  z_1 \otimes z_2 \otimes z_4 \to z_3 \otimes z_0 \otimes z_1 \otimes
  z_2z_4 \to z_3z_0 \otimes z_1 \otimes z_2z_4$.  So, this chain shows
  up in $\theta_*(W)$ with positive sign, whereas the chain $z_3
  \otimes z_0 \otimes z_1 \otimes z_2 \otimes z_4 \to z_3z_0 \otimes
  z_1 \otimes z_2 \otimes z_4 \to z_3z_0 \otimes z_1 \otimes z_2z_4$
  shows up with a negative sign.
  
  Now, $\theta_*$ is easily seen to be a chain map $Sym_*^{(p)} \to
  k[N\mathcal{S}_p]/k[N\mathcal{S}'_p]$.  Geometrically, $\theta_*$
  has the effect of subdividing a cell-complex (defined with cubical
  cells) into a simplicial space, so $\theta_*$ is a
  homotopy-equivalence.  Furthermore, $\theta_*$ is equivariant with
  respect to the action of the symmetric group.
\end{proof}  

\begin{rmk}
  As an example, consider $|N\mathcal{S}_2|$. There are $6$
  \mbox{$2$-cells}, each represented by a copy of $I^2$.  The
  \mbox{$2$-cell} labelled by the permutation \mbox{$\{0,1,2\} \mapsto
    \{1,0,2\}$} consists of the chains $z_1 \otimes z_0 \otimes z_2
  \to z_1 \otimes z_0z_2 \to z_1z_0z_2$ and $-(z_1 \otimes z_0 \otimes
  z_2 \to z_1z_0 \otimes z_2 \to z_1z_0z_2)$.  Hence, the boundary is
  the sum of \mbox{$1$-chains}: $[(z_1 \otimes z_0z_2 \to z_1z_0z_2) -
    (z_1 \otimes z_0 \otimes z_2 \to z_1z_0z_2) + (z_1 \otimes z_0
    \otimes z_2 \to z_1 \otimes z_0z_2)] - [(z_1z_0 \otimes z_2 \to
    z_1z_0z_2) - (z_1 \otimes z_0 \otimes z_2 \to z_1z_0z_2) + (z_1
    \otimes z_0 \otimes z_2 \to z_1z_0 \otimes z_2)] = (z_1 \otimes
  z_0z_2 \to z_1z_0z_2) + (z_1 \otimes z_0 \otimes z_2 \to z_1 \otimes
  z_0z_2) - (z_1z_0 \otimes z_2 \to z_1z_0z_2) - (z_1 \otimes z_0
  \otimes z_2 \to z_1z_0 \otimes z_2)$.  This \mbox{$1$-chains}
  correspond to the $4$ edges of the square.  Thus, in our example
  this $2$-cell of $|N\mathcal{S}_p|$ will correspond to
  \mbox{$z_1z_0z_2 \in Sym_2^{(2)}$}, and its boundary in
  $|N\mathcal{S}_p/N\mathcal{S}'_p|$ will consist of the two edges
  adjacent to the vertex labeled$z_0 \otimes z_1 \otimes z_2$, with
  appropriate signs: $(z_0 \otimes z_1 \otimes z_2 \to z_1 \otimes
  z_0z_2) - (z_0 \otimes z_1 \otimes z_2 \to z_1z_0 \otimes z_2)$.
  The corresponding boundary in $Sym_1^{(2)}$ will be $(z_1 \otimes
  z_0z_2) - (z_1z_0 \otimes z_2)$, matching the boundary map already
  defined on $Sym_*^{(p)}$.  See Figs.~\ref{fig.NS_2}
  and~\ref{fig.sym2}.
\end{rmk}
\begin{center}
\begin{figure}[ht]
  \psset{unit=.6in}
  \begin{pspicture}(6,3.5)
  
  \psdots[linecolor=black, dotsize=4pt]
  (2, 2)(2, 3.5)(3.3, 2.75)(3.3, 1.25)(2, .5)
  (.7, 1.25)(.7, 2.75)
  
  \psline[linewidth=1pt, linecolor=black](2, 2)(2, 3.5)
  \psline[linewidth=1pt, linecolor=black](2, 2)(3.3, 1.25)
  \psline[linewidth=1pt, linecolor=black](2, 2)(.7, 1.25)
  \psline[linewidth=1pt, linecolor=black](2, 3.5)(3.3, 2.75)
  \psline[linewidth=1pt, linecolor=black](3.3, 2.75)(3.3, 1.25)
  \psline[linewidth=1pt, linecolor=black](3.3, 1.25)(2, .5)
  \psline[linewidth=1pt, linecolor=black](2, .5)(.7, 1.25)
  \psline[linewidth=1pt, linecolor=black](.7, 1.25)(.7, 2.75)
  \psline[linewidth=1pt, linecolor=black](.7, 2.75)(2, 3.5)
  
  \rput(1.9, 2.12){$z_0 \otimes z_1 \otimes z_2$}
  \rput(2, 3.62){$z_0z_1 \otimes z_2$}
  \rput(3.28, 1.16){$z_0 \otimes z_1z_2$}
  \rput(.9, 1.16){$z_1 \otimes z_2z_0$}
  \rput(3.5, 2.87){$z_0z_1z_2$}
  \rput(2, .38){$z_1z_2z_0$}
  \rput(.6, 2.9){$z_2z_0z_1$}
  
  \psdots[linecolor=black, dotsize=4pt]
  (6, 2)(6, 3.5)(7.3, 2.75)(7.3, 1.25)(6, .5)
  (4.7, 1.25)(4.7, 2.75)
  
  \psline[linewidth=1pt, linecolor=black](6, 2)(6, 3.5)
  \psline[linewidth=1pt, linecolor=black](6, 2)(7.3, 1.25)
  \psline[linewidth=1pt, linecolor=black](6, 2)(4.7, 1.25)
  \psline[linewidth=1pt, linecolor=black](6, 3.5)(7.3, 2.75)
  \psline[linewidth=1pt, linecolor=black](7.3, 2.75)(7.3, 1.25)
  \psline[linewidth=1pt, linecolor=black](7.3, 1.25)(6, .5)
  \psline[linewidth=1pt, linecolor=black](6, .5)(4.7, 1.25)
  \psline[linewidth=1pt, linecolor=black](4.7, 1.25)(4.7, 2.75)
  \psline[linewidth=1pt, linecolor=black](4.7, 2.75)(6, 3.5)

  \rput(5.9, 2.12){$z_0 \otimes z_1 \otimes z_2$}
  \rput(6, 3.62){$z_1z_0 \otimes z_2$}
  \rput(7.28, 1.16){$z_1 \otimes z_0z_2$}
  \rput(4.9, 1.16){$z_0 \otimes z_2z_1$}
  \rput(7.5, 2.87){$z_1z_0z_2$}
  \rput(6, .38){$z_0z_2z_1$}
  \rput(4.6, 2.9){$z_2z_1z_0$}

  \end{pspicture}

  \caption[$|N\mathcal{S}_2|$]{$|N\mathcal{S}_2|$ consists of six squares, 
    grouped into two hexagons that share a common center vertex}
  \label{fig.NS_2}
\end{figure}
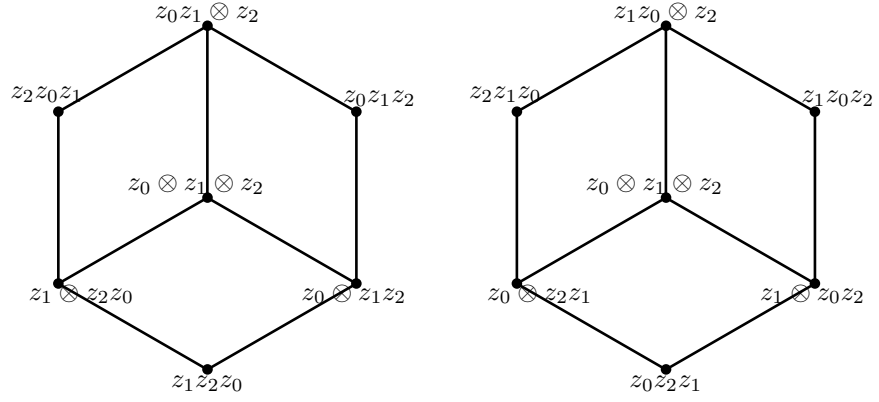
\end{center}

\begin{center}
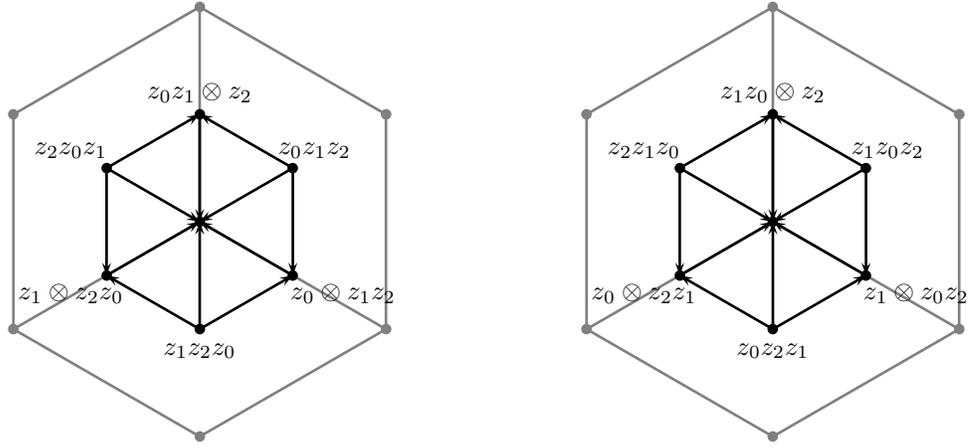
\begin{figure}[ht]
  \psset{unit=.75in}
  \begin{pspicture}(6,3.5)
  
  \psdots[linecolor=gray, dotsize=4pt]
  (2, 3.5)(3.3, 2.75)(3.3, 1.25)(2, .5)
  (.7, 1.25)(.7, 2.75)
  
  \psline[linewidth=1pt, linecolor=gray](2, 2)(2, 3.5)
  \psline[linewidth=1pt, linecolor=gray](2, 2)(3.3, 1.25)
  \psline[linewidth=1pt, linecolor=gray](2, 2)(.7, 1.25)
  \psline[linewidth=1pt, linecolor=gray](2, 3.5)(3.3, 2.75)
  \psline[linewidth=1pt, linecolor=gray](3.3, 2.75)(3.3, 1.25)
  \psline[linewidth=1pt, linecolor=gray](3.3, 1.25)(2, .5)
  \psline[linewidth=1pt, linecolor=gray](2, .5)(.7, 1.25)
  \psline[linewidth=1pt, linecolor=gray](.7, 1.25)(.7, 2.75)
  \psline[linewidth=1pt, linecolor=gray](.7, 2.75)(2, 3.5)
  
  \psdots[linecolor=black, dotsize=4pt]
  (2, 2)(2, 2.75)(2.65, 1.625)(1.35, 1.625)
  (2, 1.25)(1.35, 2.375)(2.65, 2.375)
  
  \psline[linewidth=1pt, linecolor=black]{->}(2, 1.25)(2.65, 1.625)
  \psline[linewidth=1pt, linecolor=black]{->}(2, 1.25)(1.35, 1.625)
  \psline[linewidth=1pt, linecolor=black]{->}(1.35, 2.375)(1.35, 1.625)
  \psline[linewidth=1pt, linecolor=black]{->}(1.35, 2.375)(2, 2.75)
  \psline[linewidth=1pt, linecolor=black]{->}(2.65, 2.375)(2, 2.75)
  \psline[linewidth=1pt, linecolor=black]{->}(2.65, 2.375)(2.65, 1.625)
  \psline[linewidth=1pt, linecolor=black]{->}(2.65, 2.375)(2, 2)
  \psline[linewidth=1pt, linecolor=black]{->}(2, 1.25)(2, 2)
  \psline[linewidth=1pt, linecolor=black]{->}(1.35, 2.375)(2, 2)
  \psline[linewidth=1pt, linecolor=black]{->}(2, 2.75)(2, 2)
  \psline[linewidth=1pt, linecolor=black]{->}(2.65, 1.625)(2, 2)
  \psline[linewidth=1pt, linecolor=black]{->}(1.35, 1.625)(2, 2)
    
  \rput(2, 1.1){$z_1z_2z_0$}
  \rput(1.1, 2.5){$z_2z_0z_1$}
  \rput(2.8, 2.5){$z_0z_1z_2$}
  \rput(3, 1.5){$z_0 \otimes z_1z_2$}
  \rput(1.1, 1.5){$z_1 \otimes z_2z_0$}
  \rput(2, 2.9){$z_0z_1 \otimes z_2$}
  
  \psdots[linecolor=gray, dotsize=4pt]
  (6, 3.5)(7.3, 2.75)(7.3, 1.25)(6, .5)
  (4.7, 1.25)(4.7, 2.75)
  
  \psline[linewidth=1pt, linecolor=gray](6, 2)(6, 3.5)
  \psline[linewidth=1pt, linecolor=gray](6, 2)(7.3, 1.25)
  \psline[linewidth=1pt, linecolor=gray](6, 2)(4.7, 1.25)
  \psline[linewidth=1pt, linecolor=gray](6, 3.5)(7.3, 2.75)
  \psline[linewidth=1pt, linecolor=gray](7.3, 2.75)(7.3, 1.25)
  \psline[linewidth=1pt, linecolor=gray](7.3, 1.25)(6, .5)
  \psline[linewidth=1pt, linecolor=gray](6, .5)(4.7, 1.25)
  \psline[linewidth=1pt, linecolor=gray](4.7, 1.25)(4.7, 2.75)
  \psline[linewidth=1pt, linecolor=gray](4.7, 2.75)(6, 3.5)

  \psdots[linecolor=black, dotsize=4pt]
  (6, 2)(6, 2.75)(6.65, 1.625)(5.35, 1.625)
  (6, 1.25)(5.35, 2.375)(6.65, 2.375)
  
  \psline[linewidth=1pt, linecolor=black]{->}(6, 1.25)(6.65, 1.625)
  \psline[linewidth=1pt, linecolor=black]{->}(6, 1.25)(5.35, 1.625)
  \psline[linewidth=1pt, linecolor=black]{->}(5.35, 2.375)(5.35, 1.625)
  \psline[linewidth=1pt, linecolor=black]{->}(5.35, 2.375)(6, 2.75)
  \psline[linewidth=1pt, linecolor=black]{->}(6.65, 2.375)(6, 2.75)
  \psline[linewidth=1pt, linecolor=black]{->}(6.65, 2.375)(6.65, 1.625)
  \psline[linewidth=1pt, linecolor=black]{->}(6.65, 2.375)(6, 2)
  \psline[linewidth=1pt, linecolor=black]{->}(6, 1.25)(6, 2)
  \psline[linewidth=1pt, linecolor=black]{->}(5.35, 2.375)(6, 2)
  \psline[linewidth=1pt, linecolor=black]{->}(6, 2.75)(6, 2)
  \psline[linewidth=1pt, linecolor=black]{->}(6.65, 1.625)(6, 2)
  \psline[linewidth=1pt, linecolor=black]{->}(5.35, 1.625)(6, 2)

  \rput(6, 1.1){$z_0z_2z_1$}
  \rput(5.1, 2.5){$z_2z_1z_0$}
  \rput(6.8, 2.5){$z_1z_0z_2$}
  \rput(7, 1.5){$z_1 \otimes z_0z_2$}
  \rput(5.1, 1.5){$z_0 \otimes z_2z_1$}
  \rput(6, 2.9){$z_1z_0 \otimes z_2$}

  \end{pspicture}

  \caption[$Sym^{(2)} \simeq N\mathcal{S}_2/N\mathcal{S}'_2$]
          {$Sym^{(2)} \simeq N\mathcal{S}_2/N\mathcal{S}'_2$.  The
            center of each hexagon is $z_0\otimes z_1 \otimes z_2$.}
  \label{fig.sym2}
\end{figure}
\end{center}

Now, with one piece of new notation, we may re-interpret Thm.~\ref{thm.E1_NS}.
\begin{definition}\label{def.ltimescirc}
  Let $G$ be a group.  Let $k_0$ be the chain complex consisting of
  $k$ concentrated in degree $0$, with trivial $G$-action.  If $X_*$
  is a right $G$-complex, $Y_*$ is a left $G$-complex with $k_0
  \hookrightarrow Y_*$ as a $G$-subcomplex, then define the
  \textit{equivariant half-smash tensor product} of the two complexes:
  \[
    X_* \ltimescirc_G Y_* \stackrel{def}{=} \left(X_* \otimes_{kG} 
    Y_*\right)/\left(X_* \otimes_{kG} k_0\right)
  \]
\end{definition}

\begin{cor}\label{cor.E1_Sym}
There is spectral sequence converging (weakly) to $\widetilde{H}S_*(A)$ with
\[
  E^1_{p,q} \cong 
  \bigoplus_{u \in X^{p+1}/\Sigma_{p+1}}
  H_{p+q}\left(E_*G_u \ltimescirc_{G_u} Sym_*^{(p)}; k\right),
\]
where $G_{u}$ is the isotropy subgroup for the chosen representative of $u \in 
X^{p+1}/ \Sigma_{p+1}$.
\end{cor}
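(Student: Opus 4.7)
The plan is to deduce Corollary~\ref{cor.E1_Sym} from Theorem~\ref{thm.E1_NS} by replacing the topological model $|N\mathcal{S}_p/N\mathcal{S}'_p|$ with the chain complex $Sym_*^{(p)}$ throughout, justifying the swap via the equivariant homotopy equivalence already established.

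First, I would recall that Theorem~\ref{thm.E1_NS} furnishes a spectral sequence with
\[
  E^1_{p,q} \cong \bigoplus_{u \in X^{p+1}/\Sigma_{p+1}} H_{p+q}\bigl(EG_u \ltimes_{G_u} |N\mathcal{S}_p/N\mathcal{S}'_p|;\, k\bigr),
\]
and that Lemma~\ref{lem.NS-Sym-homotopy} produces a chain map $\theta_* \co Sym_*^{(p)} \to k[N\mathcal{S}_p]/k[N\mathcal{S}'_p]$ which is a homotopy equivalence. The remark following Definition~\ref{def.sym_complex} and the end of the proof of Lemma~\ref{lem.NS-Sym-homotopy} note that $\theta_*$ is $\Sigma_{p+1}$-equivariant, so in particular it is $G_u$-equivariant for every isotropy subgroup $G_u \subseteq \Sigma_{p+1}$.

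Next, I would unpack the topological half-smash on the homological side. For a pointed $G_u$-CW complex $(Y, y_0)$ with free action away from $y_0$, there is a natural chain isomorphism
\[
  C_*\bigl(EG_u \ltimes_{G_u} Y\bigr) \;\cong\; C_*(EG_u) \otimes_{kG_u} C_*(Y) \big/ \bigl(C_*(EG_u) \otimes_{kG_u} C_*(y_0)\bigr),
\]
which is precisely $E_*G_u \ltimescirc_{G_u} C_*(Y)$ in the notation of Definition~\ref{def.ltimescirc}. Applying this with $Y = |N\mathcal{S}_p/N\mathcal{S}'_p|$ basepointed at the collapsed image of $N\mathcal{S}'_p$ identifies
\[
  H_*\bigl(EG_u \ltimes_{G_u} |N\mathcal{S}_p/N\mathcal{S}'_p|;\, k\bigr) \;\cong\; H_*\bigl(E_*G_u \ltimescirc_{G_u} (k[N\mathcal{S}_p]/k[N\mathcal{S}'_p])\bigr).
\]
Then the $G_u$-equivariant homotopy equivalence $\theta_*$ induces an equivalence of chain complexes
\[
  E_*G_u \ltimescirc_{G_u} Sym_*^{(p)} \;\simeq\; E_*G_u \ltimescirc_{G_u} \bigl(k[N\mathcal{S}_p]/k[N\mathcal{S}'_p]\bigr),
\]
since tensoring with the free $kG_u$-resolution $E_*G_u$ preserves equivariant quasi-isomorphisms (and the quotient by $E_*G_u \otimes_{kG_u} k_0$ is compatible because $\theta_*$ sends the basepoint to the basepoint).

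The main obstacle, and the only real content beyond bookkeeping, is verifying that the equivalence $\theta_*$ behaves well under the half-smash construction, in particular that the subcomplex identified as $k_0$ (the collapsed $N\mathcal{S}'_p$) corresponds correctly on the $Sym_*^{(p)}$ side. Because $\theta_*$ is built cell-by-cell from $k$-cells of $N\mathcal{S}_p$ whose initial vertex is $v_0$ (as in the proof of Lemma~\ref{lem.NS-Sym-homotopy}), chains hitting $N\mathcal{S}'_p$ are identified with the zero class in $Sym_*^{(p)}$, so $\theta_*$ extends consistently to a based equivariant map and the half-smash descends. Assembling all of the above, the $E^1$-term of Theorem~\ref{thm.E1_NS} is isomorphic to
\[
  \bigoplus_{u \in X^{p+1}/\Sigma_{p+1}} H_{p+q}\bigl(E_*G_u \ltimescirc_{G_u} Sym_*^{(p)};\, k\bigr),
\]
yielding Corollary~\ref{cor.E1_Sym}.
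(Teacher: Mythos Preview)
Your proposal is correct and matches the paper's intended argument: the corollary is not given an explicit proof there, but it follows immediately from Theorem~\ref{thm.E1_NS}, the $\Sigma_{p+1}$-equivariant homotopy equivalence of Lemma~\ref{lem.NS-Sym-homotopy}, and Definition~\ref{def.ltimescirc}, exactly as you outline. Your additional care in checking that $\theta_*$ is based and that the half-smash descends is appropriate bookkeeping that the paper leaves implicit.
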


%%%%%%%%%%%%%%%%%%%%%%%%%%%%%%%%%%%%%%%%%%%%%%%%%%%%%%%%%%%%%%%%%%%%%%%%%%%%%%%%
\section{Properties of the Complex $Sym_*^{(p)}$}\label{sec.sym_alg}         
%%%%%%%%%%%%%%%%%%%%%%%%%%%%%%%%%%%%%%%%%%%%%%%%%%%%%%%%%%%%%%%%%%%%%%%%%%%%%%%%

%%%%%%%%%%%%%%%%%%%%%%%%%%%%%%%%%%%%%%%%%%%%%%%%%%%%%%%%%%%%%%%%%%%%%%%%%%%%%%
\subsection{Algebra Structure of $Sym_*$}
We may consider $Sym_* \stackrel{def}{=} \bigoplus_{p \geq 0}
Sym_*^{(p)}$ as a bigraded differential algebra, where $bideg(W) =
(p+1, i)$ for $W \in Sym_i^{(p)}$.  The product $\boxtimes \co
Sym_i^{(p)} \otimes Sym_j^{(q)} \to Sym_{i+j}^{(p+q+1)}$ is defined
by: $W \boxtimes V \stackrel{def}{=} W \otimes V'$, where $V'$ is
obtained from $V$ by replacing each formal indeterminate $z_r$ by
$z_{r+p+1}$ for $0 \leq r \leq q$.  Eq.~\ref{eq.face_i-tensor} then
implies:
\begin{equation}\label{eq.partialboxtimes}
  d( W \boxtimes V ) = d(W) \boxtimes V + (-1)^{bideg(W)_2}W \boxtimes d(V),
\end{equation}
where $bideg(W)_2$ is the second component of $bideg(W)$.
\begin{prop}\label{prop.boxtimes}
  The product $\boxtimes$ is well-defined on the level of homology.
  Furthermore, this product (on both the chain level and homology
  level) is skew commutative in a twisted sense: $W \boxtimes V =
  (-1)^{ij}\tau(V \boxtimes W)$, where $bideg(W) = (p+1, i)$,
  $bideg(V) = (q+1, j)$, and $\tau$ is the permutation sending $\{0,
  1, \ldots, q, q+1, q+2, \ldots, p+q, p+q+1\} \mapsto \{p+1, p+2,
  \ldots, p+q+1, 0, 1, \ldots, p-1, p \}$.
\end{prop}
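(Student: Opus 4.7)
The plan is to verify the two claims separately, but the second will be essentially a direct computation on the chain level that then immediately descends to homology.

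For well-definedness on homology, the Leibniz-type identity~(\ref{eq.partialboxtimes}) is given. From it one reads off that if $W$ and $V$ are both cycles, then $W \boxtimes V$ is again a cycle, and if either $W$ or $V$ is a boundary with the other a cycle, then $W \boxtimes V$ is a boundary. Hence $\boxtimes$ descends to a well-defined product on $H_*(Sym_*)$. (One should also note that the skew-symmetry relation of Definition~\ref{def.sym_complex} is respected by $\boxtimes$ on the chain level: the relabeling $z_r \mapsto z_{r+p+1}$ used to form $V'$ preserves the tensor structure, so a relation in $V$ gives the corresponding relation in $V'$, and relations in $W$ are untouched.)

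For the twisted skew-commutativity, I would first unwind the definitions to a convenient form. By definition, $W \boxtimes V = W \otimes V'$, where $V'$ denotes $V$ with each $z_r$ replaced by $z_{r+p+1}$. On the other hand, $V \boxtimes W = V \otimes W''$ where $W''$ denotes $W$ with each $z_r$ replaced by $z_{r+q+1}$. Now apply $\tau$: on the $V$-block, $\tau$ sends $z_r \mapsto z_{r+p+1}$ for $0 \le r \le q$, converting $V$ into $V'$; on the $W''$-block, $\tau$ sends $z_{r+q+1} \mapsto z_r$ for $0 \le r \le p$, converting $W''$ back into $W$. Thus
\[
  \tau(V \boxtimes W) \;=\; V' \otimes W,
\]
and the claim reduces to the identity $W \otimes V' = (-1)^{ij}\, V' \otimes W$ in $Sym^{(p+q+1)}_{i+j}$.

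This identity is then obtained by iterated application of the defining equivalence $Z_a \otimes Z_{a+1} \approx (-1)^{\deg(Z_a)\deg(Z_{a+1})} Z_{a+1} \otimes Z_a$. Writing $W = W_0 \otimes \cdots \otimes W_s$ and $V' = V'_0 \otimes \cdots \otimes V'_t$, one moves each tensor factor $V'_\ell$ leftward past all of $W_0, \dots, W_s$ one transposition at a time; the sign contributed by moving $V'_\ell$ all the way across $W$ is $(-1)^{\deg(V'_\ell)\cdot \sum_k \deg(W_k)} = (-1)^{\deg(V'_\ell)\cdot i}$. Doing this for every $\ell$ and summing exponents gives the total sign $(-1)^{i\sum_\ell \deg(V'_\ell)} = (-1)^{ij}$, since $\sum_\ell \deg(V'_\ell) = \deg(V') = j$. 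Hence $W \otimes V' = (-1)^{ij} V' \otimes W$ on the chain level, proving the formula; passing to homology is automatic.

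The main bookkeeping obstacle is simply keeping track of which permutation of indeterminates is being applied at each step — in particular, verifying that $\tau$ as defined carries $V \otimes W''$ exactly to $V' \otimes W$ rather than to some other relabeling. Everything else is an iterated application of the chain-level sign rule, so no further ideas are needed beyond the Leibniz formula~(\ref{eq.partialboxtimes}) and the defining equivalence on $Sym_*^{(p)}$.
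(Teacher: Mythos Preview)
Your proof is correct and follows essentially the same approach as the paper: the Leibniz identity~(\ref{eq.partialboxtimes}) handles well-definedness on homology, and the skew-commutativity is verified by tracking the relabeling under $\tau$ together with the iterated sign rule from Definition~\ref{def.sym_complex}. The only cosmetic difference is order of operations---the paper commutes $V \otimes W'$ to $(-1)^{ij} W' \otimes V$ first and then applies $\tau$, whereas you apply $\tau$ first to get $V' \otimes W$ and then commute; since the sign relation depends only on degrees, not on labels, both orders give the same result.
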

\begin{proof}
  Eqn.~(\ref{eq.partialboxtimes}) implies the product passes to
  homology classes.  Now, suppose $W = Y_0 \otimes Y_1 \otimes \ldots
  \otimes Y_{p-i} \in Sym_i^{(p)}$ and $V = Z_0 \otimes Z_1 \otimes
  \ldots \otimes Z_{q-j} \in Sym_j^{(p)}$.
  \begin{equation}\label{eq.boxtimesformula}
    V \boxtimes W = V \otimes W' = (-1)^\alpha W' \otimes V,
  \end{equation}
  where $W'$ is related to $W$ by replacing each $z_r$ by $z_{r+q+1}$.  The 
  exponent $\alpha= deg(V)deg(W) = ij$ arises from the relations in 
  $Sym_{i+j}^{(p+q+1)}$.  (The fact that $deg(V) = i$ and $deg(W) = j$ may be 
  made clear by observing that the degree of a formal tensor product in 
  $Sym_*^{(s)}$ is equal to the number of {\it cut points}, that is, the number
  of places where a tensor product symbol may be inserted.)
  Next, apply the block transformation $\tau$ to Eq.~(\ref{eq.boxtimesformula})
  to obtain $\tau(V \boxtimes W) = (-1)^\alpha \tau(W' \otimes V) = (-1)^\alpha
  W \otimes V' = (-1)^\alpha W \boxtimes V$, where $V'$ is obtained by replacing
  $z_r$ by $z_{r+p+1}$ in $V$.  
\end{proof}

%%%%%%%%%%%%%%%%%%%%%%%%%%%%%%%%%%%%%%%%%%%%%%%%%%%%%%%%%%%%%%%%%%%%%%%%%%%%%%
\subsection{Computer Calculations}
In principle, the homology of $Sym_*^{(p)}$ may be found by using a computer.  
In fact, we have the following results up to $p = 7$:
\begin{theorem}\label{thm.poincare_sym_complex}
  For $0 \leq p \leq 7$, the groups $H_*(Sym_*^{(p)})$ are free
  abelian and have Poincar\'e polynomials $P_p(t) \stackrel{def}{=}
  P\left(H_*(Sym_*^{(p)}); t\right)$:
  \[
    P_0(t) = 1,
  \]
  \[
    P_1(t) = t,
  \]
  \[
    P_2(t) = t + 2t^2,
  \]
  \[
    P_3(t) = 7t^2 + 6t^3,
  \]
  \[
    P_4(t) = 43t^3 + 24t^4,
  \]
  \[
    P_5(t) = t^3 + 272t^4 + 120t^5,
  \]
  \[
    P_6(t) = 36t^4 + 1847t^5 + 720t^6,
  \]
  \[
    P_7(t) = 829t^5 + 13710t^6 + 5040t^7.
  \]
\end{theorem}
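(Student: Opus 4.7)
The proof is necessarily computational: the statement is a table of Betti numbers for eight specific chain complexes, and the plan is to pin down canonical bases, assemble the boundary matrices explicitly, and diagonalize. First I would fix a normal form for elements of $Sym_i^{(p)}$. Each equivalence class has a representative $Z_0 \otimes Z_1 \otimes \cdots \otimes Z_{p-i}$ in which each indeterminate $z_0,\ldots,z_p$ occurs exactly once, and the equivalence relation permits me to reorder the blocks with signs. Sorting the blocks by the smallest index appearing in each gives a bijection between equivalence classes and ordered set partitions of $\{0,1,\ldots,p\}$ into $p-i+1$ non-empty ordered subsets whose first entries are increasing. This gives an explicit finite basis for each $Sym_i^{(p)}$ whose size is easy to enumerate.

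Next I would implement the boundary map. On a single monomial $z_{j_0}\cdots z_{j_s}$ the face $d_\ell$ splits it between positions $\ell$ and $\ell+1$, as in Definition~\ref{def.sym_complex}. On a tensor $W \otimes V$ I would apply Eq.~(\ref{eq.face_i-tensor}) recursively, and then re-sort each resulting tensor into the canonical form above, accumulating the correct sign $(-1)^{ab}$ from every pairwise block swap. Summing $\sum_\ell (-1)^\ell d_\ell$ yields an integer matrix for each $d\colon Sym_i^{(p)} \to Sym_{i-1}^{(p)}$. Reducing these matrices by Smith normal form gives both the ranks (from the ranks of $d$) and the torsion (from the elementary divisors) of $H_*(Sym_*^{(p)})$. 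For $p \leq 7$ the Smith forms turn out to have all elementary divisors equal to $1$, so the homology is free abelian; the ranks read off from the kernels and images then produce the Poincar\'e polynomials $P_0,\ldots,P_7$ listed in the theorem.

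The main obstacle is computational scale. In top dimension $Sym_p^{(p)}$ has $(p+1)!$ generators, so for $p=7$ the boundary $Sym_7^{(7)} \to Sym_6^{(7)}$ is a matrix with $40320$ columns, and the middle-dimensional chain groups are even larger; naive dense Gaussian elimination over $\mathbb{Z}$ is infeasible, so sparse integer linear algebra (as provided by GAP, for instance) is essential. To guard against implementation errors I would run three consistency checks: (i) verify that $\sum_i (-1)^i \mathrm{rank}\, Sym_i^{(p)}$ equals the Euler characteristic computed from the final Betti numbers; (ii) confirm that the boundary commutes with the $\Sigma_{p+1}$-action of Remark~\ref{rmk.action} on each basis; and (iii) cross-check against the chessboard-complex description $Sym_*^{(p)} \cong k[S\Omega^+_{p+1}]$ mentioned in the introduction, whose low-rank homology has been computed independently by Vre\'cica and \v{Z}ivaljevi\'c~\cite{VZ}. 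Agreement of all three checks on the range $p \leq 7$ gives confidence that the computer-generated polynomials are correct.
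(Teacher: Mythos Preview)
Your proposal is correct and matches the paper's approach: the paper's proof consists entirely of the sentence ``These computations were performed using scripts written for the computer algebra systems \texttt{GAP} and \texttt{Octave},'' so your more detailed description of normal forms, boundary matrices, and Smith normal form computation is exactly the implementation the paper leaves implicit. Your consistency checks are a sensible addition but not present in the paper.
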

\begin{proof}
  These computations were performed using scripts written for the computer 
  algebra systems \verb|GAP|~\cite{GAP4} and \verb|Octave|~\cite{E}.
\end{proof}

We conjecture that the $H_*(Sym_*^{(p)})$ is always free abelian.

%%%%%%%%%%%%%%%%%%%%%%%%%%%%%%%%%%%%%%%%%%%%%%%%%%%%%%%%%%%%%%%%%%%%%%%%%%%%%%
\subsection{Representation Theory of $H_*(Sym_*^{(p)})$}\label{sec.rep_sym}
By remark~\ref{rmk.action}, the groups $H_i(Sym_*^{(p)}; k)$ carry the
structure of $k\Sigma_{p+1}$--modules, so it seems natural to
investigate the irreducible representations comprising these modules.
\begin{prop}
  Let $C_{p+1} \hookrightarrow \Sigma_{p+1}$ be the cyclic group of
  order $p+1$, embedded into the symmetric group as the subgroup
  generated by the permutation $\tau_p \stackrel{def}{=} (0, p, p-1,
  \ldots, 1)$.  Then there is a $\Sigma_{p+1}$-isomorphism:
  $H_p(Sym_*^{(p)}) \cong AC_{p+1} \uparrow \Sigma_{p+1}$, {\it i.e.},
  the alternating representation of the cyclic group, induced up to
  the symmetric group.  Note, for $p$ even, $AC_{p+1}$ coincides with
  the trivial representation $IC_{p+1}$.
  
  Moreover, $H_p(Sym_*^{(p)})$ is generated by the elements
  $\sigma(b_p)$, for the distinct cosets $\sigma C_{p+1}$, where $b_p
  \stackrel{def}{=} \sum_{j = 0}^p (-1)^{jp} \tau_p^j(z_0z_1 \ldots
  z_p)$.
\end{prop}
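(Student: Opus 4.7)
Since $Sym_p^{(p)}$ is the top-degree term, $H_p(Sym_*^{(p)}) = \ker d$ for $d \co Sym_p^{(p)} \to Sym_{p-1}^{(p)}$. My plan is to identify $Sym_p^{(p)}$ with $k[\Sigma_{p+1}]$ as a $k$-module (sending a permutation $\pi$ to the word $z_{\pi(0)} \cdots z_{\pi(p)}$) and to determine $\ker d$ coefficient by coefficient in this basis.

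For $x = \sum_\pi c_\pi \pi$ and a fixed ordered partition $A\otimes B$ with $|A| = a+1$, I would observe that exactly two permutations contribute to the coefficient of the equivalence class $[A\otimes B]\in Sym_{p-1}^{(p)}$ in $dx$: namely $\pi = AB$ through the $a$-th face map with sign $(-1)^a$, and $\pi' = BA$ through the $(p-a-1)$-th face with sign $(-1)^{p-a-1}$ twisted by the swap relation $B\otimes A = (-1)^{a(p-a-1)} A\otimes B$. Setting the total to zero and reducing the combined exponent modulo $2$ (using $a^2\equiv a$) should collapse the relation to $c_{\pi'} = (-1)^{p(a+1)} c_\pi$. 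Recognizing that $\pi' = BA$ is the left-cyclic rotation of $\pi$ by $a+1$ positions, which equals $\pi\tau_p^{-(a+1)}$ as a group element, I would rewrite the system uniformly as $c_{\sigma\tau_p^k} = (-1)^{pk} c_\sigma$ for all $\sigma \in \Sigma_{p+1}$ and all $k\in\mathbb{Z}$. These relations are self-consistent (rotation by $p+1$ is trivial and $(-1)^{p(p+1)} = 1$), so $c$ is determined on each left coset $\sigma C_{p+1}$ by a single value, yielding $\dim_k \ker d \le p!$.

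For the reverse inclusion, each $\sigma(b_p) = \sum_{j=0}^{p} (-1)^{jp}\,\sigma\tau_p^j$ is supported on the single coset $\sigma C_{p+1}$ with coefficients satisfying the prescribed sign rule, hence lies in $\ker d$. Distinct cosets yield disjoint supports, so the $p!$ elements $\{\sigma(b_p)\}$, as $\sigma$ ranges over coset representatives, are linearly independent and therefore form a $k$-basis of $\ker d$. To identify the $\Sigma_{p+1}$-module structure, I would first verify directly from the definition of $b_p$ and $\tau_p^{p+1} = 1$ that $\tau_p(b_p) = (-1)^p b_p$, so that $b_p$ spans an $AC_{p+1}$-subrepresentation of $\ker d$. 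The action $\rho\cdot\sigma(b_p) = (\rho\sigma)(b_p)$ then shifts coset representatives and scales by $(-1)^{pk}$ whenever $\rho\sigma = \sigma\tau_p^k$, which is precisely the action on the induced module $k\Sigma_{p+1}\otimes_{kC_{p+1}} AC_{p+1} = AC_{p+1}\uparrow\Sigma_{p+1}$.

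The main obstacle is the sign bookkeeping in the coefficient computation. One must juggle the three signs $(-1)^a$, $(-1)^{p-a-1}$, and $(-1)^{a(p-a-1)}$, and then verify that their product together with the leading minus sign from $c_{\pi'} = -c_\pi \cdot(\text{sign ratio})$ collapses to the clean form $(-1)^{p(a+1)}$; this hinges on the parity identities $a^2\equiv a$ and $pk \cdot 2 \equiv 0$. Once this identity is in hand, the description of $\ker d$ as a coset-supported space and the identification with the induced representation are both essentially formal.
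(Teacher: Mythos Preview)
Your proposal is correct and follows essentially the same route as the paper's proof: both identify $Sym_p^{(p)}$ with $k[\Sigma_{p+1}]$, pair up the two contributions to each class $[A\otimes B]$ in $Sym_{p-1}^{(p)}$, reduce the resulting sign identity to $c_{\sigma\tau_p^k}=(-1)^{pk}c_\sigma$, and then read off the basis $\{\sigma(b_p)\}$ and the induced-representation structure. Your framing (``which two permutations hit a given class'') is the contrapositive of the paper's (``for each $\sigma$, the faces pair off''), but the computation and the conclusion are identical.
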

\begin{proof}
  Let $w$ be a general element of $Sym_p^{(p)}$, $w = \sum_{\sigma \in 
  \Sigma_{p+1}} c_\sigma \sigma(z_0z_1 \ldots z_p)$, where $c_\sigma$ are 
  constants in $k$.  $H_p(Sym_*^{(p)})$ consists of those $w$ such that 
  $d(w) = 0$.  That is,
  \begin{equation}\label{eq.sum_sigma_zero}
    0 = \sum_{\sigma \in \Sigma_{p+1}} \sum_{i=0}^{p-1}
    (-1)^i c_\sigma \sigma(z_0 \ldots z_i \otimes z_{i+1} \ldots z_p).
  \end{equation}
  Now for each $\sigma$, the terms corresponding to $\sigma(z_0 \ldots
  z_i \otimes z_{i+1} \ldots z_p)$ occur in pairs in the above
  formula.  The obvious term of the pair is $(-1)^i c_{\sigma}
  \sigma(z_0 \ldots z_i \otimes z_{i+1} \ldots z_p)$.  Not so
  obviously, the second term of the pair is
  $(-1)^{(p-i-1)i}(-1)^{p-i-1} c_{\rho} \rho(z_0 \ldots z_{p-i-1}
  \otimes z_{p-i} \ldots z_p)$, where $\rho = \sigma
  \tau_p^{p-i}$. Thus, if $d(w) = 0$, then we must have $(-1)^i
  c_\sigma + (-1)^{(p-i-1)(i+1)}c_\rho = 0$, or $c_\rho =
  (-1)^{(p-i)(i+1)}c_\sigma$.  Set $j = p-i$, so that $c_\rho =
  (-1)^{j(p-j+1)}c_\sigma = (-1)^{jp}c_\sigma$.  This shows that the
  only restrictions on the coefficients $c_\sigma$ are that the
  absolute values of coefficients corresponding to $\sigma, \sigma
  \tau_p, \sigma \tau_p^2, \ldots$ must be the same, and their
  corresponding signs in $w$ alternate if and only if $p$ is odd;
  otherwise, they have the same signs.  Clearly, the elements
  $\sigma(b_p)$ for distinct cosets $\sigma C_{p+1}$ represents an
  independent set of generators over $k$ for $H_p(Sym_*^{(p)})$.

  Observe that $b_p$ is invariant under the action of $sgn(\tau_p)\tau_p$, and 
  so $b_p$ generates an alternating representation $A C_{p+1}$ over $k$.  
  Induced up to $\Sigma_{p+1}$, we obtain the representation $AC_{p+1} \uparrow
  \Sigma_{p+1}$ of dimension $(p+1)!/(p+1) = p!$, generated by the elements 
  $\sigma(b_p)$ as in the proposition.
\end{proof}

\begin{definition}
  For a given proper partition $\lambda = [\lambda_0, \lambda_1, \lambda_2, 
  \ldots, \lambda_s]$ of the $p+1$ integers $\{0, 1, \ldots, p\}$, an element 
  $W$ of $Sym_*^{(p)}$ will designated as {\it type $\lambda$} if it equivalent
  to $\pm(Y_0 \otimes Y_1 \otimes Y_2 \otimes \ldots \otimes Y_s)$ with
  $deg(Y_i) = \lambda_i - 1$.  That is, each $Y_i$ has $\lambda_i$ factors.
  The notation $Sym_\lambda^{(p)}$ or $Sym_\lambda$ will denote the 
  $k$-submodule of $Sym_{p-s}^{(p)}$ generated by all elements of type 
  $\lambda$.
\end{definition}
In what follows, $|\lambda|$ will refer to the number of components of 
$\lambda$.  The action of $\Sigma_{p+1}$ leaves $Sym_\lambda$ invariant for any
given $\lambda$, so the there is a decomposition as $k\Sigma_{p+1}$--module:
\[
  Sym_{p-s}^{(p)} = \bigoplus_{\lambda \vdash (p+1), |\lambda| = s+1} 
  Sym_{\lambda}.
\]

\begin{prop}\label{prop.alt-and-trivial-reps-in-sym}
  For a given proper partition $\lambda \vdash (p+1)$, 
  
  {\it (a)} $Sym_\lambda$ contains exactly one
  alternating representation $A\Sigma_{p+1}$ iff $\lambda$ contains no repeated 
  components.
  
  {\it (b)} $Sym_\lambda$ contains exactly one trivial representation
  $I\Sigma_{p+1}$ iff $\lambda$ contains no repeated even components.
\end{prop}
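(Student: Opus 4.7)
The plan is to identify $Sym_\lambda$ with an induced representation from a Young-type subgroup of $\Sigma_{p+1}$, and then apply Frobenius reciprocity. Set $n = p+1$, and for each part size $r$ let $m_r$ denote the multiplicity of $r$ in $\lambda$, so that $n = \sum_r r \cdot m_r$. Choose an ordering of $\lambda$ in which same-size parts are adjacent, and fix the canonical element
\[
e_\lambda = Y_0 \otimes Y_1 \otimes \cdots \otimes Y_s \in Sym_\lambda,
\]
in which the $z_j$'s appear in their natural order and each $|Y_i| = \lambda_i$. Let $G_\lambda = \prod_r \Sigma_{m_r} \leq \Sigma_n$, embedded as block-permutations of same-size blocks (a transposition of two size-$r$ blocks is a product of $r$ disjoint transpositions in $\Sigma_n$, hence has sign $(-1)^r$).

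The key computation is that $G_\lambda$ acts on the line $k \cdot e_\lambda$ through a character $\chi_\lambda$: applying the tensor-swap relation of Definition~\ref{def.sym_complex} to an adjacent transposition of two size-$r$ blocks gives
\[
\sigma \cdot e_\lambda \;=\; (-1)^{(r-1)(r-1)} e_\lambda \;=\; (-1)^{r-1} e_\lambda,
\]
so $\chi_\lambda|_{\Sigma_{m_r}}$ is the trivial character when $r$ is odd and the sign character when $r$ is even. First I would verify that the $\Sigma_n$-orbit of $e_\lambda$ spans $Sym_\lambda$ and that the line-stabilizer of $k \cdot e_\lambda$ is exactly $G_\lambda$; combined with the dimension count $\dim_k Sym_\lambda = n!/\prod_r m_r! = [\Sigma_n : G_\lambda]$, this yields an isomorphism of $k\Sigma_n$--modules
\[
Sym_\lambda \;\cong\; \mathrm{Ind}_{G_\lambda}^{\Sigma_n}\chi_\lambda.
\]

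With this identification, Frobenius reciprocity reduces each claim to a character comparison on $G_\lambda$. For (a), take $V = A\Sigma_n$; the restriction of $\mathrm{sgn}$ to $\Sigma_{m_r} \leq G_\lambda$ sends $\tau \mapsto \mathrm{sgn}(\tau)^r$, so it is trivial when $r$ is even and is the sign character when $r$ is odd---in both cases the opposite of $\chi_\lambda|_{\Sigma_{m_r}}$. Hence $\mathrm{Hom}_{G_\lambda}(\chi_\lambda, A\Sigma_n \downarrow_{G_\lambda}) \neq 0$ iff trivial and sign agree on each factor $\Sigma_{m_r}$, i.e., iff $m_r \leq 1$ for all $r$; when nonzero, the Hom-space is one-dimensional, giving exactly one copy of $A\Sigma_n$. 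For (b), take $V = I\Sigma_n$; its restriction to $G_\lambda$ is trivial, and one needs $\chi_\lambda$ itself trivial, i.e., the sign character on $\Sigma_{m_r}$ must be trivial for every even $r$, which says $m_r \leq 1$ whenever $r$ is even.

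The main obstacle is the isomorphism $Sym_\lambda \cong \mathrm{Ind}_{G_\lambda}^{\Sigma_n}\chi_\lambda$: one must check that the tensor-swap relations impose no further collapsing beyond the character action of $G_\lambda$ on $k \cdot e_\lambda$. I expect to produce an explicit set of $[\Sigma_n : G_\lambda]$ coset representatives and show their images span $Sym_\lambda$ freely via a filtration argument, filtering by the unordered set-partition of $\{z_0,\ldots,z_p\}$ underlying each type-$\lambda$ tensor (which is $\Sigma_n$-equivariant and untouched by the adjacent-block swap relations). The only delicate sign bookkeeping occurs when same-size blocks are not adjacent in a chosen representative; one moves blocks into position by iterated adjacent swaps and checks that the accumulated sign matches the claimed $(-1)^{r-1}$ per block transposition.
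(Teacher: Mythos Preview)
Your approach is correct and genuinely different from the paper's. The paper argues more elementarily: it observes that $Sym_\lambda$ is a cyclic $k\Sigma_{p+1}$--module (the image of $\sigma \mapsto \sigma\cdot W$ for the standard element $W$), hence a quotient of the regular representation, so each one--dimensional irreducible can occur at most once. It then checks directly whether the antisymmetrizer $V=\sum_\sigma \mathrm{sgn}(\sigma)\,\sigma(W)$ (resp.\ the symmetrizer $U=\sum_\sigma \sigma(W)$) is nonzero, by pairing off terms: if $\lambda_i=\lambda_{i+1}=\ell$ then $W=(-1)^{(\ell-1)^2}\alpha(W)$ for the block swap $\alpha$, and this forces a cancellation in $V$ (always) and in $U$ (exactly when $\ell$ is even). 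Your route---identifying $Sym_\lambda\cong \mathrm{Ind}_{G_\lambda}^{\Sigma_{n}}\chi_\lambda$ and applying Frobenius reciprocity---is more structural and yields strictly more: once the induced description is in hand, multiplicities of \emph{all} irreducibles in $Sym_\lambda$ are accessible, not just the one--dimensional ones. The paper's argument, by contrast, avoids ever naming $G_\lambda$ or invoking Frobenius, at the cost of being limited to the two characters at hand.

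The step you flag as the obstacle is genuine but straightforward, and your sketch handles it. Concretely: fix coset representatives $\sigma_1,\dots,\sigma_N$ for $\Sigma_n/G_\lambda$; the elements $\sigma_j\cdot e_\lambda$ are pairwise distinct up to sign (since two block--filled tensors are $\approx$--equivalent iff they differ by a permutation of same--size blocks), and they exhaust the $\approx$--classes of type $\lambda$, so they form a free $k$--basis of $Sym_\lambda$. This simultaneously verifies the dimension count, the line--stabilizer claim, and the injectivity of $k\Sigma_n\otimes_{kG_\lambda}k_{\chi_\lambda}\to Sym_\lambda$. One small bookkeeping point: you compute $\mathrm{Hom}_{G_\lambda}(\chi_\lambda,\mathrm{Res}\,V)$, whereas ``contains a copy of $V$'' asks for $\mathrm{Hom}_{\Sigma_n}(V,\mathrm{Ind}\,\chi_\lambda)\cong \mathrm{Hom}_{G_\lambda}(\mathrm{Res}\,V,\chi_\lambda)$; for one--dimensional $V$ these coincide, so no harm done.
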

\begin{proof}
  $Sym_\lambda$ is a quotient of the regular representation, since it is the 
  image of the $\Sigma_{p+1}$-map $\pi_\lambda \co k\Sigma_{p+1} \to 
  Sym_\lambda,$ $\sigma \mapsto \psi_\lambda s$, where $s \in 
  \Sigma_{p+1}^{\mathrm{op}}$ is the $\Delta S$-automorphism of $[p]$ 
  corresponding to $\sigma$ and $\psi_\lambda$ is a $\Delta$-morphism $[p] \to 
  [ \,|\lambda|\, ]$ that sends the points $0, \ldots, \lambda_0-1$ to $0$, the
  points $\lambda_0, \ldots, \lambda_0 + \lambda_1 -1$ to $1$, and so on.  
  Hence, there can be at most $1$ copy of $A\Sigma_{p+1}$ and at most $1$ copy
  of $I\Sigma_{p+1}$ in $Sym_\lambda$.
  
  Let $W$ be the ``standard'' element of $Sym_\lambda$.  That is, the 
  indeterminates $z_i$ occur in $W$ in numerical order and the degrees of
  monomials of $W$ are in decreasing order.  $A\Sigma_{p+1}$ exists
  in $Sym_\lambda$ iff the element $V = \sum_{\sigma \in \Sigma_{p+1}} 
  sgn(\sigma)\sigma(W)$ is non-zero.  Suppose that some component of $\lambda$
  is repeated, say $\lambda_i = \lambda_{i+1} = \ell$.  If $W = Y_0 \otimes Y_1
  \otimes \ldots \otimes Y_s$, then $deg(Y_i) = deg(Y_{i+1}) = \ell-1$.  Now, we
  know that $W = (-1)^{deg(Y_i)deg(Y_{i+1})} Y_0 \otimes \ldots \otimes Y_{i+1}
  \otimes Y_i \otimes \ldots Y_s  = -(-1)^{\ell} \alpha(W)$, for the permutation
  $\alpha \in \Sigma_{p+1}$ that exchanges the indices of indeterminates in
  $Y_i$ with those in $Y_{i+1}$ in an order-preserving way.  In $V$, the term 
  $\alpha(W)$ shows up with sign $sgn(\alpha) = (-1)^\ell$, thus cancelling 
  with $W$.  Hence, $V = 0$, and no alternating representation exists.

  If, on the other hand, no component of $\lambda$ is repeated, then
  no term $\pm \alpha(W)$ can be equivalent to $W$ for $\alpha \neq
  \mathrm{id}$, so $V$ survives as the generator of $A\Sigma_{p+1}$ in
  $Sym_\lambda$.
 
  A similar analysis applies for trivial representations.  This time, we examine
  $U = \sum_{\sigma \in \Sigma_{p+1}} \sigma(W)$, which would be a generator for
  $I\Sigma_{p+1}$ if it were non-zero.  As before, if there is a repeated 
  component, $\lambda_i=\lambda_{i+1} = \ell$, then $W=(-1)^{\ell-1}\alpha(W)$.
  However, this time, $W$ cancels with $\alpha(W)$ only if $\ell - 1$ is odd.  
  That is, $|\lambda_i| = |\lambda_{i+1}|$ is even.  If $\ell - 1$ is even, or 
  if all $\lambda_i$ are distinct, then the element $U$ must be non-zero.
\end{proof}
\begin{prop}\label{prop.alternating_reps}
  $H_i(Sym_*^{(p)})$ contains an alternating representation for each partition 
  $\lambda \vdash (p+1)$ with \mbox{$|\lambda| = p-i$} such that no component of
  $\lambda$ is repeated.
\end{prop}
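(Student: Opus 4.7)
The plan is to exhibit, for each partition $\lambda \vdash (p+1)$ with no repeated components, an explicit cycle $V_\lambda$ in $Sym_*^{(p)}$ generating the alternating-representation isotype in the appropriate degree, namely $V_\lambda = \sum_{\sigma \in \Sigma_{p+1}} \mathrm{sgn}(\sigma)\,\sigma(W_\lambda)$, where $W_\lambda$ is the standard type-$\lambda$ element from the proof of Prop.~\ref{prop.alt-and-trivial-reps-in-sym}. The heart of the argument is to prove $d(V_\lambda)=0$; once this is established, $V_\lambda$ automatically survives in homology, because (as I will explain) the entire alternating-isotype subcomplex $e_A Sym_*^{(p)}$ ends up with vanishing differential, so its homology equals its underlying graded module.

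To compute $d(V_\lambda)$, I apply the Leibniz rule from Def.~\ref{def.sym_complex}: each summand of $d(W_\lambda)$ is a term $T^{(a,b)}_j$ obtained by cutting the monomial $Y_j$ after its $a$-th letter, yielding an element of type $\mu$ where $\mu$ is $\lambda$ with the part $\lambda_j$ replaced by the pair $(a,b)$. The term carries the sign $\varepsilon_{j,a}=(-1)^{S_j+a-1}$, with $S_j=\sum_{k<j}(\lambda_k-1)$. By the $\Sigma_{p+1}$-equivariance of the differential (Rmk.~\ref{rmk.action}), $d(V_\lambda)=\sum_\sigma \mathrm{sgn}(\sigma)\,\sigma(d(W_\lambda))$, and for each fixed split the inner sum over $\sigma$ is, up to a non-zero scalar, the alternating-representation projection of $T^{(a,b)}_j$ inside $Sym_\mu$. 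By Prop.~\ref{prop.alt-and-trivial-reps-in-sym}, this projection is zero whenever $\mu$ has a repeated part, so only the splits producing a $\mu$ with all parts distinct (equivalently $a\ne b$ and $\{a,b\}\cap\{\lambda_k:k\ne j\}=\emptyset$) can contribute.

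For each such $\mu$, the two splits $(a,b)$ and $(b,a)$ of $\lambda_j$ both land in $Sym_\mu$, and the crux is to show their contributions cancel. Applying the $\approx$-swap $Y'_a\otimes Y''_b \leftrightarrow Y''_b\otimes Y'_a$ (sign $(-1)^{(a-1)(b-1)}$) followed by the cyclic shift $\rho$ on the indices of $Y_j$ defined by $c_j+i \mapsto c_j+((i-a)\bmod \lambda_j)$, one checks $T^{(b,a)}_j=(-1)^{(a-1)(b-1)}\,\rho\cdot T^{(a,b)}_j$. Because $e_A(\rho X)=\mathrm{sgn}(\rho)\,e_A X$ and $\mathrm{sgn}(\rho)=(-1)^{\lambda_j-\gcd(a,b)}$, the combined alternating-rep contribution to $Sym_\mu$ in $d(V_\lambda)$ reduces (up to a nonzero common scalar) to
\[
  (-1)^{a-1} + (-1)^{b-1}(-1)^{(a-1)(b-1)}(-1)^{a+b-\gcd(a,b)},
\]
which vanishes precisely when $a+b-\gcd(a,b)\equiv ab \pmod 2$. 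A short parity lemma confirms this: writing $a=da'$, $b=db'$ with $\gcd(a',b')=1$, at most one of $a',b'$ is even, so the congruence holds in every case.

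With $d(V_\lambda)=0$ established for every such $\lambda$, the generators $\{V_\lambda\}$ of the alternating-isotype subcomplex are all cycles; equivalently, $d|_{e_A Sym_*^{(p)}}=0$. Since $e_A Sym_*^{(p)}$ is a direct summand of $Sym_*^{(p)}$ as a chain complex (after tensoring with a ring where $(p+1)!$ is invertible, or by an analogous argument over $\mathbb{Z}$ using that each $V_\lambda$ lies in a distinct $Sym_\mu$ summand and cannot be a boundary of anything in $e_A Sym_*^{(p)}$), each $V_\lambda$ represents a non-trivial alternating class in homology, in the unique degree where $W_\lambda$ sits. The principal obstacle is the sign-matching step: pinpointing the correct cyclic shift $\rho$, tracking both the Leibniz signs and the $\approx$-signs, and carrying out the parity check that forces the $(a,b)$ and $(b,a)$ contributions to cancel.
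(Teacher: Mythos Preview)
Your proof is correct and follows essentially the same approach as the paper: both exhibit $V_\lambda=\sum_\sigma \mathrm{sgn}(\sigma)\,\sigma(W_\lambda)$, show $d(V_\lambda)=0$ by pairing each split of a monomial with the complementary split via a cyclic shift of the indices inside that monomial (your $\rho$ is exactly the paper's $\alpha$), and then invoke Schur's Lemma. The only organizational difference is that you first dispose of the splits landing in a $Sym_\mu$ with repeated parts by appealing to Prop.~\ref{prop.alt-and-trivial-reps-in-sym}, whereas the paper's pairing handles those cases uniformly without singling them out; your explicit parity lemma $ab\equiv a+b-\gcd(a,b)\pmod 2$ is precisely the identity the paper uses implicitly when it asserts $\mathrm{sgn}(\alpha)=(-1)^{(m-\ell)(\ell-1)}$.
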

\begin{proof}
  This proposition will follow from the fact that $d(V) = 0$ for any generator
  $V$ of an alternating representation in $Sym_\lambda$.  Then, by Schur's 
  Lemma, the alternating representation must survive at the level of homology.
  
  Let $V= \sum_{\sigma \in \Sigma_{p+1}} sgn(\sigma)\sigma(W)$ be the generator
  mentioned in Prop.~\ref{prop.alt-and-trivial-reps-in-sym}.  $d(V)$ consists of
  individual terms $d_j(\sigma(W)) = \sigma(d_j(W))$ along with appropriate 
  signs.  For a given $j$, $d_j(W)$ is identical to $W$ except at some monomial 
  $Y_i$, where a tensor product symbol is inserted.  We will
  introduce some notation to make the argument a little cleaner.  If 
  $Y = z_{i_0}z_{i_1} \ldots z_{i_r}$ is a monomial, then the notation $Y\{s, 
  \ldots, t\}$ refers to the monomial $z_{i_s}z_{i_{s+1}} \ldots z_{i_t}$, 
  assuming $0 \leq s \leq t \leq r$.  Now, we write
  \begin{equation}\label{eq.d_jW}
    d_j(W) = (-1)^{a + \ell} Y_0 \otimes \ldots \otimes Y_i\{0, \ldots, \ell\}
    \otimes Y_i\{\ell+1, \ldots, m\} \otimes \ldots \otimes Y_s,
  \end{equation}
  where $a = deg(Y_0) + \ldots + deg(Y_{i-1})$.  Use the relations in $Sym_*$ to
  rewrite Eq.~(\ref{eq.d_jW}):
  \begin{equation}\label{eq.d_jW2}
    (-1)^{(a + \ell) + \ell(m - \ell - 1)}Y_0 \otimes \ldots \otimes Y_i\{
    \ell+1, \ldots, m\}\otimes Y_i\{0,\ldots,\ell\} \otimes \ldots \otimes Y_s.
  \end{equation}
  Let $\alpha$ be the permutation that relabels indices in such a way that
  $Y_i\{0, \ldots, m-\ell-1\} \mapsto Y_i\{\ell+1, \ldots, m\}$ and $Y_i\{m-
  \ell, \ldots, m\} \mapsto Y_i\{0,\ldots, \ell\}$, so that the following
  is equivalent to Eq.~(\ref{eq.d_jW2}).
  \begin{equation}\label{eq.d_jW-alpha}
    (-1)^{a + m\ell - \ell^2}\alpha\left(Y_0 \otimes \ldots \otimes Y_i\{0, 
    \ldots, m-\ell-1\}\otimes Y_i\{m-\ell, \ldots, m\} \otimes \ldots \otimes 
    Y_s\right)
  \end{equation}
  Now, Eq.(~\ref{eq.d_jW-alpha}) also occurs in $d_{j'}\left(sgn(\alpha)
  \alpha(W) \right)$ for some $j'$.  This term looks like:
  \begin{equation}
    sgn(\alpha)(-1)^{a + m-\ell-1}
    \alpha\big(Y_0 \otimes \ldots \otimes 
    Y_i\{0, \ldots, m-\ell-1\}
    \otimes Y_i\{m-\ell, \ldots, m\} \otimes \ldots \otimes Y_s\big)
  \end{equation}
  \begin{equation}\label{eq.alpha-d_jprime}
    = (-1)^{m\ell - \ell^2 + a - 1}
    \alpha\big(Y_0 \otimes \ldots \otimes 
    Y_i\{0, \ldots, m-\ell-1\}
    \otimes Y_i\{m-\ell, \ldots, m\} \otimes \ldots \otimes Y_s\big)
  \end{equation}
  Comparing the signs of Eq.~(\ref{eq.alpha-d_jprime}) and 
  Eq.~(\ref{eq.d_jW-alpha}), we verify the two terms canel each other out
  in the sum $d(V)$.
\end{proof}

By Proposition~\ref{prop.alternating_reps}, it is clear that if $p+1$ is a 
triangular number -- {\it i.e.}, $p+1$ is of the form $r(r+1)/2$ for some 
positive integer $r$, then the lowest dimension in which an alternating 
representation may occur is $p + 1 - r$, corresponding to the partition 
$\lambda = [r, r-1, \ldots, 2, 1]$.  A little algebra yields the following 
statement for any $p$.
\begin{cor}\label{cor.lowest_alternating_reps}
  $H_i(Sym_*^{(p)})$ contains an alternating representation in degree $p+1-r$, 
  where $r = \lfloor \sqrt{2p + 9/4} - 1/2 \rfloor$.
  Moreover, there are no alternating representations present for $i \leq p-r$.
\end{cor}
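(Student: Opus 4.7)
The plan is to extract both parts of the corollary from Proposition~\ref{prop.alternating_reps} together with a direct combinatorial calculation of the largest length of a partition of $p+1$ into distinct parts. Proposition~\ref{prop.alternating_reps} provides an alternating representation in $H_i(Sym_*^{(p)})$ for every partition $\lambda \vdash (p+1)$ with pairwise distinct components, where the homological degree is determined by $|\lambda| = p+1-i$ (as can be read off from the definition of $Sym_\lambda$, since an element of type $\lambda$ with $|\lambda|=s+1$ tensor factors lives in $Sym_{p-s}^{(p)}$). To find the smallest such degree, I need to maximize $|\lambda|$ subject to $\lambda$ having distinct positive parts summing to $p+1$.

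The first step is purely arithmetical: the minimum sum of $s$ distinct positive integers is $1+2+\cdots+s = s(s+1)/2$, so the constraint becomes $s(s+1)/2 \leq p+1$. The largest such $s$, obtained by solving the resulting quadratic inequality, is $r = \lfloor \sqrt{2p+\tfrac{9}{4}} - \tfrac{1}{2} \rfloor$, matching the formula in the statement.

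Next, I would write $p+1 = r(r+1)/2 + k$ with $0 \leq k \leq r$ (the upper bound on $k$ being forced by maximality of $r$) and exhibit the explicit partition $\lambda = [r+k,\,r-1,\,r-2,\,\ldots,\,2,\,1]$. Its parts are distinct since $r+k \geq r > r-1$, they sum to $p+1$, and there are exactly $r$ of them. Plugging into Proposition~\ref{prop.alternating_reps} with $|\lambda| = r$ yields an alternating summand in $H_{p+1-r}(Sym_*^{(p)})$.

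Finally, for the vanishing claim, I would observe that if $i \leq p-r$ then $|\lambda| = p+1-i \geq r+1$, which by maximality of $r$ is impossible for any partition of $p+1$ into distinct positive parts. Since $Sym_i^{(p)}$ decomposes as $\bigoplus_{|\lambda|=p+1-i} Sym_\lambda$ and Proposition~\ref{prop.alt-and-trivial-reps-in-sym}(a) confines alternating summands to those $Sym_\lambda$ with no repeated component, the chain module $Sym_i^{(p)}$ itself contains no alternating $\Sigma_{p+1}$-subrepresentation in that range; hence neither does its homology, which is a subquotient. The only conceivable obstacle is an off-by-one slip between $|\lambda|$ and $i$, which should be pinned down against the definition of $Sym_\lambda$, but beyond that the argument is straightforward combinatorics.
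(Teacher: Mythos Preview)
Your proof is correct and follows exactly the approach the paper intends: the paper's ``proof'' is merely the paragraph preceding the corollary, which sketches the triangular-number case and defers the rest to ``a little algebra,'' and you have supplied that algebra in full, including the vanishing argument via Proposition~\ref{prop.alt-and-trivial-reps-in-sym}(a). You are also right to flag the indexing: from the decomposition $Sym_{p-s}^{(p)} = \bigoplus_{|\lambda|=s+1} Sym_\lambda$ one gets $|\lambda| = p+1-i$, so the statement of Proposition~\ref{prop.alternating_reps} in the paper appears to have an off-by-one typo ($p-i$ should read $p+1-i$), and your version is the one consistent with both the definition and the corollary.
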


There is not much known about the other irreducible representations
occurring in the homology groups of $Sym_*^{(p)}$, however
computational evidence shows that $H_i(Sym_*^{(p)})$ contains no
trivial representation, $I\Sigma_{p+1}$, for $i \leq p-r$ ($r$ as in
the conjecture above) up to $p = 50$.

%%%%%%%%%%%%%%%%%%%%%%%%%%%%%%%%%%%%%%%%%%%%%%%%%%%%%%%%%%%%%%%%%%%%%%%%%%%%%%
\subsection{Connectivity of $Sym_*^{(p)}$}\label{sub.connectivity_of_HS}
Quite recently, Vre\'cica and \v{Z}ivaljevi\'c~\cite{VZ} observed that the
complex $Sym_*^{(p)}$ is isomorphic to the suspension of the cycle-free 
chessboard complex $\Omega_{p+1}$ (in fact, the isomorphism takes the form 
$k\left[S\Omega_{p+1}^+\right] \to Sym_*^{(p)}$, where $\Omega_{p+1}^+$ is the 
augmented complex).

The $m$-chains of the complex $\Omega_n$ are generated by ordered
lists, $L = \{ (i_0, j_0)$, $(i_1, j_1)$, $\ldots, (i_m, j_m) \}$, where
$1 \leq i_0 < i_1 < \ldots < i_m \leq n$, all $1 \leq j_s \leq n$ are
distinct integers, and the list $L$ is {\it cycle-free}.  It may be
easier to say what it means for $L$ not to be cycle free: $L$ is not
cycle-free if there exists a subset $L_c \subseteq L$ and ordering of
$L_c$ so that $L_c = \{ (\ell_0, \ell_1), (\ell_1, \ell_2), \ldots,
(\ell_{t-1}, \ell_t), (\ell_t, \ell_0) \}$.  The face maps
are defined by deltion of elements of $L$:
\[
  d_s\left( \{ (i_0, j_0), \ldots, (i_m, j_m) \} \right)
  \stackrel{def}{=} \{ (i_0, j_0), \ldots,
  (i_{s-1}, j_{s-1}), (i_{s+1}, j_{s+1}), \ldots, (i_m, j_m) \}.
\]
For completeness, an explicit isomorphism shall be provided as part
of the proof of the following proposition.
\begin{prop}\label{prop.iso_omega_sym}
  Let $\Omega^+_n$ denote the augmented cycle-free $(n \times n)$-chessboard 
  complex, where the unique $(-1)$-chain is represented by the empty $n 
  \times n$ chessboard, and the boundary map on $0$-chains takes a vertex to the
  unique $(-1)$-chain.  For each $p \geq 0$, there is a chain isomorphism,
  $\omega_* \co k\left[S\Omega^+_{p+1}\right] \to Sym_*^{(p)}$.
\end{prop}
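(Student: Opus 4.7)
The plan is to construct $\omega_*$ explicitly on generators, verify that it is a degree-wise bijection, and then check compatibility with the boundary maps up to a careful sign analysis.

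First I would describe $\omega_*$ on basis elements. Identify the integers $\{1, \dots, p+1\}$ used by $\Omega_{p+1}^+$ with $\{0, 1, \dots, p\}$, matching the indices of the formal indeterminates $z_i$. A degree-$m$ generator of $k[S\Omega_{p+1}^+]$ (for $m \geq 1$) corresponds to a cycle-free chain $L = \{(i_0, j_0), \dots, (i_{m-1}, j_{m-1})\}$ in $\Omega_{p+1}^+$ with $i_0 < \cdots < i_{m-1}$, while the degree-$0$ part is spanned by the basepoint $\ast$ coming from the unique $(-1)$-chain. Viewing $L$ as the edge set of a directed graph on $\{0, \dots, p\}$ with edges $i_a \to j_a$, distinctness of the $i_a$'s and the $j_a$'s together with cycle-freeness forces each connected component to be a simple directed path; there are $s+1 = (p+1) - m$ such paths $C_0, \dots, C_s$. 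Order them so that $\min C_0 < \cdots < \min C_s$, and write each $C_l$ as a monomial $Z_l = z_{c_{l,0}} z_{c_{l,1}} \cdots z_{c_{l,k_l}}$ by listing its vertices in traversal order. Then set
\[
\omega_*(L) \;=\; Z_0 \otimes Z_1 \otimes \cdots \otimes Z_s \;\in\; Sym_m^{(p)},
\]
and $\omega_*(\ast) = z_0 \otimes z_1 \otimes \cdots \otimes z_p \in Sym_0^{(p)}$.

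Bijectivity in each degree follows by exhibiting the inverse: given a class $[Z_0 \otimes \cdots \otimes Z_s] \in Sym_m^{(p)}$, put it into its canonical representative with $\min Z_0 < \cdots < \min Z_s$ (absorbing Koszul signs from the equivalence relation $\approx$), and extract the consecutive-pair edges $(c_{l,r}, c_{l,r+1})$ from each $Z_l$, sorted by first coordinate, to recover $L$. The cycle-free condition is automatic because each $Z_l$ is a finite sequence. The counting matches: $L$ with $m$ pairs yields $s+1 = (p+1)-m$ monomials, i.e., an epimorphism $[p] \twoheadrightarrow [p-m]$ in $\Delta S$ modulo $\approx$, matching the suspension degree shift.

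The bulk of the proof is showing that $\omega_*$ intertwines the differentials. In $k[S\Omega_{p+1}^+]$ (with the appropriate suspension sign), $d(L) = \pm \sum_{a}(-1)^a (L \setminus \{(i_a, j_a)\})$; in $Sym_*^{(p)}$, $d = \sum_i (-1)^i d_i$ with the Leibniz rule from~\eqref{eq.face_i-tensor}. Removing the pair $c_{l,r} \to c_{l,r+1}$ at $\Omega^+$-position $a$ cuts the path $C_l$ into two sub-paths, and on the $Sym$ side this is exactly the splitting of $Z_l$ into $z_{c_{l,0}} \cdots z_{c_{l,r}} \otimes z_{c_{l,r+1}} \cdots z_{c_{l,k_l}}$ via an internal face map, with global face index $i = k_0 + \cdots + k_{l-1} + r$. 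The main obstacle is the sign comparison: the simplicial sign $(-1)^a$ must equal $(-1)^i$ times the Koszul sign produced when the two new monomials are reinserted into the tensor in the new canonical order by smallest vertex (modulo the uniform suspension sign). I would establish this by a direct parity count, observing that $a$ equals $i$ plus the number of transpositions required to move the new right-hand factor $z_{c_{l,r+1}} \cdots z_{c_{l,k_l}}$ past the intervening monomials into its sorted position. Together with bijectivity, this identification yields the claimed chain isomorphism $\omega_*$.
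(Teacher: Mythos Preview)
Your construction is essentially the same as the paper's: both decompose a cycle-free chain $L$ into its maximal directed paths (the paper calls these ``queues''), send each path $\ell_0 \to \ell_1 \to \cdots \to \ell_t$ to the monomial $z_{\ell_0}\cdots z_{\ell_t}$, and take the tensor product (with isolated vertices contributing singleton factors). The paper writes the result as $Z_1 \otimes \cdots \otimes Z_q \otimes z_{k_1} \otimes \cdots \otimes z_{k_u}$ without committing to an ordering and simply asserts that the result is a chain map, whereas you fix the canonical representative ordered by minimal vertex and outline the parity bookkeeping needed to match $(-1)^a$ against $(-1)^i$ times the Koszul reordering sign; this is a genuine refinement of the paper's argument rather than a different approach, and the remaining sign count you sketch is exactly what the paper leaves to the reader.
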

\begin{proof}
  Note that we may define the generating $m$-chains of
  $k\left[\Omega_{p+1} \right]$ as cycle-free lists $L$, with no
  requirement on the order of $L$, under the equivalence relation:
  $\sigma. L \stackrel{def}{=} \{ (i_{\sigma^{-1}(0)},
  j_{\sigma^{-1}(0)}), \ldots, (i_{\sigma^{-1}(m)},
  j_{\sigma^{-1}(m)}) \} \approx sgn(\sigma)L$, for $\sigma \in
  \Sigma_{m+1}$.  Suppose $L$ is an $(m+1)$-chain of $S\Omega^+_{p+1}$
  ({\it i.e.} an $m$-chain of $\Omega^+_{p+1}$).  Call a subset $L'
  \subseteq L$ a {\it queue} if there is a reordering of $L'$ such
  that $L' = \{(\ell_0, \ell_1), (\ell_1, \ell_2), \ldots,
  (\ell_{t-1}, \ell_t)\}$.  $L'$ is called a {\it maximal queue} if it
  is not properly contained in any other queue.  Since $L$ is supposed
  to be cycle-free, we can partition $L$ into some number of maximal
  queues, $L_1', L_2', \ldots, L_q'$.  Let $\sigma$ be a permutation
  representing the reordering of $L$ into maximal ordered queues.

   Now, each maximal ordered queue $L_i'$ will correspond to a monomial of 
   formal indeterminates $z_i$ as follows.
   \begin{equation}\label{eq.monomial_correspondence}
     L_s' = \{ (\ell_0, \ell_1), (\ell_1, \ell_2),
     \ldots, (\ell_{t-1}, \ell_{t}) \} \mapsto
     z_{\ell_0-1}z_{\ell_1-1}\cdots z_{\ell_{t}-1}.
   \end{equation}
   For each maximal ordered queue, $L'_s$, denote the monomial obtained by 
   formula~(\ref{eq.monomial_correspondence}) by $Z_s$.  Let $k_1, k_2, \ldots,
   k_u$ be the numbers in $\{0, 1, 2, \ldots, p\}$ such that $k_{r} + 1$ does 
   not appear in any pair $(i_s, j_s) \in L$.  Now we may define $\omega_*$ on 
   $L = L'_1 \cup L'_2 \cup \ldots \cup L'_q$.
   \begin{equation}\label{eq.omega-def}
     \omega_{m+1}(L) \stackrel{def}{=} Z_1 \otimes Z_2 \otimes \ldots
     \otimes Z_q \otimes z_{k_1} \otimes z_{k_2} \otimes \ldots
     \otimes z_{k_u}.
   \end{equation}
   Observe, if $L = \emptyset$ is the $(-1)$-chain of $\Omega^+_{p+1}$, then 
   there are no maximal queues in $L$, and so $\omega_0(\emptyset) = z_0 \otimes
   z_1 \otimes \ldots \otimes z_p$.

   $\omega_*$ is a (well-defined) chain map with inverse given by essentially
   reversing the process.  To each monomial $Z = z_{i_0}z_{i_1}\cdots z_{i_t}$ 
   with $t > 0$, there is an associated ordered queue $L' = \{ (i_0+1, i_1+1),
   (i_1+1, i_2+1), \ldots (i_{t-1} +1, i_t + 1) \}$.  If the monomial is a 
   singleton, $Z = z_{i_0}$, the associated ordered queue will be the empty set.
   Now, given a generator $Z_1 \otimes Z_2 \otimes \ldots \otimes Z_q \in 
   Sym_*^{(p)}$, map it to the list $L \stackrel{def}{=} L'_1 \cup L'_2 \cup \ldots \cup L'_q$,
   preserving the original order of indices.
\end{proof}
\begin{theorem}\label{thm.connectivity}
  $Sym_*^{(p)}$ is $\lfloor\frac{2}{3}(p-1)\rfloor$-connected.
\end{theorem}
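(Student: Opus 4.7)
The plan is to reduce the statement to a known connectivity bound for the cycle-free chessboard complex $\Omega_n$ via Proposition~\ref{prop.iso_omega_sym}. Specifically, that proposition supplies a chain isomorphism $\omega_* \co k[S\Omega^+_{p+1}] \stackrel{\cong}{\to} Sym^{(p)}_*$, so the connectivity question for $Sym^{(p)}_*$ is identical to the connectivity question for the augmented suspended chessboard complex $k[S\Omega^+_{p+1}]$.

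First, I would unwind what ``augmented'' and ``suspension'' do to homology. The augmented complex $k[\Omega^+_{p+1}]$ computes the reduced homology $\widetilde{H}_*(\Omega_{p+1})$, and the suspension operator shifts this by one degree, so
\[
  \widetilde{H}_i\bigl(Sym^{(p)}_*\bigr) \;\cong\; \widetilde{H}_i\bigl(S\Omega^+_{p+1}\bigr) \;\cong\; \widetilde{H}_{i-1}\bigl(\Omega_{p+1}\bigr).
\]
Thus $Sym^{(p)}_*$ is $N$-connected if and only if $\Omega_{p+1}$ is $(N-1)$-connected.

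Next, I would invoke the Vre\'{c}ica--\v{Z}ivaljevi\'{c} connectivity theorem~\cite{VZ}: the cycle-free $(n \times n)$-chessboard complex $\Omega_n$ is $\nu_n$-connected, where $\nu_n = \lfloor (2n-4)/3\rfloor - 1$. Setting $n = p+1$ gives $\nu_{p+1} = \lfloor (2p-2)/3 \rfloor - 1 = \lfloor 2(p-1)/3 \rfloor - 1$, so by the homology shift above,
\[
  \widetilde{H}_i\bigl(Sym^{(p)}_*\bigr) = 0 \quad\text{for}\quad i \leq \lfloor 2(p-1)/3 \rfloor,
\]
which is exactly the claimed connectivity. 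A quick sanity check against Theorem~\ref{thm.poincare_sym_complex} confirms the bound: for $p = 3, 4, 5, 6, 7$ the formula predicts connectivities $1, 2, 2, 3, 4$, matching the lowest nonzero degrees $2, 3, 3, 4, 5$ read off the Poincar\'e polynomials $P_3, \ldots, P_7$.

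The only nontrivial ingredient is the Vre\'{c}ica--\v{Z}ivaljevi\'{c} connectivity bound, whose proof uses a discrete Morse-theoretic / shellability argument on $\Omega_n$ and is not something I would try to reprove here. The remaining steps---identifying $Sym^{(p)}_*$ with $k[S\Omega^+_{p+1}]$, tracking the degree shift induced by suspension and augmentation, and simplifying the arithmetic $\lfloor (2(p+1)-4)/3 \rfloor - 1 + 1 = \lfloor 2(p-1)/3 \rfloor$---are mechanical once that bound is in hand.
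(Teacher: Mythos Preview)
Your proposal is correct and matches the paper's approach exactly: the paper's proof consists of the single line ``See Thm.~10 of~\cite{VZ},'' and you have simply unpacked what that citation entails---the identification from Proposition~\ref{prop.iso_omega_sym}, the suspension shift, and the arithmetic translating the Vre\'cica--\v{Z}ivaljevi\'c bound for $\Omega_{p+1}$ into the stated bound for $Sym_*^{(p)}$.
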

\begin{proof}
  See Thm.~10 of~\cite{VZ}.
\end{proof}
This remarkable fact yields the following useful corollaries:
\begin{cor}\label{cor.finitely-generated}
  The spectral sequences of Thm.~\ref{thm.E1_NS} and Cor.~\ref{cor.E1_Sym} 
  converge strongly to $\widetilde{H}S_*(A)$.
\end{cor}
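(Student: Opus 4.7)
The plan is to deduce strong convergence from the connectivity bound on $Sym_*^{(p)}$ given by Thm.~\ref{thm.connectivity}, combined with the standard behaviour of bounded-below, exhaustive filtrations. First I would observe that the filtration $\mathscr{F}_*$ used to construct both spectral sequences satisfies $\mathscr{F}_p = 0$ for $p < 0$ and $\bigcup_p \mathscr{F}_p = C_*(\mathrm{Epi}\Delta S,\, B_*^{sym}I)$. This automatically makes the induced filtration on $\widetilde{H}S_*(A)$ exhaustive and separated, so weak convergence is immediate; strong convergence then reduces to showing that for every bidegree $(p,q)$ the modules $E^r_{p,q}$ stabilise at a finite page.

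The outgoing differential $d^r \co E^r_{p,q} \to E^r_{p-r,\, q+r-1}$ is already zero for $r > p$, since its target has negative filtration degree. The essential step is to control the incoming differentials $d^r \co E^r_{p+r,\, q-r+1} \to E^r_{p,q}$, which I would do by showing that each anti-diagonal of $E^1$ has only finitely many non-zero entries. Setting $n = p+q$ and applying the standard hyperhomology spectral sequence
\[
  H_s\bigl(G_u;\, H_t(Sym_*^{(p)}/k_0;\,k)\bigr) \;\Longrightarrow\; H_{s+t}\bigl(E_*G_u \ltimescirc_{G_u} Sym_*^{(p)};\, k\bigr),
\]
together with the $\lfloor \tfrac{2}{3}(p-1)\rfloor$-connectivity of $Sym_*^{(p)}$, one obtains $H_n(E_*G_u \ltimescirc_{G_u} Sym_*^{(p)};\,k) = 0$ whenever $n \leq \lfloor \tfrac{2}{3}(p-1)\rfloor$. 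Hence non-vanishing $E^1_{p,q}$ forces $p < \tfrac{3}{2}n + 1$, so along the anti-diagonal $p + q = n$ the non-zero bidegrees are confined to the finite range $0 \leq p < \tfrac{3}{2}n + 1$.

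With finite anti-diagonals at $E^1$ (and hence at every later page), the source $E^r_{p+r,\,q-r+1}$ of any incoming differential into $(p,q)$ eventually lies outside the non-vanishing region as $r$ grows, so $E^r_{p,q} = E^\infty_{p,q}$ at some finite stage. Strong convergence to $\widetilde{H}S_*(A)$ then follows from the standard convergence criterion for bounded-below exhaustive filtrations of chain complexes.

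The main obstacle I anticipate is correctly identifying the basepoint subcomplex $k_0 \hookrightarrow Sym_*^{(p)}$ built into $\ltimescirc$ and checking that the quotient $Sym_*^{(p)}/k_0$ inherits the connectivity of $Sym_*^{(p)}$ itself, since Thm.~\ref{thm.connectivity} applies to the whole complex while the hyperhomology calculation naturally involves the quotient. In view of the isomorphism $k[S\Omega_{p+1}^+] \cong Sym_*^{(p)}$ of Prop.~\ref{prop.iso_omega_sym}, this basepoint should correspond to the augmentation $(-1)$-chain of $\Omega_{p+1}^+$, and so the identification should be routine; once it is settled, the remainder of the argument is essentially formal.
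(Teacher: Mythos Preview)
Your proposal is correct and follows essentially the same route as the paper: both arguments rest on the fact that the connectivity of $Sym_*^{(p)}$ grows with $p$ (Thm.~\ref{thm.connectivity}), so that each anti-diagonal of $E^1$ has only finitely many non-zero terms, forcing eventual stabilisation. The paper's own proof is a three-sentence sketch that simply asserts the connectivity transfer to $H_n(E_*G_u \ltimescirc_{G_u} Sym_*^{(p)})$ without supplying the hyperhomology argument you give; your version is more explicit, and your concern about the basepoint subcomplex $k_0$ is a technicality the paper also leaves implicit.
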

\begin{proof}
  The connectivity of the complexes $Sym_*^{(p)}$ is a non-decreasing
  function of $p$.  So for large enough $p$, we have
  $H_{n}\left(E_*G_{u} \ltimescirc_{G_u} Sym_*^{(p)}\right) = 0$.  For
  fixed $n$, the spectral sequence is guaranteed to collapse at a
  finite level.
\end{proof}
\begin{cor}\label{cor.trunc-isomorphism}
  For each $i \geq 0$, there is a positive integer $N_i$ so that if $p
  \geq N_i$, there is an isomorphism $\widetilde{H}S_i(A) \cong
  H_i\left(\mathscr{F}_p C_*(\mathrm{Epi}\Delta S,\,
  B_*^{sym}I\right))$.
\end{cor}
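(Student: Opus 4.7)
Consider the short exact sequence of chain complexes
\begin{equation*}
  0 \longrightarrow \mathscr{F}_p C_*(\mathrm{Epi}\Delta S,\,B_*^{sym}I)
  \longrightarrow C_*(\mathrm{Epi}\Delta S,\,B_*^{sym}I)
  \longrightarrow Q_p \longrightarrow 0,
\end{equation*}
where $Q_p$ denotes the quotient complex.  Its long exact sequence in homology reduces the claim to verifying that $H_j(Q_p) = 0$ for $j \in \{i, i+1\}$ provided $p$ is taken sufficiently large in terms of $i$.

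The complex $Q_p$ inherits the filtration $\{\mathscr{F}_{p'}/\mathscr{F}_p\}_{p' > p}$, whose subquotients agree with those of the original filtration on $C_*$.  Its spectral sequence therefore has
\begin{equation*}
  E^1_{p',q}(Q_p) \cong \bigoplus_{u \in X^{p'+1}/\Sigma_{p'+1}}
    H_{p'+q}\bigl(E_*G_u \ltimescirc_{G_u} Sym_*^{(p')};\,k\bigr)
    \quad \text{for } p' > p,
\end{equation*}
and vanishes for $p' \leq p$.  Because the degree-zero part $Sym_0^{(p')} \cong k$ (the sign relations are trivial on single-variable generators and so collapse all $(p'+1)!$ automorphism tensors into one class), the basepoint inclusion $k_0 \hookrightarrow Sym_*^{(p')}$ is $G_u$-fixed; combined with the freeness of $E_*G_u$ over $kG_u$, this yields the identification
\begin{equation*}
  E_*G_u \ltimescirc_{G_u} Sym_*^{(p')} \;\cong\;
  E_*G_u \otimes_{kG_u}\bigl(Sym_*^{(p')}/k_0\bigr).
\end{equation*}

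Thm.~\ref{thm.connectivity} asserts that $Sym_*^{(p')}/k_0$ is $\lfloor 2(p'-1)/3 \rfloor$-connected, so the double-complex spectral sequence with $E^2$-page $H_r\bigl(G_u;\,H_s(Sym_*^{(p')}/k_0)\bigr)$ has vanishing rows for $s \leq \lfloor 2(p'-1)/3 \rfloor$, forcing $E^1_{p',q}(Q_p) = 0$ whenever $p' + q \leq \lfloor 2(p'-1)/3 \rfloor$.  Setting $N_i = \lceil 3(i+1)/2 \rceil$ so that $\lfloor 2N_i/3 \rfloor \geq i+1$, then for any $p \geq N_i$ and any $p' > p$ we have $\lfloor 2(p'-1)/3 \rfloor \geq i+1$; hence every $E^1_{p',q}(Q_p)$ with $p' + q \leq i+1$ vanishes.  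Strong convergence of the filtration on $Q_p$ follows by the same boundedness argument as in Cor.~\ref{cor.finitely-generated}, whence $H_j(Q_p) = 0$ for $j \leq i+1$, and the long exact sequence produces the desired isomorphism.

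The main technical point is the connectivity-preservation step for the equivariant half-smash, which hinges on identifying $E_*G_u \ltimescirc_{G_u} Sym_*^{(p')}$ with an $E_*G_u \otimes_{kG_u}$-construction on the reduced complex and then invoking the standard group-homology spectral sequence; the remainder of the proof is a formal manipulation of filtrations and long exact sequences.
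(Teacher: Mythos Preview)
Your overall strategy---pass to the quotient $Q_p = C_*/\mathscr{F}_p$, filter it, identify its $E^1$ with the $p' > p$ part of the original spectral sequence, and invoke connectivity---is correct and is exactly what the paper has in mind (the corollary is stated without proof immediately after Cor.~\ref{cor.finitely-generated}).  The long-exact-sequence reduction and the choice of $N_i$ are fine.

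There is, however, a genuine gap in the connectivity step.  You assert that Thm.~\ref{thm.connectivity} gives $\lfloor 2(p'-1)/3\rfloor$-connectivity of $Sym_*^{(p')}/k_0$, but the theorem concerns $Sym_*^{(p')}$ itself, and the two differ.  In fact $Sym^{(p')}/k_0$ is \emph{never} $1$-connected once $p'\geq 1$: the differential $d_1\colon Sym_1^{(p')}\to Sym_0^{(p')}=k$ is the augmentation (any generator such as $z_0z_1\otimes z_2\otimes\cdots\otimes z_{p'}$ is sent to the single degree-$0$ class), hence surjective, so $H_0(Sym^{(p')})=0$; the long exact sequence of $0\to k_0\to Sym^{(p')}\to Sym^{(p')}/k_0\to 0$ then forces a surjection $H_1(Sym^{(p')}/k_0)\twoheadrightarrow H_0(k_0)=k$.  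Your hyperhomology spectral sequence therefore has a nonvanishing row at $s=1$ for every $p'$, and the vanishing of $E^1_{p',q}(Q_p)$ in low total degree does not follow.

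The source of the difficulty is the role of $k_0$ in $\ltimescirc$.  By Lemma~\ref{lem.NS-Sym-homotopy}, $Sym^{(p')}$ is a model for the \emph{relative} chain complex $k[N\mathcal{S}_{p'}]/k[N\mathcal{S}'_{p'}]$; it already computes the reduced homology of the pointed space $|N\mathcal{S}_{p'}/N\mathcal{S}'_{p'}|$, so the basepoint has been collapsed to zero rather than to the copy of $k$ that happens to sit in degree~$0$.  Tracing the derivation from Lemma~\ref{lem.G_u-identification} through Prop.~\ref{prop.weak-eq}, the honest chain-level description of $E^1_{p',q}$ is $H_{p'+q}\bigl(E_*G_u\otimes_{kG_u}Sym_*^{(p')}\bigr)$, with no further quotient by $k_0$.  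With this complex in hand the hyperhomology spectral sequence $H_r(G_u;H_s(Sym^{(p')}))\Rightarrow H_{r+s}$ has all rows $s\leq\lfloor 2(p'-1)/3\rfloor$ vanishing by Thm.~\ref{thm.connectivity}, and the remainder of your argument goes through unchanged.
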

\begin{cor}\label{cor.fin-gen-restated}
  If $A$ is finitely-generated over a Noetherian ground ring $k$, then
  $HS_*(A)$ is finitely-generated over $k$ in each degree.
\end{cor}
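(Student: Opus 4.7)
The plan is to combine the strong convergence of the spectral sequence from Cor.~\ref{cor.E1_Sym} with a finite-generation count at the chain level of the filtered complex $C_*(\mathrm{Epi}\Delta S,\,B_*^{sym}I)$. The real input is the connectivity bound of Thm.~\ref{thm.connectivity}: via Cor.~\ref{cor.finitely-generated} and Cor.~\ref{cor.trunc-isomorphism}, it forces each $\widetilde{H}S_i(A)$ to be isomorphic to the $i$-th homology of the finite truncation $\mathscr{F}_{N_i}C_*(\mathrm{Epi}\Delta S,\,B_*^{sym}I)$ for some $N_i$ depending only on $i$. Everything after that is Noetherian bookkeeping.

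First I would reduce to the augmented case with $I$ finitely generated over $k$. If $A$ is already augmented, $I \subseteq A$ is a $k$-submodule of a finitely generated module over the Noetherian ring $k$, hence itself finitely generated; moreover the splitting $HS_*(A) \cong \widetilde{H}S_*(A) \oplus k_0$ reduces the problem to the reduced theory. If $A$ has no natural augmentation, one enlarges it by formally adjoining one (e.g.\ passing to $A \oplus k\,e$ with the evident product and projection augmentation whose kernel is isomorphic to $A$ as a $k$-module) and invokes naturality to recover $HS_*(A)$ as a direct summand. This reduction is the main technical point, but after it $I$ is a finitely generated $k$-module.

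Next I would exhibit finite generation at the chain level. In chain degree $q$,
\[
  \mathscr{F}_{N_i}C_q(\mathrm{Epi}\Delta S,\,B_*^{sym}I) \;=\;
  \bigoplus_{N_i \geq m_0 \geq \cdots \geq m_q \geq 0}
  k\!\left[\prod_{j=1}^{q}\mathrm{Epi}_{\Delta S}([m_{j-1}],[m_j])\right]
  \otimes_k I^{\otimes(m_0+1)},
\]
which is a \emph{finite} direct sum. Each factor $\mathrm{Epi}_{\Delta S}([m_{j-1}],[m_j])$ is a finite set, so the corresponding group ring is a finitely generated free $k$-module; and $I^{\otimes(m_0+1)}$ is a finite tensor power of a finitely generated $k$-module. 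Hence each summand is finitely generated over $k$, and so is $\mathscr{F}_{N_i}C_q$. Because $k$ is Noetherian, every subquotient of a finitely generated $k$-module is finitely generated, so $H_i(\mathscr{F}_{N_i}C_*)$ and thus $HS_i(A)$ is finitely generated in each degree $i$.

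The hard part is the initial reduction to the augmented case and verifying the summand property of $HS_*$ under adjoining a unit; the rest of the argument is controlled entirely by the connectivity bound of Thm.~\ref{thm.connectivity}, which collapses the a priori infinite spectral sequence to a finite computation in each total degree.
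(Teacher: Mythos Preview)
Your core argument is correct and is precisely what the paper intends: invoke Cor.~\ref{cor.trunc-isomorphism} to replace $\widetilde{H}S_i(A)$ by the $i$-th homology of the truncated complex $\mathscr{F}_{N_i}C_*(\mathrm{Epi}\Delta S,\,B_*^{sym}I)$, observe that each chain module of this truncation is a finite direct sum of modules of the form $k[\text{finite set}]\otimes_k I^{\otimes(m_0+1)}$ with $m_0\le N_i$, and conclude by Noetherianity. The paper gives no proof beyond stating the corollary, so you have supplied exactly the missing bookkeeping.

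The one genuine gap is your reduction to the augmented case. For a \emph{unital} algebra $A$, the construction ``$A\oplus k e$ with projection augmentation'' is the unitalization $A^+$, and when $A$ already has a unit one has $A^+\cong A\times k$ as $k$-algebras. Recovering $HS_*(A)$ as a direct summand of $HS_*(A\times k)$ therefore requires knowing that symmetric homology sends finite products of algebras to direct sums, and nothing in the paper establishes this (the symmetric bar construction does not obviously split over products, since $(A\times k)^{\otimes(n+1)}\not\cong A^{\otimes(n+1)}\times k^{\otimes(n+1)}$). Your appeal to ``naturality'' does not supply a splitting either, because there is no unital ring map $A\to A^+$ sending $1_A$ to the new unit. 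In short, the summand claim needs an argument you have not given.

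That said, this is arguably a looseness in the paper rather than in your proof: all of the machinery from Prop.~\ref{prop.epi} through Cor.~\ref{cor.trunc-isomorphism} is developed under the standing hypothesis that $A$ is augmented (and, for the spectral sequence itself, that $I$ is free with a basis $X$), and Cor.~\ref{cor.fin-gen-restated} appears in that context. The honest fix is either to add ``augmented'' to the hypothesis, or to prove separately that $HS_*(A\times B)\cong HS_*(A)\oplus HS_*(B)$ and then run your reduction.
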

The bounds on connectivity are conjectured to be tight.  This is
certainly true for $p \equiv 1$ (mod $3$), based on Thm.~16
of~\cite{VZ}.  Corollary~12 of the same paper establishes that either
$H_{2k}\left( Sym_*^{(3k-1)}\right) \neq 0$ or
$H_{2k}\left(Sym_*^{(3k)}\right) \neq 0$.  For $k \leq 2$, both
statements are true.  When the latter condition is true, this gives a
tight bound on connectivity for $p \equiv 0$ (mod $3$).  When the
former is true, there is not enough information for a tight bound,
since we are more interested in proving that
$H_{2k-1}\left(Sym_*^{(3k-1)}\right)$ is non-zero, since for $k = 1,
2$, we have computed the integral homology, $H_1\left(
Sym_*^{(2)}\right) = \mathbb{Z}$ and $H_3\left(Sym_*^{(5)}\right)
=\mathbb{Z}$.
  
%%%%%%%%%%%%%%%%%%%%%%%%%%%%%%%%%%%%%%%%%%%%%%%%%%%%%%%%%%%%%%%%%%%%%%%%%%%%%%%%
\section{A Partial Resolution}\label{sec.partres}                     
%%%%%%%%%%%%%%%%%%%%%%%%%%%%%%%%%%%%%%%%%%%%%%%%%%%%%%%%%%%%%%%%%%%%%%%%%%%%%%%%

As before, $k$ is a commutative ground ring.  In this section, we find
an explicit partial resolution of the trivial $\Delta
S^{\mathrm{op}}$--module $\underline{k}$ by projective modules,
allowing the computation of $HS_0(A)$ and $HS_1(A)$ for a unital
associative $k$-algebra $A$.  The resolution will be constructed
through a number of technical lemmas.

For any small category $\mathscr{C}$ and object $X \in
\mathrm{Obj}\mathscr{C}$, the $\mathscr{C}$--modules $k\left[
  \mathrm{Mor}_{\mathscr{C}}\left( X, -)\right)\right]$ are projective
(as $\mathscr{C}$--module), as are the
$\mathscr{C}^{\mathrm{op}}$--modules $k\left[
  \mathrm{Mor}_{\mathscr{C}}\left( -, X \right)\right]$.  In
particular, each $k\left[ \mathrm{Mor}_{\Delta S}\left( -,
  [q]\right)\right]$ is a projective $\Delta S^{\mathrm{op}}$--module.
In proving exactness, it suffices to examine the individual
sub-$k$--modules, $k\left[ \mathrm{Mor}_{\Delta S}\left( [n], [q]
  \right)\right]$.

\begin{lemma}\label{lem.0-stage}
  For each $n \geq 0$, the sequence, $0 \gets k
  \stackrel{\epsilon}{\gets} k\left[\mathrm{Mor}_{\Delta S}\left([n],
    [0]\right)\right]\stackrel{\rho} {\gets}
  k\big[\mathrm{Mor}_{\Delta S}\left([n], [2])\right]$ is exact, where
  $\epsilon$ is defined by $\epsilon(\phi) = 1$ for any morphism $\phi
  \co [n] \to [0]$, and $\rho$ is defined by $\rho(\psi) =
  (x_0x_1x_2)\circ \psi - (x_2x_1x_0)\circ\psi$ for any morphism $\psi
  \co [n] \to [2]$.  ($x_0x_1x_2$ and $x_2x_1x_0$ are $\Delta S$
  morphisms $[2] \to [0]$ written in tensor notation.)
\end{lemma}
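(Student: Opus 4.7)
The easy half should be immediate: $\epsilon$ will be surjective for every $n \geq 0$ since there exists at least one morphism $[n] \to [0]$, and $\epsilon \circ \rho = 0$ because $\epsilon$ takes value $1$ on every morphism $[n] \to [0]$. The substantive content is therefore to show that $\ker(\epsilon) \subseteq \mathrm{Im}(\rho)$.

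First, morphisms $[n] \to [0]$ in $\Delta S$ are in bijection with $\Sigma_{n+1}^{\mathrm{op}}$, and in tensor notation each is a single monomial $x_{g(0)} x_{g(1)} \cdots x_{g(n)}$. Since $\epsilon$ sends each generator to $1$, its kernel is $k$-spanned by the differences $\sigma - \tau$ of pairs of such morphisms, so it suffices to show that any two morphisms $[n] \to [0]$ are congruent modulo $\mathrm{Im}(\rho)$. A morphism $\psi \co [n] \to [2]$ has tensor form $Y_0 \otimes Y_1 \otimes Y_2$, where each $Y_i$ is a (possibly empty) monomial and each $x_j$ appears in exactly one factor. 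By the composition rule for tensor notation, $\rho(Y_0 \otimes Y_1 \otimes Y_2) = Y_0 Y_1 Y_2 - Y_2 Y_1 Y_0$.

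Write $\equiv$ for congruence modulo $\mathrm{Im}(\rho)$. Two kinds of relations will be available for a morphism $[n] \to [0]$ viewed as a string: (A) specializing $Y_1 = 1$ yields $UW \equiv WU$ for any two-block split of the string (possibly with empty blocks), so all cyclic block-shifts are equivalences and in particular every cyclic permutation lies in the same class; and (B) in general, $UVW \equiv WVU$ for any three-block split. Since adjacent transpositions generate $\Sigma_{n+1}$, it suffices to produce a single adjacent swap. Given $a_0 a_1 a_2 \cdots a_n$ with $n \geq 1$, apply (B) with $Y_0 = a_0$, $Y_1 = a_1$, $Y_2 = a_2 \cdots a_n$ (empty if $n = 1$) to obtain $a_0 a_1 a_2 \cdots a_n \equiv a_2 \cdots a_n a_1 a_0$, and then apply (A) with $U = a_2 \cdots a_n$, $V = a_1 a_0$ to reach $a_1 a_0 a_2 \cdots a_n$. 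Swapping $a_i$ with $a_{i+1}$ at an arbitrary position will then follow by first cyclically shifting the pair to the front via (A), applying the above swap, and then shifting back; hence any two morphisms $[n] \to [0]$ are congruent modulo $\mathrm{Im}(\rho)$.

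The main obstacle is this last combinatorial identification. Once one observes that (A) already delivers every cyclic shift for free, taking the middle block of (B) to be a single letter is precisely what is needed to break the cyclic symmetry and produce a genuine adjacent transposition; everything else is bookkeeping.
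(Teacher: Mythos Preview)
Your proof is correct. Both you and the paper reduce to showing that any two monomials in $k[\mathrm{Mor}_{\Delta S}([n],[0])]$ are congruent modulo $\mathrm{Im}(\rho)$, using the basic relation $Y_0Y_1Y_2 \equiv Y_2Y_1Y_0$. Your route is a clean group-theoretic one: relation (A) gives all cyclic shifts, then a single application of (B) with a one-letter middle block yields an adjacent transposition at the front, and conjugating by cyclic shifts gives all adjacent transpositions, hence all of $\Sigma_{n+1}$.

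The paper instead runs an explicit normalization algorithm: it defines a specific family of ``$k$-rearrangements'' (three-block reversals of the form $X\,x_{i_k}\,(x_{k+1}\cdots x_n) \leadsto (x_{k+1}\cdots x_n)\,x_{i_k}\,X$ with $i_k \neq k$) and shows that iterating them drives any monomial to the identity word $x_0x_1\cdots x_n$. Your argument is shorter and more conceptual; the paper's is more laborious but buys something you do not get: it records exactly which morphisms $\psi\in\mathrm{Mor}_{\Delta S}([n],[2])$ are used, namely the set $\mathscr{B}_n$ of size $(n+1)!-1$. This is precisely what is needed for the immediately following Corollary~\ref{cor.B_n} and Lemma~\ref{lem.rho-iso}, where one wants $\rho|_{k[\mathscr{B}_n]}$ to be an isomorphism onto $\ker\epsilon$. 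So for the lemma as stated your proof is preferable, but if you continue to the next results you would need to go back and track which $\psi$'s your moves actually use.
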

\begin{proof}
  Clearly, $\epsilon$ is surjective.  Now, $\epsilon\rho = 0$, since 
  $\rho(\psi)$ consists of two morphisms with opposite signs.  Let $\phi_0 = x_0
  x_1 \ldots x_n \co [n] \to [0]$.  The kernel of $\epsilon$ is spanned by 
  elements $\phi - \phi_0$ for $\phi \in \mathrm{Mor}_{\Delta S}([n],[0])$.  So,
  it suffices to show that the submodule of $k\left[\mathrm{Mor}_{\Delta S}
  \left([n], [0]\right)\right]$ generated by $(x_0x_1x_2)\psi - (x_2x_1x_0)\psi$
  for $\psi \co [n] \to [2]$ contains all of the elements $\phi - \phi_0$.  In 
  other words, it suffices to find a sequence $\phi = \phi_k, \phi_{k-1},  
  \ldots, \phi_2, \phi_1, \phi_0$ so that each $\phi_i$ is obtained from 
  $\phi_{i+1}$ by reversing the order of 3 (possibly emtpy) blocks, $XYZ \to 
  ZYX$.  Let $\phi = x_{i_0}x_{i_1}\ldots x_{i_n}$.  If $\phi = \phi_0$, we may
  stop here.  Otherwise, we may produce a sequence ending in $\phi_0$ by way of
  a certain family of rearrangements:  
  
  \textit{$k$-rearrangment}: $x_{i_0}x_{i_1} \ldots
  x_{i_{k-1}}x_{i_k}x_{k+1} \ldots x_n \leadsto x_{k+1}\ldots x_n
  x_{i_k}x_{i_0}x_{i_1} \ldots x_{i_{k-1}}$, such that $i_k \neq k$.
  That is, a $k$-rearrangement only applies to those monomials that
  agree with $\phi_0$ in the final $n-k$ indeterminates, but not in
  the final $n-k+1$ indeterminates.  If $k = n$, then this
  rearrangement reduces to the cyclic rearrangment, $x_{i_0}x_{i_1}
  \ldots x_{i_n} \leadsto x_{i_n} x_{i_0}x_{i_1} \ldots x_{i_{n-1}}$.

  Beginning with $\phi$, perform $n$-rearrangements until the final 
  indeterminate is $x_n$.  For convenience of notation, let this new monomial be
  $x_{j_0}x_{j_1} \ldots x_{j_n}$.  (Of course, $j_n = n$.)  If $j_k = k$ for
  all $k = 0, 1, \ldots, n$, then we are done.  Otherwise, there will be a 
  number $k$ such that $j_k \neq k$ but $j_{k+1} = k + 1, \ldots, j_n = n$.   
  Perform a $k$-rearrangement followed by enough $n$-rearrangements so that
  the final indeterminate is again $x_n$. The net result of these rearrangements 
  is that the ending block $x_{k+1}x_{k+2}\ldots x_n$ remains fixed while the 
  beginning block $x_{j_0}x_{j_1}\ldots x_{j_{k}}$ becomes cyclically permuted
  to $x_{j_k}x_{j_0} \ldots x_{j_{k-1}}$.  It is clear that applying this 
  combination of rearrangements repeatedly will finally obtain a monomial 
  $x_{\ell_0}x_{\ell_1} \ldots x_{\ell_{k-1}} x_k x_{k+1} \ldots x_n$.  Now 
  repeat the process, until after a finite number of steps, we finally obtain 
  $\phi_0$.
\end{proof}
  
Let $\mathscr{B}_n \stackrel{def}{=} \{ x_{i_0}x_{i_1}\ldots
x_{i_{k-1}} \otimes x_{i_k} \otimes x_{k+1}x_{k+2} \ldots x_{n} \;|\;
1\leq k \leq n, i_k \neq k \}$.  $k[\mathscr{B}_n]$ is a free
submodule of $k\left[\mathrm{Mor}_{\Delta S}\left( [n],
  [2]\right)\right]$ of size $(n+1)! - 1$.

\begin{cor}\label{cor.B_n}
  When restricted to $k[\mathscr{B}_n]$, the map $\rho$ of 
  Lemma~\ref{lem.0-stage} is surjective onto the kernel of $\epsilon$.
\end{cor}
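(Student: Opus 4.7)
The plan is to observe that the proof of Lemma~\ref{lem.0-stage} already does essentially all the work; the corollary is a book-keeping statement about which elements are actually used in that proof. Since $\ker\epsilon$ is spanned by the differences $\phi - \phi_0$ with $\phi_0 = x_0 x_1 \cdots x_n$ and $\phi$ an arbitrary morphism $[n] \to [0]$, and since $\rho$ manifestly lands in $\ker\epsilon$, it suffices to express each such $\phi - \phi_0$ as $\rho$ applied to an element of $k[\mathscr{B}_n]$.

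First I would recall that the proof of Lemma~\ref{lem.0-stage} exhibits, for each $\phi$, a finite sequence $\phi = \phi_m, \phi_{m-1}, \ldots, \phi_1, \phi_0$ in which every consecutive pair $\phi_{j+1} \leadsto \phi_j$ arises from a single $k$-rearrangement with $i_k \neq k$ for some $1 \leq k \leq n$. Telescoping gives $\phi - \phi_0 = \sum_j (\phi_{j+1} - \phi_j)$, so it remains only to match each such single-step difference with an element of $\mathscr{B}_n$.

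Next, by direct inspection of the form of a $k$-rearrangement, the difference $\phi_{j+1} - \phi_j$ has the form $XYZ - ZYX$, where $X = x_{i_0} \cdots x_{i_{k-1}}$, $Y = x_{i_k}$, and $Z = x_{k+1} \cdots x_n$, with $i_k \neq k$. This is precisely $\rho(X \otimes Y \otimes Z)$, and $X \otimes Y \otimes Z$ satisfies the defining conditions of $\mathscr{B}_n$. Combining these two observations shows $\phi - \phi_0 \in \rho(k[\mathscr{B}_n])$, establishing the desired surjectivity.

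There is no real obstacle here beyond the careful translation between the combinatorial rearrangements of Lemma~\ref{lem.0-stage} and the listed generators of $k[\mathscr{B}_n]$. As a sanity check, one computes $|\mathscr{B}_n| = \sum_{k=1}^{n} \bigl((k+1)! - k!\bigr) = (n+1)! - 1$, which coincides with the rank of $\ker\epsilon$ (since $\mathrm{Mor}_{\Delta S}([n],[0])$ has exactly $(n+1)!$ elements and $\epsilon$ is surjective onto $k$). This suggests that $\rho|_{k[\mathscr{B}_n]}$ is in fact an isomorphism onto $\ker\epsilon$, though only the surjectivity is claimed in the corollary.
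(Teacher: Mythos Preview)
Your proof is correct and follows essentially the same approach as the paper: both arguments simply observe that each $k$-rearrangement used in the proof of Lemma~\ref{lem.0-stage} is exactly $\rho$ applied to the corresponding element of $\mathscr{B}_n$. Your version spells out the telescoping and the explicit match $\phi_{j+1}-\phi_j = XYZ - ZYX = \rho(X\otimes Y\otimes Z)$ more carefully than the paper does, and your closing rank count anticipates Lemma~\ref{lem.rho-iso}.
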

\begin{proof}
  In the proof of Lemma~\ref{lem.0-stage}, the $n$-rearrangements correspond to
  the image of elements $x_{i_0} \ldots x_{i_{n-1}} \otimes x_{i_n} \otimes 1$,
  with $i_n \neq n$.  For $k < n$, $k$-rearrangements correspond to the image of
  elements $x_{i_0} \ldots x_{i_{k-1}} \otimes x_{i_k} \otimes x_{k+1} \ldots 
  x_{n}$, with $i_k \neq k$.
\end{proof}
\begin{lemma}\label{lem.rank}
  $k\left[\mathrm{Mor}_{\Delta S}\left([n], [m]\right)\right]$ is a free 
  $k$--module of rank $(m+n+1)!/m!$.
\end{lemma}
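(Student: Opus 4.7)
The plan is to observe that $k[\mathrm{Mor}_{\Delta S}([n],[m])]$ is by construction the free $k$-module on the set $\mathrm{Mor}_{\Delta S}([n],[m])$, so freeness is automatic. The entire content of the lemma is a combinatorial count: we need only verify that $|\mathrm{Mor}_{\Delta S}([n],[m])| = (m+n+1)!/m!$.

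For the count, I would use the original characterization of $\Delta S$-morphisms as pairs $(\phi, g)$ where $\phi \in \mathrm{Mor}_\Delta([n], [m])$ is a non-decreasing set map and $g \in \Sigma_{n+1}^{\mathrm{op}}$. These two choices are independent, so
\[
  |\mathrm{Mor}_{\Delta S}([n],[m])| = |\mathrm{Mor}_{\Delta}([n], [m])| \cdot (n+1)!.
\]
The non-decreasing maps $[n] \to [m]$ are in bijection with multisets of size $n+1$ drawn from the $m+1$-element set $[m]$ (each map is determined by its ordered list of values $\phi(0) \leq \phi(1) \leq \cdots \leq \phi(n)$), hence there are $\binom{m+n+1}{n+1}$ of them. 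Multiplying yields
\[
  (n+1)! \cdot \binom{m+n+1}{n+1} = (n+1)! \cdot \frac{(m+n+1)!}{(n+1)!\,m!} = \frac{(m+n+1)!}{m!},
\]
which is the claimed rank.

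As a sanity check, one can re-derive the same count from the tensor notation introduced in the text: a $\Delta S$-morphism $[n] \to [m]$ is a tensor $X_0 \otimes X_1 \otimes \cdots \otimes X_m$ of $m+1$ (possibly empty) monomials in the non-commuting variables $x_0, \ldots, x_n$ in which each $x_i$ appears exactly once. Such a tensor is determined by an ordering of the $n+1$ variables along with a placement of the $m$ tensor symbols into the $n+2$ available slots (before, between, or after variables, with repetitions allowed), yielding $(n+1)! \cdot \binom{m+n+1}{m} = (m+n+1)!/m!$. There is no real obstacle here; the lemma is essentially bookkeeping, and the only minor subtlety is remembering to allow repeated slots for the tensor dividers (equivalently, to allow empty pre-images of $\phi$).
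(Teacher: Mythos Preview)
Your proof is correct and is essentially the same as the paper's: both count $\mathrm{Mor}_{\Delta S}([n],[m])$ as $(n+1)!\cdot\binom{m+n+1}{m}$, with the paper phrasing the binomial factor as distributing $n+1$ objects into $m+1$ compartments and you phrasing it as counting non-decreasing maps (equivalently, multisets). Your tensor-notation sanity check is exactly the paper's compartment picture, so there is no substantive difference.
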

\begin{proof}
  A morphism $\phi \co [n] \to [m]$ of $\Delta S$ is nothing more than an 
  assignment of $n+1$ objects into $m+1$ compartments, along with a total 
  ordering of the original $n+1$ objects, hence $\#\mathrm{Mor}_{\Delta S}
  \left([n], [m]\right) = \binom{m+n+1}{m}(n+1)! = \frac{(m+n+1)!}{m!}$.
\end{proof}
\begin{lemma}\label{lem.rho-iso}
  $\rho|_{k[\mathscr{B}_n]}$ is an isomorphism $k[\mathscr{B}_n] \cong 
  \mathrm{ker}\,\epsilon$.
\end{lemma}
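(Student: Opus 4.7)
The plan is to combine Corollary~\ref{cor.B_n}, which already gives the surjectivity $\rho|_{k[\mathscr{B}_n]} \twoheadrightarrow \ker\epsilon$, with a rank count showing both sides are $k$-free of the same finite rank, and then invoke the standard fact that a surjective $k$-linear map between free $k$-modules of equal finite rank is an isomorphism.

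First I would count $\#\mathscr{B}_n$. Each element $x_{i_0}\cdots x_{i_{k-1}} \otimes x_{i_k} \otimes x_{k+1}\cdots x_n$ of $\mathscr{B}_n$ is determined by a choice of $k \in \{1,\ldots,n\}$ together with a permutation $(i_0,\ldots,i_k)$ of $\{0,1,\ldots,k\}$ subject to $i_k \neq k$. The number of such permutations is $(k+1)! - k! = k\cdot k!$, and by the telescoping identity $\sum_{k=1}^n k\cdot k! = (n+1)! - 1$, we obtain $\#\mathscr{B}_n = (n+1)! - 1$, so $k[\mathscr{B}_n]$ is free of rank $(n+1)! - 1$.

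Next I would identify $\ker\epsilon$ as a free $k$-module of the same rank. The augmentation $\epsilon$ is split by $1 \mapsto \phi_0$, so $\ker\epsilon$ is a direct summand of $k[\mathrm{Mor}_{\Delta S}([n],[0])]$; by Lemma~\ref{lem.rank} the latter has rank $(n+1)!$, and the family $\{\phi - \phi_0 \mid \phi \in \mathrm{Mor}_{\Delta S}([n],[0]),\, \phi \neq \phi_0\}$ furnishes an explicit $k$-basis of $\ker\epsilon$, of cardinality $(n+1)! - 1$.

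Finally, I would invoke the standard fact that any surjective $k$-linear map $F \to G$ between free $k$-modules of the same finite rank is an isomorphism: after choosing bases, such a map is given by a square matrix $A$ admitting a right inverse $B$, so $\det(A)\det(B) = 1$, making $\det(A)$ a unit and $A$ invertible. Applied to the surjection of Corollary~\ref{cor.B_n}, this yields $\rho|_{k[\mathscr{B}_n]} \co k[\mathscr{B}_n] \stackrel{\cong}{\longrightarrow} \ker\epsilon$. No serious obstacle is expected: the combinatorial count is telescoping, freeness of $\ker\epsilon$ is immediate from the splitting, and the square-matrix argument is purely formal. The only point to double-check is that the rearrangement scheme in the proof of Lemma~\ref{lem.0-stage} exhausts all generators $\phi - \phi_0$ of $\ker\epsilon$, which is exactly what Corollary~\ref{cor.B_n} records.
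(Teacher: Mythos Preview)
Your proposal is correct and follows essentially the same approach as the paper: a rank comparison between $k[\mathscr{B}_n]$ and $\ker\epsilon$, combined with the surjectivity from Corollary~\ref{cor.B_n}. The paper's proof is just terser, asserting the size $(n+1)!-1$ of $\mathscr{B}_n$ in the text preceding the lemma and leaving implicit both the freeness of $\ker\epsilon$ and the ``surjection between free modules of equal finite rank is an isomorphism'' step that you spell out with the determinant argument.
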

\begin{proof}
  Since the rank of $k\left[\mathrm{Mor}_{\Delta S}\left([n], [0]\right)\right]$
  is $(n+1)!$, the rank of the $\mathrm{ker}\,\epsilon$ is $(n+1)! - 1$.  The
  isomorphism then follows from Corollary~\ref{cor.B_n}.
\end{proof}
\begin{lemma}\label{lem.4-term-relation}
  The relations of the form:
  \begin{equation}\label{eq.4-term}
    XY \otimes Z \otimes W + W \otimes ZX \otimes Y + YZX \otimes 1 \otimes W
    + W \otimes YZ \otimes X \approx 0
  \end{equation}
  \begin{equation}\label{eq.1-term}
    \qquad \mathrm{and} \qquad
    1 \otimes X \otimes 1 \approx 0
  \end{equation}
  collapse $k\big[\mathrm{Mor}_{\Delta S}([n], [2])\big]$ onto 
  $k[\mathscr{B}_n]$.
\end{lemma}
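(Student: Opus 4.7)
The plan is to establish an isomorphism $k[\mathscr{B}_n] \stackrel{\cong}{\to} k[\mathrm{Mor}_{\Delta S}([n],[2])]/\!\sim$, where $\sim$ denotes the equivalence generated by relations (\ref{eq.4-term}) and (\ref{eq.1-term}). For injectivity, I would first check directly that $\rho$ annihilates both relations: for (\ref{eq.1-term}) we have $\rho(1 \otimes X \otimes 1) = X - X = 0$, and for (\ref{eq.4-term}) the four terms yield
$(XYZW - WZXY) + (WZXY - YZXW) + (YZXW - WYZX) + (WYZX - XYZW) = 0$.
Hence $\rho$ factors through the quotient, and since $\rho|_{k[\mathscr{B}_n]}$ is an isomorphism onto $\ker\epsilon$ by Lemma~\ref{lem.rho-iso}, the composite $k[\mathscr{B}_n] \to k[\mathrm{Mor}_{\Delta S}([n],[2])]/\!\sim\;\to \ker\epsilon$ is an isomorphism, forcing the first arrow to be injective.

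Surjectivity is the substantive part. Each generator of $k[\mathrm{Mor}_{\Delta S}([n],[2])]$ is a tensor $A\otimes B\otimes C$ of three monomials partitioning $\{x_0, \ldots, x_n\}$ (each indeterminate occurring exactly once). My reduction procedure begins by maximizing the integer $k$ such that $C$ ends in the standard suffix $x_{k+1}\cdots x_n$; the goal is to exhibit $A\otimes B\otimes C$, modulo the relations, as a $k$-linear combination of tensors whose third factor is exactly $x_{k+1}\cdots x_n$, whose middle factor is a single $x_{i_k}$ with $i_k \ne k$, and whose first factor carries the remaining $k$ indeterminates---i.e., elements of $\mathscr{B}_n$. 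This is done by repeatedly applying (\ref{eq.4-term}) with $W$ specialized to the current standard suffix and $X$, $Y$, $Z$ chosen to target the portion of $A$ or $B$ that obstructs $\mathscr{B}_n$-form. Wherever the reduction produces a tensor of shape $1 \otimes (\cdot) \otimes 1$, relation (\ref{eq.1-term}) eliminates it.

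The main obstacle is controlling termination: each application of the 4-term relation produces three companion tensors simultaneously, so a naive rewriting may loop. I would introduce a lexicographic complexity measure on $A\otimes B\otimes C$ (for instance: first, the number of letters missing from the maximal standard suffix of $C$; second, $|B|$; third, a measure of how far $B$ is from being a single non-$x_k$ letter), and verify by case analysis that every companion tensor produced by a reduction step has strictly smaller complexity than the one being reduced. A cardinality check then closes the argument: by Lemmas~\ref{lem.rank} and~\ref{lem.0-stage}, $\mathrm{rank}(\ker\epsilon) = (n+1)! - 1 = |\mathscr{B}_n|$, so the spanning set produced by the reduction is in fact a basis of the quotient and surjectivity of $k[\mathscr{B}_n] \to k[\mathrm{Mor}_{\Delta S}([n],[2])]/\!\sim$ is established.
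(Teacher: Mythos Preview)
Your injectivity argument via $\rho$ is clean and correct, and in fact the paper does not isolate this step explicitly (it only argues surjectivity of $k[\mathscr{B}_n]$ onto the quotient, then invokes Lemma~\ref{lem.rho-iso} downstream when assembling the exact sequence). So that half of your proposal is a genuine improvement in presentation.

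The gap is in your surjectivity sketch. The raw relation~(\ref{eq.4-term}) is not well-adapted to the complexity measure you describe. If you solve~(\ref{eq.4-term}) for any one of its four terms, the three companion tensors have third factors drawn from the set $\{W, X, Y\}$; even if you specialize $W$ to be the current maximal standard suffix $x_{k+1}\cdots x_n$, two of the companions have third factor $X$ or $Y$, which need bear no relation to that suffix at all. So your primary complexity coordinate (letters missing from the maximal standard suffix of the third factor) does not decrease under a direct application of~(\ref{eq.4-term}), and there is no evident secondary coordinate that absorbs the damage. The claim that ``every companion tensor produced by a reduction step has strictly smaller complexity'' is precisely the content of the lemma, and you have not supplied it.

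The paper's proof explains why: one must first spend several steps \emph{deriving} auxiliary relations from~(\ref{eq.4-term}) and~(\ref{eq.1-term}) that do have good reduction behaviour. Specializing variables in~(\ref{eq.4-term}) to $1$ yields degeneracy relations ($X\otimes Y\otimes 1 \approx X\otimes 1\otimes Y \approx 1\otimes X\otimes Y$, and all doubly-trivial tensors $\approx 0$), a sign relation $X\otimes Y\otimes Z \approx -(Z\otimes Y\otimes X)$, a Hochschild-type relation $XY\otimes Z\otimes 1 - X\otimes YZ\otimes 1 + ZX\otimes Y\otimes 1 \approx 0$, and a cyclic relation among the $\tau_n$-translates of $x_{i_0}\cdots x_{i_{n-1}}\otimes x_{i_n}\otimes 1$. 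These derived relations keep the third factor fixed (at $1$, or later at a growing suffix), which is exactly what your complexity measure needs. The paper then bootstraps: having reduced everything with trivial third factor into $k[\mathscr{B}_n]$, it proves ``modified'' Hochschild and cyclic relations that hold modulo $k[\mathscr{B}_n]$ with the third factor fixed at $x_n$, then at $x_{n-1}x_n$, and so on---extending the standard suffix one letter at a time. This layered derivation is the missing idea; your proposed one-shot lexicographic descent does not go through without it.
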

\begin{proof}
  This proof proceeds in multiple steps.
  
  {\bf Step 1. [Degeneracy Relations]} $X \otimes Y \otimes 1 \approx X \otimes
  1 \otimes Y \approx 1 \otimes X \otimes Y$.

  First, observe that letting $X = Y = W = 1$ in Eq.~(\ref{eq.4-term}) yields
  $Z \otimes 1 \otimes 1 \approx 0$, since $1 \otimes Z \otimes 1 \approx 0$.
  Then, letting $X = Z = W = 1$ in Eq.~(\ref{eq.4-term}) produces $1 \otimes 1 
  \otimes Y \approx 0$.  Thus, any formal tensor with two trivial factors
  is equivalent to $0$.    
  Next, let $Z = W = 1$ in Eq.~(\ref{eq.4-term}).  Then using the above 
  observation, we obtain $1 \otimes X \otimes Y + 1 \otimes Y \otimes X \approx 
  0$, that is, $1 \otimes X \otimes Y \approx -(1 \otimes Y \otimes X)$.  Then, 
  if we let $X = W = 1$, we obtain $Y \otimes Z \otimes 1 + 1 \otimes Z 
  \otimes Y \approx 0$, which is equivalent to $Y \otimes Z \otimes 1 - 1 
  \otimes Y \otimes Z \approx 0$. 
  Finally, let $X = Y = 1$ in Eq.~(\ref{eq.4-term}).  The expression reduces
  to $Z \otimes 1 \otimes W - 1 \otimes Z \otimes W \approx 0$.
  
  {\bf Step 2. [Sign Relation]} $X \otimes Y \otimes Z \approx -(Z \otimes Y 
  \otimes X)$. 

  Let $Y = 1$ in Eq.~(\ref{eq.4-term}), and use the degeneracy relations
  to rewrite the result as $X \otimes Z \otimes W + 1 \otimes W \otimes ZX + 
  1 \otimes ZX \otimes W + W \otimes Z \otimes X \approx 0$.  Since
  $1 \otimes ZX \otimes W \approx -(1 \otimes W \otimes ZW)$, the desired
  result follows: $X \otimes Z \otimes W + W \otimes Z \otimes X \approx 0$.
  
  {\bf Step 3. [Hochschild Relation]} $XY \otimes Z \otimes 1 - X \otimes YZ 
  \otimes 1 + ZX \otimes Y \otimes 1 \approx 0$.  This relation is named
  after the similar relation, $XY \otimes Z - X \otimes YZ + ZX \otimes Y$,
  that arises in the Hochschild complex.
  
  Let $W = 1$ in Eq.~(\ref{eq.4-term}), and use the degeneracy and sign
  relations to obtain the desired result.

  {\bf Step 4. [Cyclic Relation]} $\ds{\sum_{j = 0}^n \tau_n^j\left(x_{i_0}
  x_{i_1} \ldots x_{i_{n-1}} \otimes x_{i_n} \otimes 1\right) \approx 0}$,
  where $\tau_n \in \Sigma_{n+1}$ is the $(n+1)$-cycle $(0,n,n-1,\ldots,2,1)$,
  which acts by permuting the indices.  
  
  For $n = 0$, there are no such relations
  (indeed, no relations at all).  For $n=1$, the cyclic relation takes the form 
  $x_0 \otimes x_1 \otimes 1 + x_1 \otimes x_0 \otimes 1 \approx 0$, which 
  follows from degeneracy and sign relations.
  
  Assume now that $n \geq 2$.  For each $k = 1, 2, \ldots, n-1$, define:
  \[
    \left\{\begin{array}{ll}
      A_k \stackrel{def}{=} & x_{i_0}x_{i_1}\ldots x_{i_{k-1}},\\
      B_k \stackrel{def}{=} & x_{i_k},\\
      C_k \stackrel{def}{=} & x_{i_{k+1}} \ldots x_{i_n}.
    \end{array}\right.
  \]
  By the Hochschild relation, $0 \approx \sum_{k=1}^{n-1} 
  (A_kB_k \otimes C_k \otimes 1 - A_k \otimes B_kC_k \otimes 1 + C_kA_k \otimes
  B_k \otimes 1)$.  But for $k \leq n-2$, $A_kB_k \otimes C_k \otimes 1 = 
  A_{k+1} \otimes B_{k+1}C_{k+1} \otimes 1$.
  Thus, after some cancellation, 
  \begin{equation}\label{eq.cyclic-relation}
    0 \approx - A_1 \otimes B_1 C_1 \otimes 1 + A_{n-1}B_{n-1} \otimes C_{n-1}
    \otimes 1 + \sum_{k=1}^{n-1} C_kA_k \otimes B_k \otimes 1.
  \end{equation}
  Now observe that sign and degeneracy relations imply
  that $- A_1 \otimes B_1 C_1 \otimes 1 \approx x_{i_1} \ldots x_{i_n} \otimes 
  x_{i_0} \otimes 1$.  The term $A_{n-1}B_{n-1} \otimes C_{n-1} \otimes 1$ is
  equal to $x_{i_0} \ldots x_{i_{n-1}} \otimes x_{i_n} \otimes 1$, and for
  $1 \leq k \leq n-1$, $C_kA_k \otimes B_k \otimes 1 = x_{i_{k+1}} \ldots 
  x_{i_n}x_{i_0} \ldots x_{i_{k-1}} \otimes x_{i_k} \otimes 1$.  Thus,
  Eq.~(\ref{eq.cyclic-relation}) can be rewritten as the cyclic relation,
  \[
    0 \approx (x_{i_0} \ldots x_{i_{n-1}} \otimes x_{i_n} \otimes 1)
    + \sum_{k=0}^{n-1} x_{i_{k+1}} \ldots x_{i_n}x_{i_0} \ldots x_{i_{k-1}} 
    \otimes x_{i_k} \otimes 1.
  \]

  {\bf Step 5.} Every element of the form $X \otimes Y \otimes 1$ is equivalent
  to a linear combination of elements of $\mathscr{B}_n$.

  To prove this, we shall induct on the size of $Y$.  Suppose $Y$ consists of
  a single indeterminate.  That is, $X \otimes Y \otimes 1 =
  x_{i_0} \ldots x_{i_{n-1}} \otimes x_{i_n} \otimes 1$.  Now, if $i_n \neq n$,
  we are done.  Otherwise, we use the cyclic relation to write
  $x_{i_0} \ldots x_{i_{n-1}} \otimes x_{i_n} \otimes 1 \approx -\sum_{j = 1}^n
  \tau_n^j\left(x_{i_0} \ldots x_{i_{n-1}} \otimes x_{i_n} \otimes 1\right)$.

  Now suppose $k \geq 1$ and any element $Z \otimes W \otimes 1$ with $|W| = k$
  is equivalent to an element of $k[\mathscr{B}_n]$.  Consider $X \otimes Y 
  \otimes 1 = x_{i_0} \ldots x_{i_{n-k-1}} \otimes x_{i_{n-k}} \ldots x_{i_n} 
  \otimes 1$.
  Let
  \[
    \left\{\begin{array}{ll}
      A = & x_{i_0}x_{i_1}\ldots x_{i_{n-k-1}},\\
      B = & x_{i_{n-k}} \ldots x_{i_{n-1}},\\
      C = & x_{i_n}.
    \end{array}\right.
  \]
  Then, by the Hochschild relation, $X \otimes Y \otimes 1 = A \otimes BC 
  \otimes 1 \approx AB \otimes C \otimes 1 + CA \otimes B \otimes 1$.
  But since $C$ has one indeterminate and $B$ has $k$ indeterminates, this last
  expression is equivalent to an element of $k[\mathscr{B}_n]$ by inductive
  hypothesis.
  
  {\bf Step 6. [Modified Hochschild Relation]} $XY \otimes Z \otimes W - X 
  \otimes YZ \otimes W + ZX \otimes Y \otimes W \approx 0$, modulo $k\left[
  \mathscr{B}_n\right]$.
  
  First, we show that $X \otimes Y \otimes W + Y \otimes X \otimes W \approx 0
  \pmod{k\left[\mathscr{B}_n\right]}$.  Indeed, if we let $Z = 1$ in 
  Eq.~(\ref{eq.4-term}), then sign and degeneracy relations yield:
  $ X \otimes Y \otimes W + Y \otimes X \otimes W \approx XY \otimes W \otimes
  1 + YX \otimes W \otimes 1$, {\it i.e.}, by step 5,
  \begin{equation}\label{eq.step8a}
    X \otimes Y \otimes W \approx -(Y \otimes X \otimes W)
    \pmod{k\left[\mathscr{B}_n\right]}.
  \end{equation}
  Now, using sign and degeneracy relations, Eq.~(\ref{eq.4-term}) can be 
  re-expressed:
  \[
    XY \otimes Z \otimes W - Y \otimes ZX \otimes W
    + YZX \otimes W \otimes 1 - X \otimes YZ \otimes W \approx 0.
  \]
  Using Eq.~(\ref{eq.step8a}), we then arrive at the modified Hochschild 
  relation,
  \begin{equation}\label{eq.step8b}
    XY \otimes Z \otimes W - X \otimes YZ \otimes W + ZX \otimes Y \otimes W 
    \approx 0 \pmod{k\left[\mathscr{B}_n\right]}.
  \end{equation}
  
  {\bf Step 7. [Modified Cyclic Relation]} $\ds{\sum_{j = 0}^k \tau_k^j\left(
  x_{i_0}x_{i_1} \ldots x_{i_{k-1}} \otimes x_{i_k} \otimes x_{i_{k+1}}\ldots
  x_{i_n}\right)} \approx 0$, modulo $k\left[\mathscr{B}_n\right]$.  Note, the
  $(k+1)$-cycle $\tau_k$ permutes the indices $i_0, i_1, \ldots, i_k$, and fixes
  the rest.
  
  Eq.~(\ref{eq.step8a}) proves the modified cyclic relations for $k=1$.
  The modified cyclic relation for $k \geq 2$ follows from the modified 
  Hochschild relation in the same manner as in step 4.  Of course, this time
  all equivalences are taken modulo $k\left[\mathscr{B}_n\right]$.
  
  {\bf Step 8.}  Every element of the form $X \otimes Y \otimes x_n$ is 
  equivalent to an element of $k[\mathscr{B}_n]$.
  
  We shall use the modified cyclic and modified Hochschild relations in a 
  similar way as cyclic and Hochschild relations were used in step 5.  Again we
  induct on the size of $Y$.  If $|Y| = 1$, then $X \otimes Y \otimes x_n =
  x_{i_0} \ldots x_{i_{n-2}} \otimes x_{i_{n-1}} \otimes x_{n}$.  If $i_{n-1} 
  \neq n-1$, then we are done.  Otherwise, use the modified cyclic relation to 
  re-express $X \otimes Y \otimes x_n$ as a sum of elements of $k\left[
  \mathscr{B}_n\right]$.
  
  Next, suppose $k \geq 1$ and any element $Z \otimes W \otimes x_n$ with $|W| =
  k$ is equivalent to an element of $k[\mathscr{B}_n]$.  Consider an element
  $X \otimes Y \otimes x_n$ with $|Y| = k + 1$.  Write $Y = BC$ with
  $|B| = k$ and $|C| = 1$ and use the modified Hochschild relation to rewrite
  $X \otimes BC \otimes x_n$ in terms of two elements whose middle tensor 
  factors are either $B$ or $C$ (modulo $k\left[\mathscr{B}_n\right]$).  By 
  inductive hypothesis, the rewritten expression must lie in $k[\mathscr{B}_n]$.
  
  {\bf Step 9.} Every element of $k\left[\mathrm{Mor}_{\Delta S}\left([n], [2]
  \right)\right]$ is equivalent to a linear combination of elements from the 
  following set:
  \begin{equation}\label{eq.step10}
    \mathscr{C}_n \stackrel{def}{=} \{X \otimes x_{i_n} \otimes 1 \;|\; i_n \neq n\} \cup 
    \{X \otimes x_{i_{n-1}} \otimes x_n \;|\; i_{n-1} \neq n-1\} \cup
    \{X \otimes Y \otimes Zx_n \;|\; |Z| \geq 1\}
  \end{equation}
  Note, the $k$--module generated by $\mathscr{C}_n$ contains $k\left[
  \mathscr{B}_n\right]$.
  
  Let $X \otimes Y \otimes Z$ be an arbitrary element of $k\left[
  \mathrm{Mor}_{\Delta S}\left([n], [2]\right)\right]$.  If $|X| = 0$, $|Y|=0$, 
  or $|Z| = 0$, then the degeneracy relations and step 5 imply that $X \otimes Y
  \otimes Z$ is equivalent to an element of $k\left[\mathscr{B}_n\right]$.  
  
  Suppose now that $|X|, |Y|, |Z| \geq 1$.  If $x_n$ occurs in $X$, use the 
  relation $X \otimes Y \otimes W \approx -(Y \otimes X \otimes W) \pmod{k\left[
  \mathscr{B}_n\right]}$ to ensure that $x_n$ occurs in the middle factor.  If 
  $x_n$ occurs in $Z$, use the sign relation and the above relation to put 
  $x_n$ into the middle factor.  In any case, it suffices to assume our element
  has the form: $X \otimes Ux_nV \otimes Z$.  Using the modified Hochschild
  relation, $X \otimes Ux_nV \otimes Z$ $\approx -(Z \otimes V \otimes XUx_n) + 
  Z \otimes VX \otimes Ux_n,$ $\pmod{k\left[\mathscr{B}_n\right]}$.    The first
  term is certainly in $k[\mathscr{C}_n]$, since $|X| \geq 1$.  If $|U| > 0$, 
  the second term also lies in $k[\mathscr{C}_n]$.  If, on the other hand, $|U|
  = 0$, then step 8 implies that $Z \otimes VX \otimes x_n$ is an element 
  of $k[\mathscr{B}_n]$.
  
  Observe that Step 9 proves Lemma~\ref{lem.4-term-relation} for $n = 0, 1, 2$,
  since in these cases, any elements that fall within the set
  $\{X \otimes Y \otimes Zx_n \;|\; |Z| \geq 1\}$ must have either $|X| = 0$
  or $|Y| = 0$, hence are equivalent via the degeneracy relation to elements of
  $k\left[\{X \otimes x_{i_n} \otimes 1 \;|\; i_n \neq n\}\right]$.  
  In what follows, assume $n \geq 3$.

  {\bf Step 10.} Every element of $k\left[\mathrm{Mor}_{\Delta S}\left([n], [2]
  \right)\right]$ is equivalent, modulo $k\left[\mathscr{B}_n\right]$, to a 
  linear combination of elements from the following set:
  \begin{equation}\label{eq.step10a}
    \mathscr{D}_n \stackrel{def}{=} \{X \otimes x_{i_{n-2}} \otimes x_{n-1}x_{n} \;|\; i_{n-2}
    \neq n-2\} \cup \{X \otimes Y \otimes Zx_{n-1}x_n \;|\; |Z| \geq 1\}.
  \end{equation}
  
  First, we require a relation that transports $x_n$ from the end of a tensor: 
  \begin{equation}\label{eq.step10b}
    W \otimes Z \otimes Xx_n \approx W \otimes x_nZ \otimes X
    \pmod{k\left[\mathscr{B}_n\right]}.
  \end{equation}
  Letting $Y = x_n$ in Eq.~(\ref{eq.4-term}), and making use of the sign
  relation, we have: $W \otimes Z \otimes Xx_n \approx W \otimes ZX \otimes x_n
  + x_nZX \otimes W \otimes 1 + W \otimes x_nZ \otimes X$.  By steps 5 and 8, 
  $W \otimes Z \otimes Xx_n \approx W \otimes x_nZ \otimes X$, modulo elements 
  of $k\left[\mathscr{B}_n\right]$.

  Now, let $X \otimes Y \otimes Z$ be an arbitrary element of $k\left[
  \mathrm{Mor}_{\Delta S}\left([n], [2]\right)\right]$.  Locate $x_{n-1}$ and 
  use the techniques of Step 9 to re-express $X \otimes Y \otimes Z$ as a linear
  combination of terms of the form: $X_j \otimes Y_j \otimes Z_jx_{n-1}$,
  modulo $k\left[\mathscr{B}_n\right]$.  Our goal is to re-express each term
  as a linear combination of vectors in which $x_n$ occurs only in the second
  tensor factor.
  
  If $x_n$ occurs in $X_j$, then observe $X_j \otimes Y_j \otimes Z_jx_{n-1} 
  \approx -(Y_j \otimes X_j \otimes Z_jx_{n-1})$, $\pmod{k\left[\mathscr{B}_n
  \right]}$.

  If $x_n$ occurs in $Z_j$, then first substitute $Y = x_{n-1}$ into 
  Eq.~(\ref{eq.4-term}), obtaining the relation:
  \[
    Xx_{n-1} \otimes Z \otimes W + W \otimes ZX \otimes x_{n-1}
    + x_{n-1}ZX \otimes 1 \otimes W + W \otimes x_{n-1}Z \otimes X \approx 0
  \]
  \[
    \Rightarrow \; W \otimes Z \otimes Xx_{n-1} \approx W \otimes ZX \otimes 
    x_{n-1} + W \otimes x_{n-1}Z \otimes X \pmod{k\left[\mathscr{B}_n\right]}
  \]
  By the modified Hochschild relation, $W \otimes x_{n-1}Z \otimes X \approx
  Wx_{n-1} \otimes Z \otimes X + ZW \otimes x_{n-1} \otimes X$, $\pmod{k
  \left[\mathscr{B}_n\right]}$, then using sign relations etc., we obtain:
  \[
    W \otimes Z \otimes Xx_{n-1} \approx W \otimes ZX \otimes x_{n-1}
    + Z \otimes X \otimes Wx_{n-1} - ZW \otimes X \otimes x_{n-1},
    \pmod{k\left[\mathscr{B}_n\right]}.
  \]
  Thus, we can express our original element $X \otimes Y \otimes Z$ as a linear
  combination of elements of the form $X' \otimes U'x_nV' \otimes Z'x_{n-1}$,
  $\pmod{k\left[\mathscr{B}_n\right]}$.  Then using modified Hochschild etc.,
  rewrite each such term as follows:
  \[
    X' \otimes U'x_nV' \otimes Z'x_{n-1} \approx X'U' \otimes x_nV' \otimes 
    Z'x_{n-1} - U' \otimes x_nV'X' \otimes Z'x_{n-1}, \pmod{k\left[
    \mathscr{B}_n\right]}.
  \]
  By Eq.~(\ref{eq.step10b}), we transport $x_n$ to the end of each term, so
  $X' \otimes U'x_nV' \otimes Z'x_{n-1} \approx X'U' \otimes V' \otimes Z'
  x_{n-1}x_n - U' \otimes V'X' \otimes Z'x_{n-1}x_n$, modulo elements of
  $k\left[\mathscr{B}_n\right]$.  If $|Z'| \geq 1$, then we are done. Otherwise,
  we have some elements of the form $X'' \otimes Y'' \otimes x_{n-1}x_n$.  Use
  an induction argument analogous to that in step 8 to re-express this type of
  element as a linear combination of elements of the form $U \otimes x_{i_{n-2}}
  \otimes x_{n-1}x_n$ such that $i_{n-2} \neq n-2$, modulo elements of $k\left[
  \mathscr{B}_n\right]$.
  
  {\bf Step 11.}  Every element of $k\left[\mathrm{Mor}_{\Delta S}\left([n], 
  [2]\right)\right]$ is equivalent to an element of $k\left[\mathscr{B}_n
  \right]$.
    
  We shall use an iterative re-writing procedure.  First of all, define sets:
  \[
    \mathscr{B}_n^{j} \stackrel{def}{=} \{ A \otimes x_{i_{n-j}} \otimes x_{n-j+1} \ldots
    x_n \;|\; i_{n-j} \neq n-j\},
  \]
  \[
    \mathscr{C}_n^{j} \stackrel{def}{=} \{ A \otimes B \otimes Cx_{n-j+1} \ldots
    x_n \;|\; |C| \geq 1\}.
  \]
  Now clearly, $\mathscr{B}_n  = \bigcup_{j=0}^{n-1} \mathscr{B}_n^{j}$.
  In what follows, `reduced' will always mean reduced modulo elements of
  $k\left[\mathscr{B}_n\right]$.  By steps 9 and 10, we can reduce an arbitrary
  element $X \otimes Y \otimes Z$ to linear combinations of elements in 
  $\mathscr{B}_n^0 \cup \mathscr{B}_n^1 \cup \mathscr{B}_n^2 \cup 
  \mathscr{C}_n^2$.  Suppose now that we have reduced  elements to linear 
  combinations of elements from the set $\mathscr{B}_n^0 \cup \mathscr{B}_n^1 
  \cup \ldots \cup \mathscr{B}_n^j \cup \mathscr{C}_n^j$, for some $j \geq 2$.
  I claim any element of $\mathscr{C}_n^j$ can be re-expressed as a linear 
  combination of elements from the set $\mathscr{B}_n^0 \cup \mathscr{B}_n^1 
  \cup \ldots \cup \mathscr{B}_n^{j+1} \cup \mathscr{C}_n^{j+1}$.  Indeed,
  let $X \otimes Y \otimes Zx_{n-j+1} \ldots x_n$, with $|Z| \geq 1$.  Let
  $w \stackrel{def}{=} x_{n-j+1} \ldots x_n$.  We may now think of $X \otimes Y \otimes Zw$
  as consisting of the `indeterminates' $x_0, x_1, \ldots, x_{n-j}, w$, hence, 
  by step 10, we may reduce this element to a linear combination of elements
  from the set $\{ X \otimes x_{i_{n-j-1}} \otimes x_{n-j}w \;|\; i_{n-j-1} 
  \neq n-j-1\} \cup \{ X \otimes Y \otimes Zx_{n-j}w \;|\; |Z| \geq 1 \}$.
  This implies the element may written as a linear combination
  of elements from the set $\mathscr{B}_n^{j+1} \cup \mathscr{C}_n^{j+1}$, 
  modulo elements of the form $A \otimes B \otimes 1$ and $A \otimes B \otimes 
  x_{n-j+1}\ldots x_n$.  Since $\{A \otimes B \otimes x_{n-j+1}x_{n-j+2}\ldots
  x_n\} \subseteq \mathscr{C}_n^{j-1}$, the inductive hypothesis ensures that
  the there is set containment $\{A \otimes B \otimes x_{n-j+1}\ldots
  x_n\} \subseteq \mathscr{B}_n^0 \cup \ldots \cup \mathscr{B}_n^{j}$.  This
  completes the inductive step.

  After a finite number of iterations, then, we can re-express any element
  $X \otimes Y \otimes Z$ as a linear combination from the set
  $\mathscr{B}_n^0 \cup \ldots \mathscr{B}_n^{n-1} \cup \mathscr{C}_n^{n-1}
  = \mathscr{B}_n \cup \mathscr{C}_n^{n-1}$.  But $\mathscr{C}_n^{n-1} = 
  \{ A \otimes B \otimes Cx_{2} \ldots x_n \;|\; |C| \geq 1\}$.  Any element 
  from this set has either $|A| = 0$ or $|B| = 0$, therefore is equivalent to an
  element of $k[\mathscr{B}_n]$ already.  
\end{proof}
\begin{cor}\label{cor.k-contains-one-half}
  If $\frac{1}{2} \in k$, then the four-term relation $XY \otimes Z \otimes W +
  W \otimes ZX \otimes Y + YZX \otimes 1 \otimes W + W \otimes YZ \otimes X 
  \approx 0$ is sufficient to collapse $k\left[\mathrm{Mor}_{\Delta S}\left([n],
  [2]\right)\right]$ onto $k\left[\mathscr{B}_n\right]$.
\end{cor}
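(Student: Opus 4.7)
The plan is to derive the one-term relation $1 \otimes X \otimes 1 \approx 0$ as a formal consequence of the four-term relation under the standing hypothesis $\tfrac{1}{2} \in k$. Once this is done, the conclusion follows immediately from Lemma~\ref{lem.4-term-relation}, whose proof uses only these two families of relations.

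The derivation will be carried out by three targeted specializations of the four-term relation. First, setting $X = Y = Z = 1$ in Eq.~(\ref{eq.4-term}) yields $2(1 \otimes 1 \otimes W) + 2(W \otimes 1 \otimes 1) \approx 0$, so dividing by $2$ gives
\[
  1 \otimes 1 \otimes W \approx -(W \otimes 1 \otimes 1). \tag{$\ast$}
\]
Next, setting $Y = Z = W = 1$ produces $2(X \otimes 1 \otimes 1) + (1 \otimes X \otimes 1) + (1 \otimes 1 \otimes X) \approx 0$, and substituting $(\ast)$ with $W = X$ collapses the first and third terms to yield
\[
  X \otimes 1 \otimes 1 + 1 \otimes X \otimes 1 \approx 0. \tag{$\ast\ast$}
\]
Finally, setting $X = Y = W = 1$ gives $3(1 \otimes Z \otimes 1) + Z \otimes 1 \otimes 1 \approx 0$; using $(\ast\ast)$ to rewrite $Z \otimes 1 \otimes 1 \approx -(1 \otimes Z \otimes 1)$, this becomes $2(1 \otimes Z \otimes 1) \approx 0$, whence $1 \otimes Z \otimes 1 \approx 0$ for arbitrary $Z$ upon dividing by $2$ once more.

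Since $X, Y, Z, W$ in Eq.~(\ref{eq.4-term}) are permitted to be arbitrary monomials (including the empty monomial $1$), the one-term relation Eq.~(\ref{eq.1-term}) is therefore a consequence of the four-term relation when $\tfrac{1}{2} \in k$. Applying Lemma~\ref{lem.4-term-relation}, which shows that the two relations together collapse $k[\mathrm{Mor}_{\Delta S}([n],[2])]$ onto $k[\mathscr{B}_n]$, completes the proof. The only potential obstacle is identifying the three correct specializations, but once the first one is chosen the others are forced by the need to eliminate terms like $X \otimes 1 \otimes 1$ and $1 \otimes 1 \otimes X$ in favor of the desired $1 \otimes X \otimes 1$.
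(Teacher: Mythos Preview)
Your proof is correct and follows essentially the same approach as the paper's: both use the same three specializations of the four-term relation (setting three of $X,Y,Z,W$ equal to $1$) to derive the one-term relation $1\otimes X\otimes 1\approx 0$, after which Lemma~\ref{lem.4-term-relation} finishes the argument. The only difference is presentational: the paper packages the three resulting equations into a $3\times 3$ linear system with determinant $-4$ and invokes invertibility when $\tfrac12\in k$, whereas you solve the system by hand via two explicit divisions by $2$.
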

\begin{proof}
  We only need to modify step 1 of the previous proof.  We will establish
  that $X \otimes 1 \otimes 1 \approx 1 \otimes X \otimes 1 \approx 1 \otimes 1
  \otimes X \approx 0$.
  
  Setting three variables at a time equal to $1$ in Eq.~(\ref{eq.4-term})
  we obtain,
  \begin{equation}\label{eq.step1_1}
    2(W \otimes 1 \otimes 1) + 2(1 \otimes 1 \otimes W) \approx 0,
    \quad \textrm{when $X = Y = Z = 1$.}
  \end{equation}
  \begin{equation}\label{eq.step1_2}
    Z \otimes 1 \otimes 1 + 3(1 \otimes Z \otimes 1) \approx 0,
    \quad \textrm{when $X = Y = W = 1$.}
  \end{equation}
  \begin{equation}\label{eq.step1_3}
    2(Y \otimes 1 \otimes 1) + 1 \otimes Y \otimes 1 + 1 \otimes 1 \otimes Y
    \approx 0, \quad \textrm{when $X = Z = W = 1$.}
  \end{equation}
  Equivalently, we have a system of linear equations,
  \[
    \left[\begin{array}{ccc}
       2 & 0 & 2 \\
       1 & 3 & 0 \\
       2 & 1 & 1
    \end{array}\right] 
    \left[ \begin{array}{c}
       z_1 \\
       z_2 \\
       z_3
    \end{array}\right]
    = 0,
  \]
  where $z_1 = X \otimes 1 \otimes 1$, $z_2 = 1 \otimes X \otimes 1$, and
  $z_3 = 1 \otimes 1 \otimes X$.  Since the determinant of the coefficient
  matrix is $-4$, the matrix is invertible in the ring $k$ as long as $1/2 \in 
  k$.
\end{proof}
  
Define for each $m \geq 0$, $P_m \stackrel{def}{=} k\left[
  \mathrm{Mor}_{\Delta S}\left(-, [m]\right)\right]$ and $P^n_m =
P_m([n])$.  Lemma~\ref{lem.4-term-relation} together with
Lemmas~\ref{lem.rho-iso} and~\ref{lem.0-stage} show the following
sequence of $k$--modules is exact for each $n \geq 0$:
\begin{equation}\label{eq.part_res-n}
  0 \gets k \stackrel{\epsilon}{\gets} P^n_0 \stackrel{\rho}{\gets}
  P^n_2 \stackrel{(\alpha, \beta)}{\longleftarrow} P^n_3 \oplus P^n_0
\end{equation}
where $\alpha \co P^n_3 
\to P^n_2$ is given by
composition with the $\Delta S$ morphism, $x_0x_1 \otimes x_2 \otimes x_3 + x_3
\otimes x_2x_0 \otimes x_1 + x_1x_2x_0 \otimes 1 \otimes x_3 + x_3 \otimes 
x_1x_2 \otimes x_0$, and $\beta \co P^n_0 \to P^n_2$ 
is induced by $1 \otimes x_0 \otimes 1$.  This holds for all $n \geq 0$, so
we have constructed a partial resolution of $\underline{k}$ by projective 
$\Delta S^\mathrm{op}$--modules:
\begin{equation}\label{eq.part_res}
  0 \gets k \stackrel{\epsilon}{\gets} P_0 \stackrel{\rho}{\gets}
  P_2 \stackrel{(\alpha, \beta)}{\longleftarrow} P_3 \oplus P_0
\end{equation}

%%%%%%%%%%%%%%%%%%%%%%%%%%%%%%%%%%%%%%%%%%%%%%%%%%%%%%%%%%%%%%%%%%%%%%%%%%%%%%%%
\section{Using the Partial Resolution for Low Degree Computations}
\label{sec.lowdeg}                     
%%%%%%%%%%%%%%%%%%%%%%%%%%%%%%%%%%%%%%%%%%%%%%%%%%%%%%%%%%%%%%%%%%%%%%%%%%%%%%%%

Let $A$ be a unital associative algebra over $k$.  Since Eq.~(\ref{eq.part_res})
is a partial resolution of $\underline{k}$, it can be used to find $HS_i(A)$
for $i = 0, 1$.

%%%%%%%%%%%%%%%%%%%%%%%%%%%%%%%%%%%%%%%%%%%%%%%%%%%%%%%%%%%%%%%%%%%%%%%%%%%%%%
\subsection{Main Theorem}

\begin{theorem}\label{thm.partial_resolution-restated}
  $HS_i(A)$ for $i=0,1$ may be computed as the degree $0$ and degree $1$ 
  homology groups of the following (partial) chain complex:
  \begin{equation}\label{eq.partial_complex}
    0\longleftarrow A \stackrel{\partial_1}{\longleftarrow} A\otimes A\otimes A
    \stackrel{\partial_2}{\longleftarrow}(A\otimes A\otimes A\otimes A)\oplus A,
  \end{equation}
  where 
  \[
    \partial_1 \co a\otimes b\otimes c \mapsto abc - cba,
  \]
  \[
    \partial_2 \co \left\{\begin{array}{lll}
         a\otimes b\otimes c\otimes d &\mapsto& ab\otimes c\otimes d + 
         d\otimes ca\otimes b \\
         && \quad + bca\otimes 1\otimes d + d\otimes bc\otimes a,\\
         a &\mapsto& 1\otimes a\otimes 1.
       \end{array}\right.
  \]
\end{theorem}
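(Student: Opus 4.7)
The plan is to apply the functor $- \otimes_{\Delta S} B_*^{sym}A$ to the partial projective resolution
\[
0 \gets \underline{k} \stackrel{\epsilon}{\gets} P_0 \stackrel{\rho}{\gets} P_2 \stackrel{(\alpha,\beta)}{\longleftarrow} P_3 \oplus P_0
\]
constructed in Eq.~(\ref{eq.part_res}) and identify the resulting complex with the one displayed in~(\ref{eq.partial_complex}). Since $\mathrm{Tor}_*^{\Delta S}(\underline{k}, B_*^{sym}A)$ is computed in low degrees by any such partial resolution by projectives, the $0$th and $1$st homology of the tensored complex will equal $HS_0(A)$ and $HS_1(A)$ respectively.

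The key identification is the Yoneda-type isomorphism: for any covariant functor $F \co \Delta S \to k\text{-}\mathbf{Mod}$, there is a natural isomorphism
\[
k[\mathrm{Mor}_{\Delta S}(-, [m])] \otimes_{\Delta S} F \;\cong\; F([m]),
\]
obtained by sending $\phi \otimes x$ to $\phi_*(x)$, with inverse $x \mapsto \mathrm{id}_{[m]} \otimes x$ (the ``identity trick'' already used in Section~\ref{sub.symbar}). Applied to $F = B_*^{sym}A$, this gives $P_m \otimes_{\Delta S} B_*^{sym}A \cong A^{\otimes(m+1)}$, so the tensored complex has terms $A$, $A^{\otimes 3}$, and $A^{\otimes 4} \oplus A$ as required.

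Next, I would translate each boundary map into the tensor notation of Section~\ref{sec.hom_alg_functors}. Under the Yoneda identification, a map $P_{m'} \to P_m$ given by post-composition with a morphism $\mu \co [m'] \to [m]$ of $\Delta S$ (written in tensor notation) goes to the map $A^{\otimes(m'+1)} \to A^{\otimes(m+1)}$ obtained by evaluating $\mu$ on a tuple of elements of $A$ as in Def.~\ref{def.symbar}. Thus:
\begin{itemize}
\item $\rho$, post-composition with $x_0x_1x_2 - x_2x_1x_0$, becomes $a \otimes b \otimes c \mapsto abc - cba$, which is $\partial_1$.
\item $\alpha$, post-composition with $x_0x_1\otimes x_2\otimes x_3 + x_3\otimes x_2x_0\otimes x_1 + x_1x_2x_0\otimes 1\otimes x_3 + x_3\otimes x_1x_2\otimes x_0$, evaluates on $a\otimes b\otimes c\otimes d$ to the four-term expression in the statement of $\partial_2$.
\item $\beta$, post-composition with $1 \otimes x_0 \otimes 1$, sends $a \in A$ to $1 \otimes a \otimes 1$.
\end{itemize}
This matches $\partial_2$ on the two summands of $(A \otimes A \otimes A \otimes A) \oplus A$.

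The proof is thus essentially bookkeeping: once the Yoneda identification is in place, each of the three boundaries is read off directly from the tensor-notation description of the corresponding $\Delta S$-morphism chosen in Section~\ref{sec.partres}. There is no genuine obstacle here; the substantive content of the theorem was already proved in the construction of the partial resolution (Lemmas~\ref{lem.0-stage}--\ref{lem.4-term-relation}), and the remaining step is simply to verify that the formulas for $\partial_1$ and $\partial_2$ defined in the statement coincide with the images of $\rho$, $\alpha$, and $\beta$ under $- \otimes_{\Delta S} B_*^{sym}A$.
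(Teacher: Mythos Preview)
Your proposal is correct and follows essentially the same approach as the paper's own proof: tensor the partial projective resolution~(\ref{eq.part_res}) with $B_*^{sym}A$ over $\Delta S$, then use the evaluation (Yoneda) isomorphism $k[\mathrm{Mor}_{\Delta S}(-,[p])]\otimes_{\Delta S} B_*^{sym}A \cong B_p^{sym}A = A^{\otimes(p+1)}$ to identify terms and read off the boundary maps. The paper's proof is in fact terser than yours, simply citing the evaluation isomorphism without spelling out the three boundary computations you carry out explicitly.
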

\begin{proof}
  Tensoring the complex~(\ref{eq.part_res}) with $B_*^{sym}A$ over $\Delta S$,
  we obtain a complex that computes $HS_0(A)$ and $HS_1(A)$.  The statement
  of the theorem then follows from isomorphisms induced by the evaluation
  map, $k\left[\mathrm{Mor}_{\Delta S}\left(-, [p]\right)\right] 
  \otimes_{\Delta S} B_*^{sym}A \stackrel{\cong}{\longrightarrow} B_p^{sym}A$.
\end{proof}

%%%%%%%%%%%%%%%%%%%%%%%%%%%%%%%%%%%%%%%%%%%%%%%%%%%%%%%%%%%%%%%%%%%%%%%%%%%%%%
\subsection{Degree $0$ Symmetric Homology}

\begin{theorem}\label{thm.HS_0}
  For a unital associative algebra $A$ over commutative ground ring $k$,
  $HS_0(A) \cong A/([A,A])$, where $([A,A])$ is the ideal generated by the 
  commutator submodule $[A,A]$.
\end{theorem}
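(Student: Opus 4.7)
The plan is to apply Theorem~\ref{thm.partial_resolution-restated}, which expresses $HS_0(A)$ as the cokernel of the map $\partial_1 \co A\otimes A\otimes A\to A$ sending $a\otimes b\otimes c$ to $abc-cba$. Write $K=\mathrm{image}(\partial_1)$. Then $HS_0(A)=A/K$, so the theorem will follow from the identification $K=([A,A])$ of $k$-submodules of $A$.

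For the inclusion $K\subseteq([A,A])$, I would check it on generators by the algebraic identity
\[
abc-cba \;=\; (abc-cab)+(cab-cba) \;=\; [ab,c] + c[a,b],
\]
both of whose summands plainly lie in $([A,A])$.

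For the reverse inclusion $([A,A])\subseteq K$, note first that $\partial_1(1\otimes a\otimes b)=ab-ba$, so every commutator $[a,b]$ lies in $K$. The main step is to show that any element of the form $u[a,b]v$ belongs to $K$. The key observation I would exploit is that $\partial_1$ can produce the tail-commutator $[a,vbu]=avbu-vbua$ from the tensor $1\otimes a\otimes vbu$, and this is exactly the ``extra'' term that appears when computing $\partial_1(ua\otimes b\otimes v)-\partial_1(u\otimes b\otimes av)$. Expanding yields the identity
\[
\partial_1(ua\otimes b\otimes v) \;-\; \partial_1(u\otimes b\otimes av) \;=\; u[a,b]v \;+\; \partial_1(1\otimes a\otimes vbu),
\]
which rearranges to express $u[a,b]v$ as a sum of three $\partial_1$-images. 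Since the two-sided ideal $([A,A])$ is spanned as a $k$-module by the elements $u[a,b]v$, this places $([A,A])$ inside $K$.

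Combining the two inclusions gives $K=([A,A])$ and hence $HS_0(A)=A/([A,A])$. The only genuinely subtle step is the reverse inclusion: the naive attempts (such as subtracting $\partial_1(u\otimes ab\otimes v)$ from $\partial_1(u\otimes ba\otimes v)$) only yield the symmetric combination $u[a,b]v+v[a,b]u$, so the trick is finding the correct asymmetric combination whose ``leftover'' is itself a single commutator and hence again in the image of $\partial_1$. Once this identity is in hand, no further work is required.
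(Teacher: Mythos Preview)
Your proposal is correct and follows essentially the same route as the paper: invoke the partial resolution to identify $HS_0(A)$ with $A/\mathrm{im}(\partial_1)$, then show $\mathrm{im}(\partial_1)=([A,A])$ by checking both inclusions. The paper's argument is terser---it asserts without justification that $\mathrm{im}(\partial_1)$ is already an ideal and then only needs $[A,A]\subseteq\mathrm{im}(\partial_1)$ for the reverse inclusion---whereas your explicit identity expressing $u[a,b]v$ as a combination of $\partial_1$-images actually supplies that missing verification; so your version is, if anything, more complete. (One cosmetic slip: the ``naive'' combination you mention in the aside yields $u[a,b]v - v[a,b]u$, not $+$, but this does not affect the argument.)
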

\begin{proof}
  By Thm.~\ref{thm.partial_resolution}, $HS_0(A) \cong A/k\left[\{abc-cba\}
  \right]$
  as $k$--module.  But $k\big[\{abc-cba\}\big]$ is an ideal of $A$.  Now
  clearly $[A,A] \subseteq k\left[\{abc-bca\}\right]$.  On the other hand,
  $k\left[\{abc-cba\}\right] \subseteq ([A,A])$ since $abc - cba = a(bc-cb) + 
  a(cb) - (cb)a$.
\end{proof}
\begin{cor}
  If $A$ is commutative, then $HS_0(A) \cong A$.
\end{cor}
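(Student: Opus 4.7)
The plan is to deduce the corollary directly from Theorem~\ref{thm.HS_0}. Since $A$ is commutative, the commutator $[a,b] = ab - ba$ vanishes for every pair $a, b \in A$, so the commutator submodule $[A,A]$ is the zero submodule. Consequently the two-sided ideal $([A,A])$ it generates is the zero ideal, and the quotient $A/([A,A])$ collapses to $A$. There is no genuine obstacle: all of the content has already been extracted in Theorem~\ref{thm.HS_0} via the explicit partial resolution of Section~\ref{sec.partres}.

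As a consistency check, the conclusion $HS_0(A) \cong A$ for commutative $A$ matches the analogous computations in Hochschild and cyclic homology, where $HH_0(A) = A/[A,A]$ and $HC_0(A) = A/[A,A]$ likewise reduce to $A$ in the commutative case. This confirms the remark in the introduction that $HS_0$ is the natural ``symmetrization'' version of $HH_0$ and $HC_0$: the three theories agree in degree zero whenever $A$ is commutative, while in general $HS_0(A) = A/([A,A])$ is a further quotient by the ideal, rather than just the submodule, generated by commutators.
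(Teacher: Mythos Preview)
Your proof is correct and matches the paper's approach exactly: the corollary is stated without explicit proof immediately after Theorem~\ref{thm.HS_0}, as it follows at once from $[A,A]=0$ in the commutative case.
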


\begin{rmk} 
  Theorem~\ref{thm.HS_0} implies that symmetric homology does not preserve 
  Morita equivalence, since for $n>1$, $HS_0\left(M_n(A)\right) = M_n(A)/\left(
  [M_n(A),M_n(A)]\right) = 0$, while in general $HS_0(A) = A/([A,A]) \neq 0$.
\end{rmk}

%%%%%%%%%%%%%%%%%%%%%%%%%%%%%%%%%%%%%%%%%%%%%%%%%%%%%%%%%%%%%%%%%%%%%%%%%%%%%%
\subsection{Degree $1$ Symmetric Homology}\label{sub.deg-1-HS}

Using \verb|GAP|, we have made the following explicit computations of degree 1
integral symmetric homology.  See section~\ref{sec.gap-fermat} for a discussion
of how computer algebra systems were used in symmetric homology computations. 

\begin{center}
\begin{tabular}{l|l}
  $A$ & $HS_1(A \;|\; \Z)$ \\
  \hline
  $\Z[t]/(t^2)$ & $\Z/2\Z \oplus \Z/2\Z$ \\
  $\Z[t]/(t^3)$ & $\Z/2\Z \oplus \Z/2\Z$ \\
  $\Z[t]/(t^4)$ & $(\Z/2\Z)^4$ \\
  $\Z[t]/(t^5)$ & $(\Z/2\Z)^4$ \\
  $\Z[t]/(t^6)$ & $(\Z/2\Z)^6$ \\
  \hline
  $\Z[C_2]$ & $\Z/2\Z \oplus \Z/2\Z$ \\
  $\Z[C_3]$ & $0$ \\
  $\Z[C_4]$ & $(\Z/2\Z)^4$ \\
  $\Z[C_5]$ & $0$ \\
  $\Z[C_6]$ & $(\Z/2\Z)^6$ \\
  \hline
\end{tabular}
\end{center}

Based on these calculations, we conjecture:
\begin{conj}
  \[
    HS_1\big(k[t]/(t^n)\big) = \left\{\begin{array}{ll}
                                 (k/2k)^n, & \textrm{if $n \geq 0$ is even.}\\
                                 (k/2k)^{n-1} & \textrm{if $n \geq 1$ is odd.}
                               \end{array}\right.
  \]
\end{conj}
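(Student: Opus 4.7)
The natural plan is to apply Theorem~\ref{thm.partial_resolution-restated} directly to $A = k[t]/(t^n)$. Write basis elements of $A^{\otimes 3}$ as $(i,j,k) := t^i \otimes t^j \otimes t^k$ with $0 \leq i, j, k \leq n-1$, interpreting $t^m = 0$ for $m \geq n$. Because $A$ is commutative, $\partial_1(a \otimes b \otimes c) = abc - cba$ vanishes identically, so $HS_1(A) \cong A^{\otimes 3}/\mathrm{im}(\partial_2)$, reducing the problem to computing the cokernel of a single map with explicit generators and relations.

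The crucial observation is that the total $t$-degree $T = i+j+k$ is preserved by $\partial_2$: the degenerate relation $\partial_2(t^s) = (0,s,0)$ lives in total degree $s$, and each of the four summands of $\partial_2(t^i \otimes t^j \otimes t^k \otimes t^l)$ has total degree $i+j+k+l$. Hence $HS_1(A)$ decomposes as $\bigoplus_{T=0}^{3(n-1)} V_T/W_T$, where $V_T$ is spanned by triples summing to $T$, and each graded piece can be analyzed in isolation. Within each $V_T$ I plan to carry out a normal-form reduction in the spirit of Lemma~\ref{lem.4-term-relation}: first kill all degenerate triples $(0,s,0)$, then apply the four-term relation with systematic choices (substituting various of $a,b,c,d$ equal to $1$) to collapse cyclic families to a canonical representative and to extract the 2-torsion identities of the form $2\cdot[\mathrm{rep}] = 0$. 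The hand calculation for $n=2$ confirms this mechanism: one finds $(1,0,0) = (0,0,1) = 0$, that $[(0,1,1)] = [(1,0,1)] = -[(1,1,0)]$ survives as 2-torsion, and that $[(1,1,1)]$ is also 2-torsion, yielding $(k/2k)^2$.

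The main obstacle, and the reason this remains a conjecture, is the combinatorial bookkeeping: for every $T$ and every $n$, one must determine exactly which surviving generator (if any) remains after applying the four-term relations, and how the cutoff $t^n = 0$ modifies the calculation. The parity dependence presumably originates from precisely this cutoff, where one representative gets additionally identified with a trivial element in the odd-$n$ case but not the even-$n$ case, eliminating a single generator. Formalizing a clean normal form uniform in $n$ is the technical crux; a promising route is to introduce an auxiliary filtration (for example by $\min(i,j,k)$) that decouples the boundary effects from the interior reductions. Once such a normal form is secured, counting the surviving 2-torsion classes should produce a closed-form expression matching $n$ for even $n$ and $n-1$ for odd $n$.
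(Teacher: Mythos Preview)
The statement is labelled a \emph{Conjecture} in the paper, and the paper offers no proof: its only support is the table of GAP computations for $n\le 6$ together with the layered computations of $HS_1(k[t])_{t^m}$ in the following subsection. So there is no ``paper's own proof'' to compare against, and your proposal is not competing with an existing argument but attempting to supply one where none exists.

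Your plan is reasonable and in fact aligns with the paper's own methodology. The total-degree decomposition you set up is exactly the layer splitting of Section~\ref{sub.splittings}: since $A=k[t]/(t^n)$ is commutative, $\partial_1\equiv 0$ and $\partial_2$ preserves the monomial weight, so $HS_1(A)\cong\bigoplus_T V_T/W_T$ as you say (the truncation $t^n=0$ only kills terms, it does not shift weights). The hand computation for $n=2$ is correct and matches the table. You are also honest about the gap: the uniform normal-form argument across all layers, with the boundary effects from $t^n=0$ tracked precisely, is exactly the missing piece, and nothing in the paper supplies it. Your suggestion to filter by $\min(i,j,k)$ is plausible but untested; until that reduction is actually carried out and shown to leave one $k/2k$ per layer (with the predicted parity correction), the statement remains open. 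In short: your outline is the natural attack and is consistent with the paper's framework, but it is a strategy, not a proof, and the paper itself does not claim otherwise.
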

\begin{rmk}
  The computations of $HS_1\big(\Z[C_n]\big)$ are consistent with those of
  Brown and Loday~\cite{BL}.  See~\ref{sub.2-torsion} for a more 
  detailed treatment of $HS_1$ for group rings.
\end{rmk}
  
Additionally, $HS_1$ has been computed for the following examples.  These
computations were done using \verb|GAP| in some cases and in others,
\verb|Fermat|~\cite{LEW} computations on sparse matrices
were used in conjunction with the \verb|GAP| scripts. ({\it e.g.} when the 
algebra has dimension greater than $6$ over $\Z$).

\begin{center}
\begin{tabular}{l|l}
  $A$ & $HS_1(A \;|\; \Z)$\\
  \hline
  $\Z[t,u]/(t^2, u^2)$ & $\Z \oplus (\Z/2\Z)^{11}$\\
  $\Z[t,u]/(t^3, u^2)$ & $\Z^2 \oplus (\Z/2\Z)^{11} \oplus \Z/6\Z$\\
  $\Z[t,u]/(t^3, u^2, t^2u)$ & $\Z^2 \oplus (\Z/2\Z)^{10}$\\
  $\Z[t,u]/(t^3, u^3)$ & $\Z^4 \oplus (\Z/2\Z)^7 \oplus (\Z/6\Z)^5$\\
  $\Z[t,u]/(t^2, u^4)$ & $\Z^3 \oplus (\Z/2\Z)^{20} \oplus \Z/4\Z$\\
  $\Z[t,u,v]/(t^2, u^2, v^2)$ & $\Z^6 \oplus (\Z/2\Z)^{42}$\\
  $\Z[t,u]/(t^4, u^3)$ & $\Z^6 \oplus (\Z/2\Z)^{19} \oplus \Z/6\Z \oplus 
    (\Z/12\Z)^2$\\
  $\Z[t,u,v]/(t^2, u^2, v^3)$ & $\Z^{11} \oplus (\Z/2\Z)^{45} \oplus 
    (\Z/6\Z)^4$\\
  $\Z[i,j,k], i^2=j^2=k^2=ijk=-1$ & $(\Z/2\Z)^8$\\
  $\Z[C_2 \times C_2]$ & $(\Z/2\Z)^{12}$\\
  $\Z[C_3 \times C_2]$ & $(\Z/2\Z)^{6}$\\
  $\Z[C_3 \times C_3]$ & $(\Z/3\Z)^{9}$\\
  $\Z[S_3]$ & $(\Z/2\Z)^2$\\
  \hline
\end{tabular}
\end{center}

%%%%%%%%%%%%%%%%%%%%%%%%%%%%%%%%%%%%%%%%%%%%%%%%%%%%%%%%%%%%%%%%%%%%%%%%%%%%%%
\subsection{Splittings of the Partial Complex}\label{sub.splittings}
Under certain circumstances, the partial complex in 
Thm.~\ref{thm.partial_resolution}
splits as a direct sum of smaller complexes.  This observation becomes 
increasingly important as the dimension of the algebra increases.  Indeed, some
of the computations of the previous section were done using splittings.
\begin{definition}
  For a commutative $k$-algebra $A$ and $u \in A$, define the $k$--modules:
  \[
    \left(A^{\otimes n}\right)_u \stackrel{def}{=} \{ a_1 \otimes a_2 \otimes \ldots \otimes 
    a_n \in A^{\otimes n} \;|\; a_1a_2\cdot \ldots \cdot a_n = u \}
  \]
\end{definition}
\begin{prop}
  If $A = k[M]$ for a commutative monoid $M$, then 
  complex~(\ref{eq.partial_complex}) splits as a direct sum of complexes
  \begin{equation}\label{eq.u-homology}
    0\longleftarrow (A)_u \stackrel{\partial_1}{\longleftarrow} 
    (A\otimes A\otimes A)_u
    \stackrel{\partial_2}{\longleftarrow}
    (A\otimes A\otimes A\otimes A)_u\oplus (A)_u,
  \end{equation}
  where $u$ ranges over the elements of $M$.  Thus, for $i = 0,1$, we have
  $HS_i(A) \cong \bigoplus_{u \in M} HS_i(A)_u$.
\end{prop}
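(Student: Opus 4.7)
The plan is to exhibit an explicit $M$-grading on each term of the partial complex~(\ref{eq.partial_complex}) and to verify that $\partial_1$ and $\partial_2$ respect this grading; then the splitting statement and the decomposition of $HS_i$ follow formally.

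First, since $A = k[M]$ is free over $k$ with basis $M$, each tensor power $A^{\otimes n}$ is free with basis $M^n$. For each $u \in M$, let $(A^{\otimes n})_u$ denote the $k$-span of those basis elements $m_1 \otimes \cdots \otimes m_n \in M^n$ with $m_1 m_2 \cdots m_n = u$ in $M$. Because $M$ is commutative, this product is independent of the order of factors, and the decomposition $A^{\otimes n} = \bigoplus_{u \in M} (A^{\otimes n})_u$ is a well-defined splitting of $k$-modules. For the single-tensor summand in degree $2$, I grade $A$ itself by $A = \bigoplus_u (A)_u = \bigoplus_u ku$.

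Next I verify the differentials are homogeneous of degree $0$ with respect to this $M$-grading. For $\partial_1(m_1 \otimes m_2 \otimes m_3) = m_1m_2m_3 - m_3m_2m_1$, commutativity of $M$ forces both monomials to equal $u = m_1m_2m_3$, so the image lies in $(A)_u$ (in fact it is zero, but that is not needed here). For the first component of $\partial_2$ applied to $m_1 \otimes m_2 \otimes m_3 \otimes m_4$ with product $u$, each of the four summands is obtained by combining and re-ordering the factors $m_1, m_2, m_3, m_4$ (with possibly inserting a unit), so the product of the three tensor factors in each summand is again a rearrangement of $m_1m_2m_3m_4 = u$; by commutativity this equals $u$. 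For the second component $a \mapsto 1 \otimes a \otimes 1$, the image of $u \in (A)_u$ is $1 \otimes u \otimes 1 \in (A^{\otimes 3})_u$.

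Consequently the complex~(\ref{eq.partial_complex}) is the direct sum, over $u \in M$, of its $u$-homogeneous subcomplexes~(\ref{eq.u-homology}). Since homology commutes with direct sums, Theorem~\ref{thm.partial_resolution-restated} yields $HS_i(A) \cong \bigoplus_{u \in M} H_i\big((A)_u \gets (A^{\otimes 3})_u \gets (A^{\otimes 4})_u \oplus (A)_u\big)$ for $i = 0, 1$. There is no real technical obstacle here; the only thing to watch is that commutativity of $M$ is used twice — once to make the total product map on $A^{\otimes n}$ well-defined independently of ordering, and once to ensure that the re-ordered monomials appearing in the formulas for $\partial_1$ and $\partial_2$ still compute the same element $u$, which is what forces the differentials to be homogeneous.
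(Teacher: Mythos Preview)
Your proof is correct and follows essentially the same approach as the paper: exhibit the $M$-grading $A^{\otimes n} = \bigoplus_{u \in M}(A^{\otimes n})_u$ and observe that $\partial_1$ and $\partial_2$ preserve the product of tensor factors, hence respect the grading. You have simply spelled out in more detail what the paper compresses into one sentence.
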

\begin{proof}
  Since $M$ is a commutative monoid, there are direct sum decompositions as
  $k$--module: $A^{\otimes n} = \bigoplus_{u \in M} \left( A^{\otimes n}
  \right)_u$.  The boundary maps $\partial_1$ and $\partial_2$ preserve the 
  products of tensor factors, so the inclusions $\left(A^{\otimes n}\right)_u 
  \hookrightarrow A^{\otimes n}$ induce maps of complexes, hence the complex 
  itself splits as a direct sum.
\end{proof}
\begin{definition}
  For each $u$, the homology groups of complex~(\ref{eq.u-homology}) will be 
  called the {\it $u$-layered symmetric homology} of $A$, denoted $HS_i(A)_u$.
\end{definition}

We may use layers to investigate the symmetric homology of $k[t]$.  This algebra
is monoidal, generated by the monoid $\{1, t, t^2, t^3, \ldots \}$.  Now, the 
$t^m$-layer symmetric homology of $k[t]$ will be the same as the $t^m$-layer 
symmetric homology of $k\left[M^{m+2}_{m+1}\right]$, where $M^p_q$ denotes the 
cyclic monoid generated by an indeterminate $s$ with the property that $s^p = 
s^q$.  Using this observation and subsequent computation, we conjecture:
\begin{conj}\label{conj.HS_1freemonoid}
  \[
    HS_1\left(k[t]\right)_{t^m} = \left\{\begin{array}{ll}
                                 0 & m = 0, 1\\
                                 k/2k, & m \geq 2\\
                               \end{array}\right.
  \]
\end{conj}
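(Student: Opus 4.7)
The plan is to compute $HS_1(k[t])_{t^m}$ directly from the $t^m$-layer of the partial complex in Theorem~\ref{thm.partial_resolution-restated}. Since $k[t]$ is commutative, $\partial_1(t^a\otimes t^b\otimes t^c)=t^{a+b+c}-t^{c+b+a}=0$, so $HS_1(k[t])_{t^m}$ is exactly the cokernel of the restriction of $\partial_2$ to that layer. First I would dispose of $m=0,1$ by small matrix calculations: for $m=0$ the unique chain $1^{\otimes 3}$ is hit by $\partial_2(1)$ from the $(A)_u$-summand, while for $m=1$ the four images $\partial_2(t\text{ in slot }i)$ together with $1\otimes t\otimes 1$ visibly span $(k[t]^{\otimes 3})_t$.

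For $m\geq 2$, my strategy is to realize the conjectured isomorphism via the mod-$2$ linear functional
\[
  \lambda_m \co (k[t]^{\otimes 3})_{t^m} \longrightarrow k/2k,\qquad t^a\otimes t^b\otimes t^c\longmapsto (ab+bc+ca)\bmod 2.
\]
The first observation is that $\lambda_m$ vanishes on $\operatorname{im}(\partial_2)$ mod~$2$: applying $\lambda_m$ to
\[
  \partial_2(t^\alpha\otimes t^\beta\otimes t^\gamma\otimes t^\delta) = t^{\alpha+\beta}\otimes t^\gamma\otimes t^\delta + t^\delta\otimes t^{\alpha+\gamma}\otimes t^\beta + t^{\alpha+\beta+\gamma}\otimes 1\otimes t^\delta + t^\delta\otimes t^{\beta+\gamma}\otimes t^\alpha
\]
yields a quadratic form in $\alpha,\beta,\gamma,\delta$ in which $\alpha\beta,\alpha\gamma,\beta\gamma$ each occur with coefficient $2$ and $\alpha\delta,\beta\delta,\gamma\delta$ each with coefficient $4$, so the total is even; meanwhile $\lambda_m(1\otimes t^m\otimes 1)=0$, so $\lambda_m$ descends to the cokernel. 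Taking $g_m:=t\otimes t\otimes t^{m-2}$ gives $\lambda_m(g_m)=1+2(m-2)\equiv 1\pmod 2$, whence $\lambda_m$ induces a surjection $HS_1(k[t])_{t^m}\twoheadrightarrow k/2k$.

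To promote this surjection to an isomorphism I would proceed in two stages: (i) show that $\partial_2\otimes\mathbb{Q}$ is surjective on the $t^m$-layer (forcing $HS_1(k[t])_{t^m}$ to be finite torsion) by writing each monomial $t^p\otimes t^q\otimes t^r$ as an explicit $\mathbb{Q}$-combination of $\partial_2$-images, organized by induction on $\max(p,q,r)$; (ii) produce enough relations modulo $\operatorname{im}(\partial_2)$ to collapse the quotient onto $k\cdot g_m$ with sole relation $2g_m=0$. The key identities for (ii) are the manifestly even relations such as
\[
  \partial_2(t\otimes t\otimes 1\otimes t^{m-2}) = 2(t^2\otimes 1\otimes t^{m-2}) + 2(t^{m-2}\otimes t\otimes t),
\]
together with the shifting differences $\partial_2(t^{\alpha-1}\otimes t\otimes t^\gamma\otimes t^\delta)-\partial_2(t^\alpha\otimes 1\otimes t^\gamma\otimes t^\delta)$, which migrate a single power of $t$ between tensor factors modulo the image.

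The main obstacle will be the uniform bookkeeping in step (ii): the number of basis tensors grows quadratically in $m$, so organizing the shifting relations into a clean induction that works for every $m$ is delicate. To keep it tractable I would exploit the commutative-case symmetry $\partial_2(a\otimes b\otimes c\otimes d)=\partial_2(b\otimes a\otimes c\otimes d)$ (which halves the effective domain) and pass to the finite-dimensional auxiliary algebra $k[M^{m+2}_{m+1}]$ that, by the discussion preceding the conjecture, agrees with $k[t]$ in the $t^m$-layer; a Smith normal form analysis of the resulting integer matrix, ideally with elementary divisors $(1,\dots,1,2)$ read off from a uniform description in $m$, would close the argument.
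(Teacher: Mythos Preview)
The statement you are attacking is a \emph{conjecture} in the paper, not a theorem: the paper offers no proof at all, only the remark that it has been verified computationally for $m\le 18$ over $\Z$ (using the layered partial complex and the \texttt{GAP}/\texttt{Fermat} scripts described in Section~\ref{sec.gap-fermat}). So any actual argument you give goes strictly beyond what the paper contains.

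Your construction of the functional $\lambda_m(t^a\otimes t^b\otimes t^c)=ab+bc+ca\pmod 2$ is genuinely new relative to the paper, and the verification that it kills $\operatorname{im}\partial_2$ is correct: the coefficient count $2(\alpha\beta+\alpha\gamma+\beta\gamma)+4(\alpha\delta+\beta\delta+\gamma\delta)$ is exactly right, and $\lambda_m(1\otimes t^m\otimes 1)=0$. Together with $\lambda_m(t\otimes t\otimes t^{m-2})\equiv 1$, this already proves one half of the conjecture uniformly in $m$ and in $k$: there is always a surjection $HS_1(k[t])_{t^m}\twoheadrightarrow k/2k$ for $m\ge 2$. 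The paper has nothing like this; its evidence is purely numerical. This half is a real contribution and you should separate it out as a self-contained lemma.

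The gap is entirely in the other direction. Your step~(ii) --- collapsing the cokernel onto a single generator with relation $2g_m=0$ --- is only a plan, not an argument. The ``shifting differences'' $\partial_2(t^{\alpha-1}\otimes t\otimes t^\gamma\otimes t^\delta)-\partial_2(t^\alpha\otimes 1\otimes t^\gamma\otimes t^\delta)$ do move powers of $t$ around, but each such move introduces several extra terms, and you have not shown that these can be absorbed in a terminating induction. Likewise, passing to $k[M^{m+2}_{m+1}]$ and invoking Smith normal form is exactly what the paper's computer verification does for each fixed $m$; saying you would ``ideally'' read off elementary divisors $(1,\dots,1,2)$ from a uniform description is restating the conjecture, not proving it. In short: you have proved the lower bound (which the paper does not), but the upper bound remains open, and your sketch does not close it.
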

This conjecture has been verified up to $m = 18$, in the case $k = \Z$.

%%%%%%%%%%%%%%%%%%%%%%%%%%%%%%%%%%%%%%%%%%%%%%%%%%%%%%%%%%%%%%%%%%%%%%%%%%%%%%
\subsection{2-torsion in $HS_1$}\label{sub.2-torsion}
The occurrence of 2-torsion in $HS_1(A)$ for the examples considered 
in~\ref{sub.deg-1-HS} and~\ref{sub.splittings} comes as no surprise,
based on Thm.~\ref{thm.HS_group}.  First consider the following chain
of isomorphisms:  $\pi_{2}^s(B\Gamma) = \pi_2\left(\Omega^{\infty}S^{\infty}
(B\Gamma)\right) \cong \pi_{1}\left(\Omega\Omega^{\infty}S^{\infty}(B\Gamma)
\right) \cong \pi_{1}\left(\Omega_0\Omega^{\infty}S^{\infty}(B\Gamma)\right) 
\stackrel{h}{\to} H_1\left(\Omega_0\Omega^{\infty}S^{\infty}(B\Gamma)\right)$.
Here, $\Omega_0\Omega^{\infty}S^{\infty}(B\Gamma)$ denotes the component of the
constant loop, and $h$ is the Hurewicz homomorphism, which is an isomorphism 
since $\Omega_0\Omega^{\infty}S^{\infty}(B\Gamma)$ is path-connected and $\pi_1$
is abelian (since it is actually $\pi_2$ of a space).

On the other hand, by Thm.~\ref{thm.HS_group},
\[
  HS_1(k[\Gamma]) \cong H_1
  \left(\Omega\Omega^{\infty}S^{\infty}(B\Gamma); k\right) \cong
  H_1\left( \Omega\Omega^{\infty}S^{\infty}(B\Gamma)\right) \otimes k.
\]
Note, all tensor products will be over $\mathbb{Z}$ in this section.
Now $\Omega\Omega^{\infty} S^{\infty}(B\Gamma)$ consists of disjoint
homeomorphic copies of $\Omega_0 \Omega^{\infty} S^{\infty}(B\Gamma)$,
one for each element of $\Gamma/[\Gamma, \Gamma]$, (where $[\Gamma,
  \Gamma]$ is the commutator subgroup of $\Gamma$), so we may write
\[
  H_1\left(\Omega\Omega^{\infty}S^{\infty}(B\Gamma)\right) \otimes k
  \cong H_1\left(\Omega_0\Omega^{\infty}S^{\infty}(B\Gamma)\right)
  \otimes k \left[\Gamma/[\Gamma, \Gamma]\right]
\]
to obtain the following result:
\begin{prop}\label{prop.stablegrouphomotopy}
  If $\Gamma$ is a group, then $HS_1(k[\Gamma]) \cong
  \pi_{2}^s(B\Gamma) \otimes k\left[ \Gamma/[\Gamma, \Gamma] \right]$.
\end{prop}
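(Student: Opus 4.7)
The plan is to assemble the identifications already laid out immediately before the proposition statement into a single chain of isomorphisms. The starting point is Fiedorowicz's theorem (Thm.~\ref{thm.HS_group}, restated as Cor.~\ref{thm.HS_group-restated}), which identifies $HS_*(k[\Gamma])$ with $H_*(\Omega\Omega^\infty S^\infty(B\Gamma); k)$. Since $H_0$ of any space is free abelian, the universal coefficient theorem gives $H_1(X;k) \cong H_1(X)\otimes k$, allowing us to work with integral homology and tensor with $k$ at the end.

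Next I would analyze the path components. Since $\Omega^\infty S^\infty(B\Gamma)$ is an infinite loop space, $\pi_0(\Omega\Omega^\infty S^\infty(B\Gamma)) \cong \pi_1(\Omega^\infty S^\infty(B\Gamma)) = \pi_1^s(B\Gamma)$, and the Hurewicz map in degree one for the classifying space gives $\pi_1^s(B\Gamma) \cong H_1(B\Gamma) \cong \Gamma/[\Gamma,\Gamma]$. Because $\Omega^\infty S^\infty(B\Gamma)$ is a grouplike $H$-space, translation by any element identifies each path component homeomorphically with the identity component $\Omega_0\Omega^\infty S^\infty(B\Gamma)$. Consequently
\[
  H_1\bigl(\Omega\Omega^\infty S^\infty(B\Gamma)\bigr) \;\cong\;
  H_1\bigl(\Omega_0\Omega^\infty S^\infty(B\Gamma)\bigr)
  \otimes \Z[\Gamma/[\Gamma,\Gamma]],
\]
where the tensor factor records the component decomposition.

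Then I would handle the identity component using the Hurewicz theorem. The space $\Omega_0\Omega^\infty S^\infty(B\Gamma)$ is path-connected and its fundamental group, being $\pi_2$ of a space, is abelian; thus the Hurewicz map gives $\pi_1\bigl(\Omega_0\Omega^\infty S^\infty(B\Gamma)\bigr) \xrightarrow{\cong} H_1\bigl(\Omega_0\Omega^\infty S^\infty(B\Gamma)\bigr)$. Finally, using $\pi_1(\Omega Y)\cong\pi_2(Y)$ and the definition of stable homotopy, $\pi_1\bigl(\Omega_0\Omega^\infty S^\infty(B\Gamma)\bigr) \cong \pi_2\bigl(\Omega^\infty S^\infty(B\Gamma)\bigr) = \pi_2^s(B\Gamma)$.

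Stringing these together and tensoring with $k$ yields the stated isomorphism. There is no genuinely hard step; the only care needed is to justify that the components of $\Omega\Omega^\infty S^\infty(B\Gamma)$ really are indexed by $\Gamma/[\Gamma,\Gamma]$ and are mutually homeomorphic, which follows from the grouplike $H$-space structure. The argument applies verbatim to the reduced version once one notes that $HS_1 = \widetilde{H}S_1$, and that the disjoint basepoint introduced in the reduction corresponds precisely to the component splitting above.
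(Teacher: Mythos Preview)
Your proposal is correct and follows essentially the same route as the paper: apply Fiedorowicz's theorem, pass to integral $H_1$ via the universal coefficient theorem, decompose $\Omega\Omega^\infty S^\infty(B\Gamma)$ into homeomorphic components indexed by $\Gamma/[\Gamma,\Gamma]$, and identify $H_1$ of the base component with $\pi_2^s(B\Gamma)$ via Hurewicz. If anything, you supply slightly more justification than the paper does for why the component set is $\Gamma/[\Gamma,\Gamma]$.
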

As an immediate corollary, if $\Gamma$ is abelian, then $HS_1(k[\Gamma])
\cong \pi_2^s(B\Gamma) \otimes k[\Gamma]$.  Moreover, by results of Brown and 
Loday~\cite{BL}, if $\Gamma$ is abelian, then $\pi_2^s(B\Gamma)$ is the reduced
tensor square, $\Gamma\, \widetilde{\wedge} \,\Gamma = \left(\Gamma \otimes 
\Gamma\right)/\approx$, where $g \otimes h \approx - h \otimes g$ for all $g, h
\in \Gamma$.  

\begin{prop}\label{prop.HS_1-C_n}
  \[
    HS_1(k[C_n]) = \left\{\begin{array}{ll}
                       (\Z/2\Z)^n & \textrm{$n$ even.}\\
                       0 & \textrm{$n$ odd.}
                  \end{array}\right.
  \]
\end{prop}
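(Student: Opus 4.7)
The plan is to invoke Proposition~\ref{prop.stablegrouphomotopy} together with the Brown--Loday description of $\pi_2^s(B\Gamma)$ recalled in the paragraph just preceding the statement, and then compute the resulting reduced tensor square by hand.

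First I would specialize Proposition~\ref{prop.stablegrouphomotopy} to $\Gamma = C_n$. Since $C_n$ is abelian, $[\Gamma,\Gamma] = 1$ and the commutator quotient is just $C_n$, giving
\[
  HS_1(k[C_n]) \;\cong\; \pi_2^s(BC_n)\otimes_\Z k[C_n].
\]
Then I would quote Brown--Loday (\cite{BL}) in the abelian case to rewrite $\pi_2^s(BC_n) \cong C_n\,\widetilde{\wedge}\,C_n$, the reduced tensor square.

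Next I would compute $C_n\,\widetilde{\wedge}\,C_n$ explicitly. Fix a generator $g$ of $C_n$. As an abelian group, $C_n\otimes_\Z C_n \cong \Z/n\Z$, generated by $g\otimes g$. Passing to the quotient by the relation $x\otimes y \approx -y\otimes x$ adds only the extra relation $g\otimes g \approx -(g\otimes g)$, i.e. $2(g\otimes g)=0$. Hence $C_n\,\widetilde{\wedge}\,C_n$ is cyclic, generated by $g\otimes g$, of order $\gcd(n,2)$. In particular it vanishes when $n$ is odd and is isomorphic to $\Z/2\Z$ when $n$ is even.

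Finally I would assemble the pieces. For $n$ odd, $\pi_2^s(BC_n)=0$, so $HS_1(k[C_n])=0$. For $n$ even,
\[
  HS_1(k[C_n]) \;\cong\; (\Z/2\Z)\otimes_\Z k[C_n] \;\cong\; (k/2k)[C_n],
\]
which for $k=\Z$ (the setting matching the table of computed examples in Section~\ref{sub.deg-1-HS}) gives $(\Z/2\Z)^n$. The only delicate point is the computation of the reduced tensor square; everything else is formal consequence of results already established in the excerpt. This also provides the promised reconciliation with the explicit $\texttt{GAP}$ calculations of $HS_1(\Z[C_n])$ tabulated earlier.
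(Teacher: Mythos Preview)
Your proof is correct and follows essentially the same route as the paper: apply Proposition~\ref{prop.stablegrouphomotopy} and then determine $\pi_2^s(BC_n)$, using that $C_n$ is abelian so the commutator quotient is $C_n$ itself. The paper's proof simply asserts the value of $\pi_2^s(BC_n)$, whereas you supply the explicit computation of the reduced tensor square via Brown--Loday; you also correctly flag that the stated answer $(\Z/2\Z)^n$ presumes $k=\Z$ (for general $k$ one gets $(k/2k)^n$).
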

\begin{proof}
  $\pi_2^s(BC_n) = \Z/2\Z$ if $n$ is even, and $0$ if $n$ is odd.
  The result then follows from Prop.~\ref{prop.stablegrouphomotopy}, as
  $k\left[ C_n/[C_n, C_n] \right] \cong k[C_n] \cong k^n$, as $k$--module.
\end{proof}

%%%%%%%%%%%%%%%%%%%%%%%%%%%%%%%%%%%%%%%%%%%%%%%%%%%%%%%%%%%%%%%%%%%%%%%%%%%%%%%%
\section{Relations to Cyclic Homology}\label{sec.cyc-homology}
%%%%%%%%%%%%%%%%%%%%%%%%%%%%%%%%%%%%%%%%%%%%%%%%%%%%%%%%%%%%%%%%%%%%%%%%%%%%%%%%

The relation between the symmetric bar construction and the cyclic bar
construction arising from the inclusions $\Delta C \hookrightarrow \Delta S$
gives rise to a natural map $HC_*(A) \to HS_*(A)$
Indeed, by remark~\ref{rmk.HC}, we may define cyclic homology thus:
$HC_*(A) = \mathrm{Tor}_*^{\Delta C}( \underline{k}, B_*^{sym}A )$, where
we understand $B_*^{sym}A$ as the restriction of the functor to $\Delta C$.

Using the partial complex of Thm.~\ref{thm.partial_resolution}, and an
analogous one for computing cyclic homology (c.f.~\cite{L}, p.~59), the
map $HC_*(A) \to HS_*(A)$ for degrees $0$ and $1$ is induced by the following
partial chain map:
\[
  \begin{diagram}
    \node{ 0 }
    \node{ A }
    \arrow{w}
    \arrow{s,r}{ \gamma_0 = \mathrm{id} }
    \node{ A \otimes A }
    \arrow{w,t}{ \partial_1^C }
    \arrow{s,r}{ \gamma_1 }
    \node{ A^{\otimes 3} \oplus A }
    \arrow{w,t}{ \partial_2^C }
    \arrow{s,r}{ \gamma_2 }
    \\
    \node{ 0 }
    \node{ A }
    \arrow{w}
    \node{ A^{\otimes 3} }
    \arrow{w,t}{ \partial_1^S }
    \node{ A^{\otimes 4} \oplus A }
    \arrow{w,t}{ \partial_2^S }
  \end{diagram}
\] 
In this diagram, the boundary maps in the upper row are defined as follows:
\[
  \partial_1^C \co a \otimes b \mapsto ab - ba
\]
\[
  \partial_2^C \co \left\{\begin{array}{ll}
                   a \otimes b \otimes c &\mapsto ab \otimes c - a \otimes bc + ca \otimes b\\
                   a &\mapsto 1 \otimes a - a \otimes 1
                 \end{array}\right.
\]
The boundary maps in the lower row are defined as in Thm.~\ref{thm.partial_resolution}.
\[
  \partial_1^S \co a \otimes b \otimes c \mapsto abc - cba
\]
\[
  \partial_2^S \co \left\{\begin{array}{ll}
                   a \otimes b \otimes c \otimes d &\mapsto ab \otimes c \otimes d 
                     - d \otimes ca \otimes b + bca \otimes 1 \otimes d
                     + d \otimes bc \otimes a\\
                   a &\mapsto 1 \otimes a \otimes 1
                 \end{array}\right.
\]
The partial chain map is given in degree 1 by $\gamma_1(a \otimes b)
\stackrel{def}{=} a \otimes b \otimes 1$.  In degree 2, $\gamma_2$ is
defined on the summand $A^{\otimes 3}$ via
\begin{eqnarray*}
  \lefteqn{a \otimes b \otimes c}\\
  &\mapsto& (a \otimes b \otimes c \otimes 1
                     - 1 \otimes a \otimes bc \otimes 1 
                     + 1 \otimes ca \otimes b \otimes 1\\
          &&         \quad  + 1 \otimes 1 \otimes abc \otimes 1
                     - b \otimes ca \otimes 1 \otimes 1, \\
          &&           - 2abc - cab),
\end{eqnarray*}
and on the summand $A$ via
\[
  a \mapsto (-1 \otimes 1 \otimes a \otimes 1, 4a).
\]

\subsection{Examples}

To provide some examples, consider the maps $\gamma_1 \co HC_1(\Z[t]/(t^n)) \to
HS_1(\Z[t]/(t^n))$.

It can be shown ({\it e.g.} by direct computation) that $HC_1(\Z[t]/(t^2)) \cong
\Z / 2\Z$ is generated by the $1$-chain $t \otimes t$.  $\gamma_1(t \otimes t) = t \otimes t
\otimes 1 \in HS_1(\Z[t]/(t^2)$ is a non-trivial element of $\Z/2\Z \oplus \Z/2\Z$ (which may
be verified by direct computation as well).  

The map $HC_1(\Z[t]/(t^3)) \to HS_1(\Z[t]/(t^2))$ may be similarly analyzed.  Here, the chain 
$t\otimes t + t \otimes t^2$ is a generator of $HC_1(\Z[t]/(t^3)) \cong \Z/6\Z$, which gets sent by
$\gamma_1$ to $t\otimes t\otimes 1 + t \otimes t^2 \otimes 1$, a non-trivial element of
$HS_1(\Z[t]/(t^3)) \cong \Z/2\Z \oplus \Z/2\Z$.

The case $n=4$ is bit more interesting.  Here, $HC_1(\Z[t]/(t^4)) \cong \Z/2\Z \oplus \Z/12\Z$,
generated by $t \otimes t$ and $t\otimes t^2 + t \otimes t^3$, respectively.  The image 
of the map $\gamma_1$ in $HS_1(\Z[t]/(t^4))$ is $(\Z/2\Z)^2 \subseteq (\Z/2\Z)^4$.

%%%%%%%%%%%%%%%%%%%%%%%%%%%%%%%%%%%%%%%%%%%%%%%%%%%%%%%%%%%%%%%%%%%%%%%%%%%%%%%%
\section{Using Computer Algebra Systems for Computing Symmetric Homology}
\label{sec.gap-fermat}
%%%%%%%%%%%%%%%%%%%%%%%%%%%%%%%%%%%%%%%%%%%%%%%%%%%%%%%%%%%%%%%%%%%%%%%%%%%%%%%%

The computer algebra systems \verb+GAP+, \verb+Octave+ and \verb+Fermat+
were used to verify proposed theorems and also to obtain some concrete
computations of symmetric homology for some small algebras.  A tar-file of the
scripts that were created and used for this work is available at 
\url{http://arxiv.org/e-print/0807.4521v1/}.  This tar-file contains the 
following files:
\begin{itemize}
  \item \verb+Basic.g+ \quad - Some elementary functions, necessary for some 
  functions in \verb+DeltaS.g+
  
  \item \verb+HomAlg.g+ \quad - Homological Algebra functions, such as 
  computation of homology groups for chain complexes.
  
  \item \verb+Fermat.g+ \quad - Functions necessary to invoke \verb+Fermat+ 
  for fast sparse matrix computations.
  
  \item \verb+fermattogap+, \verb+gaptofermat+ \quad - Auxiliary text files for
  use when invoking \verb+Fermat+ from \verb+GAP+.

  \item \verb+DeltaS.g+ \quad - This is the main repository of scripts used to
  compute various quantities associated with the category $\Delta S$, including
  $HS_1(A)$ for finite-dimensional algebras $A$.
\end{itemize}
  
In order to use the functions of \verb+DeltaS.g+, simply copy the above files 
into the working directory (such as \verb+~/gap/+), invoke \verb+GAP+, then read
in \verb+DeltaS.g+ at the prompt.  The dependent modules will automatically be
loaded (hence they must be present in the same directory as \verb+DeltaS.g+).
Note, most of the computations involving homology require substantial memory
to run.  I recommend calling \verb+GAP+ with the command line option 
``\verb+-o +{\it mem}'', where {\it mem} is the amount of memory to be allocated
to this instance of \verb+GAP+.  All computations done in this dissertation can
be accomplished by allocating 20 gigabytes of memory.  The following provides a
few examples of using the functions of \verb+DeltaS.g+

{\footnotesize
\begin{verbatim}
[ault@math gap]$ gap -o 20g

gap> Read("DeltaS.g");
gap> 
gap> ## Number of morphisms [6] --> [4]
gap> SizeDeltaS( 6, 4 );
1663200
gap> 
gap> ## Generate the set of morphisms of Delta S, [2] --> [2]
gap> EnumerateDeltaS( 2, 2 );
[ [ [ 0, 1, 2 ], [  ], [  ] ], [ [ 0, 2, 1 ], [  ], [  ] ], 
  [ [ 1, 0, 2 ], [  ], [  ] ], [ [ 1, 2, 0 ], [  ], [  ] ], 
  [ [ 2, 0, 1 ], [  ], [  ] ], [ [ 2, 1, 0 ], [  ], [  ] ], 
  [ [ 0, 1 ], [ 2 ], [  ] ], [ [ 0, 2 ], [ 1 ], [  ] ], 
  [ [ 1, 0 ], [ 2 ], [  ] ], [ [ 1, 2 ], [ 0 ], [  ] ], 
  [ [ 2, 0 ], [ 1 ], [  ] ], [ [ 2, 1 ], [ 0 ], [  ] ], 
  [ [ 0, 1 ], [  ], [ 2 ] ], [ [ 0, 2 ], [  ], [ 1 ] ], 
  [ [ 1, 0 ], [  ], [ 2 ] ], [ [ 1, 2 ], [  ], [ 0 ] ], 
  [ [ 2, 0 ], [  ], [ 1 ] ], [ [ 2, 1 ], [  ], [ 0 ] ], 
  [ [ 0 ], [ 1, 2 ], [  ] ], [ [ 0 ], [ 2, 1 ], [  ] ], 
  [ [ 1 ], [ 0, 2 ], [  ] ], [ [ 1 ], [ 2, 0 ], [  ] ], 
  [ [ 2 ], [ 0, 1 ], [  ] ], [ [ 2 ], [ 1, 0 ], [  ] ], 
  [ [ 0 ], [ 1 ], [ 2 ] ], [ [ 0 ], [ 2 ], [ 1 ] ], [ [ 1 ], [ 0 ], [ 2 ] ], 
  [ [ 1 ], [ 2 ], [ 0 ] ], [ [ 2 ], [ 0 ], [ 1 ] ], [ [ 2 ], [ 1 ], [ 0 ] ], 
  [ [ 0 ], [  ], [ 1, 2 ] ], [ [ 0 ], [  ], [ 2, 1 ] ], 
  [ [ 1 ], [  ], [ 0, 2 ] ], [ [ 1 ], [  ], [ 2, 0 ] ], 
  [ [ 2 ], [  ], [ 0, 1 ] ], [ [ 2 ], [  ], [ 1, 0 ] ], 
  [ [  ], [ 0, 1, 2 ], [  ] ], [ [  ], [ 0, 2, 1 ], [  ] ], 
  [ [  ], [ 1, 0, 2 ], [  ] ], [ [  ], [ 1, 2, 0 ], [  ] ], 
  [ [  ], [ 2, 0, 1 ], [  ] ], [ [  ], [ 2, 1, 0 ], [  ] ], 
  [ [  ], [ 0, 1 ], [ 2 ] ], [ [  ], [ 0, 2 ], [ 1 ] ], 
  [ [  ], [ 1, 0 ], [ 2 ] ], [ [  ], [ 1, 2 ], [ 0 ] ], 
  [ [  ], [ 2, 0 ], [ 1 ] ], [ [  ], [ 2, 1 ], [ 0 ] ], 
  [ [  ], [ 0 ], [ 1, 2 ] ], [ [  ], [ 0 ], [ 2, 1 ] ], 
  [ [  ], [ 1 ], [ 0, 2 ] ], [ [  ], [ 1 ], [ 2, 0 ] ], 
  [ [  ], [ 2 ], [ 0, 1 ] ], [ [  ], [ 2 ], [ 1, 0 ] ], 
  [ [  ], [  ], [ 0, 1, 2 ] ], [ [  ], [  ], [ 0, 2, 1 ] ], 
  [ [  ], [  ], [ 1, 0, 2 ] ], [ [  ], [  ], [ 1, 2, 0 ] ], 
  [ [  ], [  ], [ 2, 0, 1 ] ], [ [  ], [  ], [ 2, 1, 0 ] ] ]
gap> 
gap> ## Generate only the epimorphisms [2] -->> [2]
gap> EnumerateDeltaS( 2, 2 : epi ); 
[ [ [ 0 ], [ 1 ], [ 2 ] ], [ [ 0 ], [ 2 ], [ 1 ] ], 
  [ [ 1 ], [ 0 ], [ 2 ] ], [ [ 1 ], [ 2 ], [ 0 ] ], 
  [ [ 2 ], [ 0 ], [ 1 ] ], [ [ 2 ], [ 1 ], [ 0 ] ] ]
gap> 
gap> ## Compose two morphisms of Delta S.
gap> a := Random(EnumerateDeltaS(4,3));
[ [ 0 ], [ 2, 4, 1 ], [  ], [ 3 ] ]
gap> b := Random(EnumerateDeltaS(3,2));
[ [  ], [ 3, 0, 2 ], [ 1 ] ]
gap> MultDeltaS(b, a);
[ [  ], [ 3, 0 ], [ 2, 4, 1 ] ]
gap> MultDeltaS(a, b);
Maps incomposeable
[  ]
gap> 
gap> ## Examples of using morphisms of Delta S to act on simple tensors
gap> A := TruncPolyAlg([3,2]);
<algebra of dimension 6 over Rationals>
gap> ## TruncPolyAlg is defined in Basic.g
gap> ##  TruncPolyAlg([i_1, i_2, ..., i_n]) is generated by
gap> ##  x_1, x_2, ..., x_n, under the relation (x_j)^(i_j) = 0.
gap> g := GeneratorsOfLeftModule(A);
[ X^[ 0, 0 ], X^[ 0, 1 ], X^[ 1, 0 ], X^[ 1, 1 ], X^[ 2, 0 ], X^[ 2, 1 ] ]
gap> x := g[2]; y := g[3];
X^[ 0, 1 ]
X^[ 1, 0 ]
gap> v := [ x*y, 1, y^2 ];
gap> ## v represents the simple tensor  xy \otimes 1 \otimes y^2.
[ X^[ 1, 1 ], 1, X^[ 2, 0 ] ]
gap> ActByDeltaS( v, [[2], [], [0], [1]] );
[ X^[ 2, 0 ], 1, X^[ 1, 1 ], 1 ]
gap> ActByDeltaS( v, [[2], [0,1]] );       
[ X^[ 2, 0 ], X^[ 1, 1 ] ]
gap> ActByDeltaS( v, [[2,0], [1]] );  
[ 0*X^[ 0, 0 ], 1 ]
gap> 
gap> ## Symmetric monoidal product on DeltaS_+
gap> a := Random(EnumerateDeltaS(4,2));
[ [  ], [ 2, 1, 0 ], [ 3, 4 ] ]
gap> b := Random(EnumerateDeltaS(3,3));
[ [  ], [  ], [  ], [ 1, 3, 2, 0 ] ]
gap> MonoidProductDeltaS(a, b);
[ [  ], [ 2, 1, 0 ], [ 3, 4 ], [  ], [  ], [  ], [ 6, 8, 7, 5 ] ]
gap> MonoidProductDeltaS(b, a);
[ [  ], [  ], [  ], [ 1, 3, 2, 0 ], [  ], [ 6, 5, 4 ], [ 7, 8 ] ]
gap> MonoidProductDeltaS(a, []);
[ [  ], [ 2, 1, 0 ], [ 3, 4 ] ]
gap> 
gap> ## Symmetric Homology of the algebra A, in degrees 0 and 1.
gap> SymHomUnitalAlg(A);
[ [ 0, 0, 0, 0, 0, 0 ], [ 2, 2, 2, 2, 2, 2, 2, 2, 2, 2, 2, 6, 0, 0 ] ]
gap> ## '0' represents a factor of Z, while a non-zero p represents
gap> ## a factor of Z/pZ.
gap> 
gap> ## Using layers to compute symmetric homology
gap> C2 := CyclicGroup(2);                                       
<pc group of size 2 with 1 generators>
gap> A := GroupRing(Rationals, DirectProduct(C2, C2));
<algebra-with-one over Rationals, with 2 generators>
gap> ## First, a direct computation without layers:
gap> SymHomUnitalAlg(A);
[ [ 0, 0, 0, 0 ], [ 2, 2, 2, 2, 2, 2, 2, 2, 2, 2, 2, 2 ] ]
gap> ## Next, compute HS_0(A)_u and HS_1(A)_u for each generator u.
gap> g := GeneratorsOfLeftModule(A);
[ (1)*<identity> of ..., (1)*f2, (1)*f1, (1)*f1*f2 ]
gap> SymHomUnitalAlgLayered(A, g[1]);
[ [ 0 ], [ 2, 2, 2 ] ]
gap> SymHomUnitalAlgLayered(A, g[2]);
[ [ 0 ], [ 2, 2, 2 ] ]
gap> SymHomUnitalAlgLayered(A, g[3]);
[ [ 0 ], [ 2, 2, 2 ] ]
gap> SymHomUnitalAlgLayered(A, g[4]);
[ [ 0 ], [ 2, 2, 2 ] ]
gap> ## Computing HS_1( Z[t] ) by layers:
gap> SymHomFreeMonoid(0,10);
HS_1(k[t])_{t^0} :  [  ]
HS_1(k[t])_{t^1} :  [  ]
HS_1(k[t])_{t^2} :  [ 2 ]
HS_1(k[t])_{t^3} :  [ 2 ]
HS_1(k[t])_{t^4} :  [ 2 ]
HS_1(k[t])_{t^5} :  [ 2 ]
HS_1(k[t])_{t^6} :  [ 2 ]
HS_1(k[t])_{t^7} :  [ 2 ]
HS_1(k[t])_{t^8} :  [ 2 ]
HS_1(k[t])_{t^9} :  [ 2 ]
HS_1(k[t])_{t^10} :  [ 2 ]
gap> ## Poincare polynomial of Sym_*^{(p)} for small p.
gap> ##  There is a check for torsion, using a call to Fermat
gap> ##  to find Smith Normal Form of the differential matrices.
gap> PoincarePolynomialSymComplex(2);
C_0  Dimension: 1
C_1  Dimension: 6
C_2  Dimension: 6
D_1
SNF(D_1)
D_2
SNF(D_2)
2*t^2+t
gap> PoincarePolynomialSymComplex(5);
C_0  Dimension: 1
C_1  Dimension: 30
C_2  Dimension: 300
C_3  Dimension: 1200
C_4  Dimension: 1800
C_5  Dimension: 720
D_1
SNF(D_1)
D_2
SNF(D_2)
D_3
SNF(D_3)
D_4
SNF(D_4)
D_5
SNF(D_5)
120*t^5+272*t^4+t^3
\end{verbatim}
}

%%%%%%%%%%%%%%%%%%%%   End of main body of article
%
%                             References
%
%   BiBTeX users uncomment the following line:
%
\bibliographystyle{gtart}

\end{document}